\pgfplotsset{compat=newest}
\crefname{section}{Section}{Sections}
\crefname{example}{Example}{Examples}
\crefname{figure}{Figure}{Figures}
\crefname{subsection}{Subsection}{Subsections}
\newcommand{\yalmip}{\texttt{YALMIP}\xspace}
\newcommand{\cstssos}{\texttt{CS-TSSOS}\xspace}
\newcommand{\baron}{\texttt{BARON}\xspace}
\newcommand{\julia}{\texttt{JULIA}\xspace}
\newcommand{\mosek}{\texttt{MOSEK}\xspace}
\newcommand{\matlab}{\texttt{MATLAB}\xspace}
\newcommand{\sageopt}{\texttt{SAGEOPT}\xspace}
\newcommand{\python}{\texttt{PYTHON}\xspace}
\newcommand{\BF}{\operatorname{BF}}
\newcommand{\sprod}[2]{\langle {#1},{#2} \rangle}
\DeclareMathOperator{\cone}{cone}
\DeclareMathOperator{\conv}{conv}
\newcommand{\bfx}{\mathbf{x}}
\newcommand{\mF}{\mathcal{F}}
\newcommand{\mM}{\mathcal{M}}
\newcommand{\mP}{\mathcal{P}}
\newcommand{\N}{\mathbb{N}}
\newcommand{\Nnd}{\mathbb{N}^n_d}
\newcommand{\R}{\mathbb{R}}
\newcommand{\rmA}{A}
\newcommand{\rmF}{F}
\newcommand{\rmS}{S}
\newcommand{\Aex}{A_{\mathrm{ex}}}
\renewcommand{\epsilon}{\varepsilon}
\newcommand{\fex}{f^{\mathrm{ex}}}
\renewcommand{\R}{\mathbb{R}} 
\renewcommand{\N}{\mathbb{N}}
\newcommand{\cM}{\mathcal{M}} 
\newcommand{\cC}{\mathcal{C}} 
\newcommand{\cP}{\mathcal{P}} 
\newcommand{\cB}{\mathcal{B}} 
\renewcommand{\conv}{\operatorname{conv}} 
\renewcommand{\cone}{\operatorname{cone}} 
\newcommand{\Bx}{\operatorname{Box}}
\renewcommand{\BF}{\operatorname{BF}}
\newcommand{\MR}{\operatorname{MR}}
\newcommand{\vx}{\operatorname{vert}}
\newcommand{\cF}{\mathcal{F}}
\newcommand{\RLX}{\operatorname{RLX}}
\newcommand{\CRLX}{\operatorname{CRLX}}
\renewcommand{\sprod}[2]{\left\langle #1 , #2 \right\rangle }
\newcommand{\bS}{\mathbb{S}}
\newcommand{\GMC}{\operatorname{GMC}}
\newcommand{\tr}{\operatorname{tr}}
\newcommand{\triv}{\operatorname{triv}} 
\newtheorem{prop}{Proposition}[section]
\newtheorem{thm}[prop]{Theorem}
\theoremstyle{definition} 
\newtheorem{rem}[prop]{Remark}
\newtheorem{ex}[prop]{Example}
\newtheorem{defn}[prop]{Definition} 
\newtheorem{cor}[prop]{Corollary}
\newenvironment{nomenclature}{\begin{longtable}{@{}p{2.6cm}l}}{\end{longtable}}
\newcommand{\nclentry}[2]{#1 & #2 \\}
\crefname{hypothesis}{Hypothesis}{Hypotheses}
\title{Sparse convex relaxations \\ in polynomial optimization\thanks{Submitted to the editors DATE.   %\\ : suggestions, duality, and computations
\funding{This work was funded by the the Deut\-sche For\-schungs\-
ge\-mein\-schaft (DFG, German Research Foundation) - 314838170, GRK 2297
MathCoRe.}}}
\author{Gennadiy Averkov\thanks{Fakultät 1, Brandenburgische Technische Universität Cottbus-Senftenberg, Germany
  (\email{averkov@b-tu.de}).}
\and Benjamin Peters\thanks{Fakultät für Mathematik, Otto-von-Guericke Universität Magdeburg, Germany 
  (\email{benjamin@benjamin-peters.de}, \email{sager@ovgu.de}).}
\and Sebastian Sager\footnotemark[3] \thanks{Max Planck Institute for the Dynamics of Complex Technical Systems Magdeburg, Germany}}
\begin{document}

\maketitle

\begin{abstract}
We present a novel, general, and unifying point of view on sparse approaches to polynomial optimization. Solving polynomial optimization problems to global optimality is a ubiquitous challenge in many areas of science and engineering. Historically, different approaches on how to solve nonconvex polynomial optimization problems based on convex relaxations have been developed in different scientific communities. Here, we introduce the concept of monomial patterns. A pattern determines what monomials are to be linked by convex constraints in a convex relaxation of a polynomial optimization problem. This concept helps to understand existing approaches from different schools of thought, to develop novel relaxation schemes, and to derive a flexible duality theory, which can be specialized to many concrete situations that have been considered in the literature. We unify different approaches to polynomial optimization including polyhedral approximations, dense semidefinite relaxations, SONC, SAGE, and TSSOS in a self-contained exposition. We also carry out computational experiments to demonstrate the practical advantages of a flexible usage of pattern-based sparse relaxations of polynomial optimization problems. 
%We present an abstract framework for deriving relaxations of polynomial optimization problems with flexible choices of sparsity structures and a flexibility in modeling the constraints in a concrete optimization paradigm. The basic notion we rely upon is the notion of a \textit{pattern}, defined as a finite set of exponent vectors of monomials that are linked via convex constraints. Our framework consolidates sparse versions of Lasserre's semidefinite relaxations, sparse polyhedral approaches from global nonlinear optimization as well as other sparse convex approaches like SONC and SAGE. The framework is presented in the primal and the dual form. We give examples, accompanied with promising computational experiments, of different ways to deploy pattern-based sparse relaxations.  
\end{abstract}

%%\pacs[JEL Classification]{D8, H51}
%%\pacs[MSC Classification]{35A01, 65L10, 65L12, 65L20, 65L70}

\begin{keywords}
Polynomial optimization, nonlinear optimization, convexification, sparsity, duality, sum of squares, SONC, SAGE, TSSOS, SDSOS
% McCormick envelopes, moment problem
\end{keywords}

\begin{AMS}
  68Q25, 68R10, 68U05
\end{AMS}

%\pacs{68Q25, 68R10, 68U05}

% -------------------------------------------------------------------------------
% -------------------------------------------------------------------------------
\section{Introduction} \label{sec:intro}

Let $X$ be a basic closed semi-algebraic set defined as
\begin{equation} \label{eq_feasibleset}
	X = \{ x \in \R^n \colon g_1(x) \ge 0,\ldots, g_s(x) \ge 0, h_1(x) = 0,\ldots, h_t(x) = 0\}
\end{equation}
via $s$ polynomial inequality and $t$ polynomial equality constraints, where $s,t \ge 0$ are non-negative integers. 
In polynomial optimization, we are interested in mimizing a polynomial over the \emph{feasible set} $X$. We want to solve the problem 
\begin{equation}
	\tag{POP}\label{POP}
	\inf_{x \in X} f(x),
\end{equation}
where the objective $f$ and the polynomials used to describe $X$ are polynomials, i.e., elements of the ring $\R[x]$ of $n$-variate polynomials in the variables $x = (x_1,\ldots,x_n)$ with coefficients in $\R$. We use $x^\alpha : = x_1^{\alpha_1} \cdots x_n^{\alpha_n}$ for the monomials in $x$, where a monomial is determined by its exponent vector $\alpha  \in \N^n$. In this notation, $f(x) = \sum_{\alpha \in A} f_\alpha x^\alpha \in \R[x]_A$ is an $\R$-linear combination of monomials, where $A$ is a finite set of exponent vectors. Where convenient, we also use $f$ to denote the vector of coefficients of $f(x)$.
In general, we tried to use standard notation, avoid notational overhead, and introduce specifics when they are first used. A list of symbols is provided for convenience in the appendix. 

% -------------------------------------------------------------------------------
\subsection{The problem class polynomial optimization}

To get a first intuition for the problem class, we discuss polynomials and optimization from a general perspective, before we look into aspects of how to solve \eqref{POP}.

% -------------------------------------------------------------------------------
\subsubsection{Polynomials}

In polynomial optimization, the objective function and the inequalities and equalities describing $X$ are polynomials in real variables $x \in \R^n$. They form a huge and important subset in the more general class of nonlinear programming (NLP) problems.

Polynomial optimization problems of type \eqref{POP} appear in a variety of different areas and contexts, due to the ubiquity of polynomials and their properties. Polynomials arise directly in the mathematical modeling of real world systems, e.g., with a cubic function in the Fitzhugh-Nagumo model or with a quadratic function when air friction is modeled. They also arise from algebraic concepts. A famous example is the Rough-Hurwitz stability criterium which states that a time-invariant linear system is stable if the roots of it's characteristic polynomial have only negative real parts. Another example are symbolic reformulations and underestimators using polynomials, e.g., \cite{sherali2013reformulation,smith2001symbolic}. Even binary variables $x_i \in \{0,1\}$ can be modeled via concave quadratic constraints $x_i (1-x_i) =0$.
Concrete examples of applications in polynomial optimization, e.g., in discrete and combinatorial optimization, control systems and robotics, statistics, and electric power systems engineering are provided in \cite[Sec. 1.1]{ahmadi2019dsos}, \cite[Sec. 1.1]{laurent2009sums}, and \cite[Sec. 3.6]{MR3075433}. 

Yet another reason for the importance of polynomials is their universal approximation property. Weierstraß showed in 1885 that the subalgebra of polynomials is dense in the algebra of continuous functions (a property much celebrated and used nowadays for neural networks). The Taylor series of a function provides a convenient way to find polynomial approximations and error estimates for smooth functions. Polynomial regression and approximation are also computationally stable and efficient, e.g., using Tschebysheff polynomials and barycentric coefficients \cite{trefethen2019approximation}.
Therefore it is certainly no exaggeration to claim that mastering of the problem class \eqref{POP} would have a huge impact on many open challenges in science and engineering.

The structure of $X$ and the way it is given is of crucial important for particular approaches. In nonlinear programming, the convergence and performance of algorithms and also the conditions of optimality depend strongly on properties of constraints, such as the so-called constraint qualifications. In the approaches to polynomial optimization we are discussing here, they do not play a role, though. For \eqref{POP}, it is already interesting and nontrivial to address the special situations in which $X = \R^n$, which does not require any constraints, $X = [-1,1]^n$, modeled by the linear inequalities $-1 \le x_i \le 1$, or $X =\{-1,1\}^n$ modeled by the equalities $x_i(1-x_i)=0$. All those cases have a different flavor, and there are methods specific to those cases, which can be employed for solving a particular type of polynomial optimization problems. Nevertheless, independent of a particular version of \eqref{POP}, convexification is a common principle applied to all these versions. In the following we shall explain this universal principle without going into details regarding $X$, unless otherwise specified.

% -------------------------------------------------------------------------------
\subsubsection{Optimization}

A feasible point $x^*$ is called a global minimizer of \eqref{POP}, if there is no other point in $X$ with a lower objective function value. It is called a local minimizer, if there exists a neighborhood in $X$ around $x^*$ in which this property holds. It is way more difficult to find a global optimizer or even to verify global optimality of a given point. In contrast, local optimality can usually be verified locally via sufficient conditions of optimality by evaluating functions and higher-order derivatives in the candidate point.

Convex optimization, i.e., the minimization of a convex objective function over a convex feasible set $X$, has two important advantages compared to nonconvex optimization. The first advantage is that for strictly convex functions, the necessary first-order conditions of optimality are also sufficient conditions. The second, more important advantage is that a local minimizer of a convex problem is also a global minimizer. This can be seen by assuming the existence of two strict local minimizers, considering the connecting line of (feasible) points and constructing a contradiction to the local optimality of both from the convexity assumption on the objective function. For convex nonlinear problems, efficient algorithms have been developed in the last decades as shortly discussed in Section~\ref{sec_paradigms}. 

Many practical optimization problems are nonconvex, though. Sometimes, the determination of local optima is acceptable in an engineering context, based on the rationale that a descent-based algorithm initialized with the currently implemented practical solution will improve it towards a local optimium. Practitioners in industry might even argue that sometimes, moving from one local solution to a different one would imply additional undesirable costs. Examples are the necessary training of staff to control a complex process or related security issues.
In addition, finding global optimizers is simply often not realistically possible. Already local optimizers can be very difficult to find due to high computational costs, e.g., in the training of deep neural networks.

In many other contexts, apart of the intellectual desire to know what the very best solution is, global optimality is required. This is certainly true when optimization results are used for mathematical reasoning, such as the determination of bounds for kissing numbers \cite{Mittelmann2010}. Note that here often additional concepts such as interval analysis are necessary to obtain rigorous results \cite{Neumaier2004}.
Global optimization comes in several flavors and versions and is supported and developed by a wide range of sub-communities representing different philosophies. 
In our review, we are interested in the global solution of \eqref{POP} and shall develop a general convexification framework for this purpose.

% -------------------------------------------------------------------------------
\subsubsection{How to globally solve polynomial optimization problems} \label{sec_howto}

There are many excellent textbooks on polynomial optimization available \cite{marshall2008, lasserre2009moments, lasserre2011, wolkowicz2012handbook, MR3075433, gartner2012approximation, lasserre2015, nie2023moment, magron2023sparse}
that highlight the connection to positive polynomials, sum of squares (SOS), semidefinite programming (SDP), and related concepts from real algebra. 
Important contributions come also from a second community interested in global nonlinear programming (NLP), \cite{Neumaier2004}. Here, polyhedral relaxations and spatial branch and bound techniques \cite{Land1960,Little1963} have been suggested, e.g., \cite{Tawarmalani2005, tawarmalani2013convexification}, sometimes in the connection with additional concepts such as interval arithmetics \cite{Moore1966}. Often these approaches address also nonlinear optimal control problems, e.g., \cite{Adjiman1998,Esposito2000,Esposito2000a,Papamichail2002,Diedam2018}.

Many different approaches for the convexification of \eqref{POP} have been suggested in the literature. Among them are the well-known McCormick envelopes \cite{Mitsos2009,Scott2011,Boland2017}, i.e., the convexification of variables $x_1$ and $x_2$ and their product $x_1 x_2$. Other examples are truncated moment relaxation and its dual, the SOS relaxation, \cite{lasserre2011,laurent2009sums,marshall2008}, scaled-diagonally-dominant sum of squares (SDSOS) \cite{ahmadi2019dsos}, sums of non-negative circuit polynomials \cite{dressler2017,seidler2018experimental}, bound-factor products \cite{dalkiran2013boundfactor} and their dual Handelman's hierarchy \cite{handelman1988representing}, multilinear intermediates \cite{bao2015global}, polyhedral outer approximations \cite{tawarmalani2005polyhedral} as well as expression trees \cite{smith2001symbolic,sherali2013reformulation}.
These approaches are either based on primal or on dual formulations, as shall become clearer in the course of this review. It is one goal of this paper to present a unifying point of view that allows an easy understanding of the differences and similarities between these approaches.
Also, a variety of optimization solvers has evolved over the years, as discussed in Section~\ref{sec_computations}.

Polynomial optimization problems have obvious connections with real algebra, which in turn, is connected to semidefinite optimization. Since the seminal work of Jean-Bernard Lasserre, in the last two decades a community of experts working at the interface of global optimization, semidefinite optimization, real algebra, and real algebraic geometry has formed. This community seeks to understand the theoretical foundations of convexification as \emph{the} fundamental principle of global optimization and to use it in computations. It is interesting to observe the interplay between theory and computational practice in this context. The theory suggests tools of great generality supplied with extremely inefficient general algorithms. The computational practice exploits rather concrete convexifications (like McCormick envelopes), which can be deployed in general problems. Basically any kind of concretization of convexification ideas has led to a spin-off -- a sub-culture focussing and propagating this particular convexification technique and its application for solving \eqref{POP}. Examples are McCormick envelopes, other kinds of linear programming relaxations, Lasserre relaxations, SONC, and SAGE relaxations. 
With our review, we address three key similarities between these approaches from different sub-communities: first, using \emph{convexification} as a main tool, second, trying to impose \emph{sparse} constraints in practice, and third, establishing a \emph{duality theory} based on general principles from conic optimization. In our opinion, these similarities have not yet been articulated clearly enough. In our presentation, we demonstrate how the idea of sparse convexifications is implemented flexibly, without relying on specific modeling details. We illustrate the impact of convexification and sparsification concepts with numerical results.

% -------------------------------------------------------------------------------
\subsection{Convexification}

The (formal) convexification of problem \eqref{POP} consists of two elementary steps: lifting and describing the relationships between the variables $x$ and $v$ with convex constraints.

% -------------------------------------------------------------------------------
\subsubsection{Monomial Lifting} By introducing a new variable $v_\alpha = x^\alpha$ for each of the monomials that occur in \eqref{POP}, we linearize the objective, which becomes the linear function $(v_\alpha) _{ \alpha \in A} \mapsto \sum_{ \alpha \in A} f_\alpha v_\alpha$. We introduce the \emph{linearization} map $L_v$ for polynomials $f \in \R[x]$ by 
\[
L_v(f) := L_v \Bigl(\sum_{\alpha \in A} f_\alpha x^\alpha \Bigr) := \sum_{\alpha \in A} f_\alpha v_\alpha.
\]  Since  the monomial variable $v_0$ for the monomial $x^0 = 1$ is assigned the value $1$, we use $v_0$ as the constant $1$ if it is not specified otherwise. We call the linear inequality $L_v(f) \ge 0$ the linearization of the polynomial inequality $f \ge 0$. In \cite[Section~2.7]{lasserre2015} $L_v$ (without fixing $v_0=1$) is called the Riesz linearization functional. 
For example, the linearization of a polynomial $f:=(1-x_1)(1-x_2)$ is given by
$L_v ( f ) = L_v (1 - x_1 - x_2 + x_1 x_2) = v_{0,0} - v_{1,0} - v_{0,1} + v_{1,1} = 1 - v_{1,0} - v_{0,1} + v_{1,1}$.

Having a linear objective is favorable, but it requires the introduction of nonlinear equality constraints $v_\alpha = x^\alpha$, with $\alpha \in A$, additionally to the constraint $x \in X$. This stage of processing \eqref{POP} is the lifting stage, as we add new variables and optimize in a higher-dimensional space. The variable $v_\alpha$ is called the monomial variable for the monomial $x^\alpha$ (or, for the exponent vector $\alpha$). After this kind of reformulation, we have a vector $x= (x_1,\ldots,x_n)$ that varies in $X$ and determines the values of the monomial variables $v_\alpha$, $\alpha \in A$, on which the objective $L_v(f)$ depends. Thus, we do not need $x$ directly anymore, but we want to determine explicitly how the monomial variables, which are linked to each other through the $x$-variables, are interdependent. 

% -------------------------------------------------------------------------------
\subsubsection{Formal convexification} 

Problem \eqref{POP} is now reformulated as 
\[
\inf \Biggl\{ \sum_{\alpha \in A} f_\alpha v_\alpha \colon   x \in X,\  v_\alpha = x^\alpha \  ( \alpha \in A) \Biggr\}
\]
The next step is to project out the $x$-variables, keeping the monomial variables and introducing constraints that describe their relations. Since we want to arrive at a convex problem in the end, we describe the relation in terms of convex constraints. This intention can be formalized by replacing the constraint 
\begin{equation}\label{eq_curve}
	(v_\alpha)_{\alpha \in A} \in \{ (x^\alpha)_{\alpha \in A} \colon x \in X\},
\end{equation}
which is in fact a family of the constraints $v_\alpha = x^\alpha$, indexed by $\alpha \in A$,
with the convex constraint 
\begin{equation}\label{eq_momentbody}
	(v_\alpha)_{\alpha \in A}  \in \cM_A(X) := \conv \{ (x^\alpha)_{\alpha \in A} \colon x \in X\} .
\end{equation} 
We have a linear objective on a convex feasible set, so it is readily clear that \eqref{POP} is equivalent to the problem 
\begin{align}\tag{CVX-POP}\label{C-POPa}
	\displaystyle \inf \left\{ \sum_{\alpha \in A} f_\alpha v_\alpha \colon {v \in \cM_A(X)} \right\} 
\end{align}
in the sense that \eqref{POP} and \eqref{C-POPa} have the same optimal value. 
We emphasize that \eqref{C-POPa}, the convex problem which we call the monomial convexification of \eqref{POP}, is a formal problem.  By this we mean that \eqref{C-POPa} provides no indication on how to formulate  $\cM_A(X)$ in any of the available optimization paradigms. Instead, \eqref{C-POPa} simply declares our intention to reformulate or relax a nonconvex problem to a convex one. This process is called \emph{convexification} in the jargon of global optimization. The set $\cM_A(X)$ represents the (not yet known) convexification that conveys the (not yet known) convex constraints, through which the monomial variables $v_\alpha$, $\alpha \in A$, are related. Both in the theory of global optimization and real algebra, a major aim is to provide approaches to describe $\cM_A(X)$, exactly or approximately, within tractable paradigms of convex optimization. Linear, second order-cone, and semidefinite optimization are the most widely used paradigms of convex optimization for that purpose. The points of view on what \textit{tractable} means may vary among sub-communities. Theoretically inclined experts consider polynomial-time solvability of a class of optimization problems to be the standard definition of being tractable. In contrast, practitioners may apply a more refined distinction, as they also care about the conditioning of the problems, the rate of convergence and the exact order of polynomial running time. Hence, for example, in view of these remarks semidefinite optimization with a large size of semidefinite constraints can be considered to be tractable in the theoretically inclined community, but would be considered intractable in the practically inclined one. 

% -------------------------------------------------------------------------------
\subsection{Formal sparsification} \label{sec_sparsification}

We call a constraint \textit{dense}, if it involves a large fraction of the variables of the underlying optimization problem. In contrast, a \textit{sparse} constraint depends only on a small fraction of the optimization variables. More correctly, we are mainly interested in the difficulty of associated constraints and their impact on computational runtimes. E.g., when we use semidefinite modeling techniques, we measure the difficulty in terms of the sizes of the linear matrix inequalities, as discussed below in Section~\ref{sec_paradigms}. %sec_linearSDP

Whereas the question how to actually model the convex $\cM_A(X)$ is definitely \emph{the} core topic in global optimization, also the role of sparsity and approaches to impose sparsity are very important in practice. Suppose, for example, that we have developed an approach to model $\cM_A(X)$ exactly or approximately with some theoretical guarantees. This means that we have established a method that transfers the formal constraint $v \in \cM_A(X)$ into a set of concrete constraints,
\begin{align*} 
	\cM_A(X) \xlongrightarrow{\text{modeling method}} \text{Model of} \ \cM_A(X).
\end{align*} 
Our modeling method receives $A$ and a description of $X$ as an input and produces a model for $\cM_A(X)$. But what if, when $A$ is large and complicated, this model for $\cM_A(X)$ is not tractable, because it involves computationally hard-to-handle or ill-conditioned constraints? 
At a first glance, our modeling method is not helpful: applying it to \eqref{C-POPa} we obtain its concrete formulation, but we cannot solve it, because it is not tractable. But looking more precisely, we see that there is a remedy, and it relies on using sparse convex constraints. When we model $\cM_A(X)$, we want to bind \emph{all} of the variables $v_\alpha$ with $\alpha \in A$ by convex constraints. This is a tough task, because the number of variables may be large and their relations may be highly nontrivial. What we can do instead is creating smaller (and possible overlapping) groups of monomial variables and try to model the relations of variables within each single group. As one of the options, we can just use the \emph{same} modeling method for each group of variables. The latter is a high-level view on imposing sparsity, which refrains from details on how to choose the sparse sub-structures and what kinds of sparse sub-structures are favorable. The above strategy leads to what we call a  pattern relaxation of \eqref{POP} with respect to the pattern family $\{P_1,\ldots, P_N\}$, which covers $A$:
\begin{align}\tag{P-RLX}\label{P-RLXa}
	\displaystyle \inf \left\{ \sum_{\alpha \in A} f_\alpha v_\alpha \colon (v_\alpha)_{\alpha \in P_i} \in \mM_{P_i}(X) \ \text{for} \ i \in [N] \right\} 
\end{align}
This is a formal problem that declares the intention to find sparse constraints, and it offers flexibility in choosing patterns $P_i$ in a way that allows to achieve a desired balance between the approximation quality and the complexity of $\eqref{P-RLXa}$. 
By choosing certain specific ``shapes'' of $P_i$, we can also prescribe sparse constraints of a particular structure. This is also possible in a dual setting. For example and as explained in detail later in Section~\ref{sec_SAGE}, if we choose $P_i$ of the form $\{2\alpha, \alpha+ \beta, 2 \beta\}$, we end up with a SDSOS relaxation. If we choose the $P_i$ to be a simplicial circuit, we end up with a dual version of the SONC relaxation. There are many different approaches that can also be combined to formulate \eqref{P-RLXa}. 

We believe that our template how to derive convexifications of \eqref{POP} via the formal problems \eqref{C-POPa} and \eqref{P-RLXa} has a definite methodological advantage. While other sparse approaches are discussed inseparably of modeling methods, we clearly separate modeling and sparsity. In the course of developing a particular approach to solve \eqref{POP} we can change a modeling strategy, but keep the sparsity-imposing strategy unchanged or vice versa. In other words, the ways of imposing sparsity need not be presented together with the modeling techniques. Both tasks can be coordinated, but there is a certain amount of independence in carrying out these two tasks. As one of the highlights, we stress that the the key ideas of duality theory for sparse relaxations are independent of the concrete modeling approach. We are not aware of any other source where the duality for \eqref{P-RLXa} is presented in a model-independent way. 

The procedure of developing convexifications without imposing sparsity can be described via a diagram,
\begin{align*}
	X \xrightarrow{\text{monomial map}}  \{ (x^\alpha)_{\alpha \in A}  \colon x \in X \} \xlongrightarrow{\text{convexifying}}\cM_A(X)\xlongrightarrow{\text{lifting}}\cM_B(X). 
\end{align*}
The lifting part $\cM_A(X) \to \cM_B(X)$ is needed, when we want to pick a larger exponent set $B \supseteq A$ that we perceive to be more convenient to work with than the original set $A$. For example, if $n=1$, and we have $A = \{0,2,3,6\}$, which means that we optimize a  polynomial with the exponents $x^0, x^2, x^3, x^6$, we might not have a specific strategy that would help optimizing such a polynomial over $X$. Instead, we might consider $\cM_B(X)$ for 
 $B = \{0,1,2,3,4,5,6\}$, which would help us optimize all polynomials of degree at most $6$ over $X$. By providing a model for $\cM_B(X)$ and projecting $\cM_B(X)$ onto $\cM_A(X)$ we establish a strategy for optimizing polynomials over $X$ with the exponent vectors in $A$. 
Clearly, for solving the problem practically, we need to proceed with modeling and applying a particular convex optimization solver: 
\begin{align*} 
	\cM_B(X) \xlongrightarrow{\text{modeling}} \text{Model of} \ \cM_B(X) \xlongrightarrow{\text{solving}} 
\end{align*} 
When establishing a model, lifting can be used once again, since sometimes $\cM_B(X)$ is conveniently described as a projection of a higher-dimensional set. 

The procedure of developing a sparse convexification is a variation of the above procedure. In the sparse version, the set $B$ is the union of $P_1,\ldots,P_N$, and rather than trying to model $\cM_B(X)$ we model its $N$ projections, so that the respective diagram is as follows: 
\begin{align*}
	\cM_B(X) \xlongrightarrow{\text{projecting}}\cM_{P_i}(X) \xrightarrow{\text{modeling}} \text{Model of} \ \cM_{P_i}(X)
\end{align*}
applied for each $i \in [N]$. When models are established, we can move over to solving: 
\begin{align*}
	\text{Model of} \ \cM_{P_1}(X),\ldots, \text{Model of} \ \cM_{P_N}(X) \xrightarrow{\text{solving}} 
\end{align*} 

Figure~\ref{fig:PRLX} illustrates the procedure for the approximation of a cubic polynomial in one variable $x$ and exponents $A = \{1,2,3\}$ on $X=[0,1]$. Note that here we used $B=A$. In general, pattern relaxations can not be expected to be as tight as in this simple case.

The above strategy is an ``assembly line'' that brings us from \eqref{POP} to a lower bound on \eqref{POP}. It consists of the following steps and decisions: 
\begin{enumerate}
	\item What $B \supseteq A$ and which sets $P_1,\ldots, P_N$ covering $B$ do we use? 
	\item How do we model $\cM_{P_i}(X)$? In which optimization paradigm? 
	%If there are different shapes among the $P_i$'s the choice of the model may depend on a particular shape. 
	\item What algorithm is appropriate for the relaxation we have established? 
\end{enumerate} 

The focus of our exposition is on the first question. We shall structure our review according to linear programming, second-order cone programming, and semidefinite programming where appropriate. The question of the overarching optimization paradigm and algorithmic choices will only be shortly commented upon in the following section.

% -------------------------------------------------------------------------------
\begin{figure}[hbt!]
	\begin{subfigure}{\linewidth}
		\begin{center}
			\includegraphics[height=3.5cm]{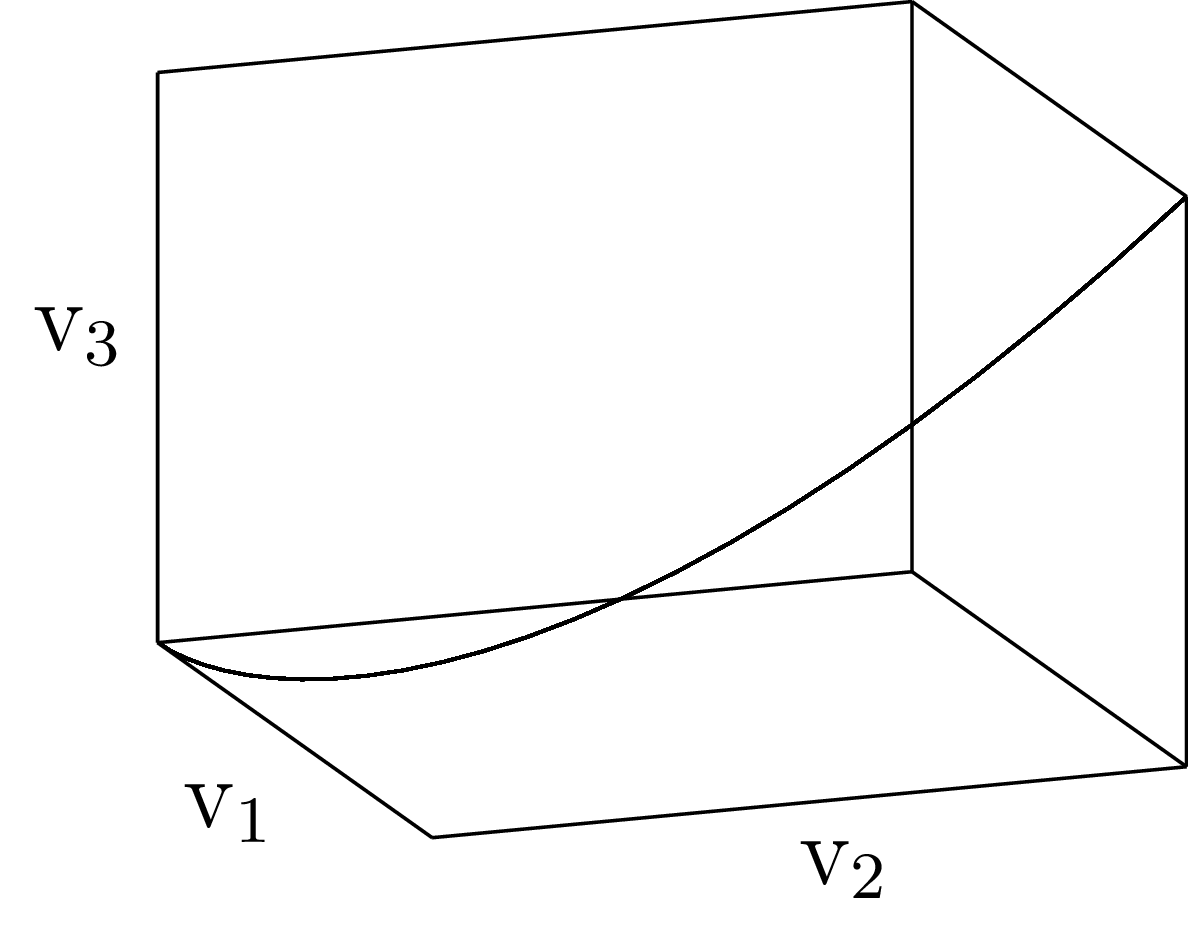}\qquad
			\includegraphics[height=3.5cm]{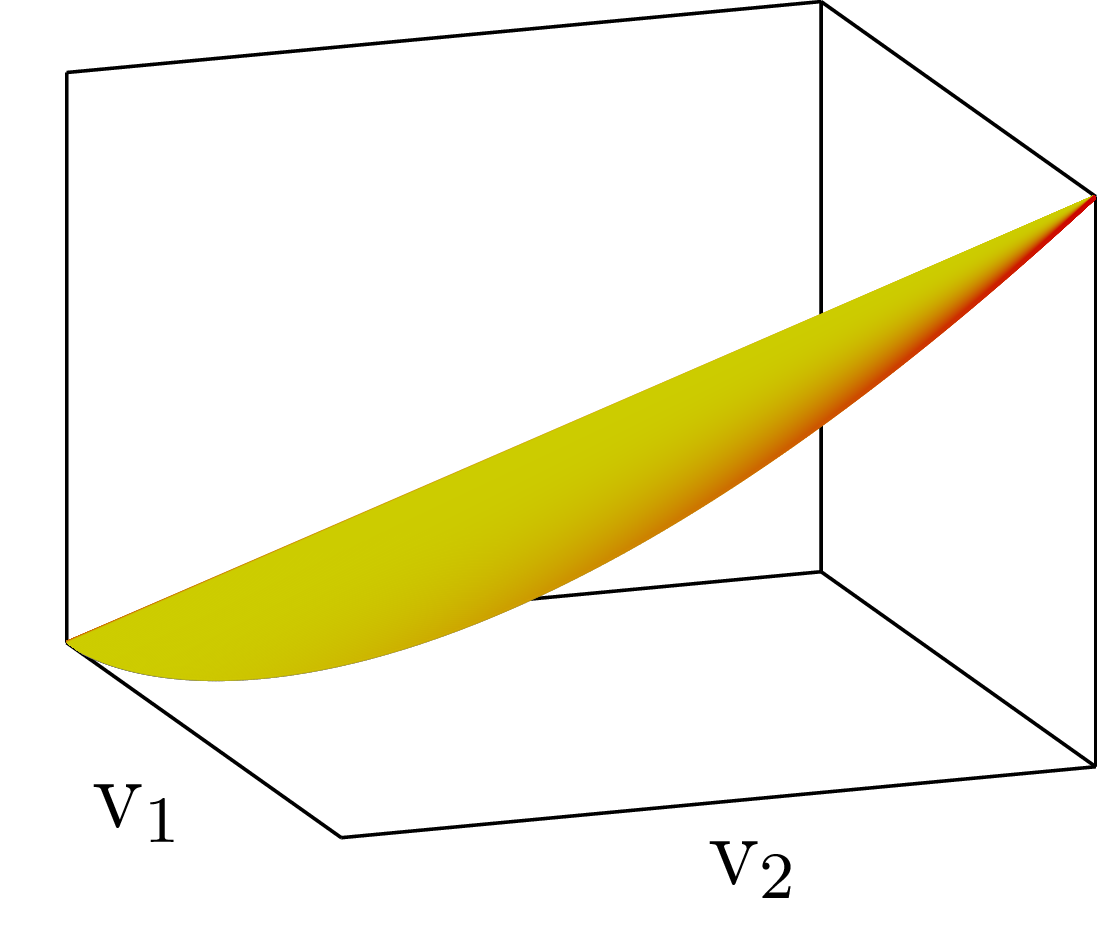} 
		\end{center}
	\end{subfigure}\par\bigskip\medskip
	\begin{subfigure}{\linewidth}
		\includegraphics[height=3.5cm]{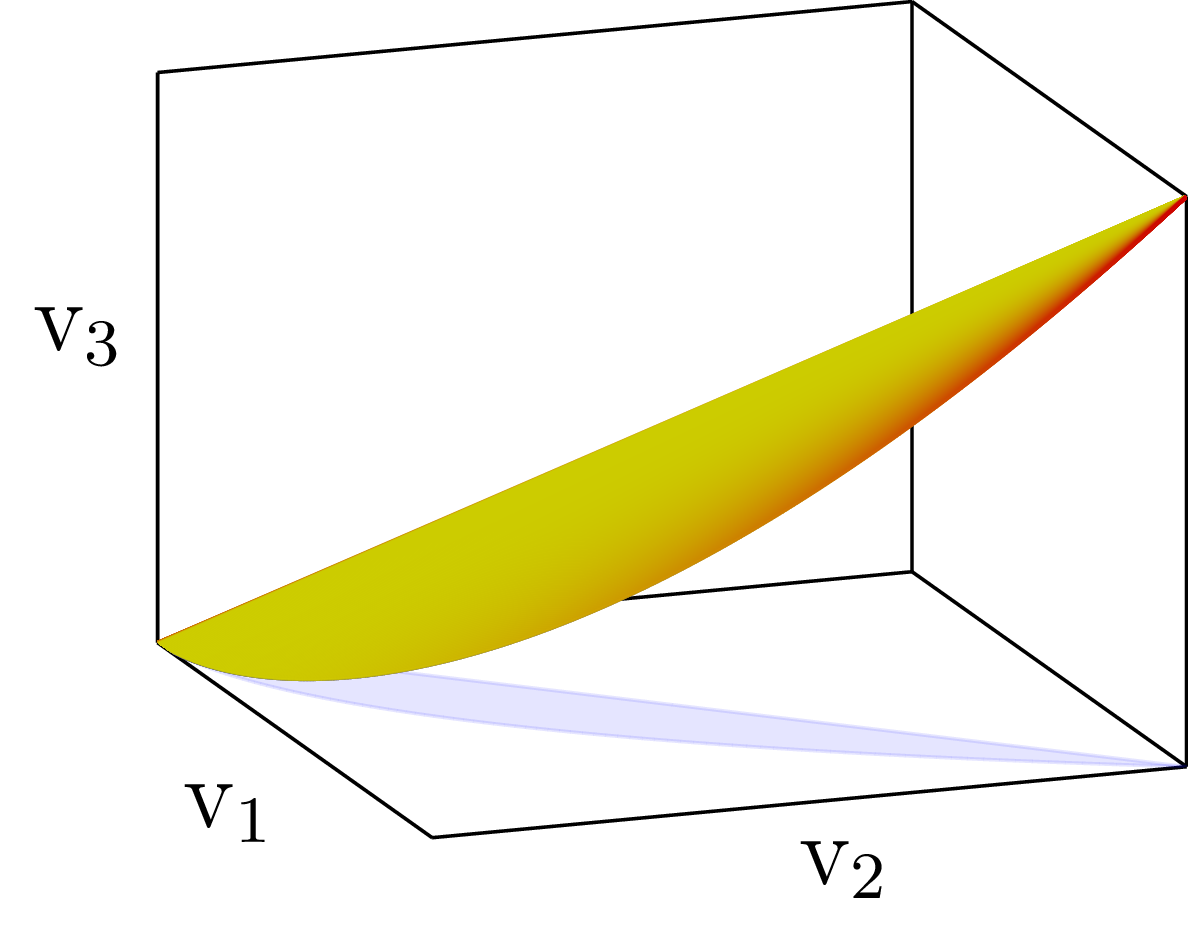}\hfill
		\includegraphics[height=3.5cm]{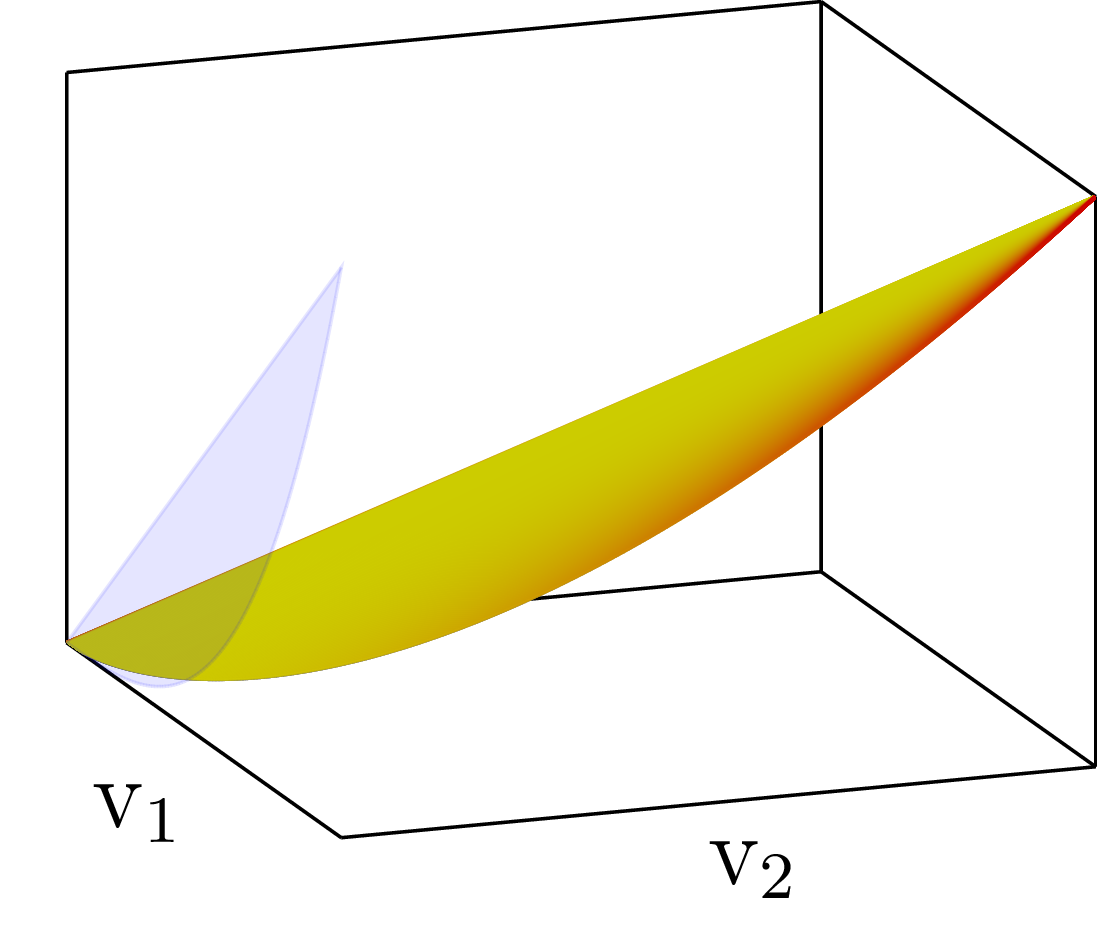}\hfill
		\includegraphics[height=3.5cm]{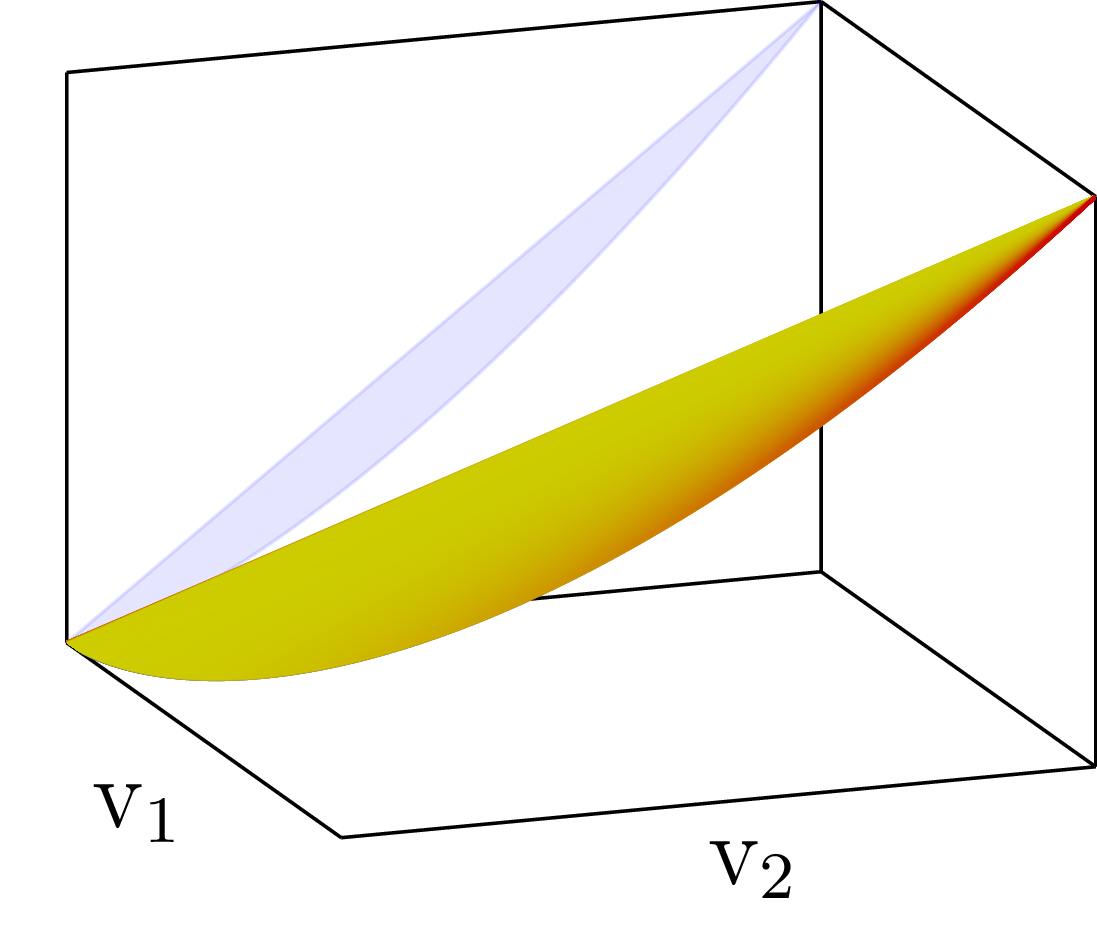}
	\end{subfigure}\par\bigskip\medskip
	\begin{subfigure}{\linewidth}
		\includegraphics[height=3.5cm]{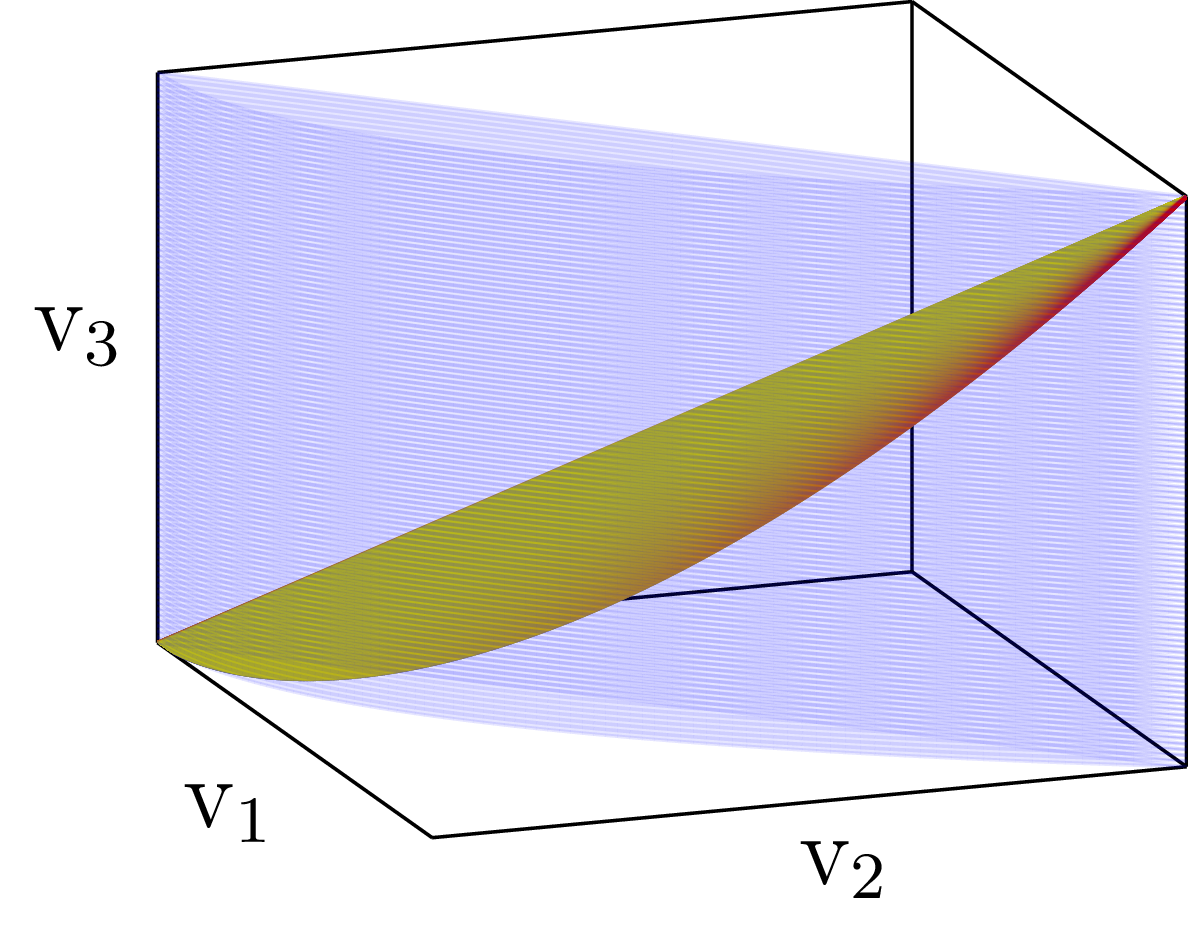}\hfill
		\includegraphics[height=3.5cm]{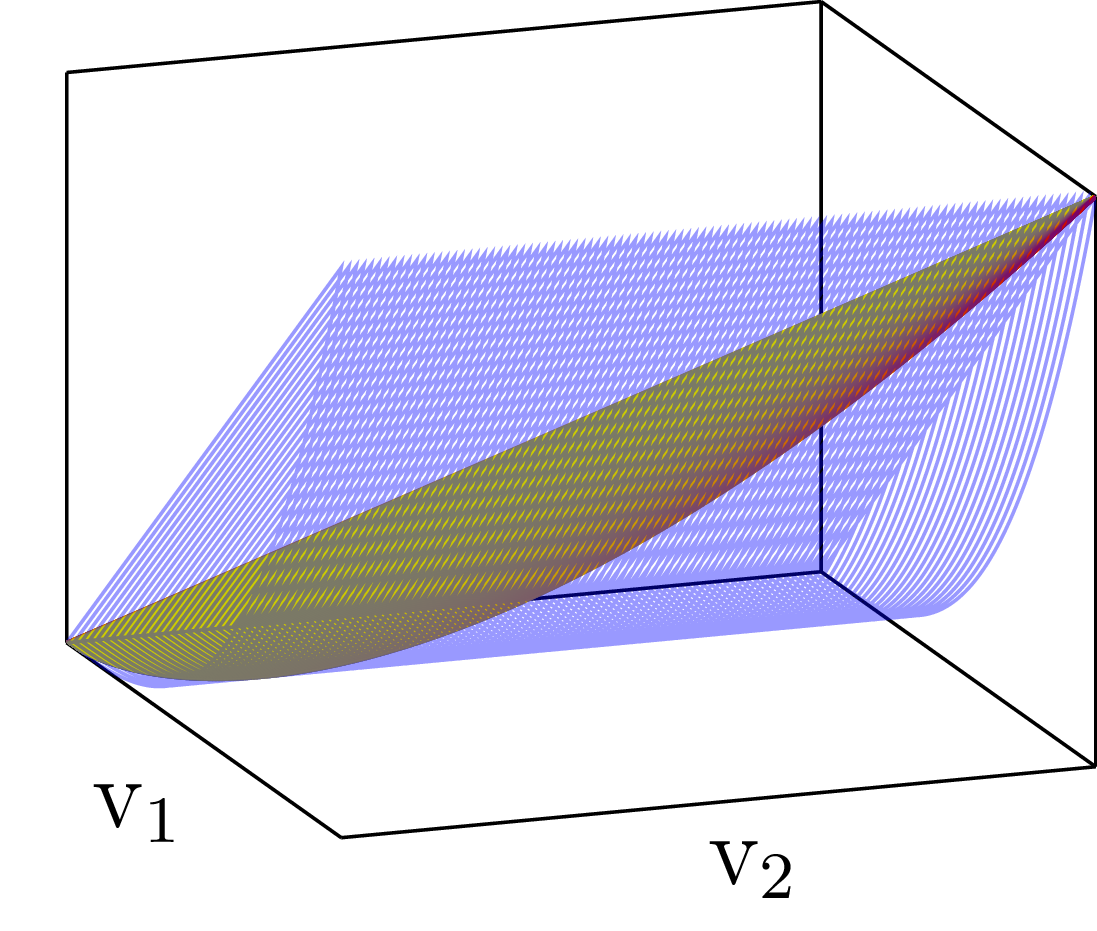}\hfill
		\includegraphics[height=3.5cm]{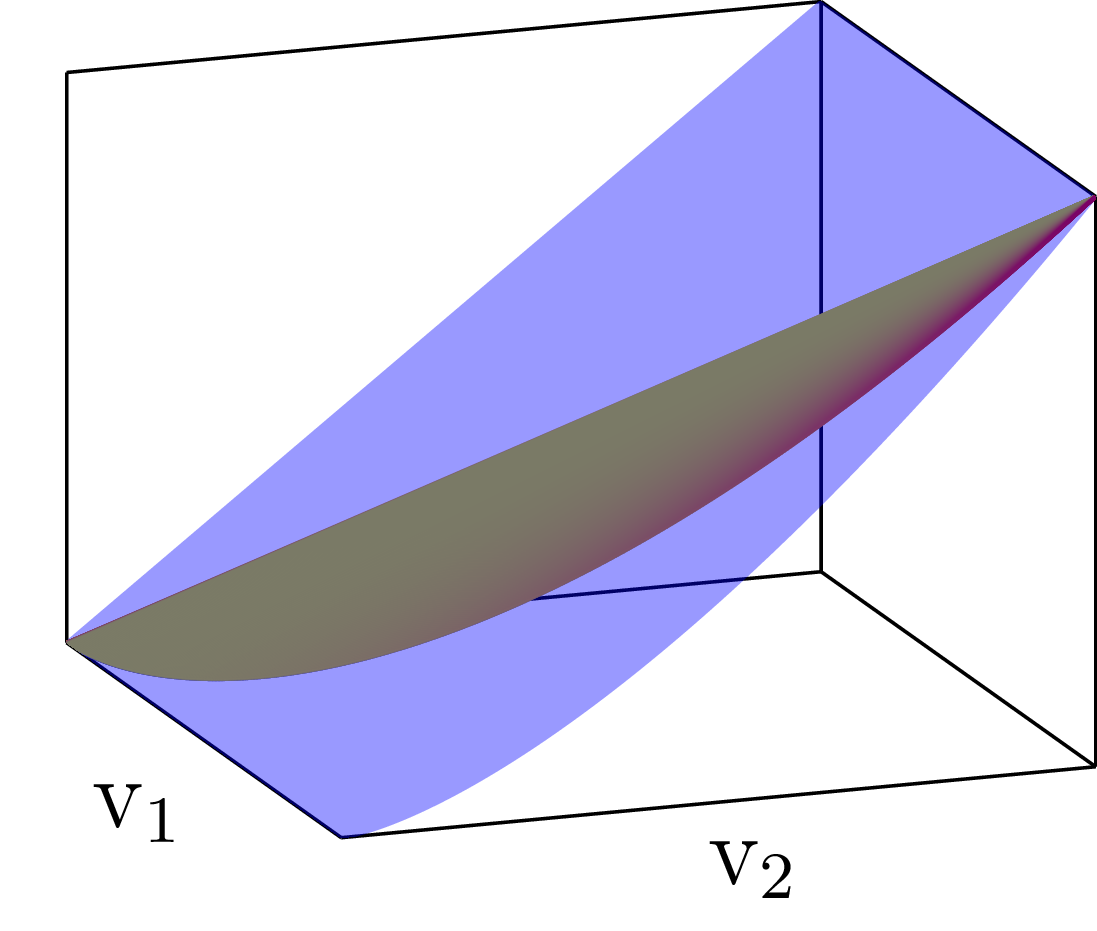}
	\end{subfigure}\par\bigskip\medskip
	\begin{subfigure}{\linewidth}
		\begin{center}
			\includegraphics[width=5.04cm]{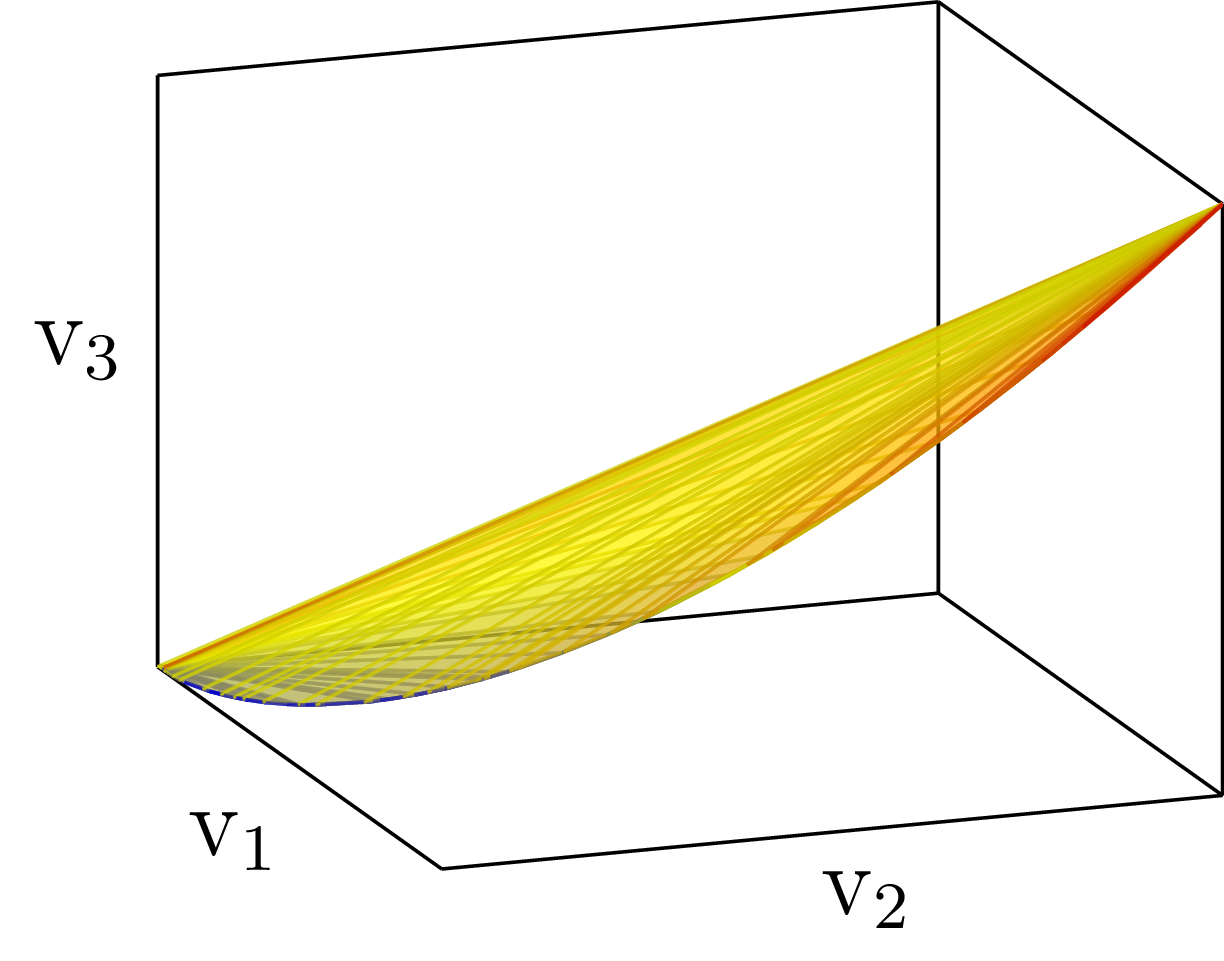}
		\end{center}
	\end{subfigure}
	\caption{Elementary steps towards a moment relaxation of a one-dimensional cubic polynomial. 
		\textbf{First row, left:} the curve $(x^\alpha)_{\alpha \in A}$ from \eqref{eq_curve} for $A=\{1,2,3\}$ and $X=[0,1]$, indicating the feasible values for the lifted, monomial variables $v$. 
		\textbf{Right:} the moment body $\mM_A(X) = \conv \{ (x^\alpha)_{\alpha \in A} \colon x \in X\}$, see \eqref{eq_momentbody}.
		\textbf{Second row, left to right:} projections $\mM_{P_i}(X)$ of $\mM_A(X)$ in blue for $P_i\in\mF := \{\{1,2\}, \,\{1,3\}, \,\{2,3\}\}$.
		\textbf{Third row, left to right:} liftings of $\mM_{P_i}(X)$ into $\R^3$ for all $P_i\in\mF$.
		\textbf{Last row:} the (intersected) feasible region $(v_\alpha)_{\alpha \in P_i} \in \mM_{P_i}(X) \ \text{for} \ i \in [3]$ of problem \eqref{P-RLXa}.
		%    pattern relaxation %$\set{\bfv}{\bfv_{P}\in\mM(X)_{P} \ \for \ P\in\mF}$ of $\mM(X)_A$ that is induced by the pattern family $\mF$.
	}\label{fig:PRLX}
\end{figure}

% -----------
\subsection{Modeling choices and optimization paradigms} \label{sec_paradigms}

As already mentioned in Section~\ref{sec_howto}, there are two schools of thought on computationally dealing with $\cM_A(X)$, to which we shall refer shortly as SDP and NLP communities, respectively. 
SDP communities introduce a sequence of semidefinitely representable sets $( S_d )_{d \ge \deg f} $ that approximate $\cM_A(X)$ and converge to $\cM_A(X)$ as $d \to \infty$. Fixing a specific $d$, \eqref{C-POPa} can be relaxed by replacing $\cM_A(X)$ with $S_d$, and semidefinite programming algorithms can be used to solve the resulting relaxation. However, as $d$ grows, the complexity of the underlying semidefinite relaxation may grow immensely. In practice, only the lowest level of the hierarchy is usually used in computations. 
The NLP community advocates the use of linear programming models. Linear programming models may be less suitable for finding tight approximations of the sets $\cM_A(X)$, because $\cM_A(X)$ may not be polyhedral and linear constraints can only describe polyhedra exactly. However, linear models work extremely well in branch-and-bound frameworks, because the dual simplex method for linear programming can be warm-started. 
There are additional approaches targeted towards other optimization paradigms. Usually these are other forms of cone programming, such as second-order cone or copositive programming. We do not intend to argue in favor of either of these schools. Also, it is beyond the scope of this review to look into the algorithmic details.

Our intention is to show the similarities of the convexification theory independent of the choice of a particular modeling approach and to integrate the available modeling techniques into the unifying convexification theory. 
In this sense we want to highlight one important similarity here. In all approaches, ultimately convex optimization problems have to be solved -- either as subproblems in a divide-and-conquer approach, or as a reformulation of the original problem. For these convex optimization problems descent based algorithms can be applied, because a local optimum is also a global optimum. Often an interior point method is used, sometimes also active set based approaches as the simplex method.
Now it is crucial from a practical point of view to understand the impact of different relaxations on the computational runtime. In general, this is very difficult to assess because of the unknown iteration numbers until convergence. However, the computational cost per iteration as an important contributor to the overall runtime is somewhat easier to estimate and a good indicator. In all Quasi-Newton methods -- primal or primal-dual interior point, active set based, or parametric programming -- the computational cost per iteration is usually dominated by the costs to solve systems of linear equations. 

The main motivation for sparsification as discussed in Section~\ref{sec_sparsification} is that sparsity can be exploited in the numerical linear algebra to solve systems of linear equations. 
E.g., the \mosek handbook \cite{mosek} emphasizes that ``having many small matrix variables is more efficient than one big matrix variable''. Some details on the specific case of interior point methods for large--scale cone programming are provided in \cite{Andersen2011}.
The main reason for this is that it is computationally cheaper to solve several smaller systems of linear equations than fewer and larger ones. You would always prefer to invert 1000 matrices of size 10 by 10 in comparison to inverting one matrix of size 10000 by 10000. This is due to the cubic runtime of the Cholesky method to obtain a LU decomposition of a symmetric matrix. Numerical results for the solution of SDPs of varying size indicate the approximate cubic runtime increase in the SDP size \cite[Table1.5]{Andersen2011}.
Considering matrices of size $n$ by $n$, a block decomposition into $N$ matrices of size $m$ would thus be asymptotically highly preferable if $N m^3 \ll n^3$. 
Of course there are many more details. Linear matrix inequalities may contain several matrix variables, and matrix variables may feature in several inequalities, and all of them may have different sizes. Sparsity may not only be exploited via block decompositions, but in a variety of ways by exploiting specifics of the model or of the optimization paradigm \cite{Andersen2011}. In some nonlinear optimization solvers, BFGS updates are directly applied to an approximation of the inverse of the Hessian, reducing the effort from cubic to quadratic for a matrix vextor multiplication.

Still, as a rule of thumb the rough $N m^3 \ll n^3$ improvement motivates why it is so interesting to look at the intersection of many patterns $\cM_{P_i}(X)$ of reduced size compared to $\cM_A(X)$. One illustrative example was shown in Figure~\ref{fig:PRLX}, although here no significant savings can be expected for the small dimensions $N=n=3, m=2$. As a main take-away we summarize that in all approaches to solving \eqref{POP}, the runtime of algorithms can be reduced drastically by creating and exploiting sparsity. In practice, this often makes the difference between intractable and computationally feasible and is a driving force in polynomial optimization reseach.

\subsection{Duality} There are two dual points of view on polynomial optimization: the convexification point of view we have just discussed and the non-negativity certification point of view. Non-negativity certification is a far-reaching generalization of the ideas behind Farkas' lemma and duality in linear programming to the case of polynomial optimization. 
When one presents the theory of polynomial optimization, one either starts with the convexification and then passes through non-negativity certification via dualization or one does it the other way around, starting with the questions of non-negativity certification and then passing to convexification via dualization. We believe that it is instructive to have a general presentation of the duality theory for sparse approaches, purified from the modeling details. Such a presentation allows to understand the core ideas and to gain insight regarding how different approaches are related to each other. For many specific choices of patterns, duality theorems were formulated independently of each other. The general duality underpinning all of these special cases is the duality of \eqref{P-RLXa} to the bound-certification problem 
\[
\sup \{ \lambda \in \R \colon f - \lambda = g_1 + \cdots + g_N, \ g_i \in \mP(X)_{P_i} \ i \in [N] \}. 
\]
Here, the non-negativity of $f-\lambda$ on $X$ is to be certified as a sum of sparse polynomials with supports contained in $P_1,\ldots,P_N$ which are  non-negative on $X$. Analogously to how we discussed modeling of \eqref{P-RLXa}, there is an aspect of modeling $\cP(X)_{P_i}$ coming up in relation to the bound-certification problem. Usually, a model of $\cM(X)_{P_i}$ can be dualized to obtain a model of $\cP(X)_{P_i}$, applying the duality of the respective optimization paradigm (e.g., linear, second-order cone or semidefinite-programming duality). 
Special cases, and a detailed discussion of duality will follow in Section~\ref{sec_conic}.

% -------------------------------------------------------------------------------
\subsection{Contributions} 

We emphasize that none of the propositions in this review is genuinely novel or surprising, although we formulated them in a self-consistent manner using basis tools from convex analysis and linear algebra. In particular the proofs for Theorems~\ref{dual:putinar} and \ref{dual:handelman} are more compact than in other references we are aware of.
The main contribution lies in generalization and abstraction of existing ideas. The unifying framework allows to derive novel approaches to convexification and/or sparsification, but also to derive theoretical results applicable to all existing and future methods.
We illustrate the potential of convexification and sparsification with numerical experiments.

% -------------------------------------------------------------------------------
\subsection{Structure of the paper} 
In Section~\ref{sec_specific} we provide an overview of two basic convexification principles, based on linear and semidefinite programming, respectively, and provide concrete suggestions on how to flexibly sparsify the  convexification approaches. 
The second major part of this paper addresses in Section~\ref{sec_conic} the conic viewpoint to pattern relaxations, duality theory for pattern relaxations, and the theory of lifted conic formulations in the context of polynomial optimization. This part provides a general systematic treatment of duality for sparse approaches to polynomial optimization. It is independent from Section~\ref{sec_specific}.
In Section~\ref{sec_computations} we present results of numerical experiments for specific patterns.

% ----------------------------
\section{Modeling principles for convexification} \label{sec_specific}

In this section we shall review different techniques for convexification and sparsification of $\{ (x^\alpha)_{\alpha \in A} \colon x \in X\}$, guided by different principles. We start with some definitions and a look at semidefinite and linear programming, before we continue with sparsification.

% ----------------------------
%\subsection{Linear matrix inequalities} \label{sec_linearSDP}
Let $\bS^m$ be the space of symmetric matrices of size $m$ over reals and $\bS_+^m$ the convex cone of positive semidefinite matrices  in $\bS^m$. For a matrix $M \in \bS^m$, we denote the condition $M \in \bS_+^m$ as $M \succeq 0$. 

If $M : \R^N \to \bS^m$ is an affine map, then the condition $M(v) \succeq 0$ is called a linear matrix inequality (LMI) of size $m$. In other words, $M(v)$ is a symmetric matrix with the entries being affine functions in $v$ and the condition $M(v) \succeq 0$ requires positive semidefiniteness of $M(v)$. Semidefinite optimization is optimization of a linear objective function subject to finitely many LMIs. If all the LMIs have size $2$, then we get a special case of semidefinite opimization, called second-order cone optimization. If all LMIs have size $1$, we obtain linear programming as an even more special case of semidefinite optimization. Semidefinite, second-order cone, and linear optimiation have a well-developed algorithmic theory, with the underlying algorithms implemented in a huge number of solvers and used in myriads of applications. 

\begin{defn}
	The set defined by finitely many LMIs is called a spectrahedron. A set which is the image of a spectrahedron under a linear transformation is called a projected spectrahedron or a semidefinitely representable set. 
\end{defn} 

% ----------------------------
\subsection{Semidefinite modeling principles} \label{sec_sdp}

We first look at semidefinite modeling principles.

\subsubsection{Semidefinite convexifications from inference rules}

Our objective is to lay out ways of deriving convex inequalities valid on $\cM_A(X)$ for the feasible set $X$ defined in \eqref{eq_feasibleset}.
Observe that, if $p(x) \ge 0$ is a polynomial inequality valid for $x \in X$, then $L_v(p) \ge 0$ is a linear inequality valid  for $v \in \cM_A(X)$. Furthermore, if $M(x)$ is a matrix whose entries are polynomials in $\R[x]_A$ and the condition $M(x) \succeq 0$ is true for $x \in X$, then the LMI $L_v (M(x)) \succeq 0$ is true for $v \in \cM_A(X)$, where $L_v(M(x))$ is the entry-wise linearization of the matrix $M(x)$. 

Let $M_B(x) = x^B (x^B)^\top$. For every $x \in \R^n$, we have easy inference rules: 
\begin{align}
  p_1(x) \ge 0,\ldots, p_t(x) \ge 0 & & &  \Rightarrow & & p_1(x)\cdots p_t(x) M_B(x) \succeq 0, \label{prod:lmi:rule} 
\\ p_1(x) \ge 0,\ldots, p_t(x) \ge  0 & & & \Rightarrow & & p_1(x) \cdots p_t(x) \ge 0,\label{prod:rule} 
\\	p(x)  \ge 0 &  &  & \Rightarrow & & p(x) M_B(x) \succeq 0, \label{cond:lmi:rule} 
\\	1 \ge 0	&  &  &  \Rightarrow & & M_B(x) \succeq 0, \label{uncond:lmi:rule} 
\\  p(x) = 0 & & & \Rightarrow & & p(x) q(x) = 0   \label{equation:rule} 
\end{align} 
where $p,p_1,\ldots,p_t, q \in \R[x]$ and $B$ is a finite subset of $\N^n$. The inequalities $g_i(x) \ge 0$ and the equations $h_j(x)=0$ that define $X$ are our fundamental knowledge concerning feasibility. Application of the inference rules gives us further, redundant knowledge, which turns out to be useful for convexification purposes. Rules (\ref{prod:lmi:rule}--\ref{uncond:lmi:rule}) deal with inequalities and differ in their degree of generality, with the rule \eqref{prod:lmi:rule} subsuming the rules (\ref{prod:rule}--\ref{uncond:lmi:rule}). \eqref{prod:lmi:rule} is a special case of \eqref{prod:rule} with $B=\{0\}$, \eqref{cond:lmi:rule} is a special case of \eqref{prod:lmi:rule} with $t=1$ and \eqref{uncond:lmi:rule} is a special case of \eqref{cond:lmi:rule} with $p=1$. 

Inserting the polynomial inequalities and equations defining $X$ into the premises of (\ref{prod:lmi:rule}--\ref{uncond:lmi:rule}) and linearizing the conclusions of these rules using $L_v$, we obtain linear and semidefinite constraints that are valid for the moment body of $X$. In this way different outer relaxations of the moment body $\cM_A(X)$ can be derived as projected spectrahedra. If one only uses the rule \eqref{prod:rule}, one obtains an outer approximation by a polyhedron.

The more general the rule is, the harder the implied constraints for $\cM_A(X)$ may become. For example, if the product $p_1(x) \cdots p_t(x)$ in rules \eqref{prod:lmi:rule} and \eqref{prod:rule} has many factors, then the respective linearization will involve many monomial variables. If an exponent set $B$ involved in rules \eqref{prod:lmi:rule} and \eqref{cond:lmi:rule} is large, then the respective linearization will be an LMI of a large size. That is, more general rules allow a more exact approximation of $\cM_A(X)$, but come at a higher computational price as discussed in Section~\ref{sec_paradigms}.

\begin{rem}
	Let us estimate the number of monomial variables involved in the linearization derived from 
	(\ref{prod:lmi:rule}--\ref{equation:rule}). Since in our context linearization is simply substitution of monomials with monomial variables, we need to track the relation between the monomials involved in the premises and the conclusions of our inference rules. 
	\begin{itemize} 
		\item  If $p_i(x) \in \R[x]_{A_i}$, then the entries of the matrix $p_1(x) \cdots p_t(x) M_B(x)$ in the conclusion of \eqref{prod:lmi:rule} belong to $\R[x]_{A_1 + \cdots + A_t + B + B}$. 
		\item For \eqref{prod:rule}, one has $p_1(x) \cdots p_t(x) \in \R[x]_{A_1+ \cdots + A_t}$.
	  \item For \eqref{cond:lmi:rule}, if $p(x) \in \R[x]_C$, the entries of $p(x) M_B(x)$ belong to $\R[x]_{C+B+B}$.  
		\item For \eqref{uncond:lmi:rule}, the entries of $M_B(x)$ belong to $\R[x]_{B+B}$.
	  \item For \eqref{equation:rule}, if $p(x) \in \R[x]_C$ and $q(x) \in \R[x]_D$, then $p(x) q(x) \in \R[x]_{C+D}$. 
	\end{itemize} 
	Deliberately choosing exponent sets $A_i, B, C,D$ of a manageable size and appropriate shape, we can control the computational complexity associated with the processing of the repsective linearizations. That is, we can make the constraints sparse and we can produce  LMIs of a desirable (and not too large) size. This is an overarching idea of most of the existing convexifications that rely on rules (\ref{prod:lmi:rule}--\ref{equation:rule}). 
\end{rem} 

\subsubsection{Positivstellens\"atze and their duals} 

Real algebra and real algebraic geometry allow us to understand in which cases the convexifications form the above inference rules approximate $\cM_A(X)$ arbitrarily well or even describe $\cM_A(X)$ exactly. From the practical perspective, however, one definitely needs to keep track of the complexity of the convex constraints resulting from those rules. The inferences \eqref{cond:lmi:rule} and \eqref{uncond:lmi:rule} are the favorite choices for linearization in the semidefinite optimization and real-algebra communities. These choices are backed by Putinar's Positivstellensatz \cite{putinar1993positive}, which uses so-called sum of squares certificates of positivity. A polynomial in $\R[x]$ is called SOS, if it admits a representation $f_1^2 + \cdots + f_t^2$ with $f_1,\ldots,f_t \in \R[x]$. We denote by $\Sigma$ the set of all SOS polynomials in $\R[x]$ and call it the infinite-dimensional SOS cone. 

\begin{thm}[Putinar]  \label{putinar:thm} 
	Let $X = \{ x \in \R^n \colon g_1(x) \ge 0,\ldots, g_s(x) \ge 0\}$ be a compact semialgebraic set contained in the ball $\{ x \colon x_1^2 + \cdots + x_n^2 \le \rho\}$ of some radius $\rho \in \R_{>0}$. Let $g_0 (x):=1$ and $g_{s+1}(x) := \rho - (x_1^2 + \cdots + x_n^2)$. Then every polynomial $f$ which is strictly positive on $X$ belongs to $f = g_0 \Sigma + \cdots + g_{s+1} \Sigma$. 
\end{thm}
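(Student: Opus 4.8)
The plan is to prove the statement by the classical moment-problem route: a Hahn--Banach separation combined with the spectral theorem. Write $M := g_0 \Sigma + \cdots + g_{s+1}\Sigma = \{\sum_{j=0}^{s+1} g_j \sigma_j \colon \sigma_j \in \Sigma\}$ for the quadratic module generated by the $g_j$; this is a convex cone in $\R[x]$ that contains every square (take $\sigma_0 = p^2$ and the rest zero) and is closed under multiplication by squares, since $\sigma (\sum_j g_j\sigma_j) = \sum_j g_j (\sigma\sigma_j) \in M$ whenever $\sigma \in \Sigma$. The crucial structural input comes for free: because $g_{s+1} = \rho - (x_1^2 + \cdots + x_n^2)$ is one of the generators, $\rho - \sum_i x_i^2 \in M$. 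First I would upgrade this to the full \emph{Archimedean} property, namely that for each $p$ there is $N$ with $N - p \in M$. The standard way is to observe that the set of elements bounded by $M$, i.e. $\{p \colon \exists\, N,\ N-p \in M \text{ and } N+p \in M\}$, is a subring of $\R[x]$, and that it contains each $x_i$ by completing squares in the relation $\rho - \sum_i x_i^2 \in M$; hence it is all of $\R[x]$.

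Suppose now, for contradiction, that $f > 0$ on $X$ but $f \notin M$. By the Archimedean property $1$ is an order unit, hence an algebraic interior point of the cone $M$, so by the Eidelheit--Hahn--Banach separation theorem there is a linear functional $L \colon \R[x] \to \R$, not identically zero, with $L \ge 0$ on $M$ and $L(f) \le 0$; since $L(1) > 0$ we normalize $L(1) = 1$. In particular $L(p^2) \ge 0$ for all $p$, so $\langle p, q\rangle := L(pq)$ is a positive semidefinite symmetric bilinear form on $\R[x]$.

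The analytic heart of the argument is to represent $L$ as integration against a positive measure supported on $X$. Passing to the quotient by the kernel $\{p \colon L(p^2)=0\}$ and completing, I obtain a Hilbert space $\mathcal{H}$ on which the multiplication operators $X_i \colon [p] \mapsto [x_i p]$ are symmetric and pairwise commuting. The bound $\rho - \sum_i x_i^2 \in M$ gives $\rho\,\|[p]\|^2 - \sum_i \|X_i [p]\|^2 = L\big((\rho - \sum_i x_i^2)\,p^2\big) \ge 0$, so each $X_i$ is bounded and extends to a commuting family of bounded self-adjoint operators with spectrum in the ball of radius $\sqrt{\rho}$. The spectral theorem for such a family yields a projection-valued measure $E$ on $\R^n$; setting $\mu(\cdot) = \langle E(\cdot)\,\mathbf{1}, \mathbf{1}\rangle$, with $\mathbf{1}$ the class of the constant polynomial, produces a positive Borel measure supported in the ball with $L(p) = \int p \, d\mu$ for all $p$. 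Testing the generators, $\int g_j\, p^2 \, d\mu = L(g_j p^2) \ge 0$ for every $p$ and each $j$; since squares of polynomials are uniformly dense in the nonnegative continuous functions on the compact support (Stone--Weierstrass), this forces $\supp\mu \subseteq \{x \colon g_j(x) \ge 0\}$, and intersecting over $j$ gives $\supp\mu \subseteq X$.

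Finally I would close the contradiction: $L(f) = \int_X f \, d\mu$, while $f > 0$ on $X$ and $\mu$ is a nonzero positive measure ($\mu(X) = L(1) = 1$), so $L(f) > 0$, contradicting $L(f) \le 0$. Hence $f \in M$, which is the asserted representation. I expect the main obstacle to be the representation step, namely extracting a \emph{compactly supported} measure from the positive functional $L$: this is precisely where compactness of $X$ (encoded in the Archimedean generator $g_{s+1}$) is indispensable, and where the commuting spectral theorem and the support localization must be invoked with care. By comparison, the algebraic bookkeeping of the Archimedean upgrade and the closure of $M$ under multiplication by squares is routine.
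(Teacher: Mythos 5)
Your proof is correct, but it cannot be compared against a proof in the paper for a simple reason: the paper does not prove Theorem~\ref{putinar:thm} at all. It states it with attribution and imports it from Putinar's original work \cite{putinar1993positive}, then uses it as a black box in the separation argument that proves the Dual Putinar theorem (Theorem~\ref{dual:putinar}). What you have reconstructed is essentially the classical proof of Putinar's Positivstellensatz itself: the Archimedean upgrade via the subring of bounded elements, Eidelheit separation at the order unit $1$, the GNS construction from the positive functional $L$, boundedness of the multiplication operators from $\rho - \sum_i x_i^2 \in M$, the joint spectral theorem to produce a representing measure $\mu$ with compact support, localization of $\supp\mu$ inside $X$ by testing against $g_j p^2$, and the final contradiction $0 \ge L(f) = \int_X f\,d\mu > 0$. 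This is exactly the functional-analytic/moment-theoretic machinery that the authors deliberately keep out of their exposition (their stated aim is to derive the dual statements ``without the introduction of moments''), so your argument is complementary to the paper rather than parallel to anything in it: you prove the ingredient they assume, by the route they avoid. Two points in your sketch deserve to be made explicit if you write it out in full: (i) the well-definedness of $X_i$ on the quotient Hilbert space requires the Cauchy--Schwarz argument showing $L(p^2)=0 \Rightarrow L(x_i^2 p^2)=0$; and (ii) the normalization $L(1)>0$ is not automatic from $L \neq 0$ alone --- it follows because $1$ is an algebraic interior point of $M$, so $L(1)=0$ would force $L \equiv 0$. Also, the subring closure of the bounded elements under products rests on the identity $\tfrac{1}{2}\bigl[(N-a)(N+a)^2 + (N+a)(N-a)^2\bigr] = N(N^2-a^2)$ together with closure of $M$ under multiplication by squares and positive scalars; calling it ``routine'' is fair, but it is the one algebraic step that is not a one-liner.
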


In the following theorem, we use the infinite symmetric positive semidefinite matrix $M_{\N^n}  = x^{\N^n}(x^{\N^n})^\top$. For a symmetric matrix $M = (m_{i,j})_{i,j \in I} \in \R^{I \times I}$ with an infinite index set $I$, we define positive semidefinteness condition $M \succeq 0$ as the validity of $(m_{i,j} )_{i,j\in B}$ for every finite $B \subseteq I$. We use the following notation: if $y \in \R^n$ and $B \subseteq \N^n$, then for $A \subseteq B$ the vector $y_A$ is the projection of $y$ on the coordinates indexed by $A$: $y_A = (y_\alpha)_{\alpha \in A} \in \R^A$. The set $Y_A$ denotes the projection of a set $Y$ on the $A$ coordinates, i.e., $Y_A  = \{ y_A \colon y \in Y \}$.

\begin{thm}[Dual Putinar] \label{dual:putinar} 
	In the notation of Theorem~\ref{putinar:thm}, for every finite set $A \subseteq \N^n \setminus \{0\}$, one has 
	\begin{equation} 
		\cM_A(X) = \left\{ v \in \R^{\N^n} \colon v_0 = 1, \ L_v(g_i M_{\N^n} ) \succeq 0 \ \text{for all} \ i=0,\ldots,s+1 \right\}_A. \label{dual:putinar:eq} 
	\end{equation} 
\end{thm}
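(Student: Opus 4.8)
The plan is to prove the two inclusions in \eqref{dual:putinar:eq} separately. Write $S$ for the spectrahedron appearing on the right-hand side before projection, so that the claim reads $\cM_A(X) = S_A$. Note first that $S$ is convex, being the intersection of the affine slice $\{v_0 = 1\}$ with the sets $\{v \colon L_v(g_i M_{\Nn}) \succeq 0\}$, each of which is convex because $L_v$ is linear in $v$ and the PSD cone is convex; hence its projection $S_A$ is convex as well.

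For the inclusion $\cM_A(X) \subseteq S_A$, it suffices to check that every generator $(x^\alpha)_{\alpha \in A}$ with $x \in X$ lies in $S_A$ and then to invoke convexity of $S_A$. Given $x \in X$, I would take the rank-one lift $\hat v := (x^\beta)_{\beta \in \Nn}$; then $\hat v_0 = 1$, $\hat v_A = (x^\alpha)_{\alpha \in A}$, and $L_{\hat v}(g_i M_{\Nn}) = g_i(x)\, x^{\Nn}(x^{\Nn})^\top \succeq 0$ because $g_i(x) \ge 0$ for every $i$ (for $i=0$ this is $1 \ge 0$, and for $i=s+1$ it is the ball constraint, valid on $X$). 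Thus each generator lies in $S_A$, and since $S_A$ is convex, the convex hull $\cM_A(X)$ is contained in $S_A$.

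The reverse inclusion $S_A \subseteq \cM_A(X)$ is where Putinar's theorem enters, and I would argue by contradiction using separation. Since $X$ is compact, the continuous image $\{(x^\alpha)_{\alpha \in A} \colon x \in X\}$ is compact, so its convex hull $\cM_A(X)$ is compact, hence closed. Suppose $v \in S_A$ with witnessing lift $\hat v$, but $v \notin \cM_A(X)$. By the separating hyperplane theorem there is $c \in \R^A$ with $\sprod{c}{v} < \min_{w \in \cM_A(X)} \sprod{c}{w} = \min_{x \in X} p(x) =: \lambda^*$, where $p(x) := \sum_{\alpha \in A} c_\alpha x^\alpha$. Choose any $\lambda'$ with $\sprod{c}{v} < \lambda' < \lambda^*$, so that $p - \lambda'$ is strictly positive on $X$; Theorem~\ref{putinar:thm} then yields SOS polynomials $\sigma_0, \ldots, \sigma_{s+1} \in \Sigma$ with $p - \lambda' = \sum_{i=0}^{s+1} g_i \sigma_i$. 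Applying the linear functional $L_{\hat v}$ converts the LMI constraints into scalar nonnegativities: writing $\sigma_i = \sum_k r_{i,k}^2$ with $r_{i,k} = b_{i,k}^\top x^{\Nn}$ for finitely supported coefficient vectors $b_{i,k}$, the polynomial identity $g_i r_{i,k}^2 = b_{i,k}^\top (g_i M_{\Nn}) b_{i,k}$ gives $L_{\hat v}(g_i \sigma_i) = \sum_k b_{i,k}^\top L_{\hat v}(g_i M_{\Nn}) b_{i,k} \ge 0$, since $L_{\hat v}(g_i M_{\Nn}) \succeq 0$ and each $b_{i,k}$ touches only a finite principal submatrix. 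Using $\hat v_0 = 1$ and $L_{\hat v}(p) = \sprod{c}{v}$, linearizing the Putinar identity yields $\sprod{c}{v} - \lambda' = \sum_i L_{\hat v}(g_i \sigma_i) \ge 0$, i.e.\ $\sprod{c}{v} \ge \lambda'$, contradicting $\sprod{c}{v} < \lambda'$.

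The main obstacle is this reverse inclusion, and within it the two ingredients that make it work. The first is the passage to a \emph{strictly} positive polynomial: one must separate at $\lambda^*$ but then certify positivity at a slightly smaller $\lambda'$, since Putinar's theorem requires strict positivity while $p - \lambda^*$ merely vanishes at the minimizer. The second is the linearization identity $g_i r^2 = b^\top (g_i M_{\Nn}) b$, which is precisely what turns positive semidefiniteness of the localizing matrices $L_{\hat v}(g_i M_{\Nn})$ into nonnegativity of the linearized localized SOS terms. Compactness of $X$ is used both to make $\cM_A(X)$ closed, enabling separation, and, via the ball constraint $g_{s+1}$, to supply the Archimedean hypothesis of Putinar's theorem. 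The restriction $A \subseteq \Nn \setminus \{0\}$ is only a normalization convenience, as the constant coordinate is already pinned by $v_0 = 1$.
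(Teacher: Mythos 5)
Your proof is correct and takes essentially the same route as the paper's: the forward inclusion via rank-one lifts $\hat v = (x^\beta)_{\beta \in \N^n}$ of the generators, and the reverse inclusion via strict separation, a shift to strict positivity so Putinar's theorem applies, and the identity $g_i q^2 = b^\top (g_i M_{\N^n}) b$ (restricted to finite principal submatrices) to turn positive semidefiniteness of the localizing matrices into nonnegativity of the linearized terms $L_{\hat v}(g_i \sigma_i)$. The only differences are cosmetic: you separate with a linear functional and a threshold $\lambda'$ where the paper uses an affine functional with constant $c_0$ plus an $\epsilon$-shift, and you run the argument as a direct contradiction rather than showing that every lift of an outside point violates some constraint.
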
 
\begin{proof} 
	The set $\cM_A(X)$ is the closed convex hull of $\{ (p^\alpha)_{\alpha \in A} \colon p \in X\}$. Hence, for showing the part ``$\subseteq$'' of the set equality \eqref{dual:putinar:eq}, it is enough to check that each $w = (p^\alpha)_{\alpha \in A}$ with $p \in X$ belongs to the right-hand side of \eqref{dual:putinar:eq}. For $v = (p^\alpha)_{\alpha \in \N^n}$ we have $v_A = w$, $v_0 =1$ and $L_v( g_i M_{\N^n} ) = g_i(p) M_{\N^n}(p)$, where $g_i(p) \ge 0$ and $M_{\N^n}(p) = v v^\top$ is an infinite rank-one positive semidefinite matrix. This shows that $w$ belongs to the right-hand side of \eqref{dual:putinar:eq}.
	
	To conclude the proof of \eqref{dual:putinar:eq}, we consider an arbitrary $w \in \R^A \setminus \cM_A(X)$ and show that such $w$ does not belong to the right-hand side of \eqref{dual:putinar:eq}. By separation theorem, there exists $c \in \R^{A \cup \{0\}}$ such that $\sum_{\alpha \in A } w_\alpha c_\alpha + c_0 < 0$ and $\sum_{\alpha \in A} u_\alpha c_\alpha + c_0 \ge 0$ for every $u \in \cM_A(X)$. Taking $u_\alpha = p^\alpha$ with $p \in X$, we see that the polynomial $\sum_{\alpha \in A} x^\alpha c_\alpha + c_0$ is non-negative on $X$. Fix $\epsilon>0$ small enough to satisfy $\sum_{\alpha \in A} w_\alpha c_\alpha + c_0 + \epsilon < 0$. Since $f = \sum_{\alpha \in A} c_\alpha x^\alpha  + c_0 + \epsilon$ is strictly positive on $X$, by Theorem~\ref{putinar:thm} we conclude that $f = g_0 \sigma_0 + \cdots + g_{s+1} \sigma_{s+1}$, where $\sigma_0,\ldots,\sigma_{s+1} \in \Sigma$. Consider any vector $v \in \R^{\N^n}$ satisfying $v_A = w$. We have 
	\[
		0 > \sum_{\alpha \in A} c_\alpha w_\alpha + c_0 + \epsilon  = L_v(f) = \sum_{i=0}^s L_v (g_i \sigma_i). 
	\]
	Hence $L_v(g_i \sigma_i) < 0$ for some $i \in \{0,\ldots,s\}$. Since $\sigma_i$ is a sum of squares, for some of these squares, which we denote as $q^2$, one has $L_v(g_i q^2) < 0$. Let $q$ be given as $q = s^\top x^B$ for some finite $B \subseteq \N^n$ and some $s \in \R^B$. Then $g_i q^2 = s^\top g_i M_B s$ and by this, $L_v (g_i q^2) = s^\top L_v( g_i M_B) s <0$. The latter shows that $g_i M_B$ is not positive semidefinite, which implies that the condition $g_i M_{\N^n} \succeq 0$ is violated. Consequently, $w$ is not in the right-hand side of \eqref{dual:putinar:eq}. 
\end{proof} 

\begin{rem} 
A natural question arises about the meaning of the set on the right-hand side of \eqref{dual:putinar:eq}, of which we take the projection onto $\R^A$. Or in other words, what would be the case $A = \N^n$ of Theorem~\ref{dual:putinar}? On the left-hand side as $\cM_{\N^n}(X)$ one can expect some generalization of the closed convex hull operation for the set  $\{ x^{\N^n} \colon x \in X\}$. It turns out that the correct generalization is the system of ``complete moment vectors'' with respect to a probability measure concentrated on $X$. Such a measure-theoretic generalization of Theorem~\ref{dual:putinar} is nice to have, but it comes with the price of being more technical, as measure-theoretic technicalities have to be addressed to properly define $\cM_{\N^n}(X)$.  We would like to stress that  Theorem~\ref{dual:putinar}, which does not involve any measure-theoretic machinery, is already enough to establish the convergence of the hierarchy of moment relaxations as outlined in the following section.
\end{rem} 

\subsubsection{Semidefinite hierarchies of moment relaxations}

We can establish sequences of approximations of polynomial optimization problems. The approximations are based on either linear or semidefinite convex problems and have convergence guarantees from the theorems of the previous section. We discuss these hierarchies and approximations with sparsity in mind. 

Consider the setting of Theorem~\ref{dual:putinar}, assuming that we know an $A_i$ with $g_i \in \R[x]_{A_i}$ for $i=0,\ldots,s+1$.  Each constrained $L_v( g_i M_{\N^n}) \succeq 0$ can be ``cropped'' to a constraint $L_v(g_i M_{B_i}) \succeq 0$, by fixing a finite set $B_i \subseteq \N^n$. This an LMI of a finite size $|B_i|$. In order to take into consideration of all the monomial variables $v_\alpha$ with $\alpha \in A$ in constraints it suffices to have $B_0 + B_0 \supseteq A$, since the constraint $L_v( g_0 M_{B_0} )  = L_v(M_{B_0}) \succeq 0$ involves the variables $v_\beta$ with $\beta \in B_0 + B_0$. Hence, each choice of $B_0,\ldots,B_{s+1}$ produces an outer approximation $\MR(B_0,\ldots,B_{s+1})_A$ of $\cM_B(X)$, where with $P := \bigcup_{i=0}^{s+1} (A_i + B_i)$ one has
\[
	\MR(B_0,\ldots,B_{s+1}) := \{ v \in \R^{P} \colon L_v( g_i M_{B_i}) \succeq 0 \ \text{for all} \ i =0,\ldots,s+1\}.
\]

That is, $\MR(B_0,\ldots,B_{s+1})_A$ is a projected spectrahedron in $\R^A$ that approximates the moment body $\cM_A(X)$ and is given as a projection of a spectrahedron  in $\R^P$. 
When all of the $B_i$ converge to $\N^n$, then $\MR(B_0,\ldots,B_{s+1})$ converges to $\cM_A(X)$ in the so-called Hausdorff metric. We recall that for compact sets $K,L$, the Hausdorff metric is defined by $\delta(K,L) = \max \{ \max_{q \in L} \min_{p \in K} \| p - q\|, \max_{p \in K} \min_{q \in L} \| p -q\|\}$. When $K \subseteq L$, this is the minimal $\rho \ge 0$ such that every point of $K$ is at distance at most $\rho$ to some point of $L$. 
\begin{cor} \label{putinar:approx:thm} 
	In the notation of Theorem~\ref{dual:putinar}, if $(B_{0,t})_{t \in \N}$ with $i=0,\ldots,s+1$ are sequences of finite subsets of $\N^n$ that are increasing in $t$ with respect to inclusion and satisfying $\bigcup_{t \in \N} B_{i,t} = \N^n$,  then 
	\[
		\lim_{t \to \infty} \MR(B_{0,t}, \ldots, B_{s+1,t})_A = \cM_A(X)
	\]
	holds in the sense of the Hausdorff metric. 
\end{cor}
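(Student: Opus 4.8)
The plan is to prove the two sides of Hausdorff convergence separately: that $\cM_A(X)$ lies inside every relaxation $K_t := \MR(B_{0,t},\ldots,B_{s+1,t})_A$, so that one half of the Hausdorff distance vanishes identically, and that the ``overshoot'' of $K_t$ beyond $\cM_A(X)$ shrinks to zero. The first inclusion is immediate from Theorem~\ref{dual:putinar}: any $w \in \cM_A(X)$ lifts to a $v \in \R^{\N^n}$ with $v_0 = 1$ and $L_v(g_i M_{\N^n}) \succeq 0$ for all $i$; cropping each infinite matrix to its principal submatrix indexed by $B_{i,t}$ preserves positive semidefiniteness, so $v$ witnesses $w \in K_t$. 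Hence $\max_{p \in \cM_A(X)} \dist(p, K_t) = 0$ for every $t$, and only $\sup_{q \in K_t}\dist(q,\cM_A(X)) \to 0$ remains to be shown.

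For the remaining half I would first establish a uniform moment bound confining $K_t$ to a fixed compact box once $t$ is large. Using the ball constraint $g_{s+1} = \rho - (x_1^2 + \cdots + x_n^2)$, the nonnegative diagonal entries of $L_v(g_{s+1}M_{B_{s+1,t}}) \succeq 0$ give recursions $v_{2\gamma + 2e_i} \le \rho\, v_{2\gamma}$ along lattice staircases, while the $2\times 2$ minors of $L_v(M_{B_{0,t}}) \succeq 0$ give $v_\beta^2 \le v_0 v_{2\beta}$; together with $v_0 = 1$ these yield $|v_\beta| \le \rho^{|\beta|/2}$ for each fixed $\beta$, provided $t$ is large enough that $B_{0,t}$ and $B_{s+1,t}$ contain the finitely many exponents in the staircase reaching $\beta$. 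Since $A$ is finite, a single threshold $T$ works simultaneously for all $\beta \in A$, so $K_t \subseteq \prod_{\alpha \in A}[-\rho^{|\alpha|/2}, \rho^{|\alpha|/2}]$ whenever $t \ge T$.

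With this compactness in hand I would argue by contradiction: if the overshoot did not vanish, there would be $\epsilon_0 > 0$, indices $t_k \to \infty$, and $q_k \in K_{t_k}$ with $\dist(q_k, \cM_A(X)) \ge \epsilon_0$; by the box bound the $q_k$ are bounded in the finite-dimensional space $\R^A$, so a subsequence converges to some $q^*$. Each $q_k$ has a witness $v^{(k)} \in \R^{\N^n}$ (extended by $0$ off its natural index set) obeying the level-$t_k$ constraints, and the same moment bound makes every coordinate sequence $(v^{(k)}_\beta)_k$ eventually bounded, so a diagonal extraction yields a coordinatewise limit $v^*$ with $v^*_0 = 1$ and $v^*_A = q^*$. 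The crucial point is to promote finite-level constraints to the full ones: fixing $i$ and a finite $C \subseteq \N^n$, for large $k$ one has $C \subseteq B_{i,t_k}$, hence $L_{v^{(k)}}(g_i M_C) \succeq 0$; since each matrix entry is a finite linear combination of coordinates of $v^{(k)}$ it converges to the corresponding entry of $L_{v^*}(g_i M_C)$, and positive semidefiniteness is a closed condition, giving $L_{v^*}(g_i M_C) \succeq 0$. As $C$ was arbitrary, $L_{v^*}(g_i M_{\N^n}) \succeq 0$ for all $i$, so Theorem~\ref{dual:putinar} places $q^* = v^*_A$ in $\cM_A(X)$, contradicting $\dist(q^*,\cM_A(X)) \ge \epsilon_0$.

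The main obstacle is precisely this last diagonal/limit step: one must extract a single infinite moment vector from a sequence living in growing finite-dimensional spaces and verify that it satisfies all the infinite semidefinite constraints at once. This is where the explicit ball constraint $g_{s+1}$ is indispensable, as it supplies the uniform bounds $|v_\beta| \le \rho^{|\beta|/2}$ that simultaneously confine the relaxations to a compact box and legitimize the coordinatewise compactness argument; without such an Archimedean-type bound the limiting moment vector need not exist. The monotonicity $K_{t+1} \subseteq K_t$ coming from the nested $B_{i,t}$ is a convenient sanity check, since then the limit equals $\bigcap_t K_t$, but the contradiction argument above does not rely on it.
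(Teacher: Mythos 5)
Your proof is correct, but it takes a genuinely different route from the paper's. The paper's argument is two lines: from Theorem~\ref{dual:putinar} and $\bigcup_{t}B_{i,t}=\N^n$ it asserts the intersection identity $\bigcap_{t}\MR(B_{0,t},\ldots,B_{s+1,t})_A=\cM_A(X)$, and then cites Lemma~1.8.2 of \cite{schneider2014}, which states that a decreasing sequence of nonempty compact convex sets converges in the Hausdorff metric to its intersection. You instead prove convergence directly: the easy inclusion $\cM_A(X)\subseteq\MR(\cdots)_A$ by cropping principal submatrices, an explicit uniform bound $|v_\beta|\le\rho^{|\beta|/2}$ extracted from the Archimedean ball constraint $g_{s+1}$, and a diagonal-extraction argument assembling, from witnesses at growing finite levels, one vector $v^*\in\R^{\N^n}$ satisfying all the infinite constraints, after which Theorem~\ref{dual:putinar} finishes. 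What your longer argument buys is precisely the two points the paper leaves implicit: (i) Schneider's lemma (and the Hausdorff metric as the paper defines it) applies only to compact sets, and boundedness of $\MR(\cdots)_A$ --- which can genuinely fail when the $B_{i,t}$ are too small, since only low-degree moments are then capped by the ball constraint --- is exactly what your moment-bound step supplies for large $t$; (ii) the intersection identity is not a purely formal consequence of Theorem~\ref{dual:putinar}, because projection does not commute with infinite intersections: a point lying in every $\MR(\cdots)_A$ has, a priori, a different lifted witness for each $t$, and manufacturing a single common witness in $\R^{\N^n}$ is exactly your compactness/diagonalization step. So your proof can be read as a self-contained expansion that justifies what the paper's citation-based proof presupposes, at the cost of length; the paper's version is shorter and cleanly separates the convex-geometric limit lemma from the Positivstellensatz input. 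One small caveat: your closing claim that the argument ``does not rely on'' monotonicity is slightly overstated --- you do use that the $B_{i,t}$ increase (together with $\bigcup_t B_{i,t}=\N^n$) to guarantee that any fixed finite $C$ is eventually contained in $B_{i,t_k}$; what you avoid is only the monotone-convergence lemma itself, not the nestedness hypothesis.
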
 
\begin{proof} 
	Using $\bigcup_{t \in \N} B_{i,t} = \N^n$ and Theorem~\ref{dual:putinar}, we obtain 
	\begin{equation} \label{intersection:seq} 
		\bigcap_{t \in \N} \MR(B_{0,t},\ldots,B_{s+1,t} )_A  = \cM_A(X). 
	\end{equation} 
	Since $B_{i,t}$'s are increasing in $t$ with respect to inclusion, the set  $\MR(B_{0,t},\ldots,B_{s+1,t} )_A$ is decreasing in $t$ with respect to inclusion. Thus, applying Lemma~1.8.2 from \cite{schneider2014} to \eqref{intersection:seq}, we get the assertion. 
\end{proof} 

\begin{rem} 
Lasserre introduced the canonical choice $B_{i,t} = \N_{2 (d-d_i + t)  }^n$ in the case, where $A = \N_{2 d}^n$, $B_i = \N_{2 d_i}$ and $d \ge d_i$. For this choice $A_i + B_{i,t} = \N_{ 2 (d+t)}^n$ so that the spectrahedron $\MR(B_{0,t},\ldots,B_{s+1,t} )$ is a subset of $\R^{\N_{d+t}^n}$. 
\end{rem} 

\begin{rem}
Theorem~\ref{putinar:approx:thm} alludes to an iterative method for solving the problems $\inf_{x \in X} f(x)$, where $f \in \R[x]_A$. Such a method  would iteratively increment $t$ and solve a semidefinite optimization problem on $R_t:=\MR(B_{0,t},\ldots,B_{s+1,t})$ in each iteration in order to provide an improved lower bound on $\inf_{x \in X} f(x)$, which would converge to the true value as $t \to \infty$.  However, in practice, such a method would be extremely impractical. Traditionally, one uses interior-point methods for solving SDPs which are difficult to warm-start. In addition, the size of the LMIs describing $R_t$ get larger (and quite dramatically so for large $n$) as $t$ increases, leading to increased run times as discussed in Section~\ref{sec_paradigms}. 
%Consequently, in each iteration the complexity of the semidefinite problem to be solved would get significantly higher. 
%Thus, a method based on Theorem~\ref{putinar:approx:thm} would completely miss the ideology of an iterative method from numerical optimization. In a classical iterative method, the cost of each iteration is bounded and the computations of the output of the previous iterations are used as an input for the current iteration. 
Instead of practically using Theorem~\ref{putinar:approx:thm}, one could treat it as a framework result showing that, in principle, an arbitrarily precise approximation of $\inf_{x \in X} f(x)$ is possible, albeit usually at a very high cost. The fact that the price to be paid for accuracy is very high is  not very surprising, because solving polynomial optimization problems to global optimality is known to be computationally hard. 

The practical conclusion we make here is that we should choose sets $B_0,\ldots,B_{s+1}$ that define the approximation $\MR(B_0,\ldots,B_{s+1})_A$ carefully. They should not be too large and the structure of the particular problem $\inf_{x \in X} f(x)$ should be exploited, such that the resulting SDPs can be solved in reasonable time. 
\end{rem} 

\begin{rem}
A natural modification of the relaxation $R(B_0,\ldots,B_{s+1})_A$ is obtained by replacing the semidefinite constraint $L_v(g_i M_{B_i}) \succeq 0$ with a family of constraints $L_v(g_i M_B)$ where $B$ belongs to a given family $\cB_i$ of finite subsets of $\N^n$. In this way, one large $B_i$ can be replaced by smaller $B$'s taken from $\cB_i$. This new relaxation might provide a looser approximation, but it it may become more manageable from a computational perspective. The relaxation is $\MR(\cB_0,\ldots,\cB_{s+1})_A$, where 
	\[
		\MR(\cB_0,\ldots,\cB_{s+1}) := \{ v \in \R^P \colon L_v(g_i M_B) \succeq 0  \ \text{for all} \ i=0,\ldots,s+1,  \ \text{and} \ B \in \cB_i\}. 
	\]
\end{rem} 

% ----------------------------
\subsection{Linear programming models} 

Now we look at different convexification approaches that result in linear programming models. Note that we often assume specific structures $X = K$ of the feasible set such as $K$ being a polytope or $K=\Bx(l,u)$.

\subsubsection{Bound-factor relaxations} 

Another useful positivstellensatz goes back to Handelman \cite{handelman1988representing}.

\begin{thm}[Handelman] \label{handelman:thm} 
	Let $K$ be a non-empty polytope given as $K = \{ x \in \R^n \colon g_1(x) \ge 0,\ldots, g_s(x) \ge 0 \}$, where $ g = (g_1,\ldots,g_s)$ are polynomials of degree one. Then every polynomial $f \in \R[x]$ strictly positive on $K$ admits a representation 
	\[
	f= \sum_{\beta \in B} c_\beta g^{\beta},
	\]
	where $B \subseteq \N^s$ is a finite set and $c_\beta \ge 0$ for every $\beta \in B$. 
\end{thm}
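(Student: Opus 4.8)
The plan is to derive Handelman's theorem from \textbf{P\'olya's theorem} on positivity on the nonnegative orthant, which I recall as follows: if $G$ is a homogeneous polynomial in $y_1,\dots,y_s$ that is strictly positive on $\R^s_{\ge 0}\setminus\{\0\}$, then for all sufficiently large $N$ the form $(y_1+\cdots+y_s)^N G$ has only nonnegative coefficients. The entire difficulty is to manufacture, out of $f$ and the affine $g_i$, a form $G$ to which P\'olya applies and whose variables $y_i$ track the values $g_i(x)$. I would first exploit boundedness of $K$. Writing $g_i(x)=\langle a_i,x\rangle+b_i$, the polytope $K$ is bounded iff its recession cone $\{d:\langle a_i,d\rangle\ge 0\ \forall i\}$ is trivial, i.e. iff the $a_i$ positively span $\R^n$. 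Hence, after discarding redundant constraints and (if $K$ is not full-dimensional) passing to $\operatorname{aff}(K)$, there are weights $\lambda_i>0$ with $\sum_i\lambda_i a_i=\0$, so that $\sum_i\lambda_i g_i\equiv\sum_i\lambda_i b_i=:c$ is a \emph{positive constant}. This identity $\tfrac1c\sum_i\lambda_i g_i\equiv 1$ is the ``partition of unity'' furnished by compactness, and it is what makes everything work.

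Next I would express $f$ as a form in the constraint values. Since the $a_i$ span $\R^n$, each coordinate $x_j$ is an affine combination of the $g_i$; absorbing constants via $\tfrac1c\sum_i\lambda_i g_i\equiv 1$, each $x_j$ becomes a \emph{homogeneous} linear combination of $g_1,\dots,g_s$. Substituting into $f$ and homogenizing the lower-degree terms by the same relation yields a homogeneous polynomial $G_0$ of degree $d=\deg f$ with $G_0(g_1(x),\dots,g_s(x))=f(x)$ identically. By construction $G_0$ is strictly positive on the compact set $g(K)=W\cap\R^s_{\ge 0}$, where $W=g(\R^n)$ is the affine image of $\R^n$, and hence on the cone it generates; but it need not be positive on the whole orthant.

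The main obstacle is then to correct $G_0$ into a form strictly positive on all of $\R^s_{\ge 0}\setminus\{\0\}$ without disturbing the values it represents. Let $L$ be the linear span of $\cone(g(K))$ and let $\ell_1,\dots,\ell_r$ be linear forms cutting out $L$; anything in the ideal generated by the $\ell_k$ vanishes on $g(K)$ and so leaves $G_0\circ g$ unchanged. For $d\ge 2$ I would set $G:=G_0+M\,(\ell_1^2+\cdots+\ell_r^2)\,(y_1+\cdots+y_s)^{d-2}$ with a large multiplier $M$ (the cases $d\le 1$ being elementary via Farkas' lemma). On the compact simplex $\{y\ge\0:\sum_i y_i=1\}$, $G_0$ is bounded below by a positive constant on a neighbourhood of $g(K)$, while $\sum_k\ell_k^2$ is bounded below off that neighbourhood; a compactness argument then shows $G>0$ on the simplex, hence by homogeneity on $\R^s_{\ge 0}\setminus\{\0\}$, once $M$ is large enough. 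Establishing this uniform lower bound is the delicate technical point.

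Finally I would apply P\'olya to $G$ in the rescaled variables $z_i=\lambda_i y_i$, obtaining $(z_1+\cdots+z_s)^N G=\sum_\beta c_\beta z^\beta$ with $c_\beta\ge 0$. Substituting $z_i=\lambda_i g_i(x)$ turns the left factor into $(\sum_i\lambda_i g_i)^N=c^N$, a positive constant — precisely the payoff of the boundedness identity — while the right side becomes $\sum_\beta c_\beta\prod_i\lambda_i^{\beta_i}\,g(x)^\beta$. Dividing by $c^N$ gives $f=\sum_\beta\bigl(c_\beta\prod_i\lambda_i^{\beta_i}/c^N\bigr)g^\beta$ with nonnegative coefficients, which is the asserted representation. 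I would close by spelling out the reductions used at the start (irredundant, full-dimensional $K$) so that the general polytope case follows.
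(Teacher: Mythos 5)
The first thing to note is that the paper does not prove Theorem~\ref{handelman:thm} at all: it is quoted as a classical result with a citation to Handelman, and the only proof given in the paper is that of the dual statement, Theorem~\ref{dual:handelman}, which uses this theorem as a black box. So there is no proof in the paper to compare against, and your proposal must stand on its own. The route you choose --- deducing Handelman from P\'olya's theorem via the boundedness identity $\sum_i \lambda_i g_i \equiv c > 0$, rewriting $f$ as a homogeneous form $G_0$ in the constraint values, adding a correction term supported on the vanishing ideal of $\operatorname{span}(g(K))$ to force positivity on the whole orthant, and applying P\'olya in the rescaled variables $z_i = \lambda_i y_i$ so that the P\'olya prefactor collapses to the constant $c^N$ upon substituting $z_i = \lambda_i g_i(x)$ --- is the standard modern route to this theorem (Handelman's original argument was quite different), and the full-dimensional core of your sketch is essentially sound, including the rescaling trick and the disposal of $d \le 1$ by affine Farkas.

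Two points, however, need more than you give them, and the second is a genuine gap as written. First, your compactness argument tacitly requires the identity $L \cap \R_+^s = \cone(g(K))$ for $L = \operatorname{span}(g(K))$: the zero set of $\sum_k \ell_k^2$ inside the simplex $\{y \in \R_+^s \colon \sum_i y_i = 1\}$ is its intersection with $L$, and unless that set is contained in the region where $G_0$ is positive, the claim that $\sum_k \ell_k^2$ is bounded below off the chosen neighbourhood is unjustified. The identity is true, but proving it is precisely where boundedness and non-emptiness enter a second time: the direction space of $W = g(\R^n)$ meets $\R_+^s$ only at $0$ because the recession cone of $K$ is trivial, and $W \cap (-\R_+^s) = \emptyset$ because $\sum_i \lambda_i g_i \equiv c > 0$; this verification should be made explicit. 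Second, and more seriously, reducing a general non-empty polytope to the full-dimensional case by ``passing to $\operatorname{aff}(K)$'' does not work as stated: that reduction yields an identity of polynomial \emph{functions on} $\operatorname{aff}(K)$, i.e.\ it shows that $f - \sum_\beta c_\beta g^\beta$ vanishes on $\operatorname{aff}(K)$, which does not make it the zero polynomial, whereas the theorem asserts an identity in $\R[x]$. Repairing this requires additional ideas, for instance: with $I$ the set of implicit equality constraints, affine Farkas expresses $-g_i$ ($i \in I$) as a nonnegative combination of the $g_j$ and $1$, so $\pm g_i$ lie in the Handelman cone $H$; compactness plus Farkas likewise give $\pm x_j \in H - H$, hence $H - H = \R[x]$ since $H$ is closed under sums and products; and the vanishing ideal of $\operatorname{aff}(K) = \{x \colon g_i(x) = 0, \ i \in I\}$ is generated by the $g_i$, $i \in I$. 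Combining these facts shows that every polynomial vanishing on $\operatorname{aff}(K)$ lies in $H$, which is what the lift from $\operatorname{aff}(K)$ back to $\R^n$ actually requires; without an argument of this kind, the lower-dimensional case of the theorem, which the statement includes, is not covered.
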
 

\begin{thm}[Dual Handelman] \label{dual:handelman}
	In the notation of Theorem~\ref{handelman:thm}, for every finite set $A \subseteq \N^n \setminus \{0\}$, on has 
	\begin{equation} \label{dual:handelman:eq}
	\cM_A(K) = \{ v \in \R^{\N^n} \colon v_0 = 1, \ L_v( g^\beta) \ge 0 \ \text{for all} \ \beta \in \N^s\}_A. 
	\end{equation} 
\end{thm}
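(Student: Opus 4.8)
The plan is to follow verbatim the two-inclusion strategy already used for Theorem~\ref{dual:putinar}, replacing Putinar's Positivstellensatz by Handelman's Theorem~\ref{handelman:thm}. The linear nature of Handelman's certificate makes the argument strictly simpler than in the Putinar case, since no passage through semidefinite matrices is required.

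For the inclusion ``$\subseteq$'', I would first note that $K$ is compact, being a polytope, so $\{(p^\alpha)_{\alpha\in A}\colon p\in K\}$ is compact and its convex hull $\cM_A(K)$ is already closed. Hence it suffices to check that each generator $w=(p^\alpha)_{\alpha\in A}$ with $p\in K$ lies in the right-hand side of \eqref{dual:handelman:eq}, because that set is convex. Taking the full moment vector $v=(p^\alpha)_{\alpha\in\N^n}$ gives $v_A=w$ and $v_0=1$, and since $L_v$ is merely substitution of $v$ for the monomials, $L_v(g^\beta)=g(p)^\beta=\prod_{i=1}^s g_i(p)^{\beta_i}\ge 0$ for every $\beta\in\N^s$, using $g_i(p)\ge 0$ on $K$.

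For the inclusion ``$\supseteq$'' I would argue by contraposition, exactly as in the proof of Theorem~\ref{dual:putinar}. Given $w\in\R^A\setminus\cM_A(K)$, the separation theorem yields $c\in\R^{A\cup\{0\}}$ with $\sum_{\alpha\in A}w_\alpha c_\alpha+c_0<0$ while $\sum_{\alpha\in A}u_\alpha c_\alpha+c_0\ge 0$ for all $u\in\cM_A(K)$; specializing $u=(p^\alpha)$ shows that the polynomial $\sum_{\alpha\in A}c_\alpha x^\alpha+c_0$ is non-negative on $K$. Choosing $\epsilon>0$ small enough that $\sum_{\alpha\in A}w_\alpha c_\alpha+c_0+\epsilon<0$ still holds, the polynomial $f=\sum_{\alpha\in A}c_\alpha x^\alpha+c_0+\epsilon$ is strictly positive on $K$, so Handelman's theorem gives $f=\sum_{\beta\in B}c_\beta g^\beta$ with all $c_\beta\ge 0$. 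Any $v$ with $v_A=w$ then satisfies $0>L_v(f)=\sum_{\beta\in B}c_\beta L_v(g^\beta)$, and since every $c_\beta\ge 0$, at least one term must have $L_v(g^\beta)<0$. Thus $w$ violates a defining constraint and does not belong to the right-hand side.

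The only steps that require care are the same two as in the Putinar case: first, compactness of $K$ lets us identify the closed convex hull with the convex hull, so that ``$\subseteq$'' needs only convexity of the right-hand side; and second, the $\epsilon$-perturbation that upgrades ``non-negative on $K$'' to ``strictly positive on $K$'' so that Handelman's theorem is applicable. I expect no genuine obstacle beyond this bookkeeping: unlike in Dual Putinar, there is no descent from a sum-of-squares multiplier to an individual square and then to a matrix inequality, since here the contradiction $L_v(g^\beta)<0$ falls out immediately from a non-negative combination being negative. This is precisely why the resulting proof is more compact than the references the authors cite.
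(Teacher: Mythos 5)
Your proof is correct and follows essentially the same route as the paper's: both inclusions are handled exactly as in the Dual Putinar argument, with Handelman's theorem replacing Putinar's and the non-negative linear combination $f=\sum_{\beta\in B}c_\beta g^\beta$ immediately forcing some $L_v(g^\beta)<0$. Your write-up is in fact slightly more careful than the paper's (e.g., noting that compactness of $K$ makes $\cM_A(K)$ closed and that convexity of the right-hand side suffices for the ``$\subseteq$'' inclusion), but there is no substantive difference in approach.
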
 

\begin{proof}
The proof is analogous to the proof of the dual Putinar theorem~\ref{dual:putinar}, but somewhat simpler, because it involves linear constraints rather than the semidefinite ones.
To show the inclusion ``$\subseteq$'', we pick $p$, $w$ and $v$ in the same way as in the proof of dual Putinar (note that $K$ in the dual Handelman corresponds to $X$ in the dual Putinar). Then $L_v(g^\beta) = g(v)^\beta \ge 0$, since $g_i(v) \ge 0$ for each $i \in [s]$ in view of the fact that $v$ belongs to $P$. This shows that $w$ is in the right-hand side of \eqref{dual:handelman:eq}.

To show the the inclusion ``$\supseteq$'', we assume that $w$ is in $\R^A \setminus \cM_A(K)$. Then, as in the proof of the dual Putinar, we fix a separating hyperplane determined by the vector $c$, introduce an $\epsilon>0$ such that the polynomial $f(x)$ (depending on $c$ and $\epsilon$) is strictly positive on $K$. By Handelman,
$f = \sum_{\beta \in B} d_\beta g^\beta$, where $B$ is a finite subset of $\N^s$ and $d_\beta$ are non-negative coefficients. 

For every $v \in \N^n$ with $v_B = w$, we have 
\begin{align*}
0 &> \sum_{\alpha \in A} c_\alpha w_\alpha + \epsilon \\
& = L_v(f) = \sum_{\beta \in B} d_\eta L_v(g^\beta). 
\end{align*}

This implies that for some $\beta \in B$ we have $L_v(g^\beta)<0$. Consequently, $v$ is not in the right-hand side set of \eqref{dual:handelman:eq}.
\end{proof}

Here, we work under the assumptions of Theorem~\ref{handelman:thm}. Consider $0 \in A_i \subseteq \N_1^n$ such that $g_i \in \R[x]_{A_i}$. We can also fix $A_i = \N_1^n$, which is the case needed when the coefficients of $g_i$ are generic, but in more specific situations, we might prefer to take a smaller $A_i$. We introduce the polyhedral relaxation $\BF(B)$ of $\cM_A(K)$ as
\begin{equation} \label{eq_BF}	
\BF(B):= \{ v \in \R^P \colon L_v( g^\beta ) \ge 0  \ \text{for all} \ \beta \in B\}
\end{equation}
depends on the choice of a finite set $B \subseteq \N^s$ and 
\[
P := \bigcup_{\beta \in B} ( \underbrace{A_1 + \cdots + A_1}_{\beta_1} + \cdots + \underbrace{A_s + \cdots + A_s}_{\beta_s} ). 
\]
We call $\BF(B)$ the bound-factor polyhedron for $B$. 
Analogously to how we analyzed established semidefinite relaxations in Corollary~\ref{putinar:approx:thm}, one can show that $\BF(B)$ can approximate $\cM_A(K)$ arbitrarily well. In particular, $\lim_{d \to \infty} \BF(\N_d^s)_A = \cM_A(K)$ with the limit in the sense of the Hausdorff metric. 

\begin{rem}
In contrast to linear programming relaxations, approximation by semidefinite relaxations is achievable for a much broader class of moment bodies $\cM_A(X)$, where $X$ is not necessarily a polytope. In some cases (for example, for univariate problems) one can even guarantee an exact description of $\cM_A(X)$. In sharp contrast to this, an exact polyhedral description is impossible with linear programming relaxations if $\cM_A(X)$ is not a polyhedron. On the other hand, linear programming relaxations are better suited for applications in branch-and-bound frameworks, because the dual simplex method can start in phase 2 with the solution of the parent node. 
\end{rem} 

\subsubsection{Exact polyhedral models} \label{sec_polyhedralbodies}

Since global NLP heavily relies on linear programming, it uses patterns with polyhedral moment bodies. Below, we present some such patterns. 

\begin{prop} 
	\label{prop:multilin}
	Let $I \subseteq \{0,1\}^n$ and $K=\Bx(l,u)$. Then the moment body $\cM_I(K)$ is a polytope given by 
	\(
	\cM_I(K) = \cM_I ( \vx(K)). 
	\)
\end{prop}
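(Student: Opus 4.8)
The plan is to prove the set equality $\cM_I(K) = \cM_I(\vx(K))$ and then observe that the right-hand side is manifestly a polytope. The inclusion ``$\supseteq$'' is immediate: since $\vx(K) \subseteq K$, the generating set $\{(x^\alpha)_{\alpha \in I} \colon x \in \vx(K)\}$ is contained in $\{(x^\alpha)_{\alpha \in I} \colon x \in K\}$, and passing to convex hulls preserves this containment. So the whole content lies in the reverse inclusion ``$\subseteq$''.

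To establish ``$\subseteq$'', note that since $\cM_I(\vx(K))$ is convex it suffices to show that every generator $(x^\alpha)_{\alpha \in I}$ with $x \in K$ already lies in $\cM_I(\vx(K))$; that is, I want an explicit convex combination of the vertex generators that reproduces it. First I would write each coordinate as $x_i = (1-t_i) l_i + t_i u_i$ with $t_i \in [0,1]$, and index the $2^n$ vertices of $K$ by subsets $S \subseteq [n]$, letting $w^{(S)}$ be the vertex whose $i$-th coordinate equals $u_i$ for $i \in S$ and $l_i$ otherwise. I would then assign the product weights $\lambda_S := \prod_{i \in S} t_i \prod_{i \notin S} (1 - t_i)$, which are non-negative and sum to $1$ because the sum factorizes as $\prod_{i \in [n]} (t_i + (1-t_i)) = 1$. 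The key claim is that $\sum_{S \subseteq [n]} \lambda_S (w^{(S)})^\alpha = x^\alpha$ for every $\alpha \in I$.

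The main obstacle, and the one place where the hypothesis $I \subseteq \{0,1\}^n$ is essential, is the verification of this identity; everything else is routine. The point is that each monomial $x^\alpha$ with $\alpha \in \{0,1\}^n$ is multilinear, hence affine in each variable separately. Concretely, writing $T := \supp(\alpha)$ one has $(w^{(S)})^\alpha = \prod_{i \in T} w^{(S)}_i$, and the weighted sum factorizes over coordinates: the factor for each $i \notin T$ contributes $t_i + (1-t_i) = 1$, while the factor for each $i \in T$ contributes $t_i u_i + (1-t_i) l_i = x_i$, so the product is exactly $\prod_{i \in T} x_i = x^\alpha$. (Equivalently, sampling each coordinate independently to be $u_i$ with probability $t_i$ yields a random vertex $W$ with $\mathbb{E}[W^\alpha] = \prod_{i \in T} \mathbb{E}[W_i] = x^\alpha$.) Once this holds simultaneously for all $\alpha \in I$, the vector $(x^\alpha)_{\alpha \in I}$ is the convex combination $\sum_{S} \lambda_S \, ((w^{(S)})^\alpha)_{\alpha \in I}$ of vertex generators and therefore lies in $\cM_I(\vx(K))$, proving ``$\subseteq$'' and hence equality. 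Finally, since $\vx(K)$ is finite, $\cM_I(\vx(K))$ is the convex hull of finitely many points and is thus a polytope, which completes the argument.
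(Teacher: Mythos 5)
Your proof is correct and follows essentially the same route as the paper: both arguments express each coordinate $x_i$ as a convex combination of $l_i$ and $u_i$, form the $2^n$ product weights over the vertices, and use multilinearity of $x^\alpha$ for $\alpha \in \{0,1\}^n$ to factorize the weighted sum coordinate-by-coordinate. Your write-up merely makes two points more explicit than the paper's (the trivial ``$\supseteq$'' inclusion and the support-splitting verification of the factorization identity), which is fine.
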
 
\begin{proof} 
	Clearly, $\vx(K) = \{l_1,u_1\} \times \cdots \times \{l_n,u_n\}$. 
	By rearranging $x_i (u_i - l_i) = x_i (u_i - l_i)$, one has 
	\[
	x_i = \underbrace{ \frac{u_i - x_i}{u_i - l_i} }_{a_{i,l_i}}  l_i + \underbrace{ \frac{x_i - l_i}{u_i - l_i}}_{a_{i,u_i}} u_i. 
	\]
	with $a_{i,l_i}, a_{i,u_i} \ge 0$, for every $x \in K$, and $a_{i,l_i} + a_{i,u_i} = 1$. Representing $x_1,\ldots,x_n$ as above and using the fact that $m(x_1,\ldots,x_n) := x^I$ is an affine function in each of the $x_i$'s, we obtain 
	\[
	m(x)   = \sum_{s_1 \in \{l_1,u_1\},\ldots, s_n \in \{l_n,u_n\} } a_{1,s_1} \cdots a_{n,s_n} \, m(s_1,\ldots,s_n),
	\]
	where all $2^n$ products $a_{1,s_1} \cdots a_{1,s_n}$ are non-negative, for $x \in K$, and 
	\[
	\sum_{s_1 \in \{l_1,u_1\},\ldots, s_n \in \{l_n,u_n\} } a_{1,s_1} \cdots a_{n,s_n} = (a_{1,l_1} + a_{1,u_1}) \cdot \ldots \cdot (a_{n,l_n} + a_{n,u_u}) = 1. 
	\] 
	This shows $\cM_I(K) = \cM_I(\vx(K))$, which immediately implies that $\cM_I(K)$ is a polytope. 
\end{proof} 

\begin{rem}
	\label{rem:multilin} 
	Since $\cM_I(K) = \cM_I(\vx(K))$, the condition in Proposition~\ref{prop:multilin} that $v \in \cM_I(K)$ can be formulated using the linear constraints 
	\begin{align*}
		v &  = \sum_{p \in \vx(K)} \lambda_p \, m(p),
		\\ 1 & = \sum_{p \in \vx(K)} \lambda_p
	\end{align*} 
	with the non-negative auxiliary variables $\lambda_p \ge 0$.
\end{rem} 

\begin{rem} \label{mccormick} 
	The rectangle $K = [l_1,u_1] \times [l_2,u_2]$ is given by $g = (g_1,g_2,g_3,g_4) = ( x_1 -l_1, u_1 -x_1, x_2 - l_2, u_2 -x_2)$ with $g_i \in \R[x]_{A_i}$ and $A_1 = A_2 = \{(0,0), (1,0)\}$ and $A_3 = A_4 = \{(0,0),(0,1)\}$. In this case, it is known that for 
	\[
	B = \{ (1,0), (0,1)\}^2 = \{ (1,0,1,0), (1,0,0,1), (0,1,1,0), (0,1,0,1) \},
	\]
	the bound-factor polyhedron $\BF(B)$ coincides with the moment body $\cM_A(K)$ for $A = \{ 0,1\}^2$. The four inequalities describing $\BF(B)$ are $L_v( g_1 g_3) \ge 0, L_v(g_1 g_4) \ge 0$, $L_v(g_2 g_3) \ge 0$, and $l_v(g_3 g_4) \ge 0$. They are known as McCormick inequalities. 
\end{rem} 

%\begin{rem} \label{mccormick} 
%	For  $I = \{0,1\}^2$ it is well known that the polytope $\cM_{\{0,1\}^2}(K)$ is given by the linearization of the inequalities $g_1 g_2 \ge 0$, with $g_i \in \{x_i - l_i, u_i - x_i\}$, which are valid on $K$: 
%	\[
%	\cM_{\{0,1\}^2} (K ) = \{ v \in \R^{\{0,1\}^2} \colon L_v (g_1 g_2) \ge 0 \ \text{for all} \  g_i \in \{x_i - l_i, u_i - x_i\}\}. 
%	\]
%	The inequalities $L_v(g_1 g_2) \ge 0$ are the well-known inequalities of McCormick. 
%\end{rem} 

\begin{ex} 
	Consider  $\cM_{\{0,1\}^2}(K)$ for $K = [-1,1]^2$. By Proposition~\ref{prop:multilin}, this moment body is a polytope with the vertices $x^{\{0,1\}^2}$, where $x \in \{-1,1\}^2$, so that $v \in \cM_{\{0,1\}^2}$ can be formulated as the constraint 
	\[
	\begin{pmatrix} 
		1 \\ v_{1,0} \\ v_{0,1} \\ v_{1,1} 
	\end{pmatrix}  = \lambda_1  \begin{pmatrix} 1 \\ -1 \\ -1 \\ 1\end{pmatrix} + \lambda_2 \begin{pmatrix} 1 \\ 1 \\ -1 \\ -1\end{pmatrix} + \lambda_3  \begin{pmatrix} 1 \\ -1 \\ 1 \\ -1\end{pmatrix} + \lambda_4 \begin{pmatrix} 1 \\ 1 \\ 1 \\ 1\end{pmatrix}
	\]
	using auxiliary non-negative variables $\lambda_1,\ldots,\lambda_4 \ge 0$. Using McCormick's inequalities we can also formulate $\cM_{\{0,1\}^2}(K)$ in the original space by 
	\begin{align*} 
		L_v( (1-x_1) (1-x_2) ) := 1 - v_{1,0} - v_{0,1} + v_{1,1} & \ge 0, 
		\\		L_v ( ( 1+ x_1) (1-x_2)) := 1 + v_{1,0} -v_{0,1} - v_{1,1} & \ge 0, 
		\\		L_v ( (1-x_1) (1 + x_2)) := 1 - v_{1,0} + v_{0,1} - v_{1,1} & \ge 0, 
		\\		L_v (1 + x_1)(1+x_2)) := 1+ v_{1,0} + v_{0,1} + v_{1,1} & \ge 0. 
	\end{align*} 
\end{ex} 

As a next step, we generalize Proposition~\ref{prop:multilin} by passing from $I = \{0,1\}^n$ to a more general set $P$: 

\begin{prop} 
	\label{prop:multilin:2} 
	Let $\alpha \in \N^n$, $P \subseteq \{0,\alpha_1\} \times \cdots \times \{0,\alpha_n\}$, and $K=\Bx(l,u)$. Then the moment body $\cM_P(K)$ is a polytope. 
\end{prop}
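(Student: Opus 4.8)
The plan is to reduce Proposition~\ref{prop:multilin:2} to the multilinear case already settled in Proposition~\ref{prop:multilin}, by means of the coordinatewise substitution $y_i := x_i^{\alpha_i}$. Writing $\phi \colon \R^n \to \R^n$, $\phi(x) = (x_1^{\alpha_1},\ldots,x_n^{\alpha_n})$, I would first observe that $\phi$ acts separately on each coordinate, so the image $K' := \phi(K)$ is again a box: each factor $J_i := \{ t^{\alpha_i} \colon t \in [l_i,u_i]\}$ is the continuous image of the interval $[l_i,u_i]$ and hence a compact (possibly degenerate, when $\alpha_i = 0$) interval, and $K' = J_1 \times \cdots \times J_n$ is of the form $\Bx(l',u')$ for suitable endpoints $l',u'$.

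Next I would track what happens to the monomials under this substitution. For $\beta \in P$ each coordinate satisfies $\beta_i \in \{0,\alpha_i\}$, so that $x_i^{\beta_i}$ equals $1$ when $\beta_i = 0$ and equals $y_i$ when $\beta_i = \alpha_i \neq 0$. Defining the support indicator $\gamma(\beta) \in \{0,1\}^n$ by $\gamma(\beta)_i := 1$ if $\beta_i \neq 0$ and $\gamma(\beta)_i := 0$ otherwise, this gives $x^\beta = y^{\gamma(\beta)}$ for all $x \in K$ and $y = \phi(x)$. The map $\gamma$ is injective on $P$: since $\beta_i = \alpha_i\, \gamma(\beta)_i$ holds coordinatewise, $\beta$ is recovered from $\gamma(\beta)$, so $\gamma$ is a bijection from $P$ onto $I := \gamma(P) \subseteq \{0,1\}^n$.

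Finally I would assemble these facts. Because $\phi$ maps $K$ onto $K'$, the set $\{ (x^\beta)_{\beta \in P} \colon x \in K\}$ coincides with the image of $\{ (y^\delta)_{\delta \in I} \colon y \in K'\}$ under the linear relabeling $T \colon \R^I \to \R^P$, $(Tw)_\beta = w_{\gamma(\beta)}$, which is a bijection by the previous paragraph. Taking convex hulls, which commute with the linear map $T$, yields $\cM_P(K) = T\bigl(\cM_I(K')\bigr)$. By Proposition~\ref{prop:multilin} applied to the box $K'$ and the set $I \subseteq \{0,1\}^n$, the moment body $\cM_I(K')$ is a polytope, and a linear image of a polytope is a polytope; hence $\cM_P(K)$ is a polytope. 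The only points requiring genuine care are the verification that each $J_i$ is an interval regardless of the parity of $\alpha_i$ (handled by continuity of $t \mapsto t^{\alpha_i}$) and the bookkeeping around degenerate coordinates $\alpha_i = 0$ (where $J_i$ is a single point and $\gamma(\beta)_i = 0$), but neither presents a real obstacle, so I expect the substitution-and-relabeling reduction itself to be the entire content of the argument.
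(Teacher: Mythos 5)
Your proof is correct and takes essentially the same route as the paper: the paper assumes WLOG that all $\alpha_i>0$, writes $P=\Gamma I$ for the diagonal matrix $\Gamma$ with entries $\alpha_1,\ldots,\alpha_n$, and invokes Proposition~\ref{transf:patterns} (giving $\cM_P(K)=\cM_I(K^\Gamma)$ with $K^\Gamma$ again a box) followed by Proposition~\ref{prop:multilin}, while you simply re-derive that instance of Proposition~\ref{transf:patterns} inline via the substitution $y_i=x_i^{\alpha_i}$ and the relabeling bijection $\gamma$. The only difference is cosmetic: the paper removes coordinates with $\alpha_i=0$ by a without-loss-of-generality reduction, whereas you carry them along as degenerate box factors.
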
 
\begin{proof} 
	There is no loss of generality in assuming $\alpha_i>0$ for every $i \in [n]$. 
	We can describe $P$ as  $P = \Gamma I$, for some  $I \subseteq \{0,1\}^n$ and the diagonal matrix  $\Gamma \in \N^{n \times n}$ with the diagonal entries $\alpha_1,\ldots,\alpha_n$. By Proposition~\ref{transf:patterns} from Section~\ref{deriving:from:mon:subs}, one has $\cM_P(K) = \cM_{\Gamma I} (K) = \cM_I({K^\Gamma})$, where $K^\Gamma = \Bx(l_\Gamma,u_\Gamma)$. By Proposition~\ref{prop:multilin}, $\cM_I(K^\Gamma)$ is a polytope. 
\end{proof} 

\begin{rem}
	\label{rem:multilin:2} 
	Remark~\ref{rem:multilin} and the proof of Proposition~\ref{prop:multilin:2} yield a way to provide an explicit formulation of $\cM_P(K)$ with $P \subseteq \{0,\alpha_1\} \times \cdots \times \{0,\alpha_n\}$. 
\end{rem} 

Motivated by Proposition~\ref{prop:multilin:2}, we call a pattern $P \subseteq \{0,\alpha_1 \} \times \cdots \times \{0,\alpha_n\}$ mutlilinear. 

We also generalize Proposition~\ref{prop:multilin:2} as follows. 
\begin{prop} 
	\label{prop:multilin:3} 
	Let $ \Gamma = (\gamma(1),\ldots,\gamma(k)) \in \R^{n \times k}$ be a matrix, for which the monomials $x^{\gamma(i)}$ with $i \in [k]$ are variable-independent. Then for any $P \subseteq \Gamma \{0,1\}^k$, the moment body $\cM_P(K)$ is a polytope. 
\end{prop}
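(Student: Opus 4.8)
The plan is to reduce the statement to the already-settled multilinear case of Proposition~\ref{prop:multilin} through a monomial change of variables, exactly as was done for Proposition~\ref{prop:multilin:2} but now with a general $\Gamma$ whose columns are \emph{variable-independent}, by which I understand that the supports $\supp(\gamma(1)),\dots,\supp(\gamma(k)) \subseteq [n]$ are pairwise disjoint (so that no variable $x_j$ occurs in two distinct monomials $x^{\gamma(i)}$). First I would introduce the substitution $y_i := x^{\gamma(i)}$ for $i \in [k]$, under which every exponent $\Gamma\epsilon$ with $\epsilon \in \{0,1\}^k$ satisfies $x^{\Gamma\epsilon} = \prod_{i=1}^k (x^{\gamma(i)})^{\epsilon_i} = y^\epsilon$. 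After discarding any zero columns (they correspond to the constant monomial $1$ and do not affect the moment body), I may assume each $\gamma(i) \neq 0$; the disjoint-support hypothesis then makes the map $\epsilon \mapsto \Gamma\epsilon$ injective on $\{0,1\}^k$, so that with $J := \{\epsilon \in \{0,1\}^k : \Gamma\epsilon \in P\}$ one gets a bijection between $P = \Gamma J$ and the multilinear index set $J \subseteq \{0,1\}^k$.

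Next I would identify the image of $K$ under the monomial map. Setting $\tilde{K} := \{(x^{\gamma(1)},\dots,x^{\gamma(k)}) : x \in K\}$, the key claim is that $\tilde{K}$ is again a box. Since $x^{\gamma(i)}$ is continuous on the compact connected set $K$, its range $R_i := \{x^{\gamma(i)} : x \in K\}$ is a compact interval. Because the supports of the $\gamma(i)$ are pairwise disjoint, the coordinate $y_i$ depends only on the block of variables $\{x_j : j \in \supp(\gamma(i))\}$, and these blocks can be varied independently inside the box $K$; hence $\tilde{K} = R_1 \times \cdots \times R_k$. One inclusion is trivial, and for the reverse I would take a target tuple in the product, pick for each $i$ a preimage realizing $y_i$, and assemble a single $x \in K$ coordinatewise, using disjointness to guarantee there are no conflicts among the blocks.

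With these two observations, the bijection $P = \Gamma J$ gives $\{(x^\alpha)_{\alpha\in P} : x \in K\} = \{(y^\epsilon)_{\epsilon\in J} : y \in \tilde{K}\}$, and taking convex hulls yields $\cM_P(K) = \cM_J(\tilde{K})$. As $J \subseteq \{0,1\}^k$ is multilinear and $\tilde{K}$ is a box, Proposition~\ref{prop:multilin} shows that $\cM_J(\tilde{K})$ is a polytope, which completes the argument. This is the same mechanism as the invocation of Proposition~\ref{transf:patterns} in Proposition~\ref{prop:multilin:2}, now applied with the variable-independent $\Gamma$ whose image box is $\tilde{K}$ in place of the diagonal transformation $K^\Gamma$.

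I expect the main obstacle to be the rigorous verification that $\tilde{K}$ is a box, i.e.\ the product structure $\tilde{K} = R_1 \times \cdots \times R_k$, since this is precisely where variable-independence is indispensable: without disjoint supports the ranges of the $y_i$ would be coupled and the image need not be a box, so the conclusion could fail. A minor additional point to address is the well-definedness of $x^{\gamma(i)}$ when $\Gamma$ has non-integer entries, which would require $K$ to lie in the positive orthant so that the monomial functions are continuous and real-valued on $K$.
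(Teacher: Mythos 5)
Your proposal is correct and follows essentially the same route as the paper: the paper's proof simply cites Proposition~\ref{transf:patterns} together with Proposition~\ref{prop:multilin}, and your argument is precisely that reduction, with the content of Proposition~\ref{transf:patterns} (the bijection $P = \Gamma J$ giving $\cM_P(K) = \cM_J(\tilde K)$, and the fact that variable-independence makes the image $\tilde K = K^\Gamma$ a box) proved inline rather than quoted. Your extra care about zero columns and about non-integer entries of $\Gamma$ addresses small imprecisions in the statement that the paper's one-line proof glosses over.
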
 
\begin{proof} 
	This is a direct consequence of Propositions~\ref{transf:patterns} and \ref{prop:multilin}. 
\end{proof} 

%\todo{We use Proposition~\ref{transf:patterns} in our proofs here, which is only shown later. Reorder or leave it as is?}

% -------------------------------------------------------------------------------
% -------------------------------------------------------------------------------
\subsubsection*{Patterns from expression trees} \label{sec_expression}

We describe how expression-tree convexifications from global NLP can be phrased in terms of patterns, when applied to \eqref{POP}. Each algebraic expression is made up of a certain set of elementary operations, such as powers, linear combinations, or products of expressions. A decomposition of an algebraic expression into these operations can be visualized using an algebraic expression tree. This is a rooted tree with nodes labeled by terms occurring in the expression. Each term is built up from its child terms using elementary operations. The underlying convexification is obtained by introducing a variable for each sub-expression occurring in the tree and providing convex constraints that link the variables of every node and its child nodes. A polynomial $f = \sum_{\alpha \in A} f_\alpha x^\alpha$ can thus be expressed as a linear combination of monomials, where each monomial $x^\alpha$ is expressed as a product of powers of the variables. If we do not want to use products with arbitrary number of factors as elementary operations, we can fix a bracketing in the product  $x_1^{\alpha_1} \cdots x_n^{\alpha_n}$, to specify the order in which multiplications are carried out. 

So, if $f = 2 x_1^2 x_3 - 3  x_1 (x_2 x_3^4) + 7 x_1 (x_2 x_3)$, we have the corresponding tree

\medskip
\tikzstyle{TreeNode} = [draw, rounded corners, align=center, minimum height = 18pt]
\begin{center} 
	\begin{tikzpicture}{scale=0.5}
		\node[TreeNode] (f) at (4,3) {$f =2 x_1^2 x_3 - 3  x_1 (x_2 x_3^4) + 7 x_1 (x_2 x_3) $} ;
		\node[TreeNode] (m1) at (0,2) { $x_1^2 x_3$ } ;
		\node[TreeNode] (m2) at (4,2) { $x_1 (x_2 x_3^4)$ } ;
		\node[TreeNode] (m3) at (8,2) { $x_1 (x_2 x_3)$ } ;
		\node[TreeNode] (a) at (-1,1) { $x_1^2$ } ;
		\node[TreeNode] (b) at (1,1) { $x_3$ } ;
		\node[TreeNode] (c) at (3,1) { $x_1$ } ;
		\node[TreeNode] (d) at (5,1) { $x_2 x_3^4$ } ;
		\node[TreeNode] (e) at (7,1) { $x_1$ } ;
		\node[TreeNode] (g) at (9,1) { $x_2 x_3$ } ;
		\node[TreeNode] (h) at (-1,0) { $x_1$ } ;
		\node[TreeNode] (i) at (4,0) { $x_2$ } ;
		\node[TreeNode] (j) at (6,0) { $x_3^4$ } ;
		\node[TreeNode] (k) at (8,0) { $x_2$ } ;
		\node[TreeNode] (l) at (10,0) { $x_3$ } ;		 
		\node[TreeNode] (m) at (6,-1) { $x_3$ } ;		 
		\draw[->,thick] (f) -- (m1); 
		\draw[->,thick] (f) -- (m2);
		\draw[->,thick] (f) -- (m3);
		\draw[->,thick] (m1) -- (a); 
		\draw[->,thick] (m1) -- (b); 
		\draw[->,thick] (m2) -- (c); 
		\draw[->,thick] (m2) -- (d); 
		\draw[->,thick] (m3) -- (e); 
		\draw[->,thick] (m3) -- (g); 
		\draw[->,thick] (a) -- (h);
		\draw[->,thick] (d) -- (i);		 
		\draw[->,thick] (d) -- (j);
		\draw[->,thick] (g) -- (k);
		\draw[->,thick] (g) -- (l);
		\draw[->,thick] (j) -- (m);
	\end{tikzpicture} 
\end{center} 
We assign monomial variables to all expressions apart from the root node $f$. The convexification for the root node $f$ and the variables $v_{2,1,0}$, $v_{1,1,4}$, and $v_{1,1,1}$ assigned to its children is $f = 2 v_{2,1,0} - 3 v_{1,1,4} + 7 v_{1,1,1}$. Every non-root node of a tree gets assigned a monomial variable. Taking a non-root node together with its children determines a pattern. For example, the node $x_1 (x_2 x_3^4)$ and its children form the pattern $\{ (1,1,4), (1,0,0), (0,1,4)\}$, while the node $x_1^2$ and its only child $x_1$ determine the pattern $\{ (2,0,0), (1,0,0)\}$. As a consequence of Proposition~\ref{prop:multilin:3}, we see that for three-element patterns $P$ the respective moment body $\cM_P(K)$ is a polytope. For the two-element patterns $P = \{ (2,0,0) , (1,0,0)\}$ and $P = \{(0,0,4), (0,0,1)\}$ the respective moment bodies $\cM_P(K)$ are not polytopes, but they are two-dimensional bodies, for which an outer polyhedral approximation can be constructed rather easily. Hence, for this example, we establish a sparse polyhedral relaxation of \eqref{POP} using  \eqref{P-RLXa} with a pattern family arising from an expression tree. 

There is yet another possibility. If we allow products with arbitrary many factors as elementary operations, then each monomial $x^{\alpha}$ can be written as a product of powers of the variables. For the above example, we obtain the expression tree 
\begin{center} 
	\begin{tikzpicture}{scale=0.5}
		\node[TreeNode] (f) at (4,3) {$f =2 x_1^2 x_3 - 3  x_1 x_2 x_3^4 + 7 x_1 x_2 x_3 $} ;
		\node[TreeNode] (m1) at (0,2) { $x_1^2 x_3$ } ;
		\node[TreeNode] (m2) at (4,2) { $x_1 x_2 x_3^4$ } ;
		\node[TreeNode] (m3) at (8,2) { $x_1 x_2 x_3$ } ;
		\node[TreeNode] (a) at (-1,1) { $x_1^2$ } ;
		\node[TreeNode] (b) at (1,1) { $x_3$ } ;
		\node[TreeNode] (c) at (3,1) { $x_1$ } ;
		\node[TreeNode] (d) at (5,1) { $x_3^4$ } ;
		\node[TreeNode] (e) at (7,1) { $x_1$ } ;
		\node[TreeNode] (g) at (9,1) { $x_3$ } ;
		\node[TreeNode] (h) at (-1,0) { $x_1$ } ;
		\node[TreeNode] (i) at (4,1) { $x_2$ } ;
		\node[TreeNode] (j) at (5,0) { $x_3$ } ;
		\node[TreeNode] (k) at (8,1) { $x_2$ } ;
		\draw[->,thick] (f) -- (m1); 
		\draw[->,thick] (f) -- (m2);
		\draw[->,thick] (f) -- (m3);
		\draw[->,thick] (m1) -- (a); 
		\draw[->,thick] (m1) -- (b); 
		\draw[->,thick] (m2) -- (c); 
		\draw[->,thick] (m2) -- (d); 
		\draw[->,thick] (m2) -- (i); 
		\draw[->,thick] (m3) -- (e); 
		\draw[->,thick] (m3) -- (g); 
		\draw[->,thick] (a) -- (h);
		\draw[->,thick] (d) -- (j);
		\draw[->,thick] (m3) -- (k);
	\end{tikzpicture} 
\end{center} 
The difference in constructing patterns for this tree is that some of the nodes determine patterns of the form $\{ \alpha_1 e_1,\ldots, \alpha_n e_n , \alpha \}$. For example, the node $x_1 x_2 x_3^4$ together with its children defines the pattern $ \{ (1,0,0), (0,1,0), (0,0,4), (1,1,4)\}$. For such patterns, Proposition~\ref{prop:multilin:2} guarantees that the respective moment body $\cM_P(K)$ is a polytope. Hence, the comments for the previous expression tree still apply. 

% ----------------------------
\subsection{Sparsifying dense approaches} 
\label{deriving:from:mon:subs} 

We want to carry over Lasserre's semidefinite relaxations of the moment body $\cM_{\N_d^n}(X)$ body to a relaxation of $\cM_P(X)$ for some smaller sets $P$, which can be used as patterns in \eqref{P-RLXa}. 

For a pattern $P \subseteq \N^n$, we already introduced the notation $x^P := ( x^\alpha)_{\alpha \in P}$. Given a matrix $\Gamma = (\gamma(1),\ldots,\gamma(k)) \in \N^{n \times k}$, we use the similar notation with $x^{\Gamma} := ( x^{\gamma(1)},\ldots, x^{\gamma(k)})$ along with the vectors $l_\Gamma := ( l_{\gamma(1)},\ldots, l_{\gamma(k)})$ and $u_\Gamma := (u_{\gamma(1)},\ldots,u_{\gamma(k)})$. 
For a set $X$, we introduce the set $X^\Gamma := \{ x^\Gamma \colon x \in X\}$,  which is the image of $X$ under the map $x \mapsto x^\Gamma$. 

Assuming that $\Gamma$ has rank $k$, one can now define the pattern $\Gamma P := \{ \Gamma \alpha \colon \alpha \in P\}$, which we call the image of $P$ under $\Gamma$. The exponents in  $P$ are in one-to-one correspondence with the exponents in $\Gamma P$, since $\Gamma$ has rank $k$. 
We now map the $k$-dimensional patterns $\N_d^k$ to patterns $\Gamma \N_d^k$. Clearly, one can generate many copies of such patterns by considering many different choices of $\Gamma$, and then use such patterns within \eqref{P-RLXa}. 

We call monomials $x^{\gamma(1)},\ldots, x^{\gamma(k)}$ variable-independent if no variable $x_i$ occurs in more than one of these $k$ monomials. This means the supports of the exponent vectors $\gamma(1),\ldots,\gamma(k)$ are pairwise disjoint. 

\begin{prop} \label{transf:patterns} 
	Let $\Gamma = (\gamma(1),\ldots,\gamma(k)) \in \N^{n \times k}$ be a rank $k$ matrix and consider a pattern $P \subseteq \N^k$. Then for the moment body of the pattern $\Gamma P$, one has 
	\[
	\cM_{\Gamma P}(X) = \cM_{P} (X^\Gamma). 
	\]
	If $X = K = \Bx(l,u)$ and if $x^{\gamma(1)},\ldots, x^{\gamma(k)}$ are variable-independent monomials, then $K^\Gamma = \Bx(l_\Gamma,u_\Gamma)$.
\end{prop}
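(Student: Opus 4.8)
The plan is to reduce the claimed identity to a single exponent law together with a bookkeeping of coordinate indices, so that nothing beyond ``equal generating sets have equal convex hulls'' is required. The key computation is that for every $x \in \R^n$ and every $\alpha = (\alpha_1,\ldots,\alpha_k) \in \N^k$ one has $x^{\Gamma \alpha} = (x^\Gamma)^\alpha$. Indeed, writing $\Gamma \alpha = \sum_{i=1}^k \alpha_i \gamma(i)$ and expanding, $x^{\Gamma\alpha} = \prod_{j} x_j^{\sum_i \gamma(i)_j \alpha_i} = \prod_{i=1}^k \bigl(x^{\gamma(i)}\bigr)^{\alpha_i} = (x^\Gamma)^\alpha$. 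Thus evaluating the monomial map of the pattern $\Gamma P$ at a point $x$ yields exactly the monomial map of the pattern $P$ evaluated at the image point $x^\Gamma$.

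Next I would address the matching of coordinate spaces, which is where the rank-$k$ hypothesis enters. Since $\gamma(1),\ldots,\gamma(k)$ are linearly independent over $\R$, the linear map $\alpha \mapsto \Gamma \alpha$ is injective on $\R^k$, hence restricts to a bijection between $P$ and $\Gamma P$; this canonically identifies $\R^{\Gamma P}$ with $\R^P$ by sending the coordinate indexed by $\beta = \Gamma \alpha$ to the coordinate indexed by $\alpha$. Under this identification the exponent law gives $(x^\beta)_{\beta \in \Gamma P} = (x^{\Gamma\alpha})_{\alpha \in P} = \bigl((x^\Gamma)^\alpha\bigr)_{\alpha\in P}$. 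Letting $x$ range over $X$ and recalling that $y := x^\Gamma$ ranges over exactly $X^\Gamma$ by the definition of $X^\Gamma$, the two generating sets
\[
\{ (x^\beta)_{\beta \in \Gamma P} \colon x \in X \} \quad\text{and}\quad \{ (y^\alpha)_{\alpha \in P} \colon y \in X^\Gamma \}
\]
coincide. Taking convex hulls of both then yields $\cM_{\Gamma P}(X) = \cM_P(X^\Gamma)$.

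For the supplementary claim about boxes, I would use variable-independence to pass from $X^\Gamma$ as an abstract image to an explicit product. Variable-independence means the supports of $\gamma(1),\ldots,\gamma(k)$ are pairwise disjoint, so the coordinate functions $x \mapsto x^{\gamma(i)}$ of the map $x \mapsto x^\Gamma$ depend on disjoint blocks of the variables $x_1,\ldots,x_n$. Restricting to $K = \Bx(l,u)$, the box factorizes over these blocks, so the image $K^\Gamma$ factorizes as the product over $i \in [k]$ of the ranges $\{ x^{\gamma(i)} \colon x \in K \}$. Each such range is a compact interval, and monotonicity of the monomial $x^{\gamma(i)}$ in each of its box-constrained variables places the endpoints at the evaluations $l^{\gamma(i)}$ and $u^{\gamma(i)}$, i.e.\ at the respective coordinates of $l_\Gamma$ and $u_\Gamma$; this gives $K^\Gamma = \Bx(l_\Gamma,u_\Gamma)$.

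The proof carries no deep obstacle: the substance is the exponent identity, which is routine, and the care needed to treat $\R^{\Gamma P}$ and $\R^P$ as one and the same space, which is precisely what the rank-$k$ assumption buys. The one spot deserving a little attention is the endpoint computation in the box statement, where one must justify that the interval $\{ x^{\gamma(i)} \colon x \in K \}$ has endpoints $l^{\gamma(i)}$ and $u^{\gamma(i)}$; this is immediate when the monomial is monotone on the relevant box factor (e.g.\ for a nonnegative box), and is the only place where an implicit sign/monotonicity assumption on $l,u$ is being invoked.
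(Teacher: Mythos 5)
Your proof of the main identity $\cM_{\Gamma P}(X) = \cM_{P}(X^\Gamma)$ is correct and is essentially the paper's own argument: the exponent law $x^{\Gamma\alpha} = (x^\Gamma)^\alpha$, the bijection between $P$ and $\Gamma P$ supplied by the rank-$k$ hypothesis to identify $\R^{\Gamma P}$ with $\R^{P}$, and the observation that the two generating sets coincide before taking convex hulls.

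The box statement, however, rests on a misreading of the notation $l_\Gamma, u_\Gamma$. In the paper (see the definitions at the start of Section~\ref{deriving:from:mon:subs} and the worked examples there), $l_{\gamma(i)}$ and $u_{\gamma(i)}$ are \emph{defined} as the lower and upper bounds of the monomial $x^{\gamma(i)}$ as $x$ ranges over $K$, not as the corner evaluations $l^{\gamma(i)} = \prod_j l_j^{\gamma(i)_j}$ and $u^{\gamma(i)}$ that you use. For instance, for $K = [-1,2]\times[1,2]$ and $\gamma = (1,2)$ the paper computes $l_\gamma = -4$ and $u_\gamma = 8$, whereas the corner evaluation gives $l^\gamma = -1$. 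With your reading, the claimed equality $K^\Gamma = \Bx(l_\Gamma,u_\Gamma)$ is simply false once the box has negative coordinates, which is exactly why you found yourself forced to add a sign/monotonicity hypothesis that is not part of the statement. With the paper's definition no such hypothesis is needed: each $x^{\gamma(i)}$ ranges over the full interval $[l_{\gamma(i)},u_{\gamma(i)}]$ (continuity of the monomial on the connected box, together with the definition of $l_{\gamma(i)},u_{\gamma(i)}$ as the min and max), and variable-independence lets the $k$ values be fixed independently of one another, so the image $K^\Gamma$ is the full product $\Bx(l_\Gamma,u_\Gamma)$ --- this is the paper's one-line argument. Your block-factorization of the box over the disjoint supports is the same mechanism and is fine; only the identification of the interval endpoints needs to be corrected, after which your closing caveat paragraph should be deleted rather than repaired.
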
 
\begin{proof} 
	Since $\Gamma$ has rank $k$, the map $\alpha \mapsto \Gamma \alpha$ from $P$ to $\Gamma P$ is bijective. The pattern  $P$ gives the system of monomials $y^P$ in the variables $y = (y_1,\ldots,y_k)$, whereas the pattern $\Gamma P$ gives the system of monomials $x^{\Gamma P}$. One can obtain $x^{\Gamma P}$ by substituting $y = x^{\Gamma}$ into $y^P$. In the coordinate form, the substitution can be written as $y_i = x^{\gamma(i)}$. One has 
	\begin{align*}
		\cM_{\Gamma P}(X) & \, = \conv \{ ( (x^\Gamma)^\alpha )_{\alpha \in P} : x \in X \}  \\ & = \conv \{ ( y^\alpha)_{\alpha \in P} \colon y \in X^\Gamma \} \\ & = \cM_P(X^\Gamma). 
	\end{align*} 
	Assume now that $X = K = \Bx(l,u)$ and that the  monomials $x^{\gamma(1)}, \ldots, x^{\gamma(k)}$ are variable-independent. In this case, by letting $x$ vary in  $K=\Bx(l,u)$, we can fix the values of the $k$ monomials $x^{\gamma(i)}$ independently. Since $x^{\gamma(i)}$ takes values in $[l_{\gamma(i)},u_{\gamma(i)}]$, we obtain  $K^\Gamma = \Bx(l_\Gamma,u_\Gamma)$. 
\end{proof} 

If  $\Gamma = (\gamma(1),\ldots,\gamma(k)) \in \N^{n \times k}$ is such that the monomials $x^{\gamma(1)},\ldots,x^{\gamma(k)}$ are variable-independent, then we call the pattern $\Gamma \N_d^k$ a truncated sub-monoid pattern. If $k=1$, then we call $\Gamma \N_d^k$  a chain. Thus, a chain is a pattern of the form $\{0,\gamma, \ldots, d \gamma \}$, where $\gamma \in \N^n$. 

Applying Proposition~\ref{transf:patterns} to the moment body $\cM_{\Gamma \N_d^k}(X)$ of a truncated submonoid pattern, we can transfer the relaxation techniques for $\cM_{\N_d^n}(X)$ to the body $\cM_{\Gamma \N_d^k}(X)$. In principle, both bound-factor relaxations and Lasserre's semidefinite relaxation can be chosen. In Section~\ref{sec_computations} we shall use Lasserre's relaxation, thus we only spell out the resulting Lasserre-type moment relaxation of $\cM_{\Gamma \N_{2 d}^k}(X)$ (we replace $d$ by $2d$, since we work with even degrees in Lasserre's approach). For box constraints we have $K^\Gamma = \Bx(l_\Gamma, u_\Gamma)$. The Lasserre-type relaxation of $M_{\Gamma \N_{2 d}^k}$ is the following set 
\begin{align*}
	\MR_{\Gamma,d}(K)  := \{ v \in \R^{\Gamma \N_{2d}^k} \colon 
	& L_v (M_{\Gamma \N_d^k} ) \succeq 0 \ \text{and} 
	\\ & L_v(h_i M_{ \Gamma \N_{d-1}^k} ) \succeq 0 \ \text{for all} \ i \in [k] \}
\end{align*}
where $h_i := (x^{\gamma(i)} - l_{\gamma(i)} ) (u_{\gamma(i)} - x^{\gamma(i)})$.
We call $\MR_{\Gamma,d}(K)$ the semidefinite relaxation of $\cM_{\Gamma \N_k^d}(K)$ and use it in our computations.  

\begin{ex}
	Consider the box $K = [-1,1] \times [-2,2]$. We choose $d=1$ and $\Gamma = \begin{pmatrix} 2 & 0 \\ 0 & 2 \end{pmatrix}$. Then $\Gamma \N_{2 d}^n = \Gamma \N_2^2 = \{ (0,0), (2,0), (0,2), (4,0), (2,2), (0,4)\}$ is the pattern. The monomials $x_1^2$ and $x_2^2$ range in the segments $[0,1]$ and $[0,4]$ for $x \in K$. That is, $l_{2,0} = 0, u_{2,0} = 1$, $l_{0,2} = 0, u_{0,2} = 4$. Consequently, $\MR_{\Gamma,d}(K)$ is given by the LMIs
	\begin{align*} 
		L_v( \begin{pmatrix} 1 & x_1^2 & x_2^2 
			\\ x_1^2 &  x_1^4 & x_1^2 x_2^2 
			\\ x_2^2 & x_1^2 x_2^2 & x_2^4
		\end{pmatrix} ) := \begin{pmatrix} 1 & v_{2,0} & v_{0,2} 
			\\ v_{2,0} & v_{4,0} & v_{2,2} 
			\\ v_{0,2} & v_{2,2} & v_{0,4} 
		\end{pmatrix} & \succeq 0, 
		\\  L_v (x_1^2 (1 - x_1^2) ) := v_{2,0} - v_{4,0} & \ge 0, 
		\\ L_v ( x_2^2 ( 4 - x_2^2) ) := 4 v_{0,2} - v_{0,4} & \ge 0.  
	\end{align*} 
\end{ex} 

\begin{ex} 
	Consider the box $K = [-1,2] \times [1,2]$ and the chain pattern $P= \{ 0, \gamma, 2 \gamma \}$ with $\gamma = (1,2)$. The Lasserre-type relaxation for this choice is $\MR_{\Gamma,d}(K)$ with $\Gamma = \begin{pmatrix} 1 \\ 2\end{pmatrix}$ and $d = 1$. One has $x^\gamma = x_1 x_2^2$. When $x \in K$, one has $-1 \le x_1 \le 2$ and $1 \le x_2^2 \le 4$. 
	It follows that the least  value for $x^\gamma = x_1 x_2^2$ is attained for $x_1 = -1, x_2^2 = 4$ and the largest value for $x^\gamma$ is attained for $x_1 = 2, x_2^2 =4$. That is, $l_\gamma = -4$ and $u_\gamma = 8$.
	Hence, $\MR_{\Gamma,d}(K)$ is given by the LMIs
	\begin{align*} 
		L_v (\begin{pmatrix} 1 & x_1 x_2^2 
			\\ x_1 x_2^2 & x_1^2 x_2^4 
		\end{pmatrix} ) :=  \begin{pmatrix} 1 & v_{1,2} \\ v_{1,2} & v_{2,4} \end{pmatrix} & \succeq 0, 
		\\ L_v ( (x_1 x_2^2 + 4)(8 - x_1 x_2^2) ) := 32 + 4 v_{1,2} + v_{2,4} & \ge 0. 
	\end{align*}  
\end{ex} 

% -------------------------------------------------------------------------------
\subsection{Shifting patterns} \label{sec_shifting}

For all considered patterns we can also construct a shifted version, which may be useful in computations. Note that if a pattern $P$ does not contain the zero exponent, we can pass from from $P$ to $P \cup \{0\}$ without any significant changes in the relaxation, because the constraint $v_{P \cup \{0\}} \in \cM_{P \cup \{0\}}(X)$ is equivalent to $v_P \in \cM_P(X)$, $v_0=1$. 

\begin{prop}
	\label{prop:shift} 
	Consider a pattern $P \subseteq \N^n$ with $0 \in P$
	and an exponent vector $\eta \in \N^n \setminus \{0\}$ such that the two monomials $x^{\eta}$ and $x^{\alpha}$ are variable-independent for each choice of $\alpha \in P$. Then the pattern $\eta + P$ is given by 
	\[
	\cM_{\eta+P}(X) = \{   ( v_{\eta + \beta} )_{\beta \in P} \in   \cone \cM_P(X) \colon  l_\eta \le v_\eta \le u_\eta \}. 
	\]
\end{prop}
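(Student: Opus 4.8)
The plan is to reduce the statement to the scalar behaviour of the extra monomial $x^{\eta}$ and to exploit that, by variable-independence, its value decouples from the reduced moment vector $(x^{\beta})_{\beta \in P}$. Writing $t := x^{\eta}$ and using $x^{\eta+\beta} = x^{\eta} x^{\beta} = t\, x^{\beta}$, the generating set of the left-hand body factorises as
\[
\{(x^{\eta+\beta})_{\beta\in P} : x \in X\} = \{\, t\cdot(x^{\beta})_{\beta\in P} : x \in X\,\}.
\]
The core observation is that, over the box $K=\Bx(l,u)$ whose $\eta$-variables are disjoint from all variables occurring in $P$, the scalar $t$ ranges over the interval $[l_\eta,u_\eta]$ \emph{independently} of $(x^{\beta})_{\beta\in P}$, exactly in the spirit of Proposition~\ref{transf:patterns}. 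Since $0\in P$, every generator has $\beta=0$ coordinate $t\cdot x^{0}=t$, which identifies the $\beta=0$ coordinate of a point of $\cM_{\eta+P}(X)$, namely $v_{\eta}$, with this scalar factor; note also that $\cM_P(X)\subseteq\{w_0=1\}$.

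For the inclusion ``$\subseteq$'' I would check that each generator $t\,(x^{\beta})_{\beta\in P}$ lies in the right-hand side. The point $(x^{\beta})_{\beta\in P}$ belongs to $\cM_P(X)$, so its nonnegative multiple $t\,(x^{\beta})_{\beta\in P}$ lies in $\cone\cM_P(X)$, while its $\beta=0$ coordinate $t$ satisfies $l_\eta\le t\le u_\eta$. Both defining conditions of the right-hand side, membership in the convex cone $\cone\cM_P(X)$ and the two-sided bound on the $v_\eta$ coordinate, are convex, so the inclusion passes to the convex hull $\cM_{\eta+P}(X)$.

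For the reverse inclusion ``$\supseteq$'' I would take a point $u$ of the right-hand side, so $u\in\cone\cM_P(X)$ with $\lambda:=u_0=v_\eta\in[l_\eta,u_\eta]$. Because $\cM_P(X)$ lies in $\{w_0=1\}$, writing $u=\lambda w$ with $w\in\cM_P(X)$ recovers $\lambda$ as the scaling factor; expanding $w=\sum_j\mu_j\,(x_j^{\beta})_{\beta\in P}$ as a convex combination of generators, I then realise the common factor $\lambda$ as $x_S^{\eta}$ for a single choice $x_S$ of the $\eta$-variables (possible because the range of the continuous function $x^{\eta}$ on the connected box is precisely $[l_\eta,u_\eta]$, by the intermediate value theorem) and splice $x_S$ together with the remaining coordinates of each $x_j$. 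Since the support of $\eta$ is disjoint from the supports of all $\beta\in P$, the resulting points $\tilde x_j\in K$ satisfy $\tilde x_j^{\eta+\beta}=\lambda\,x_j^{\beta}$, so $u=\sum_j\mu_j\,(\tilde x_j^{\eta+\beta})_{\beta\in P}$ is a convex combination of generators of $\cM_{\eta+P}(X)$.

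The step that needs the most care is the ``$\supseteq$'' direction just described: realising the single shared cone-scaling $\lambda$ as one value $x_S^{\eta}$ and distributing it across the whole convex combination is what makes the two descriptions match, and it relies essentially on the product structure of the box (so that each spliced $\tilde x_j$ stays feasible). The accompanying subtlety I expect to flag is the sign of the factor $t=x^{\eta}$: the nonnegative scaling used throughout lands inside $\cone\cM_P(X)$, all of whose points have nonnegative $\beta=0$ coordinate, precisely when $x^{\eta}\ge 0$ on $X$, i.e. $l_\eta\ge 0$, and the boundary case $\lambda=0$ is then handled by noting that $\cone\cM_P(X)$ is the closed cone over the compact base $\cM_P(X)$ and that $0$ is attainable as $x_S^{\eta}$, so $u=0$ indeed lies in $\cM_{\eta+P}(X)$.
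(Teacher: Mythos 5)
Your proof is correct and rests on the same key observation as the paper's: over the box, variable-independence decouples the scalar $t = x^\eta \in [l_\eta,u_\eta]$ from the reduced moment vector $(x^\beta)_{\beta \in P}$, so the generators of $\cM_{\eta+P}$ factor as $t\cdot(x^\beta)_{\beta\in P}$. Where you diverge is in how the two descriptions are matched. The paper argues by a chain of identities, $\cM_{\eta+P}(X) = \conv\{\,t u : t\in[l_\eta,u_\eta],\ u \in \cM_P(X)\,\} = \conv\{\,t u : t\in\{l_\eta,u_\eta\},\ u \in \cM_P(X)\,\}$, and reads off the result as the slab of $\cone\cM_P(X)$ between the two cross-sections at heights $l_\eta$ and $u_\eta$. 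You instead prove two inclusions, and your ``$\supseteq$'' direction is constructive: factor $u = \lambda w$ with $w \in \cM_P(X)$ using $w_0 = 1$, expand $w$ as a finite convex combination of generators, realize $\lambda = x_S^\eta$ via the intermediate value theorem, and splice $x_S$ into each generator. This is longer, but it makes explicit that the argument needs the product structure of $K = \Bx(l,u)$ — mere variable-disjointness of the monomials does not decouple their values over an arbitrary $X$ — a point the paper's one-sentence appeal to ``independence'' leaves implicit.

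Your closing caveat is not a cosmetic worry but a genuine hypothesis missing from the statement: the identity fails when $x^\eta$ takes negative values on $K$. For instance, with $P = \{(0,0),(0,1)\}$, $\eta = (1,0)$, $K = [-1,1]^2$, the left-hand side is $\conv\{(x_1, x_1x_2) : x \in K\} = [-1,1]^2$, while every point of $\cone\cM_P(K)$ has nonnegative $v_\eta$-coordinate, so the right-hand side is only the triangle $\{0 \le v_{1,0} \le 1,\ |v_{1,1}| \le v_{1,0}\}$. The paper's proof carries the same implicit assumption $l_\eta \ge 0$ (its ``two parallel cross-sections'' of $\cone\cM_P(X)$ only make sense at nonnegative heights), and all of the paper's examples satisfy it. So with $l_\eta \ge 0$ made explicit your argument is complete; your treatment of the boundary case $\lambda = 0$ is also correct.
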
 
\begin{proof} 
	To clarify the asserted equality, we note that although  $\cone \cM_P(X)$ is a subset of $\R^{P}$ and $\cM_{\eta+P}(X)$ is a subset of $\R^{\eta +P}$, we can identify  $\R^{P}$ and $\R^{\eta + P}$ in view of the bijection $\beta \leftrightarrow \eta + \beta$ between the index sets $P$ and $\eta + P$. 
	
Due to variable-independence of the monomials $x^\eta$ and $x^\alpha$, the choice of values of $x^\eta$ is independent on the choice of $m(x):=(x^\alpha)_{\alpha \in P}$, as $x$ varies in $X$. Hence 
	\begin{align*} 
		\cM_{\eta + P} (X) &  = \conv \{ t x^\alpha : t \in [l_\eta,u_\eta] , x \in X \} 
		\\ & = \conv \{ t u : t \in [l_\eta ,u_\eta], u \in \cM_P(X) \}, 
		\\ & = \conv \{ t u : t \in \{l_\eta, u_\eta\}, u \in \cM_P(X) \}. 
	\end{align*} 
	Thus, $\cM_{\eta + P}(X)$ is the convex hull of two parallel cross-sections of $\cone \cM_P(X)$ at ``heights'' $l_\eta$ and $u_\eta$, respectively. 
	This readily gives the assertion. 
\end{proof} 

Proposition~\ref{prop:shift} gives a way to establish formulations of shifted patterns, in particular, shifted truncated sub-monoids and shifted chains. Since we employ the conic-hull operation, we need to homogenize constraints. Both linear and semidefinite constraints can be homogenized. We give a simple example, illustrating how shifting works with respect to linear constraints. 
\begin{ex} 
	Consider the pattern $\{0,1\}^2 \times \{1\}$, which is a shift of $P = \{0,1\}^2 \times \{1\}$ by the vector $\eta = (0,0,1)$ and let us choose $X = [0,1]^2 \times [1,2]$. The moment body $\cM_P(X)$ is a polytope defined by the McCormick inequalities $L_v( x_1 x_2) := v_{1,1,0} \ge 0,$ $L_v(x_1 (1-x_2)) := v_{1,0,0} - v_{1,1,0} \ge 0,$ $L_v( (1-x_1) x_2) := v_{0,1,0} - v_{1,1,0} \ge 0,$  $L_v ( (1-x_1) (1-x_2)) := 1 - v_{1,0,0} - v_{0,1,0} + v_{1,1,0} \ge 0$. When we homogenize, the $1$ in the last inequality gets replaced by $v_{0,0,0}$. Finally, we shift the indices by adding $\eta= (0,0,1)$ and add the constraint $1 \le v_{0,0,1} \le 2$, resulting in the linear inequalities 
	\begin{align*}
		v_{1,1,1} & \ge 0, 
		\\ v_{1,0,1} - v_{1,1,1} & \ge 0,
		\\ v_{0,1,1} - v_{1,1,1} & \ge 0, 
		\\ v_{0,0,1} - v_{1,0,1} - v_{0,1,1} + v_{1,1,1} & \ge 0,
		\\ 1 \le v_{0,0,1} & \le 2
	\end{align*} 
	that define $\cM_{ \{0,1\}^2 \times \{1\} } ([0,1]^2 \times [1,2])$. 
\end{ex} 

\begin{ex} 
	We give a small example that shows how to derive a semidefinite formulation of a  shifted chain. 
	
	Consider the shifted chain $\eta + P = \{ (0,1), (1,1), (2,1) , (3,1), (4,1)\}$ with  $\eta = (0,1)$ and $P = \{(0,0),(1,0),(2,0),(3,0),(4,0)\}$. We choose $X = [-1,1] \times [1,2]$. The moment body $\cM_P(X)$ is described by the two LMIs
	\begin{align*} 
		L_v ( \begin{pmatrix} 1 & x_1 & x_1^2
			\\ x_1 & x_1^2 & x_1^3
			\\ x_1^2 & x_1^3 & x_1^4 \end{pmatrix}  ) := \begin{pmatrix} 1 & v_{1,0} & v_{2,0} \\ v_{1,0} & v_{2,0} & v_{3,0} 
			\\ v_{2,0} & v_{3,0} & v_{4,0} \end{pmatrix} & \succeq 0, 
		\\ L_v ( (1-x_1^2) \begin{pmatrix} 1 & x_1 \\ x_1 & x_1^2 \end{pmatrix}  ) : = 
		\begin{pmatrix} 1 - v_{2,0} & v_{1,0} - v_{3,0} 
			\\ v_{1,0} -  v_{3,0} & v_{2,0} - v_{4,0} 
		\end{pmatrix} & \succeq 0 
	\end{align*} 
	Replacing $1$ by $v_{0,0}$, then shifting the indices by $(0,1)$ and introducing the constraint $1 \le v_{0,1} \le 2$, we arrive at the following description of $\cM_{\eta+ P}(X)$: 
	\begin{align*}
		\begin{pmatrix} v_{0,1} & v_{1,1} & v_{2,1} \\ v_{1,1} & v_{2,1} & v_{3,1} 
			\\ v_{2,1} & v_{3,1} & v_{4,1} \end{pmatrix} & \succeq 0, 
		\\ 
		\begin{pmatrix} v_{0,1} - v_{2,1} & v_{1,1} - v_{3,1} 
			\\ v_{1,1} -  v_{3,1} & v_{2,1} - v_{4,1} 
		\end{pmatrix} & \succeq 0,
		\\ 1 \le v_{0,1} & \le 2.
	\end{align*} 
\end{ex}

% ----------------------------
% -------------------------------------------------------------------------------
\subsection{Summary and Visualization of Patterns} \label{sec_visualization}

We have seen a variety of different patterns that can be used in our general convexification and sparsification framework.
It is possible to combine the above approaches to obtain a pattern family $F$. This can be done problem-specifically, by exploiting properties of the underlying exponent set $A$ to obtain a benefitial relation between computational cost and tightness of the relaxation. Examples of such customized pattern families combining multilinear patterns and shifted chains are visualized below and evaluated computationally in Section~\ref{sec_structuredresults}.

One additional advantage of the abstracted concept of monomial patterns is that it allows a general way of visualizing which monomials and which auxiliary lifted variables are involved.

\begin{ex}\label{rmk:pattern-plot}
We consider different exponent sets for $n=2$ for visualization,
\begin{align*}
A_1  &:= \{(0,0), (0,3), (3,0), (3,3)\}, \\
A_2  &:= \{(0,0), (1,1), (2,2), (3,3), (4,4), (5,5), (6,6)\}, \\
%A_3  &:= \{(0,4),(2,5),(2,8),(6,2)\}, \\
%A_4  &:= \{(0,2),(3,5),(6,8)\}, \\
A_3  &:= \{(0,0), (0,3), (0,6), (2,0), (2,3), (4,0)\}, \\ %formerly A_5
A_4  &:= \{(4,0), (4,1), (4,2), (4,3), (4,4), (4,5)\}, \\ %formerly A_6
\Aex &:= \{(0,2), (1,1), (2,3), (2,4), (4,0), (5,5)\}.
\end{align*}
Figure~\ref{fig_patterns} shows a visualization for them and some of the patterns discussed above.
In addition, three custom pattern families $F^1$, $F^2$, and $F^3$ based on combinations of multilinear patterns, chains, and shifted chains applied to $\Aex$ are also visualized in the last row of Figure~\ref{fig_patterns}. They will be used for numerical results in Section~\ref{sec_structuredresults}.
\end{ex}

\begin{figure}[h!!!]
 \begin{center}
\begin{tikzpicture}
\begin{groupplot}[
    axis line style = {black!70!white},
	group style     = {group size=3 by 1,
	xticklabels at  = edge bottom,
	yticklabels at  = edge left,
	ylabels at      = edge left,
%	horizontal sep  = 0.7cm,
	xlabels at      = edge bottom
	},
	width=\textwidth*(1/3),
	xtick              = {0,5},
	ytick              = {0,5},
	xmin               =-0.6,
	xmax               = 6.6,
    ymin               =-0.6,
    ymax               = 6.6,
    tick align         = center,
    xlabel             = $\alpha_1$,
	ylabel             = $\alpha_2$,
	y tick label style = {xshift=0.3em},
	x tick label style = {yshift=0.3em},
	ylabel style       = {rotate =-90, xshift= 1.2em},
	xlabel style       = {             yshift= 1.2em}
    ]
    %%%%%%%%%%%%%%%%%%%%%%%%%%%%%%%%%%%%%%%%%%%%%%%%%%%%%%%%%%%%%%
    \nextgroupplot[title={$\rmA_1$},title style={yshift=-1.5ex}]
    
    \foreach \x in {0,...,6}{
    \foreach \y in {0,...,6}{
    \addplot[only marks,mark size=1.2pt,black!40!white] coordinates {(\x,\y)} ;
    }}
    \addplot[only marks,mark size=1.2pt, mark=square*, red] coordinates {(3,3)} ;
    \addplot[only marks,mark size=1.2pt, mark=square*, red] coordinates {(3,0)} ;
    \addplot[only marks,mark size=1.2pt, mark=square*, red] coordinates {(0,3)} ;
    
    \addplot[only marks,mark size=1.2pt, mark=square*, red] coordinates {(0,0)} ;
    
    \addplot[smooth cycle, black!70!white] coordinates {
            (3,0)
            (3,3)
            (0,3)
            (0,0)
        }; 
 
    %%%%%%%%%%%%%%%%%%%%%%%%%%%%%%%%%%%%%%%%%%%%%%%%%%%%%%%%%%%%%%%
    \nextgroupplot[title={$\rmA_2$},title style={yshift=-1.5ex}]
    
    \foreach \x in {0,...,6}{
    \foreach \y in {0,...,6}{
    \addplot[only marks,mark size=1.2pt,black!40!white] coordinates {(\x,\y)} ;
    }}
    \addplot[only marks,mark size=1.2pt, mark=square*, red] coordinates {(0,0)} ;   
    \addplot[only marks,mark size=1.2pt, mark=square*, red] coordinates {(1,1)} ;
    \addplot[only marks,mark size=1.2pt, mark=square*, red] coordinates {(2,2)} ;
    \addplot[only marks,mark size=1.2pt, mark=square*, red] coordinates {(3,3)} ;
    \addplot[only marks,mark size=1.2pt, mark=square*, red] coordinates {(4,4)} ;
    \addplot[only marks,mark size=1.2pt, mark=square*, red] coordinates {(5,5)} ;
    \addplot[only marks,mark size=1.2pt, mark=square*, red] coordinates {(6,6)} ;
    
    \addplot[smooth cycle, black!70!white] coordinates {
            (1,0)
            (1,1)
            (0,1)
            (0,0)
        }; 
    \addplot[smooth cycle, black!70!white] coordinates {
            (2,0)
            (2,2)
            (0,2)
            (0,0)
        };        
    \addplot[smooth cycle, black!70!white] coordinates {
            (3,0)
            (3,3)
            (0,3)
            (0,0)
        };            
    \addplot[smooth cycle, black!70!white] coordinates {
            (4,0)
            (4,4)
            (0,4)
            (0,0)
        };    
    \addplot[smooth cycle, black!70!white] coordinates {
            (5,0)
            (5,5)
            (0,5)
            (0,0)
        };
    \addplot[smooth cycle, black!70!white] coordinates {
            (6,0)
            (6,6)
            (0,6)
            (0,0)
        };
    \addplot[only marks,mark size=1.2pt,blue] coordinates {(1,0)} ; 
    \addplot[only marks,mark size=1.2pt,blue] coordinates {(0,1)} ;
    
    \addplot[only marks,mark size=1.2pt,blue] coordinates {(2,0)} ; 
    \addplot[only marks,mark size=1.2pt,blue] coordinates {(0,2)} ;

    \addplot[only marks,mark size=1.2pt,blue] coordinates {(3,0)} ; 
    \addplot[only marks,mark size=1.2pt,blue] coordinates {(0,3)} ;    
    
    \addplot[only marks,mark size=1.2pt,blue] coordinates {(4,0)} ; 
    \addplot[only marks,mark size=1.2pt,blue] coordinates {(0,4)} ;
    
    \addplot[only marks,mark size=1.2pt,blue] coordinates {(5,0)} ; 
    \addplot[only marks,mark size=1.2pt,blue] coordinates {(0,5)} ;   
    
    \addplot[only marks,mark size=1.2pt,blue] coordinates {(6,0)} ; 
    \addplot[only marks,mark size=1.2pt,blue] coordinates {(0,6)} ; 
       
	%%%%%%%%%%%%%%%%%%%%%%%%%%%%%%%%%%%%%%%%%%%%%%%%%%%%%%%%%%%%%%%
    \nextgroupplot[title={$\Aex$},title style={yshift=-1.5ex}]
    
    \nextgroupplot[title={Expression Tree}]
    \foreach \x in {0,...,6}{
    \foreach \y in {0,...,6}{
    \addplot[only marks,mark size=1.2pt,black!40!white] coordinates {(\x,\y)} ;
    }}
    
    \addplot[only marks,mark size=1.2pt,blue] coordinates {(0,1)} ;
    \addplot[only marks,mark size=1.2pt,blue] coordinates {(1,0)} ;
    \addplot[smooth cycle, black!70!white] coordinates {
            (1,0)
            (1,1)
            (0,1)
            (0,0)
        };     
    
    \addplot[only marks,mark size=1.2pt,blue] coordinates {(0,3)} ;
    
    \addplot[only marks,mark size=1.2pt,blue] coordinates {(0,4)} ;
    \addplot[only marks,mark size=1.2pt,blue] coordinates {(2,0)} ;
    
    \addplot[only marks,mark size=1.2pt,blue] coordinates {(0,5)} ;
    
    \addplot[only marks,mark size=1.2pt,blue] coordinates {(0,2)} ;
    \addplot[only marks,mark size=1.2pt,blue] coordinates {(5,0)} ;

    \addplot[only marks,mark size=1.2pt, mark=square*, red] coordinates {(1,1)} ;
    \addplot[only marks,mark size=1.2pt, mark=square*, red] coordinates {(2,3)} ;
    \addplot[only marks,mark size=1.2pt, mark=square*, red] coordinates {(2,4)} ;

    \addplot[only marks,mark size=1.2pt,blue] coordinates {(0,0)} ;
       
    \addplot[smooth cycle, black!70!white] coordinates {
            (2,0)
            (2.4,1)
            (2.4,3)
            (2,4)
            (1,4.35)
            (0,4)
            (-0.4,3)
            (-0.4,1)
            (0,0)
            (1,-0.35)
        };
     
     \addplot[smooth, black!70!white] coordinates {
            (0,2)
            (0.2,1)
            (0,0)
        };
           
    \addplot[smooth cycle, black!70!white] coordinates {
            (2,0)
            (2.35,1)
            (2.35,2)
            (2,3)
            (1,3.35)
            (0,3)
            (-0.35,2)
            (-0.35,1)
            (0,0)
            (1,-0.35)
        };
        
    \addplot[smooth cycle, black!70!white] coordinates {
            (5,0)
            (5,5)
            (0,5)
            (0,0)
        }; 
     
     \addplot[smooth, black!70!white] coordinates {
            (4,0)
            (2,0.4)
            (0,0)
        };   
    \addplot[only marks,mark size=1.2pt, mark=square*, red] coordinates {(5,5)} ;
    \addplot[only marks,mark size=1.2pt, mark=square*, red] coordinates {(4,0)} ;
    \addplot[only marks,mark size=1.2pt, mark=square*, red] coordinates {(0,2)} ;
    \end{groupplot}
\end{tikzpicture}	
\end{center}
 \begin{center}
\begin{tikzpicture}
\begin{groupplot}[
    axis line style = {black!70!white},
	group style     = {group size=3 by 1,
	xticklabels at  = edge bottom,
	yticklabels at  = edge left,
	ylabels at      = edge left,
%	horizontal sep  = 0.7cm,
	xlabels at      = edge bottom
	},
	width=\textwidth*(1/3),
	xtick              = {0,10},
	ytick              = {0,10},
	xmin               =-0.6,
	xmax               = 10.6,
    ymin               =-0.6,
    ymax               = 10.6,
    tick align         = center,
    xlabel             = $\alpha_1$,
	ylabel             = $\alpha_2$,
	y tick label style = {xshift=0.3em},
	x tick label style = {yshift=0.3em},
	ylabel style       = {rotate =-90, xshift= 1.2em},
	xlabel style       = {             yshift= 1.2em}
    ]
    %%%%%%%%%%%%%%%%%%%%%%%%%%%%%%%%%%%%%%%%%%%%%%%%%%%%%%%%%%%%%%
	%%%%%%%%%%%%%%%%%%%%%%%%%%%%%%%%%%%%%%%%%%%%%%%%%%%%%%%%%%%%%%%
    \nextgroupplot[title={$\Aex$},title style={yshift=-1.5ex}]
%    \nextgroupplot[title={Expression Tree},title style={yshift=-1.5ex}]
    
    \foreach \x in {0,...,10}{
    \foreach \y in {0,...,10}{
    \addplot[only marks,mark size=1.2pt,black!40!white] coordinates {(\x,\y)} ;
    }}

    \addplot[only marks,mark size=1.2pt,blue] coordinates {(0,1)} ;
    \addplot[only marks,mark size=1.2pt,blue] coordinates {(1,0)} ;
    \addplot[smooth cycle, black!70!white] coordinates {
            (1,0)
            (1,1)
            (0,1)
            (0,0)
        };

    \addplot[only marks,mark size=1.2pt,blue] coordinates {(0,3)} ;

    \addplot[only marks,mark size=1.2pt,blue] coordinates {(0,4)} ;
    \addplot[only marks,mark size=1.2pt,blue] coordinates {(2,0)} ;
    
    \addplot[only marks,mark size=1.2pt,blue] coordinates {(0,5)} ;
    
    \addplot[only marks,mark size=1.2pt,blue] coordinates {(0,2)} ;
    \addplot[only marks,mark size=1.2pt,blue] coordinates {(5,0)} ;
  
    \addplot[only marks,mark size=1.2pt, mark=square*, red] coordinates {(1,1)} ;
    \addplot[only marks,mark size=1.2pt, mark=square*, red] coordinates {(2,3)} ;
    \addplot[only marks,mark size=1.2pt, mark=square*, red] coordinates {(2,4)} ;

    \addplot[only marks,mark size=1.2pt,blue] coordinates {(0,0)} ;
       
    \addplot[smooth cycle, black!70!white] coordinates {
            (2,0)
            (2.4,1)
            (2.4,3)
            (2,4)
            (1,4.35)
            (0,4)
            (-0.4,3)
            (-0.4,1)
            (0,0)
            (1,-0.35)
        };
     \addplot[smooth, black!70!white] coordinates {
            (0,4)
            (0.4,3)
            (0.4,2)
            (0,1)
        };         
     \addplot[smooth, black!70!white] coordinates {
            (2,0)
            (1,0)
        };    
    \addplot[smooth cycle, black!70!white] coordinates {
            (2,0)
            (2.35,1)
            (2.35,2)
            (2,3)
            (1,3.35)
            (0,3)
            (-0.35,2)
            (-0.35,1)
            (0,0)
            (1,-0.35)
        };
     \addplot[smooth, black!70!white] coordinates {
            (0,3)
            (0.25,2.5)
            (0.3,2)
            (0.25,1.5)
            (0,1)
        };          
        
    \addplot[smooth cycle, black!70!white] coordinates {
            (5,0)
            (5,5)
            (0,5)
            (0,0)
        }; 
     \addplot[smooth, black!70!white] coordinates {
            (5,0)
            (4,0.45)
            (3,0.55)
            (2,0.45)
            (1,0)
        };       
     \addplot[smooth, black!70!white] coordinates {
            (0,5)
            (0.45,4)
            (0.55,3)
            (0.45,2)
            (0,1)
        };
      \addplot[smooth, black!70!white] coordinates {
            (0,2)
            (0,1)
        };  
     \addplot[smooth, black!70!white] coordinates {
            (4,0)
            (3,0.4)
            (2,0.4)
            (1,0)
        };   
    \addplot[only marks,mark size=1.2pt, mark=square*, red] coordinates {(5,5)} ;
    \addplot[only marks,mark size=1.2pt, mark=square*, red] coordinates {(4,0)} ;
    \addplot[only marks,mark size=1.2pt, mark=square*, red] coordinates {(0,2)} ;
 
    %%%%%%%%%%%%%%%%%%%%%%%%%%%%%%%%%%%%%%%%%%%%%%%%%%%%%%%%%%%%%%%
    \nextgroupplot[title={$\Aex$},title style={yshift=-1.5ex}]
%    \nextgroupplot[title={Bound-Factor},title style={yshift=-1.5ex}]
    \foreach \x in {0,...,10}{
    \foreach \y in {0,...,10}{
    \addplot[only marks,mark size=1.2pt,black!40!white] coordinates {(\x,\y)} ;
    }}
    \foreach \x in {0,...,5}{
    \foreach \y in {0,...,5}{
    \addplot[only marks,mark size=1.2pt,blue] coordinates {(\x,\y)} ;
    }}
    
    \addplot[only marks,mark size=1.2pt, mark=square*, red] coordinates {(1,1)} ;
    \addplot[only marks,mark size=1.2pt, mark=square*, red] coordinates {(2,3)} ;
    \addplot[only marks,mark size=1.2pt, mark=square*, red] coordinates {(2,4)} ;
    \addplot[only marks,mark size=1.2pt, mark=square*, red] coordinates {(5,5)} ;
    \addplot[only marks,mark size=1.2pt, mark=square*, red] coordinates {(4,0)} ;
    \addplot[only marks,mark size=1.2pt, mark=square*, red] coordinates {(0,2)} ;
 
    \addplot[smooth, black!70!white] coordinates {
        (0,0)
        (0,5)
        (1,5)
        (1,0)
        (2,0)
        (2,5)        
        (3,5)
        (3,0)
        (4,0)
        (4,5)
        (5,5)
        (5,0)
      };

	%%%%%%%%%%%%%%%%%%%%%%%%%%%%%%%%%%%%%%%%%%%%%%%%%%%%%%%%%%%%%%%
    \nextgroupplot[title={$\Aex$},title style={yshift=-1.5ex}]
%    \nextgroupplot[title={Moment Relaxation},title style={yshift=-1.5ex}]
    
    \foreach \x in {0,...,10}{
    \foreach \y in {0,...,10}{
    \addplot[only marks,mark size=1.2pt,black!40!white] coordinates {(\x,\y)} ;
    }}
    
    \foreach \x in {0,...,10}{
    \addplot[only marks,mark size=1.2pt,blue] coordinates {(\x,0)} ;
    }
    \foreach \x in {0,...,9}{
    \addplot[only marks,mark size=1.2pt,blue] coordinates {(\x,1)} ;
    }
    \foreach \x in {0,...,8}{
    \addplot[only marks,mark size=1.2pt,blue] coordinates {(\x,2)} ;
    }
    \foreach \x in {0,...,7}{
    \addplot[only marks,mark size=1.2pt,blue] coordinates {(\x,3)} ;
    }    
    \foreach \x in {0,...,6}{
    \addplot[only marks,mark size=1.2pt,blue] coordinates {(\x,4)} ;
    } 
    \foreach \x in {0,...,5}{
    \addplot[only marks,mark size=1.2pt,blue] coordinates {(\x,5)} ;
    }     
    \foreach \x in {0,...,4}{
    \addplot[only marks,mark size=1.2pt,blue] coordinates {(\x,6)} ;
    } 
    \foreach \x in {0,...,3}{
    \addplot[only marks,mark size=1.2pt,blue] coordinates {(\x,7)} ;
    } 
    \foreach \x in {0,...,2}{
    \addplot[only marks,mark size=1.2pt,blue] coordinates {(\x,8)} ;
    }    
    \foreach \x in {0,...,1}{
    \addplot[only marks,mark size=1.2pt,blue] coordinates {(\x,9)} ;
    }
    \foreach \x in {0,...,0}{
    \addplot[only marks,mark size=1.2pt,blue] coordinates {(\x,10)} ;
    }        
    \addplot[only marks,mark size=1.2pt, mark=square*, red] coordinates {(1,1)} ;
    \addplot[only marks,mark size=1.2pt, mark=square*, red] coordinates {(2,3)} ;
    \addplot[only marks,mark size=1.2pt, mark=square*, red] coordinates {(2,4)} ;
    \addplot[only marks,mark size=1.2pt, mark=square*, red] coordinates {(5,5)} ;
    \addplot[only marks,mark size=1.2pt, mark=square*, red] coordinates {(4,0)} ;
    \addplot[only marks,mark size=1.2pt, mark=square*, red] coordinates {(0,2)} ;
    
    \addplot[smooth, black!70!white] coordinates {
            (0,0)
            (9,0)
            (10,0)
            (9,1)
            (8,1)
            (1,1)
            (0,1)
            (0,2)
            (1,2)
            (7,2)
            (8,2)
            (7,3)
            (6,3)
            (1,3)
            (0,3)
            (0,4)
            (1,4)
            (5,4)
            (6,4)
            (5,5)
            (4,5)
            (1,5)
            (0,5)
            (0,6)
            (1,6)
            (3,6)
            (4,6)
            (3,7)
            (2,7)
            (1,7)
            (0,7)
            (0,8)
            (1,8)
            (2,8)
            (1,9)
            (0,9)
            (0,10)
        };   
    \end{groupplot}
\end{tikzpicture}	
\end{center}
 \begin{center}
\begin{tikzpicture}
\begin{groupplot}[
    axis line style={black!70!white},
	group style={group size=3 by 1,
	xticklabels at = edge bottom,
	yticklabels at = edge left,
	ylabels at=edge left,
%	horizontal sep = 0.7cm
	xlabels at=edge bottom,
	},
	width=\textwidth*(1/3),
	xtick              = {0,6},
	ytick              = {0,6},
	xmin               =-0.6,
	xmax               = 6.6,
    ymin               =-0.6,
    ymax               = 6.6,
    tick align         = center,
    xlabel             = $\alpha_1$,
	ylabel             = $\alpha_2$,
	y tick label style = {xshift=0.3em},
	x tick label style = {yshift=0.3em},
	ylabel style       = {rotate =-90, xshift= 1.2em},
	xlabel style       = {             yshift= 1.2em}
    ]
    
    %%%%%%%%%%%%%%%%%%%%%%%%%%%%%%%%%%%%%%%%%%%%%%%%%%%%%%%%%%%%%%%
    \nextgroupplot[title={$A_2$},title style={yshift=-1.5ex}]
    
    \foreach \x in {0,...,6}{
    \foreach \y in {0,...,6}{
    \addplot[only marks,mark size=1.2pt,black!40!white] coordinates {(\x,\y)} ;
    }}
    \addplot[only marks,mark size=1.2pt, mark=square*, red] coordinates {(0,0)} ;
    \addplot[only marks,mark size=1.2pt, mark=square*, red] coordinates {(1,1)} ;
    \addplot[only marks,mark size=1.2pt, mark=square*, red] coordinates {(2,2)} ;
    \addplot[only marks,mark size=1.2pt, mark=square*, red] coordinates {(3,3)} ;
    \addplot[only marks,mark size=1.2pt, mark=square*, red] coordinates {(4,4)} ;
    \addplot[only marks,mark size=1.2pt, mark=square*, red] coordinates {(5,5)} ;
    \addplot[only marks,mark size=1.2pt, mark=square*, red] coordinates {(6,6)} ;

    \addplot[smooth, black!70!white] coordinates {
            (0,0)
            (6,6)};

%	%%%%%%%%%%%%%%%%%%%%%%%%%%%%%%%%%%%%%%%%%%%%%%%%%%%%%%%%%%%%%%%
    \nextgroupplot[title={$A_3$},title style={yshift=-1.5ex}]

    \foreach \x in {0,...,6}{
    \foreach \y in {0,...,6}{
    \addplot[only marks,mark size=1.2pt,black!40!white] coordinates {(\x,\y)} ;
    }}
    
    \addplot[only marks,mark size=1.2pt, mark=square*, red] coordinates {(0,0)} ;
    \addplot[only marks,mark size=1.2pt, mark=square*, red] coordinates {(0,3)} ;
    \addplot[only marks,mark size=1.2pt, mark=square*, red] coordinates {(0,6)} ;
    \addplot[only marks,mark size=1.2pt, mark=square*, red] coordinates {(2,0)} ;
    \addplot[only marks,mark size=1.2pt, mark=square*, red] coordinates {(4,0)} ;
    \addplot[only marks,mark size=1.2pt, mark=square*, red] coordinates {(2,3)} ;

    \addplot[smooth cycle, black!70!white, tension=0.2] coordinates {
            (0,0)
            (0,3)
            (0,6)
            (2,3)
            (4,0)
            (2,0)
            (0,0)
        };
 
%	%%%%%%%%%%%%%%%%%%%%%%%%%%%%%%%%%%%%%%%%%%%%%%%%%%%%%%%%%%%%%%%       
    \nextgroupplot[title={$A_1$},title style={yshift=-1.5ex}]
    
    \foreach \x in {0,...,6}{
    \foreach \y in {0,...,6}{
    \addplot[only marks,mark size=1.2pt,black!40!white] coordinates {(\x,\y)} ;
    }}
    \addplot[only marks,mark size=1.2pt, mark=square*, red] coordinates {(0,0)} ;
    \addplot[only marks,mark size=1.2pt, mark=square*, red] coordinates {(3,3)} ;
    \addplot[only marks,mark size=1.2pt, mark=square*, red] coordinates {(3,0)} ;
    \addplot[only marks,mark size=1.2pt, mark=square*, red] coordinates {(0,3)} ;

    \addplot[smooth, black!70!white] coordinates {
            (0,0)
            (3,3)
        }; 
    \addplot[smooth, black!70!white] coordinates {
            (0,0)
            (0,3)
        }; 
    \addplot[smooth, black!70!white] coordinates {
            (0,0)
            (3,0)
        };

\end{groupplot}
\end{tikzpicture}	
\end{center}		
 \begin{center}
\begin{tikzpicture}
\begin{groupplot}[
    axis line style={black!70!white},
	group style={group size=3 by 1,
	xticklabels at = edge bottom,
	yticklabels at = edge left,
	ylabels at=edge left,
%	horizontal sep = 0.7cm
	xlabels at=edge bottom,
	},
	width=\textwidth*(1/3),
	xtick              = {0,5},
	ytick              = {0,5},
	xmin               =-0.6,
	xmax               = 5.6,
    ymin               =-0.6,
    ymax               = 5.6,
    tick align         = center,
    xlabel             = $\alpha_1$,
	ylabel             = $\alpha_2$,
	y tick label style = {xshift=0.3em},
	x tick label style = {yshift=0.3em},
	ylabel style       = {rotate =-90, xshift= 1.2em},
	xlabel style       = {             yshift= 1.2em}
    ]
    
    %%%%%%%%%%%%%%%%%%%%%%%%%%%%%%%%%%%%%%%%%%%%%%%%%%%%%%%%%%%%%%%
    \nextgroupplot[title={$A_4$ },title style={yshift=-1.5ex}]
    
    \foreach \x in {0,...,5}{
    \foreach \y in {0,...,5}{
    \addplot[only marks,mark size=1.2pt,black!40!white] coordinates {(\x,\y)} ;
    }}
    \addplot[only marks,mark size=1.2pt, mark=square*, red] coordinates {(4,0)} ;
    \addplot[only marks,mark size=1.2pt, mark=square*, red] coordinates {(4,1)} ;
    \addplot[only marks,mark size=1.2pt, mark=square*, red] coordinates {(4,2)} ;
    \addplot[only marks,mark size=1.2pt, mark=square*, red] coordinates {(4,3)} ;
    \addplot[only marks,mark size=1.2pt, mark=square*, red] coordinates {(4,4)} ;
    \addplot[only marks,mark size=1.2pt, mark=square*, red] coordinates {(4,5)} ;   

    \addplot[smooth, black!70!white] coordinates {
            (4,0)
            (4,5)
        };
    
    %%%%%%%%%%%%%%%%%%%%%%%%%%%%%%%%%%%%%%%%%%%%%%%%%%%%%%%%%%%%%%%
    \nextgroupplot[title={$A_1$},title style={yshift=-1.5ex}]
    
    \foreach \x in {0,...,5}{
    \foreach \y in {0,...,5}{
    \addplot[only marks,mark size=1.2pt,black!40!white] coordinates {(\x,\y)} ;
    }}
    \addplot[only marks,mark size=1.2pt, mark=square*, red] coordinates {(0,0)} ;
    \addplot[only marks,mark size=1.2pt, mark=square*, red] coordinates {(3,3)} ;
    \addplot[only marks,mark size=1.2pt, mark=square*, red] coordinates {(3,0)} ;
    \addplot[only marks,mark size=1.2pt, mark=square*, red] coordinates {(0,3)} ;
    
    \addplot[only marks,mark size=1.2pt,blue] coordinates {(1,0)} ;
    \addplot[only marks,mark size=1.2pt,blue] coordinates {(2,0)} ;
    
    \addplot[only marks,mark size=1.2pt,blue] coordinates {(0,1)} ;
    \addplot[only marks,mark size=1.2pt,blue] coordinates {(1,1)} ;
    \addplot[only marks,mark size=1.2pt,blue] coordinates {(2,1)} ;
    \addplot[only marks,mark size=1.2pt,blue] coordinates {(3,1)} ;
    
    \addplot[only marks,mark size=1.2pt,blue] coordinates {(0,2)} ;
    \addplot[only marks,mark size=1.2pt,blue] coordinates {(1,2)} ;
    \addplot[only marks,mark size=1.2pt,blue] coordinates {(2,2)} ;
    \addplot[only marks,mark size=1.2pt,blue] coordinates {(3,2)} ;
    
    \addplot[only marks,mark size=1.2pt,blue] coordinates {(1,3)} ;
    \addplot[only marks,mark size=1.2pt,blue] coordinates {(2,3)} ;

    \addplot[smooth, black!70!white] coordinates {
            (0,0)
            (0,3.3)
        }; 
        
    \addplot[smooth, black!70!white] coordinates {
            (1,0)
            (1,3.3)
        }; 
        
    \addplot[smooth, black!70!white] coordinates {
            (2,0)
            (2,3.3)
        }; 
        
    \addplot[smooth, black!70!white] coordinates {
            (3,0)
            (3,3.3)
        }; 
        
    \addplot[smooth, black!70!white] coordinates {
            (0  ,0)
            (3.3,0)
        };        

    \addplot[smooth, black!70!white] coordinates {
            (0  ,1)
            (3.3,1)
        };  

    \addplot[smooth, black!70!white] coordinates {
            (0  ,2)
            (3.3,2)
        };  
        
    \addplot[smooth, black!70!white] coordinates {
            (0  ,3)
            (3.3,3)
        };  
	%%%%%%%%%%%%%%%%%%%%%%%%%%%%%%%%%%%%%%%%%%%%%%%%%%%%%%%%%%%%%%%
	
    \nextgroupplot[title={$\Aex$},title style={yshift=-1.5ex}]
    
    \nextgroupplot[title={Expression Tree}]
    \foreach \x in {0,...,5}{
    \foreach \y in {0,...,5}{
    \addplot[only marks,mark size=1.2pt,black!40!white] coordinates {(\x,\y)} ;
    }}
    
    \addplot[only marks,mark size=1.2pt, mark=square*, red] coordinates {(1,1)} ;
    \addplot[only marks,mark size=1.2pt, mark=square*, red] coordinates {(2,3)} ;
    \addplot[only marks,mark size=1.2pt, mark=square*, red] coordinates {(2,4)} ;
    \addplot[only marks,mark size=1.2pt, mark=square*, red] coordinates {(5,5)} ;
    \addplot[only marks,mark size=1.2pt, mark=square*, red] coordinates {(4,0)} ;
    \addplot[only marks,mark size=1.2pt, mark=square*, red] coordinates {(0,2)} ;    

    \addplot[only marks,mark size=1.2pt,blue] coordinates {(2,5)} ;    
    \addplot[only marks,mark size=1.2pt,blue] coordinates {(2,2)} ;
    \addplot[only marks,mark size=1.2pt,blue] coordinates {(2,1)} ;
    \addplot[only marks,mark size=1.2pt,blue] coordinates {(2,0)} ;
    \addplot[smooth, black!70!white] coordinates {
            (2,0)
            (2,5.3)
        };
        
    \addplot[only marks,mark size=1.2pt,blue] coordinates {(1,0)} ;
    \addplot[only marks,mark size=1.2pt,blue] coordinates {(3,0)} ;
    \addplot[smooth, black!70!white] coordinates {
            (0  ,0)
            (4.3,0)
        };  
        
    \addplot[only marks,mark size=1.2pt,blue] coordinates {(0,0)} ;
    \addplot[only marks,mark size=1.2pt,blue] coordinates {(3,3)} ;
    \addplot[only marks,mark size=1.2pt,blue] coordinates {(4,4)} ;
    \addplot[smooth, black!70!white] coordinates {
            (0,0)
            (5,5)
        };

    \addplot[only marks,mark size=1.2pt,blue] coordinates {(0,1)} ;
    \addplot[only marks,mark size=1.2pt,blue] coordinates {(1,2)} ;

    \addplot[smooth, black!70!white] coordinates {
            (0,0)
            (0,2)
        };
    \addplot[smooth, black!70!white] coordinates {
            (1,0)
            (1,2)
        };
        
    \addplot[smooth, black!70!white] coordinates {
            (0,1)
            (2,1)
        };
    \addplot[smooth, black!70!white] coordinates {
            (0,2)
            (2,2)
        };  
\end{groupplot}
\end{tikzpicture}	
\end{center}	
 \begin{center}
\begin{tikzpicture}
\begin{groupplot}[
    axis line style = {black!70!white},
	group style     = {group size=3 by 1,
	xticklabels at  = edge bottom,
	yticklabels at  = edge left,
	ylabels at      = edge left,
%	horizontal sep  = 0.7cm,
	xlabels at      = edge bottom
	},
	width=\textwidth*(1/3),
	xtick              = {0,5},
	ytick              = {0,5},
	xmin               =-0.6,
	xmax               = 5.6,
    ymin               =-0.6,
    ymax               = 5.6,
    tick align         = center,
    xlabel             = $\alpha_1$,
	ylabel             = $\alpha_2$,
	y tick label style = {xshift=0.3em},
	x tick label style = {yshift=0.3em},
	ylabel style       = {rotate =-90, xshift= 1.2em},
	xlabel style       = {             yshift= 1.2em}
    ]
    %%%%%%%%%%%%%%%%%%%%%%%%%%%%%%%%%%%%%%%%%%%%%%%%%%%%%%%%%%%%%%%
    \nextgroupplot[title={$\Aex$}]

    \foreach \x in {0,...,10}{
    \foreach \y in {0,...,10}{
    \addplot[only marks,mark size=1.5pt,black!40!white] coordinates {(\x,\y)} ;
    }}
    \addplot[only marks,mark size=1.5pt,red] coordinates {(1,1)} ;
    \addplot[only marks,mark size=1.5pt,red] coordinates {(2,3)} ;
    \addplot[only marks,mark size=1.5pt,red] coordinates {(2,4)} ;
    \addplot[only marks,mark size=1.5pt,red] coordinates {(5,5)} ;
    \addplot[only marks,mark size=1.5pt,red] coordinates {(4,0)} ;
    \addplot[only marks,mark size=1.5pt,red] coordinates {(0,2)} ;    
 
    \addplot[only marks,mark size=1.5pt,blue] coordinates {(4,4)};
    \addplot[only marks,mark size=1.5pt,blue] coordinates {(3,3)};
    \addplot[only marks,mark size=1.5pt,blue] coordinates {(2,2)};
    \addplot[only marks,mark size=1.5pt,blue] coordinates {(0,0)};
    
    \addplot[only marks,mark size=1.5pt,blue] coordinates {(2,0)} ;
    \addplot[only marks,mark size=1.5pt,blue] coordinates {(2,1)} ;
    \addplot[only marks,mark size=1.5pt,blue] coordinates {(2,5)} ;
    
    \addplot[only marks,mark size=1.5pt,blue] coordinates {(0,1)} ;
    \addplot[only marks,mark size=1.5pt,blue] coordinates {(1,0)} ;
    \addplot[only marks,mark size=1.5pt,blue] coordinates {(1,5)} ;
    \addplot[only marks,mark size=1.5pt,blue] coordinates {(4,5)} ;
    \addplot[only marks,mark size=1.5pt,blue] coordinates {(3,5)} ;
    
    \addplot[only marks,mark size=1.5pt,blue] coordinates {(3,0)} ;
    \addplot[only marks,mark size=1.5pt,blue] coordinates {(5,0)} ;
    \addplot[only marks,mark size=1.5pt,blue] coordinates {(0,4)} ;
    \addplot[only marks,mark size=1.5pt,blue] coordinates {(0,3)} ;
    \addplot[only marks,mark size=1.5pt,blue] coordinates {(0,5)} ;
    
    \addplot[smooth, black!70!white] coordinates {
            (0,0)
            (5,5)
        }; 
    \addplot[smooth, black!70!white] coordinates {
            (0,0)
            (0,5)
        };
    \addplot[smooth, black!70!white] coordinates {
            (0,0)
            (5,0)
        };
    \addplot[smooth, black!70!white] coordinates {
            (2,0)
            (2,5)
        };
    \addplot[smooth, black!70!white] coordinates {
            (0,5)
            (5,5)
        };
    \addplot[smooth cycle, black!70!white] coordinates {
            (1,0)
            (1,1)
            (0,1)
            (0,0)
        }; 
    \addplot[smooth cycle, black!70!white] coordinates {
            (5,0)
            (5,5)
            (0,5)
            (0,0)
        };
    %%%%%%%%%%%%%%%%%%%%%%%%%%%%%%%%%%%%%%%%%%%%%%%%%%%%%%%%%%%%%%%
    \nextgroupplot[title={$\Aex$}]
    
    \foreach \x in {0,...,10}{
    \foreach \y in {0,...,10}{
    \addplot[only marks,mark size=1.5pt,black!40!white] coordinates {(\x,\y)} ;
    }}
    \addplot[only marks,mark size=1.5pt,red] coordinates {(1,1)} ;
    \addplot[only marks,mark size=1.5pt,red] coordinates {(2,3)} ;
    \addplot[only marks,mark size=1.5pt,red] coordinates {(2,4)} ;
    \addplot[only marks,mark size=1.5pt,red] coordinates {(5,5)} ;
    \addplot[only marks,mark size=1.5pt,red] coordinates {(4,0)} ;
    \addplot[only marks,mark size=1.5pt,red] coordinates {(0,2)} ;    
 
%    \addplot[only marks,mark size=1.5pt,blue] coordinates {(4,4)};
%    \addplot[only marks,mark size=1.5pt,blue] coordinates {(3,3)};
%    \addplot[only marks,mark size=1.5pt,blue] coordinates {(2,2)};
    \addplot[only marks,mark size=1.5pt,blue] coordinates {(0,0)};
    
    \addplot[only marks,mark size=1.5pt,blue] coordinates {(2,0)} ;
%    \addplot[only marks,mark size=1.5pt,blue] coordinates {(2,1)} ;
%    \addplot[only marks,mark size=1.5pt,blue] coordinates {(2,5)} ;
    
    \addplot[only marks,mark size=1.5pt,blue] coordinates {(0,1)} ;
    \addplot[only marks,mark size=1.5pt,blue] coordinates {(1,0)} ;
%    \addplot[only marks,mark size=1.5pt,blue] coordinates {(1,5)} ;
%    \addplot[only marks,mark size=1.5pt,blue] coordinates {(4,5)} ;
%    \addplot[only marks,mark size=1.5pt,blue] coordinates {(3,5)} ;
    
    \addplot[only marks,mark size=1.5pt,blue] coordinates {(3,0)} ;
    \addplot[only marks,mark size=1.5pt,blue] coordinates {(5,0)} ;
    \addplot[only marks,mark size=1.5pt,blue] coordinates {(0,4)} ;
    \addplot[only marks,mark size=1.5pt,blue] coordinates {(0,3)} ;
    \addplot[only marks,mark size=1.5pt,blue] coordinates {(0,5)} ;
    
%    \addplot[smooth, black!70!white] coordinates {
%            (0,0)
%            (5,5)
%        }; 
    \addplot[smooth, black!70!white] coordinates {
            (0,0)
            (0,5)
        };
    \addplot[smooth, black!70!white] coordinates {
            (0,0)
            (5,0)
        };
    \addplot[smooth cycle, black!70!white] coordinates {
            (1,0)
            (1,1)
            (0,1)
            (0,0)
        };
    \addplot[smooth cycle, black!70!white] coordinates {
            (2,0)
            (2.4,1)
            (2.4,3)
            (2,4)
            (1,4.35)
            (0,4)
            (-0.4,3)
            (-0.4,1)
            (0,0)
            (1,-0.35)
        };
    \addplot[smooth cycle, black!70!white] coordinates {
            (2,0)
            (2.35,1)
            (2.35,2)
            (2,3)
            (1,3.35)
            (0,3)
            (-0.35,2)
            (-0.35,1)
            (0,0)
            (1,-0.35)
        };
    \addplot[smooth cycle, black!70!white] coordinates {
            (5,0)
            (5,5)
            (0,5)
            (0,0)
        };            
    %%%%%%%%%%%%%%%%%%%%%%%%%%%%%%%%%%%%%%%%%%%%%%%%%%%%%%%%%%%%%%%
    \nextgroupplot[title={$\Aex$}] 
    \foreach \x in {0,...,10}{
    \foreach \y in {0,...,10}{
    \addplot[only marks,mark size=1.5pt,black!40!white] coordinates {(\x,\y)} ;
    }}
    \addplot[only marks,mark size=1.5pt,red] coordinates {(1,1)} ;
    \addplot[only marks,mark size=1.5pt,red] coordinates {(2,3)} ;
    \addplot[only marks,mark size=1.5pt,red] coordinates {(2,4)} ;
    \addplot[only marks,mark size=1.5pt,red] coordinates {(5,5)} ;
    \addplot[only marks,mark size=1.5pt,red] coordinates {(4,0)} ;
    \addplot[only marks,mark size=1.5pt,red] coordinates {(0,2)} ;    
 
    \addplot[only marks,mark size=1.5pt,blue] coordinates {(4,4)};
    \addplot[only marks,mark size=1.5pt,blue] coordinates {(3,3)};
    \addplot[only marks,mark size=1.5pt,blue] coordinates {(2,2)};
    \addplot[only marks,mark size=1.5pt,blue] coordinates {(0,0)};
    
    \addplot[only marks,mark size=1.5pt,blue] coordinates {(2,0)} ;
    \addplot[only marks,mark size=1.5pt,blue] coordinates {(2,1)} ;
    \addplot[only marks,mark size=1.5pt,blue] coordinates {(2,5)} ;
    
    \addplot[only marks,mark size=1.5pt,blue] coordinates {(0,1)} ;
    \addplot[only marks,mark size=1.5pt,blue] coordinates {(1,0)} ;
    \addplot[only marks,mark size=1.5pt,blue] coordinates {(1,5)} ;
    \addplot[only marks,mark size=1.5pt,blue] coordinates {(4,5)} ;
    \addplot[only marks,mark size=1.5pt,blue] coordinates {(3,5)} ;
    
    \addplot[only marks,mark size=1.5pt,blue] coordinates {(3,0)} ;
    \addplot[only marks,mark size=1.5pt,blue] coordinates {(5,0)} ;
    \addplot[only marks,mark size=1.5pt,blue] coordinates {(0,4)} ;
    \addplot[only marks,mark size=1.5pt,blue] coordinates {(0,3)} ;
    \addplot[only marks,mark size=1.5pt,blue] coordinates {(0,5)} ;
    
    \addplot[smooth, black!70!white] coordinates {
            (0,0)
            (0,5)
        };
    \addplot[smooth, black!70!white] coordinates {
            (0,0)
            (5,0)
        };
    \addplot[smooth cycle, black!70!white] coordinates {
            (1,0)
            (1,1)
            (0,1)
            (0,0)
        };
    \addplot[smooth, black!70!white] coordinates {
            (0,5)
            (5,5)
        };
    \addplot[smooth, black!70!white] coordinates {
            (0,5)
            (5,5)
        }; 
    \addplot[smooth, black!70!white] coordinates {
            (0,0)
            (5,5)
        };  
    \addplot[smooth, black!70!white] coordinates {
            (2,0)
            (2,5)
        };
    \addplot[smooth, black!70!white] coordinates {
            (0,4)
            (1,4.35)
            (2,4)
            (3,3.65)
            (4,4)
        };            
    \addplot[smooth cycle, black!70!white] coordinates {
            (2,0)
            (2.4,1)
            (2.4,3)
            (2,4)
            (1,4.2)
            (0,4)
            (-0.4,3)
            (-0.4,1)
            (0,0)
            (1,-0.2)
        };
    \addplot[smooth cycle, black!70!white] coordinates {
            (2,0)
            (2.35,1)
            (2.35,2)
            (2,3)
            (1,3.2)
            (0,3)
            (-0.35,2)
            (-0.35,1)
            (0,0)
            (1,-0.2)
        };
    \addplot[smooth cycle, black!70!white] coordinates {
            (5,0)
            (5,5)
            (0,5)
            (0,0)
        };                
    \end{groupplot}
\end{tikzpicture}	
\end{center}
 \caption{
Visualization of involved variables $v_\alpha$ in examplary patterns. The title of each subplot refers to the corresponding set $A$ from Example~\ref{rmk:pattern-plot}. Exponents $\alpha \in A \subset \N^2$ are depicted as red squares. Pattern-specific auxiliary exponents $\alpha \in \cup_i P_i$ are depicted as blue dots. Curves connect all exponents $\alpha \in P_i$. As a rule of thumb, the largest cardinality $| P_i |$ has a strong influence on the overall runtime to solve \eqref{P-RLXa}.\\
\textbf{First row:} multilinear patterns with $P = \{0,1\}^2$, Section~\ref{sec_polyhedralbodies}\\ %{sec_lprelaxations}
\textbf{Second row:} expression tree, Section~\ref{sec_expression}, bound-factor product, Section~\ref{sec_polyhedralbodies}, and moment relaxation, Section~\ref{sec_sdp} \\
%	\textbf{Left:}  pattern family induced by the expression tree from \cref{fig:symbolic-reformulation}, i.e., $\mF=\{\ML(\alpha,\{0,1\}^2):\alpha\in\{(1,1),(2,3),(2,4),(5,5)\} \}\cup \{ \{(0,1),(0,i)\}:i\in\{2,3,4,5\}\}\cup\{\{(1,0),(i,0)\}:i\in\{2,4,5\}\}$; 
%	\textbf{Middle:} pattern corresponding to the bound-factor product $\BF((5,5))$ applied to $\Aex$; 
%	\textbf{Right:} pattern of the lowest hierarchy level of the  moment relaxation $\N^2_{10}$ applied to $\Aex$.}\label{fig:tree-bf-moment}
\textbf{Third row:} different truncated submonoids, Section~\ref{deriving:from:mon:subs}\\
%    \textbf{Left:} univariate truncated submonoid  $\TS(\1,\N_6)=A_2$; 
%    \textbf{Middle:} bivariate truncated submonoid $\TS((2,0),(0,3)),\N^2_2)$;
%    \textbf{Right:} pattern family $\mF=\{\TS((0,3),\N_1),\TS((3,0),\N_1),\TS((3,3),\N_1)\}$ for $A_1$.}\label{fig:chains}
\textbf{Fourth row:} different shifted chains, Section~\ref{sec_shifting} \\
%  \textbf{Left:} shifted chain $(4,0)+\CH(\bfe^2,5)=A_6$;
%  \textbf{Middle:} pattern family $\mF=\{(i,0)+\CH(\bfe^2,3):i\in [3]_0\}\cup\{(0,i)+\CH(\bfe^1,3): i\in [3]_0\}$ for $A_1$;
%  \textbf{Right:} pattern family $\mF=\{\CH(\1,5),\CH(\bfe^2,2), \CH(\bfe^1,4),(2,0)+\CH(\bfe^2,5),\bfe^2+\CH(\bfe^1,2),(0,2)+\CH(\bfe^1,2)$ for $\Aex$.}\label{fig:shifted-chains}	 
\textbf{Fifth row:} Families $F^1$, $F^2$, $F^3$ combining multilinear and shifted chains patterns\\
%  \textbf{Left:} the pattern family $\mF^1=\{\ML(\alpha,\{0,1\}^2):\alpha\in\{(1,1),(5,5)\}\}\cup\{\CH(\alpha,5): \alpha\in\{\bfe^1,\bfe^2,\1^2\}\}\cup\{(0,5) + \CH(\bfe^1,5),(2,0) + \CH(\bfe^2,5)\}$;
% \textbf{Middle:} the pattern family $\mF^2=\{\ML(\alpha,\{0,1\}^2):\alpha\in\{(1,1), (2,3), (2,4), (5,5)\}\}\cup\{\CH(\alpha,5): \alpha\in\{\bfe^1,\bfe^2\}\}$; 
%  \textbf{Right:} the pattern family $\mF^3=\{\ML(\alpha,\{0,1\}^2):\alpha\in\{(1,1), (2,3), (2,4), (5,5)\}\}\cup\{\CH(\alpha,5): \alpha\in\{\bfe^1,\bfe^2,\1^2\}\}\cup \{(0,5) + \CH(\bfe^1,5), (2,0) + \CH(\bfe^2,5), (0,4) + \CH(2\bfe^1,2)\}$.
%\textbf{Sixth row:} SONC and SDSOS\\
%	\textbf{Left:} circuit pattern (sonc) for $A_3 = \{(0,4),(2,5),(2,8),(6,2)\}$; 
%	\textbf{Right:} sdsos pattern for $A_4 = \{(0,2),(3,5),(6,8)\}$.}\label{fig:dsos-circuit-pattern}
}
\label{fig_patterns}
\end{figure}
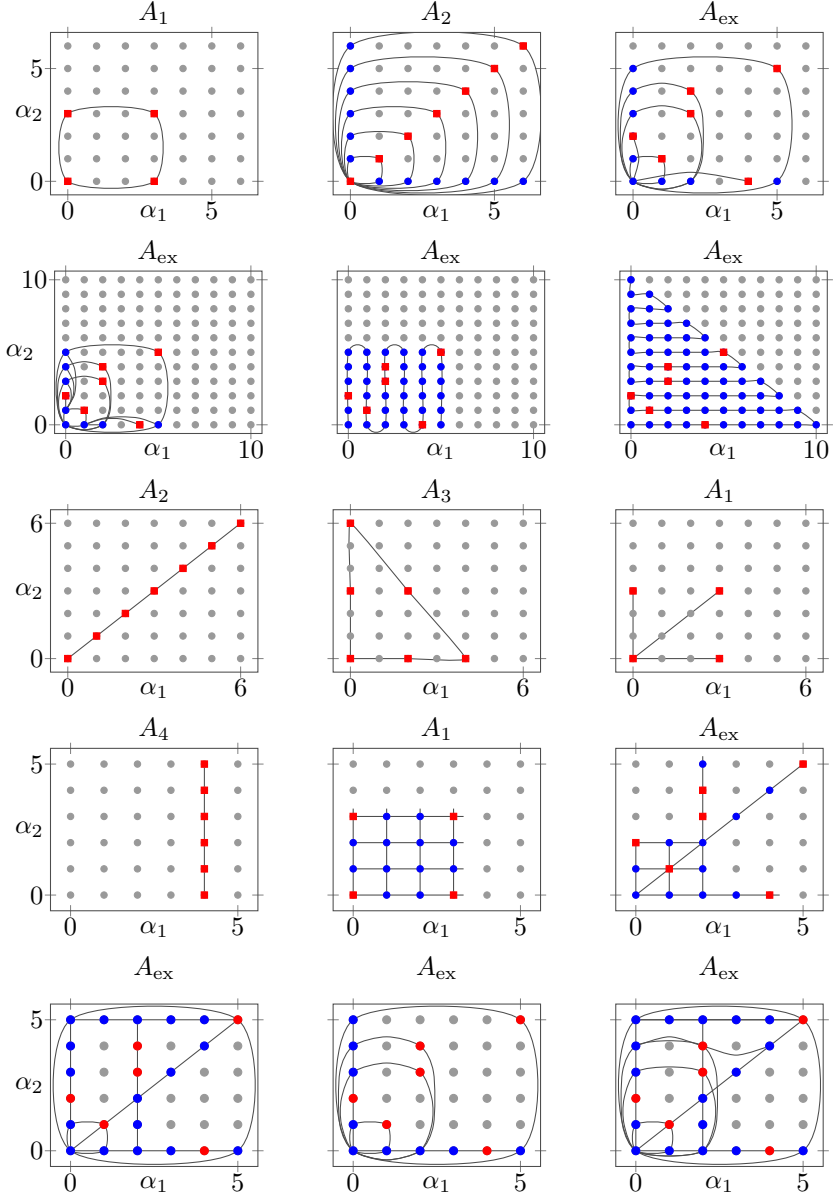
\clearpage

% ---------------------------------------------------------
% ---------------------------------------------------------
\section{Conic pattern relaxations and their duality} \label{sec_conic}

We now focus on a different aspect of polynomial optimization -- duality. Complementary, but also based on the concept of sparsification, we shall see how patterns corresponding to some popular dual approaches in polynomial optimization arise from a unifying, general framework.

% -------------------------------------------------------------------------------
% -------------------------------------------------------------------------------
\subsection{Conic pattern relaxations} 

We introduce a conic analog of \eqref{P-RLXa}. To this end, analogously to how we defined  the moment body $\cM_A(X)$ using the convex hull, we introduce the \emph{moment cone} $\cC_A(X) :=\overline {  \cone \{ x^A \colon x \in X\} }$, defined using the conic hull. In the conic setting, we need to change our treatment of $v_0$. In the discussion below, $v_0$ is a variable and not the constant $1$. In particular, the use of the functional $L_v$ is modified accordingly. 

We give an example in which the relation between the moment body and the moment cone is straightforward and another example, where the relation is intricate.  The two situations differ by the condition whether the set of exponent vectors contains the zero exponent. 

\begin{ex} 
	Let $X = \R$. 
	For $A = \{0,1,2\}$, one has 
	\[
	\cC_A(X) = \left\{ \begin{pmatrix} v_0 \\ v_1 \\ v_2 \end{pmatrix} \in \R^A \colon \begin{pmatrix} v_0 & v_1 \\ v_1 & v_2 \end{pmatrix} \succeq 0 \right\},
	\]  
	while 
	\[
	\cM_A(X) = \left\{ \begin{pmatrix} 1 \\ v_1 \\ v_2 \end{pmatrix} \in \R^A \colon \begin{pmatrix} 1 & v_1 \\ v_1 & v_2 \end{pmatrix} \succeq 0 \right\}
	\]
	is the cross-section of $\cC_A(X)$ by the plane $v_0=1$. So, here $\cC_A(X)$ is a homogenization of $\cM_A(X)$ and $\cM_A(X)$ is a de-homogenization of $\cC_A(X)$. That is, the relation between $\cC_A(X)$ and $\cM_A(X)$ is straightforward. 
\end{ex}

\begin{ex} 	
	Consider the case $A = \{2,3,4\}$ and $X = \R$. 
	Obviously, a polynomial $f (x)\in \R[x]_A$ is non-negative if and only if $f(x) / x^2 \in \R[x]_{\{0,1,2\}} $ is non-negative. This implies that non-negative linear functionals on $\cC_A(X)$ correspond to non-negative linear functionals on $\cC_{\{0,1,2\}}(X)$. Thus, by the separation theorem for convex cones, $\cC_A(X)$ is a copy of $\cC_{\{0,1,2\}}(X)$ so that one has 
	\[
	\cC_A(X) = \left\{ \begin{pmatrix} v_2 \\ v_3 \\ v_4 \end{pmatrix}  \in \R^A \colon \begin{pmatrix} v_2& v_3 \\ v_3 & v_4 \end{pmatrix} \succeq 0 \right\} 
	\]
	The moment body $\cM_A(X)$ can be obtained by taking the cross-section of $\cC_{\{0\} \cup A}(X)$ by the hyperplane $v_0=1$. By standard results on the moment problem for the real line $\R$, we know that the moment cone $\cC_{\{0,1,2,3,4\}}(X)$ is defined by $L_v(M_{\{0,1,2\}} )  \succeq 0$. Consequently, since $\cC_{\{0\} \cup A}$ is a projection of $\cC_{\{0,1,2,3,4\}}(X)$, we have 
	\[
	\cC_{\{0\} \cup A}(X) = \left\{ \begin{pmatrix} v_0\\ v_2 \\ v_3 \\ v_4 \end{pmatrix} \in \R^{\{0\} \cup A} \colon  \underbrace{\begin{pmatrix} v_0 & v_1 & v_2 \\ v_1 & v_2 & v_3 \\ v_2 & v_3 & v_4 \end{pmatrix}}_{=:L_v(M_{\{0,1,2\}})} \succeq 0 \ \text{for some} \ v_1 \in \R \right\} 
	\]
	The semidefinite condition defining $\cC_{ \{0\} \cup A}(X)$ is an instance of a positive semidefinite matrix-completion problem for a chordal graph. We refer to Chapter~10 of \cite{vandenberghe2015chordal} for details. Hence, by \cite[Theorem~10.1]{vandenberghe2015chordal}, this semidefinite condition can be phrased via the positive semidefiniteness of the sub-matrices in the matrix defining $\cC_{\{0\} \cup A}(X)$ that do not involve $v_1$. Consequently, 
	\[
	\cC_{\{0\} \cup A}(X) = \left\{ \begin{pmatrix} v_0\\ v_2 \\ v_3 \\ v_4 \end{pmatrix} \in \R^{\{0\} \cup A} 
	\colon 
	\begin{pmatrix} v_0 & v_2 \\ v_2 & v_4 \end{pmatrix} \succeq 0, \  \begin{pmatrix} v_2& v_3 \\ v_3 & v_4 \end{pmatrix} \succeq 0 \right\} 
	\]
	By fixing $v_0=1$ we obtain a description of $\cM_A(X)$ by two LMIs with matrices of size two: 
	\[
	\cM_A(X) = \left\{ \begin{pmatrix} v_2 \\ v_3 \\ v_4 \end{pmatrix} \in \R^A \colon  
	\begin{pmatrix} 1 & v_2 \\ v_2 & v_4 \end{pmatrix} \succeq 0, \  \begin{pmatrix} v_2& v_3 \\ v_3 & v_4 \end{pmatrix} \succeq 0
	\right\} 
	\]
	We thus see that the description of $\cM_A(X)$ contains an LMI not present in the description of $\cC_A(X)$. 
\end{ex} 

For a finite exponent set $A$ with $0 \in A$, we  formulate the conic convexification of the problem $\inf_{x \in X} f(x)$ of optimizing a polynomial $f \in \R[x]_A$ over a set $X \subseteq \R^n$, as the problem $\inf \{ L_v(f) \colon v \in \R^A, \ v_0 = 1, \ v \in \cC_A(X) \}$. This is nothing but \eqref{C-POPa}, since the constraints $v_0=1, v \in \cC_A(X)$ are nothing but the constraint $v \in \cM_A(X)$. But when we move on to a  sparse relaxation, the conic sparse relaxation differs from \eqref{P-RLXa} in general. As with \eqref{P-RLXa}, we fix a pattern family $\cF = \{P_1,\ldots,P_N\}$ such that $B:= P_1 \cup \cdots \cup P_N$ contains $A$ as a subset. We call
\begin{equation}
	\inf \bigl\{ L_v(f) \colon v_0 = 1, v_{P_i} \in \cC_{P_i}(X)  \ \text{for all} \ i \in [N] \bigr\} \tag{C-P-RLX} \label{C-P-RLX}
\end{equation}
the \emph{conic pattern relaxation} of the problem $\inf_{x \in X} f(x)$ for the pattern family $\cF = \{P_1,\ldots,P_N\}$. When all $P_i$ contain $0$, \eqref{C-P-RLX} is nothing by \eqref{P-RLXa}, but when some of the $P_i$'s do not contain $0$, \eqref{C-P-RLX} is a coarser convexification than \eqref{P-RLXa} in general. 

Example~\ref{ex_difference} illustrates the difference between \eqref{P-RLXa} and \eqref{C-P-RLX}. 
\begin{ex} \label{ex_difference}
	Consider $A = \{0,1,2,3,4\}$, $X = \R$ and the pattern family $\cF = \{ P_1, P_2\}$ with $P_1 = \{0,1,2\}$ and $P_2 = \{2,3,4\}$. For this choice, \eqref{P-RLXa}, formulated as  
	\[
	\inf \left\{ \sum_{i=0}^4 f_i v_i \colon (v_0,v_1,v_2) \in \cM_{P_1}(X), \ (v_2,v_3,v_4) \in \cM_{P_2}(X) \right\},
	\] 
	gives the exact optimal value of $\inf_{x \in X} f(x)$ when $f \in \R[x]_A$ satisfies $f_0= f_1=f_2=0$. In contrast to this, \eqref{C-P-RLX}, formulated as 
	\[
	\inf \left\{ \sum_{i=0}^4 f_i v_i \colon v_0 = 1, (v_0,v_1,v_2) \in \cC_{P_1}(X), \ (v_2,v_3,v_4) \in \cC_{P_2}(X) \right\},
	\] gives the optimal value $-\infty$ or $0$ when $f_0=f_1 = f_2= 0$. To see this, note that $(v_2, v_3, v_4) \in \cC_{P_2}(X)$ is a positively homogeneous constraint so that validity implies the validity of $ (\lambda v_2, \lambda v_3, \lambda v_4) \in \cC_{P_2}(X)$ for each $\lambda \in \R_+$. Consequently, $\inf \{ ( f_2 (\lambda v_2) + f_3 (\lambda v_3) + f_4(\lambda v_4) \colon \lambda \in \R_+ \} $ is $-\infty$ or $0$, depending on whether $f_2 v_2 + f_3 v_3 + f_4 v_4$ is negative or not. 
\end{ex} 

\eqref{C-P-RLX} is a coarser convexification of $\inf_{x \in X} f(x)$ than \eqref{P-RLXa}, but due to the direct presence of cones in \eqref{C-P-RLX}, it has an advantage of being directly connected with conic optimization. In addition, the cones $\cC_{P_i}(X)$ involved in \eqref{C-P-RLX} might have a simpler description than the respective moment bodies. We introduce the notation 
\[
\CRLX_\cF(X) := \{ v \in \R^B \colon v_{P_i} \in \cC_{P_i}(X) \ \text{for all} \in [N]\},
\]
for the cone defining the conic moment relaxation. This cone is a relaxation of the 
cone $\cC_A(X)$ in the sense of the inclusion $\cC_A(X) \subseteq \CRLX_\cF(X)_A$. 
The problem \eqref{C-P-RLX} can be written as  $\inf \{ L_v(f) \colon v_0 = 1, v \in \CRLX_\cF(X) \}$. 

% -------------------------------------------------------------------------------
% -------------------------------------------------------------------------------
\subsection{Conic optimization and conic duality} 
\label{sect:conic:duality} 

We provide a brief overview of conic optimization and conic duality, which we present in a form tailored to our purposes. 
Dual cones can be formulated for general Euclidean spaces. Here we use a definition directly addressing the real vector space and standard scalar product.
\begin{defn} 
	The \emph{dual cone} of a set $C \subseteq \R^m$  is the closed convex cone 
	\[
	C^\ast := \{ y \in \R^m \colon  \sprod{y}{x} \ge 0 \ \text{for all} \ x \in C\},
	\] 
	where $\sprod{x}{y} := x^\top y$ is the standard scalar product of $x$ and $y$. 
\end{defn} 

By separation theorems, for every closed convex cone $C$, one has $C^{\ast\ast} = C$. 

% Commented out by Sebastian - this is not used in our paper and now covered by the introductory sentece above the definition.
%\begin{rem} 
%	The dual cone is also introduced for subsets $C \subseteq V$ of a general real vector space $V$ as the cone $C^\ast := \{ y \in V^\vee \colon y(x) \ge 0 \ \text{for all} \ x \in V\}$ in the dual space $V^\vee$ and for subsets $C \subseteq V$ of a Euclidean space with a scalar product $\sprod{\dotvar }{\dotvar}_V$ by $C^\ast = \{ y \in V  \colon \sprod{x}{y}_V \ge 0 \ \text{for all} \ x \in C\}$. These more general versions of the duality for cones is reducible to the version for the space $\R^m$. For a Euclidean space $V$, it suffices to introduce coordinates in $V$ with respect to an orthonormal basis, in order to replace $V$ with $\R^m$. For $V$ and $V^\vee$ it suffices to introduce coordinates in $V$ with respect to some basis $b_1,\ldots,b_m$ and choose the coordinates in $V^\vee$ with respect to the dual basis of the basis $b_1,\ldots,b_m$. In this, way both $V$ and $V^\vee$ gets replaced by $\R^m$. Below, we discuss conic optimization and duality in conic optimization. We are going to phrase it in terms of the ``concrete spaces'' $\R^m$, keeping in mind that it can also be reformulated in the more abstract version in terms of abstract vector spaces.  
%\end{rem} 
%
Conic programming is a general template of optimization problems with a linear objective function and so-called conic constraints. We define a \emph{conic problem}  as a problem of the form 
\begin{equation}
	\inf \{ \sprod{c}{x} \colon x \in C, \ b - A x \in D\} \tag{CP} \label{CP}
\end{equation}
with the linear objective function $x  \in \R^n \mapsto \sprod{c}{x}$ given by $c \in \R^n$, the conic constraint $x \in C$ on the vector $x$ of decision variables, defined using a closed convex cone $C \subseteq \R^n$, and a linear conic constraint $b - A x \in D$, defined using a vector $b \in \R^m$, a matrix $A \in \R^{m \times n}$ and a closed convex cone $D \subseteq \R^m$. If we choose $D = \{0\}$, the constraint $b - A x \in D$ amounts to the system of linear equations $A x = b$. If we choose $D = \R_+^m$, the constraint $b - A x \in D$ becomes a system of linear inequalities $A x \le b$. If we choose $C = \R_+^n$, the constraint $x \in C$ is the non-negativity constraint on the $x$ variables. These observations shows that linear programming in its basic versions is captured as a special case of \eqref{CP}. The dual problem of \eqref{CP} is defined as 
\begin{equation}
	\sup \{ \sprod{b}{y} \colon c -  A^\top y \in C^\ast,  \ y \in D^\ast \} \tag{CP-D} \label{CP-D}
\end{equation}

% See above
%\begin{rem}
%	In the abstract setting, \eqref{CP-D} involves a linear map $A : V \to W$ between vector spaces. If $V,W$ are Euclidean spaces, then $A^\top$ is replaced by the adjoint map $A^\ast : W \to V$, defined by the equality $\sprod{A x}{y}_W = \sprod{x}{A^\ast y}_V$ valid for $x \in V$ and $y \in W$. When $V,W$ are merely vector spaces over $\R$, the adjoint map $A^\ast : W^\vee \to V^\vee$ is defined by the equality $y(A x) = (A^\ast y)(x)$ valid for $x \in V$ and $y \in W^\vee$. 
%\end{rem} 

Rather than presenting a general conic-duality theorem relating \eqref{CP} and \eqref{CP-D}, we focus on its special case, which we need for applications in polynomial optimization. 
\begin{prop} \label{abstract:duality} 
	Let $C \subseteq \R^m$ be a pointed closed convex cone, let $p \in \R^m$ and $q \in C \setminus \{0\}$. Then 
	\begin{equation} \label{duality:eq} 
		\sup \{ \lambda \in \R \colon p - \lambda q \in C \}  = \inf \{ \sprod{p}{v} \colon v \in C^\ast , \sprod{q}{v} = 1\}. 
	\end{equation} 
\end{prop}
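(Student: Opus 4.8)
The plan is to establish the two inequalities between the primal value $L := \sup \{ \lambda \in \R \colon p - \lambda q \in C\}$ and the dual value $R := \inf\{ \sprod{p}{v} \colon v \in C^\ast, \sprod{q}{v} = 1\}$ separately. Weak duality, $R \ge L$, is immediate: if $\lambda$ is primal-feasible and $v$ is dual-feasible, then $p - \lambda q \in C$ together with $v \in C^\ast$ gives $\sprod{v}{p - \lambda q} \ge 0$, i.e. $\sprod{p}{v} \ge \lambda \sprod{q}{v} = \lambda$; taking the infimum over $v$ and the supremum over $\lambda$ yields $R \ge L$.

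Before attacking the reverse inequality I would record that the dual feasible set is nonempty. If no $v \in C^\ast$ satisfied $\sprod{q}{v} > 0$, then, since $\sprod{q}{v} \ge 0$ holds automatically for $v \in C^\ast$, we would have $\sprod{q}{v} = 0$ for all $v \in C^\ast$, hence $\sprod{\pm q}{v} \ge 0$ for every such $v$, so $\pm q \in C^{\ast\ast} = C$, contradicting pointedness because $q \ne 0$. Thus some $v_0 \in C^\ast$ has $\sprod{q}{v_0} > 0$, and rescaling it produces a dual-feasible point; in particular $R < +\infty$.

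For the reverse inequality I would argue by contradiction: suppose $R > L$ and fix a real number $\mu$ with $L < \mu < R$. Since $\mu > L = \sup\{\lambda \colon p - \lambda q \in C\}$, the point $z := p - \mu q$ does not lie in $C$. As $C$ is a closed convex cone, separating $z$ from $C$ produces a vector $d$ with $\sprod{d}{z} < 0$ and $\sprod{d}{c} \ge 0$ for all $c \in C$, i.e. $d \in C^\ast$ and $\sprod{d}{p} < \mu \sprod{d}{q}$. Because $d \in C^\ast$ and $q \in C$, we have $\sprod{d}{q} \ge 0$, and I would split into two cases. If $\sprod{d}{q} > 0$, then $v := d/\sprod{d}{q}$ is dual-feasible and $\sprod{p}{v} = \sprod{d}{p}/\sprod{d}{q} < \mu$, so $R \le \sprod{p}{v} < \mu < R$, a contradiction. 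If instead $\sprod{d}{q} = 0$, then $\sprod{d}{p} < 0$ while $d \in C^\ast$ and $\sprod{q}{d} = 0$, so $d$ is a recession direction of the dual feasible set along which the objective strictly decreases: for the dual-feasible $v_0$ from the previous step, $v_0 + t d$ remains dual-feasible for all $t \ge 0$ and $\sprod{p}{v_0 + t d} \to -\infty$, forcing $R = -\infty$ and contradicting $\mu < R$. Either way we reach a contradiction, so $R = L$.

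The routine ingredients are the separation of a point from a closed convex cone and the identity $C^{\ast\ast} = C$, both available from the preceding discussion. The main obstacle, and the only place where pointedness is truly essential, is the degenerate case $\sprod{d}{q} = 0$, in which the separating functional cannot be normalized into a dual-feasible point; handling it requires the nonemptiness of the dual feasible set in order to exhibit an explicit unbounded ray, and this is where I would concentrate the argument.
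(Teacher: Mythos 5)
Your proof is correct and complete. For comparison: the paper does not actually prove this proposition --- it is quoted from \cite[Proposition~2.4]{averkov2024convex} with the remark that its proof is ``an easy exercise in convexity theory'' --- so your argument supplies exactly what the paper leaves to the reader. What you give is the standard conic-duality argument: weak duality directly from the definitions of $C$ and $C^\ast$; nonemptiness of the dual feasible set from the bipolar identity $C^{\ast\ast}=C$ together with pointedness and $q\in C\setminus\{0\}$; and the reverse inequality by separating the non-member $p-\mu q$ from the closed cone $C$ by a hyperplane through the origin, then splitting on whether the separating functional $d$ satisfies $\sprod{d}{q}>0$ (normalize it to a dual-feasible point with objective value below $\mu$) or $\sprod{d}{q}=0$ (use $d$ as a recession direction of the dual feasible set, forcing the infimum to $-\infty$). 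Two points are handled correctly that are easy to fumble: you establish $R<+\infty$ before choosing $\mu$, so a real number strictly between $L$ and $R$ exists in every case where $R>L$, including the primal-infeasible case $L=-\infty$; and you isolate pointedness as the hypothesis guaranteeing dual feasibility, which is precisely where the statement would otherwise fail (for a non-pointed cone such as a closed half-space one can have $L=-\infty$ while the dual feasible set is empty, so $R=+\infty$). In short, the proposal is a valid self-contained proof along the expected lines of the ``exercise'' the paper invokes.
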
 
The proposition is borrowed from \cite[Proposition~2.4]{averkov2024convex}. Its proof is an easy exercise in convexity theory. 

\begin{rem} 
	\eqref{duality:eq} asserts the equivalence of \eqref{CP} and \eqref{CP-D} for a special choice of $A, b, c$ and the cones $C$ and $D$. 
\end{rem} 

Let us also establish an abstract ``sparse'' version of Proposition~\ref{abstract:duality}. 

\begin{prop} \label{sparse:abstract:duality} 
	Let $C_1,\ldots,C_N \subseteq \R^m$ be closed convex cones, which are subsets of a pointed closed convex cone $C$, let $p \in \R^m$ and $q \in (C_1 + \cdots + C_N) \setminus \{0\}$. Then 
	\begin{equation}
		\sup \{ \lambda \in \R \colon p-\lambda q \in C_1 + \cdots + C_N\} = \inf \{ \sprod{p}{v} \colon v \in C_1^\ast \cap \cdots \cap C_N^\ast, \ \sprod{q}{v} = 1\}. 
	\end{equation}  
\end{prop}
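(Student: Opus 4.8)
The plan is to reduce the statement to Proposition~\ref{abstract:duality} applied to the single cone $\tilde{C} := C_1 + \cdots + C_N$. For this reduction to be legitimate, I must check that $\tilde{C}$ satisfies the hypotheses of Proposition~\ref{abstract:duality}, namely that it is a pointed closed convex cone containing $q$ as a nonzero element. The last point holds by assumption, since $q \in (C_1 + \cdots + C_N) \setminus \{0\}$.

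Convexity and the cone property of $\tilde{C}$ are immediate, as a finite Minkowski sum of convex cones is again a convex cone. For pointedness, I would observe that $\tilde{C} \subseteq C$: each summand $C_i$ lies in $C$, and $C$, being a convex cone, is closed under addition. Since $C$ is pointed, it follows that $\tilde{C} \cap (-\tilde{C}) \subseteq C \cap (-C) = \{0\}$, so $\tilde{C}$ is pointed as well. The crux, and the main obstacle, is the closedness of $\tilde{C}$, because Minkowski sums of closed convex cones need not be closed in general. Here I would exploit that all summands sit inside the pointed cone $C$: if $x_1 + \cdots + x_N = 0$ with $x_i \in C_i$, then for each $i$ one has $x_i = -\sum_{j \neq i} x_j \in C \cap (-C) = \{0\}$, whence $x_i = 0$. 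This absence of nontrivial cancellation among the summands is precisely the classical sufficient condition in convex analysis guaranteeing that a finite sum of closed convex cones is closed, so $\tilde{C}$ is closed.

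It remains to identify the dual cone, and then to invoke Proposition~\ref{abstract:duality}. A short computation gives $\tilde{C}^\ast = C_1^\ast \cap \cdots \cap C_N^\ast$: a vector $y$ lies in $(C_1 + \cdots + C_N)^\ast$ if and only if $\sprod{y}{x_1 + \cdots + x_N} \ge 0$ for all choices $x_i \in C_i$; setting all but one summand to zero yields $y \in C_i^\ast$ for each $i$, and the converse follows from $\sprod{y}{x_1 + \cdots + x_N} = \sum_i \sprod{y}{x_i}$. Applying Proposition~\ref{abstract:duality} to $\tilde{C}$ and $q \in \tilde{C} \setminus \{0\}$ then gives $\sup\{\lambda \in \R \colon p - \lambda q \in \tilde{C}\} = \inf\{\sprod{p}{v} \colon v \in \tilde{C}^\ast,\ \sprod{q}{v} = 1\}$. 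Substituting $\tilde{C} = C_1 + \cdots + C_N$ on the left-hand side and $\tilde{C}^\ast = C_1^\ast \cap \cdots \cap C_N^\ast$ on the right-hand side reproduces the asserted identity. Every step other than the closedness of $\tilde{C}$ is routine bookkeeping.
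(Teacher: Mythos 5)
Your proof is correct and takes essentially the same route as the paper: apply Proposition~\ref{abstract:duality} to the sum cone $C_1+\cdots+C_N$ together with the identity $(C_1+\cdots+C_N)^\ast = C_1^\ast \cap \cdots \cap C_N^\ast$. The only difference is that you spell out the pointedness and, in particular, the closedness of the sum via the no-cancellation argument inside the pointed cone $C$, a subtle point the paper asserts without proof.
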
 
\begin{proof} 
	Since $C_1 + \cdots + C_N \subseteq C$ and $C$ is a pointed closed convex cone, it follows that $C_1 + \cdots + C_N$, too, is a pointed closed convex cone. Hence, the assertion follows by using Proposition~\ref{abstract:duality} for the cone $C_1 + \cdots + C_N$ in combination with the well-known equality $(C_1 + \cdots + C_N)^\ast = C_1^\ast \cap \cdots \cap C_N^\ast$. 
\end{proof} 

% -------------------------------------------------------------------------------
% -------------------------------------------------------------------------------
\subsection{Duality for conic pattern relaxations}

We apply conic optimization and duality as laid out in Section~\ref{sect:conic:duality} to conic pattern relaxations. Let $\cP_A(X)$ denote the set of polynomials $f \in \R[x]_A$ that are non-negative on $X$. It is clear that $\cP_A(X)$ is a closed convex cone. Furthermore, if $X$ has non-empty interior, the cone $\cP_A(X)$ is pointed. Indeed, if $\cP_A(X) \cap ( - \cP_A(X))$, as the set of polynomials $f \in \R[x]_A$ that are zero on $X$, consists only of the zero polynomial. 

\begin{rem} 
	The set $\R[x]_A$ can be identified with $\R^A$ via the natural bijection $\sum_{\alpha \in A} f_\alpha x^\alpha \leftrightarrow (f_\alpha)_{\alpha \in A}$. 
	With this identification, $L_v(f)$ is interpreted as the scalar product of $f \in \R[x]_A \leftrightarrow \R^A$ and $v \in \R^A$. Consequently, for $C \subseteq \R[x]_A$ we have $C^\ast  = \{ v \in \R^A \colon L_v(f) \ge 0 \ \text{for all} \ f \in C\}$. Analogously, for $D \subseteq \R^A$, we can interpret $D^\ast$ as a cone in $\R[x]_A$ and use the equality $D^\ast = \{ f \in \R[x]_A \colon L_v(f) \ge 0 \ \text{for all} \ v \in D\}$. 
\end{rem} 

\begin{prop} \label{moment:vs:positive} 
	Let $A \subseteq \N^n$ be a finite exponent set and $X \subseteq \R^n$. Then $\cC_A(X)^\ast = \cM_A(X)^\ast = \cP_A(X)$. 
\end{prop}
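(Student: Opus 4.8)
The plan is to reduce all three objects to the dual cone of a single generating set and then identify that dual cone with $\cP_A(X)$. Write $S := \{ x^A \colon x \in X\} \subseteq \R^A$. First I would recall the elementary fact that the dual cone of a set depends only on its closed convex conic hull: for any $S \subseteq \R^m$ one has $S^\ast = (\conv S)^\ast = (\cone S)^\ast = (\cl{\cone S})^\ast$. This is immediate from the definition, since a linear inequality $\sprod{y}{\cdot} \ge 0$ valid on $S$ automatically stays valid under non-negative combinations, hence on $\cone S$ and on $\conv S$, and by continuity of $\sprod{y}{\cdot}$ also on the closure $\cl{\cone S}$; the reverse inclusions are trivial because $S$ is contained in each of these hulls.

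Applying this with our $S$, and using $\cM_A(X) = \conv S$ together with $\cC_A(X) = \cl{\cone S}$, I obtain $\cM_A(X)^\ast = S^\ast = \cC_A(X)^\ast$ at once. It then remains to show $S^\ast = \cP_A(X)$. Here I would invoke the identification of $\R[x]_A$ with $\R^A$ from the preceding remark, under which $L_v(f) = \sprod{f}{v}$ and, crucially, $\sprod{f}{x^A} = \sum_{\alpha \in A} f_\alpha x^\alpha = f(x)$ for every $x \in \R^n$. Thus membership $f \in S^\ast$ says exactly that $f(x) \ge 0$ for all $x \in X$, i.e. $f \in \cP_A(X)$, which gives $S^\ast = \cP_A(X)$ and closes the chain of equalities.

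The argument carries essentially no obstacle beyond bookkeeping; the one point that genuinely requires care is the direction of the identification. The cones $\cM_A(X)$ and $\cC_A(X)$ live in the ``moment'' space $\R^A$, whereas their dual cones and $\cP_A(X)$ live in the ``coefficient'' space $\R[x]_A \cong \R^A$, so the equality $\sprod{f}{x^A} = f(x)$ is precisely the bridge between the two descriptions and is where I would be most careful. I note that no compactness, interior, or other regularity assumption on $X$ is needed, since the proof only uses the purely formal identity $S^\ast = (\cl{\cone S})^\ast = (\conv S)^\ast$.
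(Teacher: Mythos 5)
Your proposal is correct and follows essentially the same route as the paper: the paper likewise passes from the dual of $\cC_A(X) = \overline{\cone\{x^A \colon x \in X\}}$ down to the dual of the generating set $\{x^A \colon x \in X\}$, identifies linear functionals with polynomials so that $\sprod{f}{x^A} = f(x)$, and concludes $\cP_A(X)$, handling $\cM_A(X)^\ast$ ``analogously.'' Your only cosmetic difference is stating the elementary fact $S^\ast = (\conv S)^\ast = (\cl{\cone S})^\ast$ once and using it to treat both cones simultaneously, which is a clean way to package the same argument.
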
 
\begin{proof} 
	We have 
	\begin{align*}
		\cC_A(X)^\ast  & =\left\{ f \in \R[x]_A \colon \sum_{\alpha \in A} f_\alpha v_\alpha \ge 0 \ \text{for all} \ v \in \cC_A(X) \right\} 
		\\ & = \left\{ f \in \R[x]_A \colon \sum_{\alpha \in A} f_\alpha v_\alpha \ge 0 \ \text{for all} \ v \in \overline{ \cone \{ x^A \colon x \in X\} } \right\} 
		\\ & = \left\{ f \in \R[x]_A \colon \sum_{\alpha \in A} f_\alpha v_\alpha \ge 0 \ \text{for all} \ v \in  \{ x^A \colon x \in X\}  \right\} 
		\\ & = \left\{ f \in \R[x]_A \colon \sum_{\alpha \in A} f_\alpha x^\alpha \ge 0 \ \text{for all} \ x \in X \right\} 
		\\ & = \cP_A(X).
	\end{align*}
	Analogously, we obtain $\cM_A(X)^\ast = \cP_A(X)$. 
\end{proof} 

The duality $\cM_A(X)^\ast = \cP_A(X)$ corresponds to a pair of optimization problems, dual to each other. 

\begin{prop} \label{nonsparse:duality} 
	Let $A \subseteq \N^n$ with $0 \in A$ and $f \in \R[x]_A$. Then 
	\begin{align*}
		\sup \{ \lambda \in \R \colon f - \lambda \in \cP_A(X) \} & = \inf \{ L_v(f) \colon v \in \cC_A(X), \ v_0=1\} 
		\\ & = \inf \{ L_v(f) \colon v \in \cM_A(X) \}.   
	\end{align*} 
\end{prop}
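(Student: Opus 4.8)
The plan is to show that the outer two expressions both equal $\inf_{x \in X} f(x)$ by elementary convex analysis, and then to identify the middle expression with the right-hand one through the relation between the moment cone and the moment body. For the right-hand equality I would use that $L_{x^A}(f) = f(x)$ together with the fact that a linear functional attains the same infimum over a convex hull as over its generating set: since $\cM_A(X) = \conv\{x^A : x \in X\}$, this gives $\inf\{L_v(f) : v \in \cM_A(X)\} = \inf_{x \in X} f(x)$. For the left-hand expression I would invoke the hypothesis $0 \in A$, which guarantees $f - \lambda \in \R[x]_A$ for every $\lambda \in \R$; then $f - \lambda \in \cP_A(X)$ is by definition the condition $f(x) \ge \lambda$ for all $x \in X$, so the supremum over admissible $\lambda$ is exactly $\inf_{x \in X} f(x)$. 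This already yields the equality of the first and third expressions.

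It remains to match the middle expression $\inf\{L_v(f) : v \in \cC_A(X),\ v_0 = 1\}$ with the right-hand one. The key geometric claim is the sandwich
\[
\cM_A(X) \subseteq \{ v \in \cC_A(X) : v_0 = 1 \} \subseteq \overline{\cM_A(X)}.
\]
The first inclusion holds because a convex combination of the points $x^A$ is in particular a conic combination whose $0$-coordinate, equal to $\sum_i \lambda_i$ since $x^0 = 1$, equals $1$. For the second inclusion, a point $v \in \cC_A(X)$ with $v_0 = 1$ is a limit of conic combinations $w^{(k)} \in \cone\{x^A : x \in X\}$; their $0$-coordinates $s_k := w^{(k)}_0$ converge to $v_0 = 1$, so for large $k$ we have $s_k > 0$ and $w^{(k)}/s_k \in \cM_A(X)$ is a genuine convex combination converging to $v$. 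Because $v \mapsto L_v(f)$ is linear and continuous, its infimum is unchanged upon passing to the closure and is identical over any set trapped between $\cM_A(X)$ and $\overline{\cM_A(X)}$, which delivers the middle equality.

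Alternatively, and more in the spirit of this section, the first equality is a direct instance of the conic duality of Proposition~\ref{abstract:duality}: identifying $\R[x]_A$ with $\R^A$ via $f \leftrightarrow (f_\alpha)_{\alpha \in A}$, one applies it with $C = \cP_A(X)$, $p = f$, and $q = 1 \in \cP_A(X) \setminus \{0\}$. Then $C^\ast = \cP_A(X)^\ast = \cC_A(X)$ by Proposition~\ref{moment:vs:positive} (using $\cC_A(X)^{\ast\ast} = \cC_A(X)$, as $\cC_A(X)$ is a closed convex cone), the normalization $\sprod{q}{v} = 1$ reads $v_0 = 1$, and $\sprod{p}{v} = L_v(f)$. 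The one caveat is that Proposition~\ref{abstract:duality} requires $C$ to be \emph{pointed}, which, as observed just above, holds when $X$ has nonempty interior; the elementary argument of the first two paragraphs avoids this hypothesis and is therefore the route I expect to be the most robust. The main point requiring care is precisely this pointedness/closure subtlety in relating the $v_0 = 1$ slice of the moment cone to the moment body.
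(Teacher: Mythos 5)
Your proposal is correct and takes essentially the same approach as the paper, whose entire proof reads that the assertion is trivial but also follows from Proposition~\ref{abstract:duality}: your first two paragraphs flesh out the ``trivial'' elementary route (including the sandwich of the slice $\{v \in \cC_A(X) : v_0 = 1\}$ between $\cM_A(X)$ and its closure), and your final paragraph is exactly the conic-duality route, with the correct caveat that Proposition~\ref{abstract:duality} needs $\cP_A(X)$ pointed (e.g.\ $X$ with nonempty interior), which the elementary argument avoids. Nothing further is needed.
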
 
\begin{proof} 
	The assertion is trivial, but it also follows from Proposition~\ref{abstract:duality}. 
\end{proof} 

In Proposition~\ref{nonsparse:duality}, the formal supremum-problem is a declaration of the intention to find lower bounds on $f$, while the infimum-problem (written in two equivalent versions) is a declaration of the intention to convexify. 
Proposition~\ref{nonsparse:duality} serves as a starting point for establishing an analogous sparse version of duality. 

%This allows us to determine the dual cone of $\RLX_\cF(X)$ for a pattern family $\cF = \{P_1,\ldots,P_N\}$. Fix $B = P_1 \cup \cdots \cup P_N$. The set $\RLX_\cF(X)$ is the intersection of $\cM_{P_i}(X) \times \R^{B \setminus P_i}$ for $i \in [N]$. Hence 
%\[
%\CRLX_\cF(X)^\ast = \RLX_\cF(X)^\ast = \cP_{P_1}(X) + \cdots + \cP_{P_N}(X). 
%\]

\begin{prop} \label{duality} 
	Let $P_1,\ldots, P_N, A \subseteq \N^n$ be finite sets of exponent vectors  satisfying  $\{0\} \cup A \subseteq B:= P_1 \cup \cdots \cup P_N$. Let $f \in \R[x]_A$ and let $X \subseteq \R^n$ have non-empty interior. Then 
	\begin{align*}
		& \sup \{ 
		\lambda \in \R \colon f - \lambda \in \cP_{P_1}(X) + \cdots + \cP_{P_N}(X) 
		\}  \\ = & 
		\inf \{ L_v(f) \colon v_0 =1, \ v_{P_i} \in \cC_{P_i}(X)  \ \text{for every} \ i \in [N]\}. 
	\end{align*} 
	Furthermore, if  $0 \in P_i$ for every $i \in [N]$, then the infimum is equal to 
	\begin{align*} 
		\inf \{ L_v(f) \colon v_{P_i} \in \cM_{P_i}(X)  \ \text{for every} \ i \in [N]\}. 
	\end{align*} 	
	If, additionally, the set $X$ is compact, then $\sup$ and $\inf$ are attained and can be replaced by $\max$ and $\min$, respectively. 	
\end{prop}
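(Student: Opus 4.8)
The plan is to reduce the statement to the abstract sparse conic duality of Proposition~\ref{sparse:abstract:duality}, carried out in the coefficient space $\R[x]_B \cong \R^B$, and then to treat the $\cM$-reformulation and the attainment as two refinements. Identifying $\R[x]_B$ with $\R^B$ so that $L_v(g) = \sprod{g}{v}$, I set $C_i := \cP_{P_i}(X)$ (regarded in $\R^B$ via $\R[x]_{P_i} \subseteq \R[x]_B$) and $C := \cP_B(X)$. Each $C_i$ is a closed convex cone contained in $C$, and $C$ is pointed because $X$ has non-empty interior: any polynomial in $\cP_B(X) \cap (-\cP_B(X))$ vanishes on a non-empty open subset of $X$ and is therefore the zero polynomial. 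Taking $p := f$ and $q := 1$ (the constant polynomial, whose coefficient vector is supported on $0 \in B$), one has $q \in \cP_{P_j}(X) \subseteq C_1 + \cdots + C_N$ for any $j$ with $0 \in P_j$, and $q \neq 0$. Proposition~\ref{sparse:abstract:duality} then yields
\[
\sup\{\lambda \colon f - \lambda \in \cP_{P_1}(X) + \cdots + \cP_{P_N}(X)\} = \inf\{L_v(f) \colon v \in C_1^\ast \cap \cdots \cap C_N^\ast,\ v_0 = 1\},
\]
using $\sprod{q}{v} = v_0$ and $\sprod{p}{v} = L_v(f)$.

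Next I would compute the dual cones inside $\R^B$. For $g \in \cP_{P_i}(X)$ the pairing $\sprod{g}{v}$ involves only the coordinates $v_{P_i}$, so $v \in C_i^\ast$ if and only if $v_{P_i}$ lies in the dual of $\cP_{P_i}(X)$ formed in $\R^{P_i}$, the coordinates outside $P_i$ remaining free. By Proposition~\ref{moment:vs:positive}, $\cP_{P_i}(X) = \cC_{P_i}(X)^\ast$, and since $\cC_{P_i}(X)$ is a closed convex cone, bipolarity gives $\cP_{P_i}(X)^\ast = \cC_{P_i}(X)$. Hence $C_1^\ast \cap \cdots \cap C_N^\ast = \{v \in \R^B \colon v_{P_i} \in \cC_{P_i}(X) \text{ for all } i \in [N]\}$, and the right-hand side above becomes exactly the infimum in the statement, proving the first equality.

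For the ``furthermore'' part, assume $0 \in P_i$ for every $i$. Then each generator $x^{P_i}$ has its $0$-coordinate equal to $1$, so $\cC_{P_i}(X)$ is the homogenization of $\cM_{P_i}(X)$ and its section $\{w \in \cC_{P_i}(X) \colon w_0 = 1\}$ coincides with $\cM_{P_i}(X)$, as at the beginning of this section (any closure discrepancy is immaterial for the infimum of a continuous linear functional). The constraints $v_0 = 1$ together with $v_{P_i} \in \cC_{P_i}(X)$ for all $i$ are thus equivalent to $v_{P_i} \in \cM_{P_i}(X)$ for all $i$, the latter already forcing $v_0 = 1$; this rewrites the infimum in the asserted $\cM$-form. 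The hypothesis $0 \in P_i$ is essential precisely here, as Example~\ref{ex_difference} shows.

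Finally, suppose in addition that $X$ is compact. Then $\{x^{P_i} \colon x \in X\}$ is compact, hence $\cM_{P_i}(X)$ is compact, and since $B = \bigcup_i P_i$ the feasible set $\{v \in \R^B \colon v_{P_i} \in \cM_{P_i}(X) \text{ for all } i\}$ is closed, bounded and non-empty (it contains $x^B$ for any $x \in X$); the continuous objective $L_v(f)$ therefore attains its minimum. For the supremum the crucial and most delicate point, which I expect to be the main obstacle, is that the cone $K := \cP_{P_1}(X) + \cdots + \cP_{P_N}(X)$ is closed, something not automatic for sums of closed cones. I would derive it from the pointedness of $C = \cP_B(X)$: if $d_1 + \cdots + d_N = 0$ with $d_i \in \cP_{P_i}(X)$, then $d_1 = -(d_2 + \cdots + d_N) \in \cP_B(X) \cap (-\cP_B(X)) = \{0\}$, and inductively all $d_i = 0$, so by the standard criterion for sums of cones meeting only at the origin, $K$ is closed. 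Consequently $\{\lambda \colon f - \lambda \in K\}$ is the intersection of a line with a closed convex set, hence a closed interval, bounded above because its supremum equals the finite minimum just found; therefore the supremum is attained. Everything besides this closedness is routine bookkeeping with dual cones and compactness.
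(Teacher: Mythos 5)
Your proof is correct and follows essentially the same route as the paper: the main equality via Proposition~\ref{sparse:abstract:duality} applied to $C_i = \cP_{P_i}(X)$ inside the pointed cone $\cP_B(X)$, with the dual cones identified through Proposition~\ref{moment:vs:positive} and bipolarity, then the $\{v_0 = 1\}$ section argument for the $\cM$-form and compactness/closedness for attainment. The one place you go beyond the paper is the closedness of $\cP_{P_1}(X)+\cdots+\cP_{P_N}(X)$: the paper asserts it without justification, whereas you derive it from pointedness of $\cP_B(X)$ (no cancellation among the summands) together with the standard criterion for closedness of sums of closed cones --- a worthwhile addition, since sums of closed convex cones are not closed in general.
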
 
\begin{proof} 	
	The cones $\cP_{P_i}(X)$ are subsets of the closed pointed convex cone  $\cP_B(X)$, which shows that the assumptions of Proposition~\ref{sparse:abstract:duality} are fulfilled for the cones $C_i = \cP_{P_i}(X)$. 
	
	As a direct consequence of Proposition~\ref{moment:vs:positive}, we obtain 
	\begin{align*}
		(\cC_A(X) \times \R^{B \setminus A})^\ast = (\cM_A(X) \times \R^{B \setminus A} )^\ast   = \cP_A(X) \subseteq \R[x]_B
	\end{align*} 
	which, inserted in Proposition~\ref{sparse:abstract:duality}, yields the main assertion. 
	
	Now, assume that $0 \in P_i$ for every $i \in [N]$.	Then one has $\cM_{P_i}(X) = \{ v \in \cC_{P_i}(X) \colon v_0 =1\}$. 
	
	Assume additionally that $X$ is compact. 
	The infimum is finite and a minimum, because $\RLX_\cF(X) = \{ v \in \R^B \colon v_{P_i} \in \cM_{P_i}(X) \ \text{for all} \in [N] \}$ is a compact set contained in $\Bx(l_B, u_B)$.  Hence the  supremum is finite as well. 
	To see that the (finite) supremum is attained it suffices to observe that $\cP_{P_1}(X) + \cdots + \cP_{P_n}(X)$ is a closed convex cone so that $ \{ f- \lambda \colon \lambda \in \R, \ f- \lambda \in \cP_{P_1}(X) + \cdots + \cP_{P_N}(X)\}$ is a ray with the end point of the ray corresponding to the $\lambda$, for which the supremum is attained. 
\end{proof} 

We call 
\begin{equation}
	\max \{ \lambda \in \R \colon f- \lambda \in \cP_{P_1}(X) + \cdots + \cP_{P_N}(X) \}. \tag{D-P-RLX} \label{D-P-RLX}
\end{equation} 
the \emph{dual pattern relaxation} for the pattern family $\cF = \{P_1,\ldots,P_N\}$. The constraint in \eqref{D-P-RLX} is a formal constraint $f - \lambda = g_1 + \cdots + g_N$, involving the real-valued variable $\lambda$, certifying the bound $f \ge \lambda$ on $f$ through the ``conic variables'' $g_i \in \cP_{P_i}(X)$.  Analogously to our discussion of the primal problem \eqref{P-RLXa}, the conditions $g_i \in \cP_{P_i}(X)$ are formal constraints, since \eqref{D-P-RLX} gives no  clues on how to model these constraints. It is just a declaration that if the $P_i$'s allow a sparse formulation of $g_i \in \cP_{P_i}(X)$ in some optimization paradigm this implies the same for the model \eqref{D-P-RLX}. For being able to use \eqref{D-P-RLX}, one either needs to provide exact descriptions of $\cP_{P_i}(X)$, which may be a description in the original space or a lifted description, or to model an \emph{inner} approximation of $\cP_{P_i}(X)$. We stress that relaxing \eqref{D-P-RLX} means finding formulations of inner approximations of $\cP_{P_i}(X)$. Indeed, \eqref{D-P-RLX} is a maximization problem, so that a strengthening of the constraints $v_{P_i} \in \cP_{P_i}(X)$ by replacing $\cP_{P_i}(X)$ with smaller cones makes the maximum over $\lambda$'s smaller.  Thus, relaxing \eqref{D-P-RLX} means to determine a cone of particular types of non-negative polynomials with the support in $P_i$ and use such polynomials for certifying bounds $f(x) \ge \lambda$ valid for $x \in X$.

\subsection{Sparse conic approaches from the pattern perspective}

\label{existing:sparse:conic}

One can develop a concrete sparse conic approach by starting with \eqref{C-P-RLX} and then deriving \eqref{D-P-RLX} via dualization, but one can also start with \eqref{D-P-RLX} and then derive \eqref{C-P-RLX} via dualization. If one starts with \eqref{C-P-RLX}, one emphasizes convexification and then passes to \eqref{D-P-RLX}, which is concerned with a certification of bounds using sparse non-negative polynomials. If one starts with \eqref{D-P-RLX} and then derives \eqref{C-P-RLX} as, e.g., in \cite{katthan2021unified}, one emphasizes certificates of bounds and then moves on to convexification. Emphasis on bound certification has been the favorite choice in the study of the SAGE, SOS, SDSOS, TS-SSOS and related approaches, which we are going to summarize below. The contents of the following subsections are mostly a survey of results about sparse approaches to polynomial optimization that are spread in the literature. Our intention was to provide a largely self-contained and unified presentation, in which specific approaches are derived as corollaries of general results. 
Our presentation of the duality for SONC and SAGE differs from the presentation of the duality in other sources such as \cite{katthan2021unified}: rather than starting with a formulation of \eqref{D-P-RLX} and obtaining a formulation of \eqref{C-P-RLX} via the dualization, we start with a formulation of \eqref{C-P-RLX} and dualize it to a formulation of \eqref{D-P-RLX}. 

% -------------------------------------------------------------------------------
\subsubsection{SAGE, SONC, and SDSOS} \label{sec_SAGE}

SAGE and SONC are closely related to each other \cite{chandrasekaran2016relative,katthan2021unified}. The patterns in both these approaches are simplicial circuits, but SAGE deals with the non-negativity on $\R_+^n$, while SONC with the non-negativity on $\R^n$. We start with a discussion of SAGE.

\begin{prop}  \label{prop:age}
	Let $\gamma(0), \cdots, \gamma(k) \in \N^n$ be affinely independent exponent vectors  and let $\beta \in \N^n$ be an exponent vector in the relative interior of the $k$-dimensional simplex $\conv(\gamma(0),\ldots,\gamma(k))$. Consider the coefficients $\lambda_i>0$ of the convex combination 
	\begin{align*}
		\beta & = \sum_{i=0}^k \lambda_i \gamma(i),
		\\	1 & = \sum_{i=0}^k \lambda_i
	\end{align*} Then, for the pattern $P = \{\beta,\gamma(0),\ldots,\gamma(k)\} $, one has 
	\begin{align} \label{AGE:convex} 
		\cC_P(\R_+^n) = \left\{ v \in \R_+^P \colon 0 \le v_\beta \le \prod_{i=0}^k v_{\gamma(i)}^{\lambda_i} \right\} 
	\end{align} 
\end{prop}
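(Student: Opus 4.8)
The plan is to abbreviate the right-hand side as $D := \{ v \in \R_+^P \colon v_\beta \le \prod_{i=0}^k v_{\gamma(i)}^{\lambda_i}\}$ and prove $\cC_P(\R_+^n) = D$ by showing one inclusion directly and the other by dualization. First I would record that $D$ is a closed convex cone: the weighted geometric mean $g(w) = \prod_{i=0}^k w_i^{\lambda_i}$ on $\R_+^{k+1}$ is concave and positively homogeneous of degree $\sum_i \lambda_i = 1$, so its hypograph intersected with $\{v_\beta \ge 0\}$ is a closed convex cone, and $D$ is exactly this set under the identification $w_i \leftrightarrow v_{\gamma(i)}$, $g(w) \leftrightarrow v_\beta$.

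For the inclusion $\cC_P(\R_+^n) \subseteq D$, I would check that every generator lies in $D$: for $x \in \R_+^n$ the relation $\beta = \sum_i \lambda_i \gamma(i)$ gives $x^\beta = \prod_i (x^{\gamma(i)})^{\lambda_i}$, so $x^P = (x^\beta, (x^{\gamma(i)})_i)$ satisfies the defining inequality of $D$ with equality. Since $D$ is a closed convex cone, it contains $\cC_P(\R_+^n) = \overline{\cone\{x^P \colon x \in \R_+^n\}}$.

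For the reverse inclusion I would pass to duals. By Proposition~\ref{moment:vs:positive}, $\cC_P(\R_+^n)^\ast = \cP_P(\R_+^n)$, and since both $\cC_P(\R_+^n)$ and $D$ are closed convex cones, the equality $\cC_P(\R_+^n) = D$ follows once I show $D^\ast = \cP_P(\R_+^n)$, because then $D = D^{\ast\ast} = \cP_P(\R_+^n)^\ast = \cC_P(\R_+^n)^{\ast\ast} = \cC_P(\R_+^n)$. To compute $D^\ast$, I fix $f \in \R^P$ and minimize $\sprod{f}{v} = f_\beta v_\beta + \sum_i f_{\gamma(i)} v_{\gamma(i)}$ over $v \in D$: for fixed $(v_{\gamma(i)})_i = w \ge 0$ the objective is linear in $v_\beta \in [0, g(w)]$, so the minimum is attained at an endpoint, and weighted AM-GM ($\sum_i \lambda_i u_i \ge \prod_i u_i^{\lambda_i}$ applied to $u_i = f_{\gamma(i)} w_i/\lambda_i$) yields
\[
D^\ast = \Bigl\{ f \in \R^P \colon f_{\gamma(i)} \ge 0 \ (i=0,\ldots,k), \ f_\beta \ge - \prod_{i=0}^k (f_{\gamma(i)}/\lambda_i)^{\lambda_i} \Bigr\}.
\]

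Finally I would identify this set with $\cP_P(\R_+^n)$. The implication ``coefficient conditions $\Rightarrow$ non-negative'' is the same estimate applied pointwise: $\sum_i f_{\gamma(i)} x^{\gamma(i)} \ge \prod_i (f_{\gamma(i)}/\lambda_i)^{\lambda_i}\, x^\beta$, whence $f(x) \ge \bigl(f_\beta + \prod_i (f_{\gamma(i)}/\lambda_i)^{\lambda_i}\bigr) x^\beta \ge 0$ for $x \in \R_+^n$. The converse is where the real work sits and is the main obstacle. Non-negativity of each $f_{\gamma(j)}$ follows by sending $x = e^{rt}$, $r\to\infty$, along a functional $t$ that strictly maximizes $\langle\cdot,t\rangle$ over $P$ at the vertex $\gamma(j)$. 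The bound on $f_\beta$ requires realizing equality in AM-GM, i.e. finding $x \in \R_{>0}^n$ with $f_{\gamma(0)} x^{\gamma(0)} = \cdots = f_{\gamma(k)} x^{\gamma(k)}$, which amounts to solving $\langle \gamma(i)-\gamma(0), \log x\rangle = \log(f_{\gamma(0)}/f_{\gamma(i)})$; this is solvable precisely because affine independence of $\gamma(0),\ldots,\gamma(k)$ makes the differences $\gamma(i)-\gamma(0)$ linearly independent (the argument needs affine independence only, not linear independence of the $\gamma(i)$ themselves). Evaluating $f$ there gives $f(x) = \bigl(f_\beta + \prod_i (f_{\gamma(i)}/\lambda_i)^{\lambda_i}\bigr) x^\beta$, forcing the bound, and the degenerate cases with some $f_{\gamma(i)} = 0$ are handled by a limiting argument. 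This establishes $D^\ast = \cP_P(\R_+^n)$ and hence the claimed equality.
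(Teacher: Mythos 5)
Your proof is correct, but it takes a genuinely different route from the paper's. The paper argues entirely on the primal (moment) side: it considers the set $G$ of points in $\R_{>0}^P$ where the defining inequality $v_\beta \le \prod_i v_{\gamma(i)}^{\lambda_i}$ holds with equality, shows every $v \in G$ is a positive multiple of some $x^P$ with $x \in \R_{>0}^n$ by solving a log-linear system whose coefficient matrix $(\Gamma^\top \ \mathbf{1})$ has rank $k+1$ thanks to affine independence, and then recovers the whole right-hand side cone as $\conv(\overline{G})$, using that the standard unit vectors with $v_\beta = 0$ lie in $\overline{G}$. You instead work on the dual (non-negativity) side: you compute $D^\ast$ via weighted AM-GM, identify it with $\cP_P(\R_+^n)$, and conclude by biconjugation, invoking Proposition~\ref{moment:vs:positive} and $C^{\ast\ast}=C$ for closed convex cones. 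Both proofs pivot on the same piece of linear algebra --- affine independence makes a log-linear system solvable, whether used to hit a point of $G$ (paper) or to force equality in AM-GM (you) --- but they package it differently. The paper's route keeps Proposition~\ref{prop:age} free of any case analysis and then obtains Propositions~\ref{prop:gmc:dual} and~\ref{prop:sage:pol:cone} (essentially your formula for $D^\ast$) afterwards by a single dualization; your route proves the SAGE non-negativity description first and gets the moment cone as a corollary, which is exactly the order in the SAGE literature (e.g.\ the sources cited in Section~\ref{existing:sparse:conic}) that the paper says it deliberately reverses, and it costs you extra work: the exposed-vertex argument for $f_{\gamma(j)} \ge 0$, the equality case of AM-GM, and a limiting argument when some $f_{\gamma(i)} = 0$. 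One small slip worth fixing: equality in weighted AM-GM requires the terms $f_{\gamma(i)} x^{\gamma(i)}/\lambda_i$ to coincide, not $f_{\gamma(i)} x^{\gamma(i)}$, so your system should read $\langle \gamma(i)-\gamma(0), \log x\rangle = \log\bigl(\lambda_i f_{\gamma(0)}/(\lambda_0 f_{\gamma(i)})\bigr)$; the same rank argument applies, so nothing breaks.
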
 
\begin{proof} 
	We denote the right-hand side of \eqref{AGE:convex} as $C$. 
	It is well known that the weighted geometric mean $g: \R_+^{k+1} \to \R$
	\[
	g(t_0,\ldots,t_k) = \prod_{i=0}^k t_i^{\lambda_i}
	\]
	is a concave positively homogeneous function. Thus, $C$ is a closed convex cone defined as the region below the graph of $g$ and above the graph of the zero function on $\R_+^{k+1}$. To see that $\cC_P(\R_+^n) \subseteq C$ is true, it suffices to observe that for the choice $v_\beta = x^\beta$ and $v_{\gamma(i)} = x^{\gamma(i)}$ with $x \in \R_+^n$, the inequalities defining $C$ are true, as the upper bound on $v_\beta$ is attained with equality and the lower bound on $v_\beta$ is trivially fulfilled. 
	To verify the converse inclusion $C \subseteq \cC_P(\R_+^n)$, we start with the set 
	\[
	G = \left \{ v \in \R_{>0}^P \colon v_\beta = \prod_{i=0}^k v_{\gamma(i)}^{\lambda_i} \right\}, 
	\]
the relative interior of the graph of $g$. Pick an arbitrary $v \in G$, fix $y_i = \ln v_{\gamma(i)}$ and consider the matrix $\Gamma = (\gamma(0),\ldots, \gamma(k))$. Let $\mathbf{1}=(1,\ldots,1)^\top \in \R^{k+1}$. The matrix $M = (\Gamma^\top \ \mathbf{1})$ has rank $k+1$. Hence, the linear system $M u = y$ with the right-hand side $y=(y_0,\ldots,y_k)^\top$ has a solution $u = (u_1,\ldots,u_{n+1})^\top$. Let $w = (u_1,\ldots,u_n)^\top$. One has $y = M u = \Gamma^\top w + \mathbf{1} u_{n+1}$, which can be spelled out as $\sprod{\gamma(i)}{w} + u_{n+1} = y_i$. Exponentiation of the latter equality gives 
	$x^{\gamma(i)} e^{u_{n+1}} = v_{\gamma(i)}$  with $x = ( e^{u_1} ,\ldots, e^{u_n})^\top \in \R_{>0}^n$. 
	For this choice of $x$ and $c = e^{u_{n+1}}$, we obtain $c x^\beta = \prod_{i=0}^k ( c x^{\gamma(i)} )^{ \lambda_i}  = \sum_{i=0}^k v_{\gamma(i)} = v_\beta$ and thus $c x^P = v$. We have realized $v \in G$ as a point in $\cC_P(\R_+^n)$ up to a scaling factor $c>0$. Since $\cP_P(\R_+^n)$ is a cone, $v \in G$ itself is contained in $\cC_P(\R_+^n)$. This shows that $G \subseteq \cC_P(\R_+^n)$. But then the topological closure $\overline{G}$, which is the whole graph of $g$, is also a subset of $\cC_P(\R_+^n)$. To conclude the proof it thus suffices to show that $\conv( \overline{G}) = C$. It is clear that if  $(v_{\gamma(0)},\ldots, v_{\gamma(k)})$ is a standard unit vector and $v_\beta =0$, then $v$ is in the graph $\overline{G}$ of $g$. But then the graph $\R_+^{k+1} \times \{0\}$ of the zero function, defined on $\R_+^k$ is in $\conv(\overline{G})$. This shows that the graph $\overline{G}$ of $g$ and the graph of the zero function on $\R_+^k$ are both subsets of $C$. But since every point $v \in C$ lies above the graph of the zero function and below the graph of $g$, we conclude that $C = \conv(\overline{G})$. 
\end{proof} 

	We call a pattern $P$ as in \eqref{AGE:convex} a \emph{SAGE-circuit}. For a family $\cF = \{P_1,\ldots,P_N\}$ of SAGE circuits and the domain $X=\R_+^n$, we call the problem \eqref{C-P-RLX} the \emph{SAGE moment convexification} of the problem $\inf_{x \in X} f(x)$. 

For establishing the dual of the SAGE moment convexification, we need to dualize the cone $\cC_P(\R^n_+)$ from \eqref{AGE:convex}. 
\begin{defn} \label{def:GMC}
	For $\lambda = (\lambda_0,\ldots,\lambda_k) \in \R_{>0}^{k+1}$ with $1 = \sum_{i=0}^k \lambda_i$, we define the \emph{geometric-mean cone} for $\lambda$ by 
	\[
	\GMC_\lambda := \left \{ (y, t_0,\ldots,t_k) \in \R_+^{k+2} \colon 0 \le y \le \prod_{i=0}^k t_i^{\lambda_i} \right\}.  
	\]
\end{defn} 
Note that the geometric-mean cone is defined slightly differently in \mosek's documentation \cite{mosek}. 

\begin{prop} \label{prop:gmc:dual} 
	In the notation of Definition~\ref{def:GMC}, one has 
	\begin{equation} \label{gmc:dual} 
		(\GMC_\lambda)^\ast = \left\{ (z,s_0,\ldots,s_k) \in \R \times \R_+^{k+1} \colon z + \sum_{i=0}^k \left( \frac{s_i}{\lambda_i} \right)^{\lambda_i} \ge 0 \right\}.
	\end{equation} 
\end{prop}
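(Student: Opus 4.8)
The plan is to compute the dual cone directly from its definition, unwinding $(\GMC_\lambda)^\ast = \{(z,s_0,\ldots,s_k) : zy + \sum_{i=0}^k s_i t_i \ge 0 \text{ for all } (y,t_0,\ldots,t_k)\in\GMC_\lambda\}$ with respect to the standard scalar product used throughout. First I would record two easy necessary conditions by probing extreme rays of $\GMC_\lambda$: taking $y=0$ and letting a single $t_j\to\infty$ (which stays feasible since $\prod_i t_i^{\lambda_i}\ge 0$) forces $s_j\ge 0$, so $s\in\R_+^{k+1}$; and noting that any $z\ge 0$ together with $s\in\R_+^{k+1}$ already makes $zy+\sum_i s_it_i\ge 0$ automatic, so the only nontrivial regime is $z<0$.

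In the regime $z<0$ the worst case over $y\in[0,\prod_i t_i^{\lambda_i}]$ is $y=\prod_i t_i^{\lambda_i}$, so membership in the dual cone is equivalent to $z\prod_{i=0}^k t_i^{\lambda_i}+\sum_{i=0}^k s_i t_i\ge 0$ for all $t\in\R_+^{k+1}$. Since this expression is positively homogeneous of degree $1$ in $t$ (using $\sum_i\lambda_i=1$), I can normalize to the slice $\prod_i t_i^{\lambda_i}=1$, reducing the condition to $z\ge -\min\{\sum_{i=0}^k s_i t_i : t\in\R_+^{k+1},\ \prod_i t_i^{\lambda_i}=1\}$. The core of the proof is the evaluation of this minimum, which I would obtain cleanly from the weighted AM--GM inequality: writing $s_it_i=\lambda_i\,(s_it_i/\lambda_i)$ gives $\sum_i s_it_i=\sum_i\lambda_i\frac{s_it_i}{\lambda_i}\ge\prod_i\bigl(\frac{s_it_i}{\lambda_i}\bigr)^{\lambda_i}=\prod_i\bigl(\frac{s_i}{\lambda_i}\bigr)^{\lambda_i}$, using $\prod_i t_i^{\lambda_i}=1$ in the last step, with equality when $t_i\propto\lambda_i/s_i$. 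Hence the minimum equals the weighted geometric mean $\prod_{i=0}^k(s_i/\lambda_i)^{\lambda_i}$, and the dual condition becomes $z+\prod_{i=0}^k(s_i/\lambda_i)^{\lambda_i}\ge 0$.

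Putting the two regimes together yields $(\GMC_\lambda)^\ast=\{(z,s)\in\R\times\R_+^{k+1}: z+\prod_{i=0}^k(s_i/\lambda_i)^{\lambda_i}\ge 0\}$. I must flag that this natural computation produces the \emph{product} (weighted geometric mean) of the quantities $s_i/\lambda_i$, and I do not see any argument that would instead yield the sum: already for $k=1$, $\lambda_0=\lambda_1=\tfrac12$ the minimum is $2\sqrt{s_0s_1}=\prod_i(s_i/\lambda_i)^{\lambda_i}$, not $\sqrt{2s_0}+\sqrt{2s_1}$. Consequently the displayed $\sum_{i=0}^k(s_i/\lambda_i)^{\lambda_i}$ should read $\prod_{i=0}^k(s_i/\lambda_i)^{\lambda_i}$ (the two coincide only when $k=0$, a single term, which likely explains the slip). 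The main technical obstacle is the boundary behaviour when some $s_j=0$: there the AM--GM equality case forces $t_i\to 0$ for $i\ne j$ and $t_j\to\infty$, so the infimum value $\prod_i(s_i/\lambda_i)^{\lambda_i}=0$ is only approached and not attained; I would dispatch this by a short continuity/limiting argument confirming the infimum is still $0$, and then check that the resulting set is exactly the closed dual cone, which is consistent with $\GMC_\lambda$ being a full-dimensional closed convex cone so that $(\GMC_\lambda)^{\ast\ast}=\GMC_\lambda$.
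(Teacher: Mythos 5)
Your proof is correct and rests on the same core as the paper's: the weighted AM--GM estimate with equality at $t_i\propto\lambda_i/s_i$, together with a continuity treatment of the boundary case $s_j=0$ (you take limits in $t$, the paper instead perturbs $s$ slightly and exhibits an explicit point of $\GMC_\lambda$ with negative scalar product); the only difference is packaging, since you evaluate the support function directly on the normalized slice $\prod_i t_i^{\lambda_i}=1$ while the paper argues via two inclusions and a contradiction. You are also right to flag the misprint: the displayed $\sum_{i=0}^k(s_i/\lambda_i)^{\lambda_i}$ should be $\prod_{i=0}^k(s_i/\lambda_i)^{\lambda_i}$, which is exactly what the paper's own proof manipulates and what the downstream Propositions~\ref{prop:sage:pol:cone} and~\ref{SONC:non:neg} use, and your $k=1$ check ($2\sqrt{s_0s_1}$ versus $\sqrt{2s_0}+\sqrt{2s_1}$) correctly shows the sum version would describe a strictly larger set than $(\GMC_\lambda)^\ast$.
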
 
\begin{proof} 
	We denote $\GMC_\lambda$ as $C$ and the right-hand side of \eqref{gmc:dual} as $D$. The scalar product of $(y,t_0,\ldots,t_k) \in C$ and $(z,s_0,\ldots,s_k) \in D$ satisfies 
	\begin{align*} 
		z y + \sum_{i=0}^k s_i t_i & =  zy + \sum_{i=0}^k \lambda_i \frac{s_i}{\lambda_i} t_i
		\\ & \ge z y + \prod_{i=0}^k \left( \frac{s_i}{\lambda_i} \right)^{\lambda_i} \prod_{i=0}^k t_i^{\lambda_i}   &  & \text{(by the AM-GM inequality)}
		\\ & \ge \left( z + \prod_{i=0}^k \left( \frac{s_i}{\lambda_i} \right)^{\lambda_i} \right)  y
		\\ & \ge 0.
	\end{align*} 
	This shows that $D \subseteq C^\ast$. 
% Sebastian: I think this sentence is not necessary for the proof
%Since $\{0\} \times \R^{k+1}_+ \subseteq C \subseteq \R \times \R_+^{k+1}$, we have $\{0\} \times \R_+^{k+1} \subseteq C^\ast \subseteq \R \times \R_+^{k+1}$. 
Assume now $C^\ast \not\subseteq D$. Then there exists $(z,s_0,\ldots,s_k) \in C^\ast$ with $s_0,\ldots,s_k \ge 0$ and $z  + \prod_{i=0}^k \left( \frac{s_i}{\lambda_i} \right)^{\lambda_i} < 0$. The continuity of this function allows to find slightly increased values $s_0,\ldots, s_k > 0$ still resulting in $z  + \prod_{i=0}^k \left( \frac{s_i}{\lambda_i} \right)^{\lambda_i} < 0$. Then, fixing $(y,t_0,\ldots,t_k) \in C$ with $t_i = \frac{\lambda_i}{s_i}$ and $y = \prod_{i=0}^k t_i^{\lambda_i}$, for the scalar product of $(y,t_0,\ldots,t_k)$ and $(z, s_0,\ldots,s_k)$, we obtain 
	\begin{align*} 
		z y + \sum_{i=0}^k s_i t_i & = zy + \sum_{i=0}^k \lambda_i 
		\\ & = zy + 1
		\\ & < - \prod_{i=0}^k \left( \frac{s_i}{\lambda_i} \right)^{\lambda_i} y  + 1
		\\ & = - \prod_{i=0}^k \left( \frac{s_i}{\lambda_i} \right)^{\lambda_i} t_i^{\lambda_i}  + 1
		\\ & = 0. 
	\end{align*} 
	The scalar product is negative, which contradicts the fact that one of the vectors is in $C^\ast$ and  the other one is in $C$. This implies that $D = C^\ast$. 
\end{proof} 

As a a direct consequence of Proposition~\ref{prop:gmc:dual}, we obtain 
\begin{prop} \label{prop:sage:pol:cone}
	In the notation of Proposition~\ref{prop:age}, one has 
	\[
	\cP_P(\R_+^n) = \left\{ f \in \R[x]_P \colon f_\beta + \prod_{i=0}^k \left( \frac{f_{\gamma(i)}}{\lambda_i} \right)^{\lambda_i} \ge 0  \right\}. 
	\]
\end{prop}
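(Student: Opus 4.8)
The plan is to treat this entirely as a dualization of the moment-cone description from Proposition~\ref{prop:age}, so that no new analytic work is required: all the substance has already been carried out in Propositions~\ref{prop:age} and~\ref{prop:gmc:dual}, and what remains is to chain together three earlier results with the correct coordinate bookkeeping.

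First I would invoke Proposition~\ref{moment:vs:positive} with the exponent set $P$ and the domain $\R_+^n$ to obtain $\cP_P(\R_+^n) = \cC_P(\R_+^n)^\ast$, where the dual is taken in $\R^P$ under the identification $\R[x]_P \leftrightarrow \R^P$ that turns $L_v(f)$ into the scalar product $\sprod{f}{v}$. This reduces the entire claim to computing the dual of the moment cone $\cC_P(\R_+^n)$.

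Second, I would observe that the explicit description of $\cC_P(\R_+^n)$ given in Proposition~\ref{prop:age}, namely $\{ v \in \R_+^P \colon 0 \le v_\beta \le \prod_{i=0}^k v_{\gamma(i)}^{\lambda_i}\}$, is literally the geometric-mean cone $\GMC_\lambda$ of Definition~\ref{def:GMC} under the coordinate relabeling $v_\beta \leftrightarrow y$ and $v_{\gamma(i)} \leftrightarrow t_i$ for $i = 0,\ldots,k$. Since this relabeling is an orthogonal isomorphism of $\R^P$ with $\R^{k+2}$ — it merely permutes and renames coordinates, preserving the standard inner product — it carries dual cones to dual cones. Hence $\cC_P(\R_+^n)^\ast$ equals $(\GMC_\lambda)^\ast$ under the matching identification $f_\beta \leftrightarrow z$ and $f_{\gamma(i)} \leftrightarrow s_i$. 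Finally, substituting the explicit formula for $(\GMC_\lambda)^\ast$ from Proposition~\ref{prop:gmc:dual} yields exactly the claimed condition $f_\beta + \prod_{i=0}^k (f_{\gamma(i)}/\lambda_i)^{\lambda_i} \ge 0$, together with the non-negativity constraints $f_{\gamma(i)} \ge 0$ that are implicit in the well-definedness of the real powers $(\cdot)^{\lambda_i}$.

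There is no genuine obstacle here; the only point requiring care is the bookkeeping of the coordinate identification. One must make sure that the interior (apex) exponent $\beta$ of the simplicial pattern is matched with the distinguished coordinate $y$ of the geometric-mean cone, while the affinely independent vertices $\gamma(0),\ldots,\gamma(k)$ are matched with $t_0,\ldots,t_k$, so that the inequality of Proposition~\ref{prop:gmc:dual} translates verbatim into the stated description of $\cP_P(\R_+^n)$. Once this correspondence is fixed, the proof is a one-line composition of Propositions~\ref{moment:vs:positive}, \ref{prop:age}, and~\ref{prop:gmc:dual}.
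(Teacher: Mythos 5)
Your proposal is correct and follows exactly the paper's own argument: the paper's proof is the same one-line composition of Proposition~\ref{moment:vs:positive} (the duality $\cC_P(\R_+^n)^\ast = \cP_P(\R_+^n)$), Proposition~\ref{prop:age} (identifying $\cC_P(\R_+^n)$ with $\GMC_\lambda$), and Proposition~\ref{prop:gmc:dual} (the dual of $\GMC_\lambda$). Your additional care with the coordinate identification and with the implicit constraints $f_{\gamma(i)} \ge 0$ only makes explicit what the paper leaves to the reader.
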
 
\begin{proof} 
	The assertion follows from Propositions~\ref{prop:age}, \ref{prop:gmc:dual}, and the duality relation $\cC_P(\R_+^n)^\ast = \cP_P(\R_+^n)$. 
\end{proof} 

Proposition~\ref{prop:sage:pol:cone} yields an explicit formulation of the problem \eqref{D-P-RLX} on the domain $X =\R_+^n$ in the case when $\cF = \{P_1,\ldots,P_N\}$ is a family of SAGE-circuits. We call such a problem \eqref{D-P-RLX} the \emph{SAGE relaxation} of $\inf_{x \in \R_+^n} f(x)$ with respect to the family of SAGE-circuits $\cF$. The polynomials of from $\cP_{P_1}(\R_+^n) + \cdots + \cP_{P_N}(\R_+^n)$, where $P_1,\ldots,P_N$ are SAGE-circuits, are called \emph{SAGE polynomials}. 

\begin{rem} \label{compute:SAGE} 
	Let us briefly discuss computational options for the SAGE moment convexification and the SAGE relaxation. The problems involve the cones $\GMC_\lambda$ and $(\GMC_\lambda)^\ast$ for different choices of $\lambda$. The cones $\GMC_\lambda$ and $(\GMC_\lambda)^\ast$ are similar in nature, because both involve the weighted geometric-mean in their description. It is known that $\GMC_\lambda$ and $(\GMC_\lambda)^\ast$ are second-order cone representable \cite{ben2001lectures}.  The Mosek documentation \cite{aps2020mosek} provides hints on how to model cones like $\GMC_\lambda$ and its dual. There are also further options, some of which rely on the substitution of variables $v_\alpha = e^{\nu_\alpha}$ for the SAGE moment convexification and  $f_\alpha  = e^{ \phi_\alpha}$ for the SAGE relaxation. 
\end{rem} 

We proceed with a similar derivation of primal and dual SONC relaxations.
\begin{prop} \label{sonc:convexification} 
	Let the assumptions of Proposition~\ref{prop:age} hold and assume that additionally $\gamma(0),\ldots,\gamma(k) \in 2 \N^n$. Then, for $\beta \in 2 \N^n$, one has 
	\begin{equation} \label{even:case} 
		\cC_P( \R^n)  = \left\{ v \in \R_+^P \colon 0 \le v_\beta \le \prod_{i=0}^k v_{\gamma(i)}^{\lambda_i} \right\}
	\end{equation}
	and, for $\beta \not\in 2 \N^n$, one has 
	\begin{equation} \label{odd:case} 
		\cC_P( \R^n)  = \left\{ v \in \R_+^P \colon - \prod_{i=0}^k v_{\gamma(i)}^{\lambda_i} \le v_\beta \le \prod_{i=0}^k v_{\gamma(i)}^{\lambda_i} \right\}
	\end{equation} 
\end{prop}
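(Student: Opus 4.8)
The plan is to reduce both cases to the positive-orthant result of Proposition~\ref{prop:age} by exploiting that the exponents $\gamma(0),\ldots,\gamma(k)$ are even. Write $|x| := (|x_1|,\ldots,|x_n|) \in \R_+^n$. Since $\gamma(i) \in 2\N^n$, we have $x^{\gamma(i)} = |x|^{\gamma(i)} \ge 0$ for every $x \in \R^n$, so the $\gamma$-coordinates of any point of $\cC_P(\R^n)$ are automatically non-negative; the only difference between the two cases is the behaviour of the coordinate $x^\beta$. In the even case \eqref{even:case}, where $\beta \in 2\N^n$ as well, one has $x^\beta = |x|^\beta$, hence $x^P = |x|^P$ for all $x$, and as $x$ ranges over $\R^n$ the vector $|x|$ ranges over all of $\R_+^n$. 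Therefore $\{x^P : x \in \R^n\} = \{y^P : y \in \R_+^n\}$, and passing to conic hulls and closures gives $\cC_P(\R^n) = \cC_P(\R_+^n)$, so that \eqref{even:case} is exactly \eqref{AGE:convex}.

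For the odd case I would introduce the linear involution $\pi$ on $\R^P$ that negates the $\beta$-coordinate and fixes the others, and denote by $R$ the right-hand side of \eqref{odd:case}. The first key point is that $R$ is a closed convex cone: the map $v \mapsto \prod_{i=0}^k v_{\gamma(i)}^{\lambda_i}$ is concave and positively homogeneous of degree $\sum_i \lambda_i = 1$ on $\R_+^{k+1}$ (as already used in the proof of Proposition~\ref{prop:age}), while $v_\beta \mapsto |v_\beta|$ is convex and positively homogeneous, so the two-sided constraint $|v_\beta| \le \prod_{i=0}^k v_{\gamma(i)}^{\lambda_i}$ cuts out a closed convex cone. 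For the inclusion $\cC_P(\R^n) \subseteq R$ it then suffices to check that every generator $x^P$ lies in $R$: indeed $x^{\gamma(i)} \ge 0$, and using $\beta = \sum_i \lambda_i \gamma(i)$ we get $|x^\beta| = |x|^\beta = \prod_i (|x|^{\gamma(i)})^{\lambda_i} = \prod_i (x^{\gamma(i)})^{\lambda_i}$, so the bound holds with equality. Since $R$ is a closed convex cone, it contains the closed conic hull of all generators, i.e.\ $\cC_P(\R^n)$.

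For the reverse inclusion $R \subseteq \cC_P(\R^n)$ I would combine Proposition~\ref{prop:age} with a symmetry argument. Because $\beta \notin 2\N^n$, some coordinate $\beta_j$ is odd; flipping the sign of $x_j$ negates $x^\beta$ while leaving every $x^{\gamma(i)}$ unchanged, so the generating set $\{x^P : x \in \R^n\}$ is invariant under $\pi$, and hence so is its closed conic hull $\cC_P(\R^n)$. Moreover $\{y^P : y \in \R_+^n\} \subseteq \{x^P : x \in \R^n\}$ yields $C_+ := \cC_P(\R_+^n) \subseteq \cC_P(\R^n)$. Using the explicit description of $C_+$ from Proposition~\ref{prop:age}, one checks that $R = C_+ \cup \pi(C_+)$: a point of $R$ with $v_\beta \ge 0$ lies in $C_+$, while one with $v_\beta \le 0$ satisfies $\pi(v) \in C_+$. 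Since $C_+ \subseteq \cC_P(\R^n)$ and $\cC_P(\R^n)$ is $\pi$-invariant, both $C_+$ and $\pi(C_+)$ lie in $\cC_P(\R^n)$, giving $R \subseteq \cC_P(\R^n)$ and completing the odd case.

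I expect the main obstacle to be the bookkeeping around the reflection $\pi$: one must verify both that the monomial curve is genuinely $\pi$-symmetric (the sign-flip step, which is precisely where the hypothesis $\beta \notin 2\N^n$ is used) and that the closed convex cone $\cC_P(\R^n)$ inherits this symmetry, so that the two half-cones $C_+$ and $\pi(C_+)$ reassemble into exactly $R$. The concavity of the weighted geometric mean needed to see that $R$ is a convex cone is already available from the proof of Proposition~\ref{prop:age}, so no new analytic input is required beyond that reflection argument.
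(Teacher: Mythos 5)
Your proof is correct and follows essentially the same route as the paper's: the even case is reduced to Proposition~\ref{prop:age} via sign-insensitivity of even exponents, and the odd case uses the reflection induced by an odd coordinate of $\beta$ together with the decomposition of the right-hand side of \eqref{odd:case} as the union of $\cC_P(\R_+^n)$ and its reflection (the paper's $C'$ and $C''$), plus the observation that every generator $x^P$ satisfies one of the two convex constraints with equality. The only difference is presentational: you spell out the closedness and convexity of the right-hand side and the $\pi$-invariance of the closed conic hull, which the paper leaves implicit.
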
 
\begin{proof} 
	If $\beta \in 2\N^n$, then $P \subseteq 2 \N^n$. This implies that the value $x^P$ does not depend on the signs of the coordinates $x_1,\ldots,x_n$ but only on the absolute values $|x_1|,\ldots, |x_n|$. This implies $\cC_P( \R^n) = \cC_P(\R_+^n)$ so that the assertion follows from Proposition~\ref{prop:age}. 
	
	If $\beta \notin 2 \N^n$, then the sign of $x^{\gamma(i)}$ still does not depend on the signs of $x_1,\ldots,x_n$ but the sign of $x^{\beta}$ depend on the signs of $x_1,\ldots,x_n$ and switching the sign of $x_i$, for which $\beta_i$ is odd, generates a sign switch of $x^\beta$. This shows that $\cC_P(\R^n)$ contains both $C' := \cC_P(\R_+^n)$ and its reflection $C'' := \{ v \in \R^P \colon (- v_\beta, v_{\gamma(0)},\ldots, v_{\gamma(k)}) \in \cC_P(\R_+^n)\}$.  Hence, $\cC_P(\R^n) \supseteq C' \cup C''$, where, in view of Proposition~\ref{prop:age}, $C:=C' \cup C''$ is the right-hand side of \eqref{odd:case}. To see that $\cC_P(\R^n)$ is a subset of $C$ it suffices to observe that, for every $x \in \R^n$, one has $x^P = ( - |x^\beta| , x^{\gamma(0)},\ldots, x^{\gamma(k)})$ or $x^P = (|x^\beta| , x^{\gamma(0)},\ldots, x^{\gamma(k)})$, we see that for $v = x^P$, one of the two convex constraints that define $C$ is attained with equality. This concludes the proof of the equality $\cC_P(\R^n) = C$. 
\end{proof} 

We can dualize the cones from Proposition~\ref{sonc:convexification}. 

\begin{prop} \label{SONC:non:neg} 
	In the notation of Proposition~\ref{sonc:convexification}, for $\beta \in 2 \N^n$, one has 
	\[
	\cP_P(\R^n) = \left\{ f \in \R[x]_P \colon f_\beta + \prod_{i=0}^k \left( \frac{f_{\gamma(i)}}{\lambda_i} \right)^{\lambda_i} \ge 0  \right\}
	\] 
	and, for $\beta \not\in 2 \N^n$, one has 
	\[
	\cP_P(\R^n) = \left\{ f \in \R[x]_P \colon -|f_\beta| + \prod_{i=0}^k \left( \frac{f_{\gamma(i)}}{\lambda_i} \right)^{\lambda_i} \ge 0  \right\}.
	\]
\end{prop}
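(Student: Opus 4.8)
The plan is to read off $\cP_P(\R^n)$ as the dual cone $\cC_P(\R^n)^\ast$, invoking the duality $\cC_P(X)^\ast = \cP_P(X)$ from Proposition~\ref{moment:vs:positive}, and then to dualize the two explicit descriptions of $\cC_P(\R^n)$ established in Proposition~\ref{sonc:convexification}.

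For the even case $\beta \in 2\N^n$, the proof of Proposition~\ref{sonc:convexification} shows $\cC_P(\R^n) = \cC_P(\R_+^n)$, so the two cones share a dual. Hence $\cP_P(\R^n) = \cC_P(\R_+^n)^\ast = \cP_P(\R_+^n)$, and the first formula is exactly the description of $\cP_P(\R_+^n)$ already provided in Proposition~\ref{prop:sage:pol:cone}. Thus the even case requires nothing beyond citing the SAGE computation.

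For the odd case $\beta \notin 2\N^n$, I would use that, by Proposition~\ref{sonc:convexification}, $\cC_P(\R^n) = C' \cup C''$ where $C' = \cC_P(\R_+^n)$ and $C''$ is the reflection of $C'$ across the hyperplane $v_\beta = 0$. Since $\sprod{f}{v} \ge 0$ for all $v \in C' \cup C''$ holds precisely when it holds on $C'$ and on $C''$ separately, dualization turns the union into an intersection: $(C' \cup C'')^\ast = C'^\ast \cap C''^\ast$. I already have $C'^\ast = \cP_P(\R_+^n)$ from the even-case reasoning. To handle $C''$, I would introduce the orthogonal involution $R$ that negates the $\beta$-coordinate and fixes every $\gamma(i)$-coordinate, so that $C'' = R(C')$; since $R$ is self-adjoint with $R = R^{-1}$, one obtains $C''^\ast = R(C'^\ast)$, i.e. the cone obtained from the description of $\cP_P(\R_+^n)$ by replacing $f_\beta$ with $-f_\beta$. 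Intersecting, the constraints $f_{\gamma(i)} \ge 0$, $f_\beta + \prod_{i=0}^k (f_{\gamma(i)}/\lambda_i)^{\lambda_i} \ge 0$, and $-f_\beta + \prod_{i=0}^k (f_{\gamma(i)}/\lambda_i)^{\lambda_i} \ge 0$ collapse to $f_{\gamma(i)} \ge 0$ together with $|f_\beta| \le \prod_{i=0}^k (f_{\gamma(i)}/\lambda_i)^{\lambda_i}$, which is the claimed description (the sign conditions $f_{\gamma(i)} \ge 0$ being implicit, as the weighted geometric mean is taken over non-negative quantities).

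The delicate point I expect to spend care on is the reflection identity $(R(C'))^\ast = R(C'^\ast)$: it relies on $R$ being orthogonal and an involution, via $\sprod{f}{Rv} = \sprod{Rf}{v}$, and should be verified explicitly. I would also make sure to note that $C' \cup C''$ is genuinely convex (this was recorded in Proposition~\ref{sonc:convexification}), so that no hidden convex-hull or closure operation intervenes and the elementary identity $(C' \cup C'')^\ast = C'^\ast \cap C''^\ast$ indeed computes $\cP_P(\R^n)$.
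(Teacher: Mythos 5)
Your proof is correct, and the even case coincides exactly with the paper's argument: $\cC_P(\R^n)=\cC_P(\R_+^n)$, so dualizing and citing Proposition~\ref{prop:sage:pol:cone} finishes. For the odd case, however, you take a genuinely different route from the paper. The paper argues on the primal (non-negativity) side: since each $x^{\gamma(i)}$ is invariant under sign changes of the variables while the sign of $x^\beta$ can be flipped by switching the sign of some $x_i$ with $\beta_i$ odd, the polynomial $f_\beta x^\beta + \sum_i f_{\gamma(i)} x^{\gamma(i)}$ is non-negative on $\R^n$ if and only if $-|f_\beta|\, x^\beta + \sum_i f_{\gamma(i)} x^{\gamma(i)}$ is non-negative on $\R_+^n$, after which Proposition~\ref{prop:sage:pol:cone} applied to the latter polynomial gives the claimed formula. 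You instead stay on the dual (moment-cone) side: you dualize the description $\cC_P(\R^n)=C'\cup C''$ from Proposition~\ref{sonc:convexification}, using $(C'\cup C'')^\ast = C'^\ast\cap C''^\ast$ together with the reflection identity $(RC')^\ast = R(C'^\ast)$ for the orthogonal involution $R$ that negates the $v_\beta$-coordinate, and then observe that the two resulting inequalities $\pm f_\beta + \prod_{i=0}^k (f_{\gamma(i)}/\lambda_i)^{\lambda_i}\ge 0$ collapse to the stated one with $-|f_\beta|$. Both arguments rest on the same sign symmetry; the paper's is shorter and works directly with polynomials, while yours is more in keeping with the section's announced methodology of deriving $\cP$-descriptions by dualizing $\cC$-descriptions, and it makes the union-to-intersection mechanism explicit and reusable. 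One small correction: your concern that convexity of $C'\cup C''$ is needed for the identity $(C'\cup C'')^\ast = C'^\ast\cap C''^\ast$ is unfounded, since that identity holds for arbitrary sets; what you actually need (and have, from Proposition~\ref{sonc:convexification}) is that this union \emph{equals} the closed convex cone $\cC_P(\R^n)$, so that its dual is $\cP_P(\R^n)$ by Proposition~\ref{moment:vs:positive}.
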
 
\begin{proof} 
	If $\beta \in 2 \N^n$, then we have $\cC_P(\R^n) = \cC_P(\R_+^n)$. Hence $\cP_P(\R^n) = \cC_P(\R^n)^\ast = \cC_P(\R_+^n)^\ast = \cP_P(\R_+^n)$, where $\cP_P(\R_+^n)$ was determined in Proposition~\ref{prop:sage:pol:cone}. 
	
	Assume $\beta \not\in 2 \N^n$. We recall  that $x^{\gamma(i)}$ does not depend on the signs of $x_1,\ldots,x_n$ because $\gamma(i) \in 2 \N^n$, while $x^{\beta}$ has a sign that can be switched by switching the sign of an $x_i$ with odd $\beta_i$. 
	This shows that $f_\beta x^\beta + \sum_{i=0}^k f_{\gamma(i)} x^{\gamma(i)}$ is non-negative on $\R^n$ if and only if $ - |f_\beta| x^\beta + \sum_{i=0}^k f_{\gamma(i)} x^{\gamma(i)}$ is non-negative on $\R_+^n$. 
\end{proof} 

If $\gamma(0,\ldots,\gamma(k)$ are affinely independent exponent vectors that belong to $2 \N^n$ and $\beta \in \N^n$ lies in the relative interior of the simplex with the vertices $\gamma(0),\ldots,\gamma(k)$, we call $P = \{\beta, \gamma(0),\ldots,\gamma(k)\}$ a \emph{SONC circuit}. We call \eqref{C-P-RLX} on the domain $X =\R^n$ where $\cF = \{P_1,\ldots,P_N\}$ is a family of SONC circuits a \emph{SONC moment convexification} of the unconstrained problem $\inf_{x \in \R^n} f(x)$, and we call the respective problem \eqref{D-P-RLX} a \emph{SONC relaxation} of the unconstrained problem $\inf_{x \in \R^n} f(x)$. Polynomials from $\cP_{P_1}(\R^n) + \cdots + \cP_{P_N}(\R^n)$, where $P_1,\ldots,P_N$ are SONC circuits, are called \emph{SONC polynomials}. 

From Proposition~\ref{SONC:non:neg}, we obtain an explicit description of the SONC relaxations for the problem $\inf_{x \in \R^n} f(x)$. 

\begin{rem}
	Comments in Remark~\ref{compute:SAGE} regarding the computational options for SAGE relaxations also apply for SONC relaxations. 
\end{rem} 

We call a pattern of the form $\{ 2 \alpha , \alpha + \beta, 2 \beta \}$ with $\alpha, \beta \in \N^n$ and $\alpha \ne \beta$ an SDSOS pattern. This is a special SONC pattern with $\alpha + \beta = \lambda_0 (2 \alpha) + \lambda_1 (2 \beta)$ and $\lambda_0= \lambda_1 = \frac{1}{2}$. 

\begin{prop}
	Let $P = \{2 \alpha, \alpha + \beta, 2 \beta\} \subseteq \N^n$ be an SDSOS pattern. Then, for $\alpha + \beta \in 2 \N^n$, one has  
	\begin{align*} 
		\cC_P(\R^n) & = \{ v \in \R_+^P \colon 0 \le v_{\alpha+\beta}, \ v_{\alpha+\beta}^2 \le v_{2 \alpha} v_{2 \beta} \}, 
		\\ \cP_P(\R^n) & = \{ f \in \R[x]_P \colon f_{\alpha+\beta} + 2 (f_{2\alpha} \cdot f_{2 \beta})^{1/2} \ge 0 \}. 
	\end{align*} 
	Furthermore, for $\alpha + \beta \not\in 2 \N^n$, one has 
	\begin{align*} 
		\cC_P(\R^n) & = \{ v \in \R_+^P \colon v_{\alpha+\beta}^2 \le v_{2 \alpha} v_{2 \beta} \}, 
		\\	\cP_P(\R^n) & =  \{ v \in \R^P \colon f^2_{\alpha+\beta} \le   2 f_{2\alpha} \cdot f_{2 \beta}  \}.
	\end{align*} 
\end{prop}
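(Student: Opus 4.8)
The plan is to obtain this statement as a direct specialization of Propositions~\ref{sonc:convexification} and \ref{SONC:non:neg} to a circuit with $k=1$. First I would verify that an SDSOS pattern is a SONC circuit of this minimal size. Setting $\gamma(0) := 2\alpha$ and $\gamma(1) := 2\beta$, both exponent vectors lie in $2\N^n$, and they are affinely independent precisely because $\alpha \ne \beta$, so $\conv(\gamma(0),\gamma(1))$ is the nondegenerate segment $[2\alpha,2\beta]$. Its relative interior is the open segment, and since $\alpha+\beta = \tfrac{1}{2}(2\alpha) + \tfrac{1}{2}(2\beta)$ is the midpoint, the interior exponent is exactly $\alpha+\beta$ with barycentric weights $\lambda_0 = \lambda_1 = \tfrac{1}{2} > 0$. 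Hence all hypotheses of Proposition~\ref{prop:age}, and therefore of Propositions~\ref{sonc:convexification} and \ref{SONC:non:neg}, are satisfied.

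With these identifications the weighted geometric means collapse to ordinary square roots. On the primal side, $\prod_{i=0}^{1} v_{\gamma(i)}^{\lambda_i} = v_{2\alpha}^{1/2} v_{2\beta}^{1/2} = (v_{2\alpha} v_{2\beta})^{1/2}$, and on the dual side $\prod_{i=0}^{1} (f_{\gamma(i)}/\lambda_i)^{\lambda_i} = (2 f_{2\alpha})^{1/2}(2 f_{2\beta})^{1/2} = 2 (f_{2\alpha} f_{2\beta})^{1/2}$, the factor $2$ arising from $1/\lambda_i = 2$. Substituting these expressions into \eqref{even:case} and \eqref{odd:case} of Proposition~\ref{sonc:convexification}, and into the two formulas of Proposition~\ref{SONC:non:neg}, immediately produces the moment-cone and non-negativity-cone descriptions, separated according to the parity of $\alpha+\beta$.

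The only work beyond substitution is rewriting the geometric-mean bounds in the stated algebraic form. For the moment cones I would use that on $\R_+^P$ all of $v_{\alpha+\beta}, v_{2\alpha}, v_{2\beta}$ are non-negative, so the one-sided bound $v_{\alpha+\beta} \le (v_{2\alpha} v_{2\beta})^{1/2}$ (even case) and the two-sided bound $-\,(v_{2\alpha} v_{2\beta})^{1/2} \le v_{\alpha+\beta} \le (v_{2\alpha} v_{2\beta})^{1/2}$ (odd case) may be squared without loss to give $v_{\alpha+\beta}^2 \le v_{2\alpha} v_{2\beta}$ in both regimes. For the non-negativity cones the even case $f_{\alpha+\beta} + 2(f_{2\alpha} f_{2\beta})^{1/2} \ge 0$ is already in the desired shape, while the odd case $-|f_{\alpha+\beta}| + 2(f_{2\alpha} f_{2\beta})^{1/2} \ge 0$ rearranges to $|f_{\alpha+\beta}| \le 2(f_{2\alpha} f_{2\beta})^{1/2}$ and, upon squaring, to the displayed quadratic inequality relating $f_{\alpha+\beta}^2$ to the product $f_{2\alpha} f_{2\beta}$.

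I do not anticipate any genuine obstacle: the statement is a corollary, and its content lies entirely in checking the circuit hypotheses and in the bookkeeping of the weights $\lambda_0 = \lambda_1 = \tfrac{1}{2}$. The single point deserving a word of care is that the square-root term $(f_{2\alpha} f_{2\beta})^{1/2}$ presupposes $f_{2\alpha}, f_{2\beta} \ge 0$; this sign restriction is inherited from the non-negativity of the dual variables in Proposition~\ref{SONC:non:neg}, and it is exactly what legitimizes both the geometric-mean expressions and the squaring steps used to pass to the final inequalities.
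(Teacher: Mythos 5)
Your proposal is correct and takes exactly the paper's route: the paper's entire proof is the one-line observation that the claim follows from Propositions~\ref{sonc:convexification} and \ref{SONC:non:neg} applied to the special circuit with $k=1$ and $\lambda_0=\lambda_1=\tfrac{1}{2}$, and your verification of the circuit hypotheses and of the weight bookkeeping fills in precisely that specialization.

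One caveat at the very last step: in the odd case, squaring $|f_{\alpha+\beta}| \le 2\,(f_{2\alpha} f_{2\beta})^{1/2}$ gives $f_{\alpha+\beta}^2 \le 4\, f_{2\alpha} f_{2\beta}$, not the constant $2$ that appears in the displayed statement, so your assertion that squaring produces ``the displayed quadratic inequality'' glosses over a real discrepancy. The discrepancy is a typo in the statement itself (one which the paper's terse proof also fails to flag, along with the stray ``$v \in \R^P$'' that should read $f \in \R[x]_P$); the honest conclusion of your argument is the bound with constant $4$, and that is the version that should stand.
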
 
\begin{proof} 
	The assertion follows from Propositions~\ref{sonc:convexification} and \ref{SONC:non:neg} applied to the special circuit $P$ with $k=1$ and $\lambda_0 = \lambda_1 = \frac{1}{2}$. 
\end{proof} 

Analogously to SONC relaxations, we define \emph{SDSOS moment convexifications} and \emph{SDSOS relaxations} of the problem $\inf_{x \in \R^n} f(x)$. If $P_1,\ldots, P_N$ are SDSOS patterns, then we call a polynomial from $\cP_{P_1}(\R^n) + \cdots + \cP_{P_N}(\R^n)$ an \emph{SDSOS polynomial}. 

% -------------------------------------------------------------------------------
\subsubsection{Sparse SOS certificates for unconstrained problems} 

Let $\bS_+^m$ be the cone of positive semidefinite matrices within the vector space $\bS^m$ of symmetric matrices of size $m$.  We introduce the Euclidean structure on $\bS^m$ by endowing it with the trace scalar product $\sprod{U}{V} = \tr(UV)$. The following is well known: 

\begin{prop} \label{sdp:self-dual}
	The cone $\bS_+^m$ is self-dual. That is, $(\bS_+^m)^\ast = \bS_+^m$
\end{prop}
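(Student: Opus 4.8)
The plan is to verify the two inclusions $\bS_+^m \subseteq (\bS_+^m)^\ast$ and $(\bS_+^m)^\ast \subseteq \bS_+^m$ separately, relying only on the definition of the dual cone with respect to the trace scalar product $\sprod{U}{V} = \tr(UV)$.

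For the inclusion $\bS_+^m \subseteq (\bS_+^m)^\ast$, I would fix $V \in \bS_+^m$ and show $\sprod{U}{V} \ge 0$ for every $U \in \bS_+^m$. The cleanest route is to factor both matrices: writing $U = A A^\top$ and $V = B B^\top$, which is possible precisely because they are positive semidefinite, cyclicity of the trace gives $\tr(UV) = \tr(A A^\top B B^\top) = \tr\bigl((A^\top B)^\top (A^\top B)\bigr) = \|A^\top B\|_F^2 \ge 0$. Alternatively, a spectral decomposition $U = \sum_i \lambda_i u_i u_i^\top$ with $\lambda_i \ge 0$ reduces the claim to $\tr(UV) = \sum_i \lambda_i\, u_i^\top V u_i \ge 0$, which holds term by term since $V \succeq 0$.

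For the reverse inclusion $(\bS_+^m)^\ast \subseteq \bS_+^m$, I would take $V$ in the dual cone and test it against rank-one matrices. For any $x \in \R^m$, the matrix $x x^\top$ lies in $\bS_+^m$, so the dual-cone condition forces $0 \le \sprod{x x^\top}{V} = \tr(x x^\top V) = x^\top V x$. Since $x$ is arbitrary, this is exactly the statement $V \succeq 0$, i.e.\ $V \in \bS_+^m$.

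Combining the two inclusions yields $(\bS_+^m)^\ast = \bS_+^m$. There is no genuine obstacle here; the only points requiring a moment's care are to invoke the correct matrix factorization (or spectral decomposition) in the first inclusion, and to recall that testing against the rank-one matrices $x x^\top$ alone already characterizes positive semidefiniteness in the second.
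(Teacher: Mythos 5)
Your proof is correct. It takes the same essential route as the paper's in the key direction: the paper also obtains $(\bS_+^m)^\ast \subseteq \bS_+^m$ by testing against the rank-one matrices $u u^\top$ and using $\sprod{V}{u u^\top} = u^\top V u$. The difference is in packaging. The paper observes that $\bS_+^m$ is the conic hull of the rank-one matrices $u u^\top$, so that $(\bS_+^m)^\ast = \{ V \colon \sprod{V}{u u^\top} \ge 0 \ \text{for all} \ u \in \R^m\}$, and this single identity delivers both inclusions at once, since the right-hand side is precisely the definition of positive semidefiniteness. You instead prove the two inclusions separately, and for the forward inclusion $\bS_+^m \subseteq (\bS_+^m)^\ast$ your primary argument is a different computation: factor $U = A A^\top$, $V = B B^\top$ and use cyclicity of the trace to get $\tr(UV) = \|A^\top B\|_F^2 \ge 0$. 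That is perfectly valid and arguably more self-contained, since it does not invoke the generation of $\bS_+^m$ by rank-one matrices; your spectral-decomposition alternative, $\tr(UV) = \sum_i \lambda_i\, u_i^\top V u_i \ge 0$, is exactly the paper's implicit argument in disguise. In short: the paper's version is more compact, yours is more explicit, and the mathematical content coincides.
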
 
\begin{proof} 
	$\bS_+^m$ is the conic hull of the rank-one matrices $u u^\top$ with $u \in \R^m$. Hence, $(\bS_+^m)^\ast$ consists of matrices $V$ that satisfy $\sprod{V}{u u^\top} \ge 0$ for every $u \in \R^m$. Since $\sprod{V}{u u^\top} = u^\top V u$, the latter condition is exactly the definition of positive semidefiniteness, which gives the assertion. 
\end{proof} 

A polynomial that is a sum of squares of polynomials from $\R[x]$ is called a \emph{SOS polynomial}. We introduce the closed convex cone 
\[
\Sigma_B := \left\{ \sum_{i=1}^t f_i^2 \colon t \in \N, \ f_1,\ldots,f_t \in \R[x]_B\right\}
\]
of sparse SOS polynomials with the sparsity determined by $B$. Clearly, $\Sigma_B \subseteq \cP_{B+B}(\R^n)$, but the inclusion is strict in general.

\begin{prop} \label{sos:vs:moments} 
	Let $B \subseteq \N^n$ be a finite set of $m$ exponent vectors. Then, $\Sigma_B$ is the image 
	\begin{equation} \label{sigma:image:sdp}
		\Sigma_B = \{ (x^B)^\top M x^B \colon M \in \bS_+^m \} 
	\end{equation}
	of $\bS_+^m$ under the linear map $M \in \bS_+^m \mapsto (x^B)^\top M x^B \in \R[x]_{B+B}$, while $\Sigma_B^\ast$ is given with $M_B = x^B (x^B)^\top$ by 
	\begin{equation} \label{sigma:dual} 
		\Sigma_B^\ast = \left\{ v \in \R^{B+B} \colon L_v(M_B) \succeq 0 \right\},
	\end{equation}
	which means that $\Sigma_B^\ast$ is linearly isomorphic to a cross-section of $\Sigma_+^m$ by a linear sub-space, with the isomorphy determined by the injective map $v \in \R^{B+B} \mapsto L_v(M_B) \in \bS_+^m$. 
\end{prop}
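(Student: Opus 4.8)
The plan is to prove the three assertions of the proposition in sequence, using the Gram-matrix representation of sums of squares as the basic tool and the self-duality of $\bS_+^m$ from Proposition~\ref{sdp:self-dual} to power the dual computation.

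First I would establish the image description \eqref{sigma:image:sdp}. Given $p = \sum_{i=1}^t f_i^2$ with $f_1,\ldots,f_t \in \R[x]_B$, I write each $f_i$ through its coefficient vector $c_i \in \R^B \cong \R^m$ as $f_i = (x^B)^\top c_i$, so that $f_i^2 = (x^B)^\top (c_i c_i^\top) x^B$, and summing yields $p = (x^B)^\top M x^B$ with $M := \sum_{i} c_i c_i^\top \in \bS_+^m$. Conversely, any $M \in \bS_+^m$ admits a decomposition $M = \sum_i c_i c_i^\top$ (e.g.\ from its spectral decomposition), whence $(x^B)^\top M x^B = \sum_i \bigl((x^B)^\top c_i\bigr)^2$ is a sum of squares of elements of $\R[x]_B$. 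This identifies the image of $M \mapsto (x^B)^\top M x^B$ as exactly $\Sigma_B$.

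Next comes the dualization yielding \eqref{sigma:dual}. The key computation is that, expanding $(x^B)^\top M x^B = \sum_{\alpha,\beta \in B} M_{\alpha\beta}\, x^{\alpha+\beta}$ and using linearity of $L_v$, one obtains $L_v\bigl((x^B)^\top M x^B\bigr) = \sum_{\alpha,\beta} M_{\alpha\beta}\, v_{\alpha+\beta} = \sprod{M}{L_v(M_B)}$, since $M_B = x^B (x^B)^\top$ has $(\alpha,\beta)$-entry $x^{\alpha+\beta}$ and hence $L_v(M_B)$ has $(\alpha,\beta)$-entry $v_{\alpha+\beta}$. By the definition of the dual cone together with the image description just proved, $v \in \Sigma_B^\ast$ holds if and only if $\sprod{M}{L_v(M_B)} \ge 0$ for every $M \in \bS_+^m$, i.e.\ if and only if $L_v(M_B) \in (\bS_+^m)^\ast$. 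By Proposition~\ref{sdp:self-dual} this is equivalent to $L_v(M_B) \succeq 0$, which is precisely \eqref{sigma:dual}.

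Finally I would settle the injectivity and cross-section claim. The linear map $v \mapsto L_v(M_B)$ from $\R^{B+B}$ to $\bS^m$ is injective, because every $\gamma \in B+B$ may be written as $\gamma = \alpha + \beta$ with $\alpha,\beta \in B$, so $v_\gamma$ appears as the $(\alpha,\beta)$-entry of $L_v(M_B)$; thus $L_v(M_B)=0$ forces $v=0$. Its image is the linear subspace of $\bS^m$ consisting of matrices whose $(\alpha,\beta)$-entry depends only on $\alpha+\beta$, and under this isomorphism \eqref{sigma:dual} identifies $\Sigma_B^\ast$ with the cross-section of $\bS_+^m$ by that subspace, as asserted. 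The whole argument is routine; the only point needing care is that $M \mapsto (x^B)^\top M x^B$ is in general \emph{not} injective (distinct Gram matrices may yield the same polynomial when the products $x^{\alpha+\beta}$ coincide for different pairs $\alpha,\beta \in B$), so \eqref{sigma:image:sdp} must be read as a statement about the image of the map rather than about a bijection. This subtlety does not affect the dualization, which uses only surjectivity onto $\Sigma_B$.
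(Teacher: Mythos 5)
Your proof is correct and follows essentially the same route as the paper's: the Gram-matrix identity $f^2 = \sprod{M_B}{ff^\top}$ (equivalently $(x^B)^\top M x^B$) together with rank-one decompositions of PSD matrices for \eqref{sigma:image:sdp}, and the self-duality of $\bS_+^m$ combined with the identity $L_v\bigl(\sprod{M_B}{Q}\bigr) = \sprod{L_v(M_B)}{Q}$ for \eqref{sigma:dual}. Your explicit treatment of the injectivity of $v \mapsto L_v(M_B)$ and the caution that $M \mapsto (x^B)^\top M x^B$ need not be injective are correct additions that the paper leaves implicit.
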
 
\begin{proof} 
	A square of a polynomial $f(x) = f^\top x^B \in \R[x]_B$ can be written as $$f(x)^2 = (f^\top x^B) ((x^B)^\top f) = f^\top x^B (x^B)^\top f = f^\top M_B f = \sprod{M_B}{f f^\top},$$
	where in vector expressions involving $f$, we use the identification $\R[x]_B \leftrightarrow \R^B$. 
	Consequently, $f_1^2 + \cdots + f_t^2 = \sprod{M_B}{Q}$, where $Q =\sum_{i=1}^t f_i f_i^\top \succeq 0$. Conversely, every symmetric positive-semidefinite matrix $Q$ of size $|B|$ can be decomposed into $\sum_{i=1}^t f_i f_i^\top$ for some $f_1,\ldots,f_t$, which shows that $\sprod{M_B}{Q} = \sum_{i=1}^t f_i^2 \in \Sigma_B$. This shows \eqref{sigma:image:sdp}.
	
	Since $\bS_+^m$ is self-dual, a symmetric matrix $M$ is positive semidefinite if and only if $\sprod{M}{Q} \ge 0$ holds for every symmetric positive semidefinite matrix $Q$. We thus obtain 
	\begin{align*}
		\left\{ v \in \R^{B+B} \colon L_v(M_B) \succeq 0 \right\} & = \left\{ v \in \R^{B+B} \colon \sprod{L_v(M_B)}{Q} \ge 0 \ \text{for all} \ Q \succeq 0 \right\}
		\\ & = \left\{ v \in \R^{B+B} \colon L_v(\sprod{M_B)}{Q}) \ge 0 \ \text{for all} \ Q \succeq 0 \right\}
		\\ & = \left\{ v \in \R^{B+B} \colon L_v ( g) \ge 0 \ \text{for all} \ g \in \Sigma_B\right\}
		\\ & = \Sigma_B^\ast. 
	\end{align*} 
	which concludes the proof. 
\end{proof} 

In most cases, we do not have an explicit description in any of the available optimization paradigms for the cones $\cP_{B+B}(\R^n)$ which we use in \eqref{D-P-RLX}. In contrast, we have semidefinite representations for the cones $\Sigma_{B}$, as explained by Proposition~\ref{sos:vs:moments}. Thus, SOS cones along with the so-called positivstellens\"atze, which explain how positivity of polynomials can be certified in terms of SOS polynomials, provide a theoretical foundation for semidefinite approaches to polynomial optimization. Such approaches were pioneered by Lasserre \cite{lasserre2001global}. 

For $f \in \R[x]_{B+B}$, we call $\sup \{ f \in \R[x]_{B+B} \colon f -\lambda \in \Sigma_B \}$ an \emph{SOS relaxation} of the unconstrained problem $\inf_{x \in \R^n} f(x)$ with respect to the exponent set $B$. Furthermore, we call $\sup \{ L_v(f) \colon v_0 = 1, \ L_v( M_B) \succeq 0 \}$ the \emph{semidefinite moment relaxation} of $\inf_{x \in \R^n} f(x)$ with respect to the exponent set $B$. The dense versions of these relaxations are obtained by taking $B = \N_d^n$. 

We formulate the duality between SOS and a semidefinite moment relaxation for the unconstrained polynomial optimization problem. 

\begin{prop} \label{sos:moments:duality} 
	For a finite exponent set $B \subseteq \N^n$ and $f \in \R[x]_{B+B}$, we have 
	\[
	\sup \{ \lambda \in \R \colon f -\lambda \in \Sigma_B \}  = \inf \{ L_v(f) \colon v_0=1, \ L_v(M_B) \succeq 0 \}. 
	\]
\end{prop}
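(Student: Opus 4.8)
The plan is to derive this equality directly from the abstract conic duality of Proposition~\ref{abstract:duality}, specialized to the sparse SOS cone. I would work inside the Euclidean space $\R[x]_{B+B} \cong \R^{B+B}$, using the identification discussed just before Proposition~\ref{moment:vs:positive}, under which $L_v(g)$ is the standard scalar product $\sprod{g}{v}$ for $g \in \R[x]_{B+B}$ and $v \in \R^{B+B}$. In Proposition~\ref{abstract:duality} I would then set $C := \Sigma_B$, $p := f$, and $q := 1$, the constant polynomial $x^0$.

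Before invoking Proposition~\ref{abstract:duality} I would check its three hypotheses for this choice. Convexity and closedness of $\Sigma_B$ are already recorded when the cone is introduced. Pointedness follows from the fact that every SOS polynomial is non-negative on $\R^n$: if $g \in \Sigma_B$ and $-g \in \Sigma_B$, then $g \ge 0$ and $g \le 0$ pointwise on $\R^n$, so $g$ vanishes identically and is therefore the zero polynomial. Finally, $q = 1 = 1^2$ lies in $\Sigma_B \setminus \{0\}$ because the constant $1$ belongs to $\R[x]_B$; here I rely on the standing assumption $0 \in B$, which is precisely what makes the coordinate $v_0$ and the normalization $v_0 = 1$ meaningful. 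If $0 \notin B$, then $0 \notin B+B$ and the statement degenerates, so this assumption should be made explicit.

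With these verifications, Proposition~\ref{abstract:duality} yields
\[
\sup\{\lambda \in \R \colon f - \lambda \in \Sigma_B\} = \inf\{\sprod{f}{v} \colon v \in \Sigma_B^\ast, \ \sprod{1}{v} = 1\}.
\]
It then remains only to rewrite the right-hand side. By the identification, $\sprod{f}{v} = L_v(f)$ and $\sprod{1}{v} = L_v(1) = v_0$, so the constraint $\sprod{1}{v} = 1$ becomes $v_0 = 1$. The dual cone $\Sigma_B^\ast$ is computed in Proposition~\ref{sos:vs:moments} to equal $\{v \in \R^{B+B} \colon L_v(M_B) \succeq 0\}$. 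Substituting these into the infimum produces exactly $\inf\{L_v(f) \colon v_0 = 1, \ L_v(M_B) \succeq 0\}$, which is the asserted equality. The only genuinely nontrivial ingredient is the closedness of $\Sigma_B$ required by Proposition~\ref{abstract:duality}; this is a standard but not self-evident fact about SOS cones and is taken as given from the cone's introduction, so that the proof itself reduces to a short matching of notation.
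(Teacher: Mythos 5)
Your proposal is correct and follows exactly the paper's own route: apply Proposition~\ref{abstract:duality} to the pointed closed convex cone $C=\Sigma_B$ with $p=f$, $q=1$, and then identify $\Sigma_B^\ast$ via \eqref{sigma:dual} from Proposition~\ref{sos:vs:moments}. Your explicit verification of pointedness and of $1\in\Sigma_B$ (hence the implicit assumption $0\in B$) is a welcome bit of added care, but it does not change the argument, which the paper compresses into a single sentence.
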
 
\begin{proof} 
	The assertion follows by applying Proposition~\ref{abstract:duality} to the pointed closed convex cone $C = \Sigma_B$ and using the description \eqref{sigma:dual} of $(\Sigma_B)^\ast$. 
\end{proof} 

From a computational perspective, the issue with the primal-dual pair of problems in Proposition~\ref{sos:moments:duality} is that they involve semidefinite constraints with matrices of size $|B|$. When $|B|$ is large, the respective computations can become intractable, as discussed in Section~\ref{sec_paradigms}. To cope with this, one can establish a sparse version of Proposition~\ref{sos:moments:duality}. 

\begin{prop} \label{sparse:sos}
	Let $B_1,\ldots,B_N \in \N^n$ be finite exponent sets and let $f \in \R[x]_A$ with $A = (B_1 +B_1) \cup \cdots \cup (B_N+B_N)$. Then 
	\[
	\sup \{ \lambda \in \R \colon f - \lambda \in \Sigma_{B_1} + \cdots + \Sigma_{B_N} \}  = \inf \{ L_v(f) \colon v_0 = 1,  L_v(M_{B_i}) \succeq 0  \ \forall \ i\in [N]\}. 
	\]
\end{prop}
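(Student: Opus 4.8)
The plan is to prove this as the sparse counterpart of Proposition~\ref{sos:moments:duality}: where that proof invoked the abstract duality of Proposition~\ref{abstract:duality} for the single cone $\Sigma_B$, here I would invoke its sparse refinement, Proposition~\ref{sparse:abstract:duality}, for the family $\Sigma_{B_1},\ldots,\Sigma_{B_N}$. Throughout I would work in the ambient space $\R[x]_A \leftrightarrow \R^A$, under which $L_v(f)$ is the scalar product $\sprod{f}{v}$, as recorded in the remark preceding Proposition~\ref{moment:vs:positive}. Since $B_i + B_i \subseteq A$ for every $i$, each cone $\Sigma_{B_i}$ embeds into $\R^A$ by zero-padding its coordinates outside $B_i + B_i$, and I would regard all the $\Sigma_{B_i}$ as living inside this single space.

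First I would verify the hypotheses of Proposition~\ref{sparse:abstract:duality}. Each $\Sigma_{B_i}$ is a closed convex cone (as stated in the definition of $\Sigma_B$). I would set $C := \cP_A(\R^n)$; this is a closed convex cone, and it is pointed because $\R^n$ has non-empty interior, so that any element of $\cP_A(\R^n) \cap (-\cP_A(\R^n))$, being both $\ge 0$ and $\le 0$ on $\R^n$, is identically zero and hence the zero polynomial. Moreover $\Sigma_{B_i} \subseteq \cP_{B_i+B_i}(\R^n) \subseteq \cP_A(\R^n) = C$, so all the summand cones lie in the common pointed cone $C$. I would then take $p := f$ and $q := 1$, the constant polynomial, which corresponds to the coordinate vector $e_0 \in \R^A$; noting that $0 \in B_i$ for at least one $i$ (as implicit in the normalization $v_0 = 1$ appearing in the statement), the constant $1 = 1^2$ lies in $\Sigma_{B_i} \subseteq \Sigma_{B_1} + \cdots + \Sigma_{B_N}$, so $q \in (\Sigma_{B_1} + \cdots + \Sigma_{B_N}) \setminus \{0\}$, as required.

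With these identifications Proposition~\ref{sparse:abstract:duality} would give
\[
\sup \{ \lambda \in \R \colon f - \lambda \in \Sigma_{B_1} + \cdots + \Sigma_{B_N} \} = \inf \{ \sprod{f}{v} \colon v \in \Sigma_{B_1}^\ast \cap \cdots \cap \Sigma_{B_N}^\ast, \ \sprod{q}{v} = 1 \}.
\]
It would then remain only to rewrite the right-hand side in the claimed form. The normalization $\sprod{q}{v} = \sprod{1}{v} = v_0$ becomes $v_0 = 1$, and $\sprod{f}{v} = L_v(f)$. Finally, Proposition~\ref{sos:vs:moments} identifies $\Sigma_{B_i}^\ast = \{ v \colon L_v(M_{B_i}) \succeq 0 \}$, so the intersection is exactly $\{ v \in \R^A \colon L_v(M_{B_i}) \succeq 0 \ \text{for all} \ i \in [N] \}$, which yields the asserted equality.

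The step I expect to require the most care is not any individual inequality but the bookkeeping of dual cones across the embeddings $\R^{B_i + B_i} \hookrightarrow \R^A$. Concretely, I would check that dualizing $\Sigma_{B_i}$ inside the large space $\R^A$ constrains only the coordinates indexed by $B_i + B_i$: since every $g \in \Sigma_{B_i}$ vanishes outside $B_i + B_i$, the pairing $\sprod{g}{v}$ ignores the remaining coordinates of $v$, so the description of $\Sigma_{B_i}^\ast$ from Proposition~\ref{sos:vs:moments}, originally phrased in $\R^{B_i+B_i}$, indeed cuts out $\Sigma_{B_i}^\ast \subseteq \R^A$ through the single LMI $L_v(M_{B_i}) \succeq 0$. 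Once this embedding point is settled, the intersection of the dual cones and the remainder of the argument are immediate.
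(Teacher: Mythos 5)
Your proof is correct and takes essentially the same route as the paper's: apply Proposition~\ref{sparse:abstract:duality} with $C_i = \Sigma_{B_i}$ viewed inside the pointed cone $\cP_A(\R^n)$, then identify the dual cones via \eqref{sigma:dual}, modified so that the dualization takes place in $\R^A$ rather than $\R^{B_i+B_i}$. Your extra verifications (pointedness of $\cP_A(\R^n)$, and that $q=1$ lies in $\Sigma_{B_1}+\cdots+\Sigma_{B_N}$ because $0 \in B_i$ for some $i$, which is forced by the constraint $v_0=1$) are details the paper leaves implicit, and they are handled correctly.
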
 
\begin{proof} 
	Clearly, $\Sigma_{B_1} + \cdots + \Sigma_{B_N}$ is a subset of the pointed convex cone $\cP_A(\R^n)$, so that that the assumptions of Proposition~\ref{sparse:abstract:duality} are true for $C_i = \Sigma_{B_i}$. Hence, the assertion follows from Proposition~\ref{sparse:abstract:duality} by taking into  account the expression \eqref{sigma:dual} for the dual of the SOS cone. Note that we need to dualize $\Sigma_{B_i}$ as a cone in $\R[x]_A$. In this case, \eqref{sigma:dual} is only slightly modified to $(\Sigma_{B_i})^\ast = \{ v \in \R^A \colon L_v(M_{B_i}) \succeq 0\}$, by using $\R^A$ rather than $\R^{B_i+B_i}$. 
\end{proof} 

The primal-dual pair from Proposition~\ref{sparse:sos} is a relaxed model for the pair of the problems \eqref{D-P-RLX} and \eqref{C-P-RLX} using the cones $\Sigma_{B_i} \subseteq \cP_{B_i+B_i}(\R^n)$ and $\{v \in \R^{B_i+B_i} \colon L_v(M_{B_i}) \succeq 0\} \supseteq \cC_{B_i+B_i}(\R^n)$. Thus, Proposition~\ref{sparse:sos} corresponds to a pattern relaxation for the pattern family $\cF = \{B_i+B_i \colon i \in [N]\}$. 

We call the two problems in Proposition~\ref{sparse:sos} the \emph{sparse SOS relaxation} and the \emph{sparse semidefinite moment relaxation} of $\inf_{x \in \R^n} f(x)$ with respect to the exponent sets $B_1,\ldots,B_N$. 

% -------------------------------------------------------------------------------
\subsubsection{TSSOS} \label{sec_TSSOS}

The intention of the TSSOS approach \cite{MR4198579} is to sparsify SOS relaxations without losing quality. That is, one wants to replace an SOS relaxation by an equivalent sparse SOS relaxation. 
Assume we want to solve the unconstrained problem $\inf_{x \in \R^n} f(x)$ with $f \in \R[x]_A$. We fix an SOS relaxation with respect to some set $B$ satisfying $A \subseteq B+B$. Having $A$ and $B$, we now establish a sparse SOS relaxation with respect to some $B_1,\ldots,B_N$ that is equivalent to the SOS relaxation with respect to $B$ for all polynomials $f$ with the support contained in $A$.  That is, one wants to have 
\begin{equation} \label{TSSOS:wish}
	\inf \{ \lambda \in \R \colon f - \lambda \in \Sigma_B \} = \inf \{ \lambda \in \R \colon f - \lambda \in \Sigma_{B_1} + \cdots + \Sigma_{B_N} \}  \ \forall \; \ f \in R[x]_A.
\end{equation} 

Whenever the maximum size of the $B_i$'s is smaller than the size of $B$, the sparse SOS relaxation is usually more tractable than the SOS relaxation with respect to $B$. 

The TSSOS approach is an iterative algorithm that gets $A$ and $B$ with $A \subseteq B+B$ as an input and performs updates on a partition $\cB = \{B_1,\ldots,B_N\}$ of $B$. In each iteration, the partition gets coarser (that is, $N$ gets smaller), and the algorithm terminates in finitely many steps, providing a partition $\cB$ that satisfies \eqref{TSSOS:wish}. 

To explain the algorithm, we use undirected graphs and we view a graph as a set of its edges. For example, $G = \{ \{0,1\}, \{1,2\} ,\{1,3\} \}$ is a graph with three edges $\{0,1\}$, $\{1,2\}$ and $\{1,3\}$ between nodes from the vertex set  $\{0,1,2,3\}$. 

The algorithm starts by fixing $S_0 := A \cup 2 B$. In the $i$-th iteration one considers the graph $G_i := \{ \{\alpha,\beta\} \colon \alpha,\beta \in \N^n, \ \alpha+ \beta \in S_{i-1}\}$. One defines 
\[
S_i = \{ \alpha + \beta \colon \alpha,\beta  \ \text{nodes of $G_i$ connected by a path} \},
\]
and sets the partition $\cB_i$ to be the family of connected components (vertex sets) of $G_i$. $\cB_i$ stabilizes after finitely many iterations and upon stabilization one has \eqref{TSSOS:wish} with $\cB_i = \{B_1,\ldots,B_N\}$. 

\begin{rem}
	How much one has gained by employing TSSOS, depends crucially on whether the size of the sets $B_1,\ldots,B_N$ is small. Understanding which choices of $A$ lead to sparse TSSOS reformulations $\Sigma_B \cap \R[x]_A$ is an interesting open problem.  
\end{rem} 

\begin{rem} 
	The basic TSSOS approach was generalized to constrained polynomial optimization and modified to the so-called Chordal-TSSOS approach \cite{MR4198579} and the CS-TSSOS approach. 
\end{rem} 

\begin{rem} 
	A refined version of the TSSOS method has recently been suggested \cite{Shaydurova2024}. In rTSSOS, refined partitions are determined via integer programming, allowing a better fine-tuning between approximation quality and computational costs. 
\end{rem} 

% -------------------------------------------------------------------------------
\subsubsection{Optimization of sparse univariate polynomials over $\R_+$}

Averkov and Scheiderer have shown that optimization of sparse univariate polynomials over $\R_+$ can be formulated exactly via a sparse SOS relaxation \cite[Proposition~4.3]{averkov2024convex}. 

\begin{thm} \label{thm_averkovscheiderer}
	For every univariate polynomial $f \in \R[x]_A$, where 
	$A \subseteq \N$ is a finite set of exponents containing $0$, of odd size $|A| = 2 k+ 1$,	and with $d = \max (A) > 2k$ one has 
	\begin{align*} 
		\inf_{x \in \R_+} f(x) & = \sup \{ \lambda \colon f- \lambda \in x^0 \Sigma_B + \dots + x^{d-2k} \Sigma_B \} 
		\\ & = \inf \{ L_v(f) \colon v_0 =1, \ L_v( x^i M_B ) \succeq 0 \ \text{for all} \ i \in 0,\ldots,d-2k \}, 
	\end{align*} 
	where $B = \{0,\ldots,k\}$. 
\end{thm}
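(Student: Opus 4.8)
The plan is to treat the two asserted equalities separately: the second (between the sum-of-squares problem and the moment problem) is a formal instance of conic duality, while the first (that their common value is the true minimum) is where the sparsity hypothesis does the work. Write $B=\{0,\ldots,k\}$, $N:=d-2k$, and work in $\R[x]_{\{0,\ldots,d\}}\cong\R^{d+1}$ via coefficient vectors. For the second equality I would apply Proposition~\ref{sparse:abstract:duality} to the closed convex cones $C_i:=x^i\Sigma_B$ ($i=0,\ldots,N$), the pointed closed convex cone $C:=\cP_{\{0,\ldots,d\}}(\R_+)$ (pointed because $\R_+$ has nonempty interior, so $0$ is the only polynomial vanishing on $\R_+$), $p:=f$ and $q:=1\in x^0\Sigma_B$. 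Each $C_i\subseteq C$, so the proposition applies and already guarantees that $C_0+\cdots+C_N$ is closed and pointed. To read off the dual cones, use $\Sigma_B=\{(x^B)^\top M x^B:M\succeq 0\}$ from Proposition~\ref{sos:vs:moments}: for $s=(x^B)^\top M x^B$ one has
\[
L_v(x^i s)=\sprod{M}{L_v(x^i M_B)},
\]
so by self-duality of $\bS_+^{k+1}$ (Proposition~\ref{sdp:self-dual}) we get $v\in C_i^\ast\iff L_v(x^iM_B)\succeq 0$. Since $(C_0+\cdots+C_N)^\ast=\bigcap_i C_i^\ast$ and $\sprod{q}{v}=v_0$, Proposition~\ref{sparse:abstract:duality} gives precisely the equality of the $\sup$- and $\inf$-problems in the statement.

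It remains to show that this common value equals $\inf_{x\in\R_+}f(x)$. The inequality ``$\le$'' is weak duality: any representation $f-\lambda=\sum_i x^i\sigma_i$ with $\sigma_i\in\Sigma_B$ is nonnegative on $\R_+$ (equivalently, $v=(x^j)_{j=0}^d$ is feasible for the moment problem for every $x\ge 0$ because $L_v(x^iM_B)=x^i\,x^B(x^B)^\top\succeq 0$). For the reverse inequality it suffices, via the separation argument of Theorem~\ref{dual:putinar} together with $\cP_A(\R_+)^\ast=\cC_A(\R_+)$ (Proposition~\ref{moment:vs:positive}), to prove the moment-side inclusion
\[
\{\,v\in\R^{\{0,\ldots,d\}}\colon L_v(x^iM_B)\succeq 0\ \text{for}\ i=0,\ldots,N\,\}_A\subseteq\cC_A(\R_+),
\]
which is dual to the representation statement $\cP_A(\R_+)\subseteq x^0\Sigma_B+\cdots+x^N\Sigma_B$. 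Once this is known, the equivalence of \eqref{POP} and \eqref{C-POPa} yields $\inf\{L_v(f):v\in\cM_A(\R_+)\}=\inf_{x\in\R_+}f(x)$ and the theorem follows.

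The crux is this inclusion, and it is exactly here that $|A|=2k+1$ is indispensable: a \emph{dense} polynomial of degree $d$ nonnegative on $\R_+$ generally needs sum-of-squares weights of degree $\sim d$, and only the sparsity of $A$ can force the $\sigma_i$ down to degree $2k$. I would exploit the classical fact that the monomials $\{x^\alpha\}_{\alpha\in A}$ form an extended Tchebycheff system on $(0,\infty)$. Taking $v$ with all windows $L_v(x^iM_B)\succeq 0$, the aim is to produce a nonnegative measure $\mu$ on $\R_+$ with $v_\alpha=\int x^\alpha\,d\mu$ for $\alpha\in A$, which is exactly the assertion $v_A\in\cC_A(\R_+)$. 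Two complementary ways to get $\mu$: first, the windows $L_v(x^iM_B)$ are the maximal-clique constraints of a banded --- hence chordal --- sparsity pattern, so one may complete the free coordinates $v_\alpha$ ($\alpha\notin A$) to full-size Stieltjes Hankel matrices by the chordal completion theorem \cite[Thm.~10.1]{vandenberghe2015chordal} (as in the $A=\{2,3,4\}$ example above) and then invoke the solution of the truncated Stieltjes moment problem (a flat-extension / atomic-measure argument) to obtain $\mu$; second, and more intrinsically, the Tchebycheff-system moment theory provides a principal representation of every boundary point of $\cM_A(\R_+)$ by an atomic measure on $\R_+$ whose number of atoms is bounded in terms of $k$ alone, independently of $d$.

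The main obstacle is controlling this completion/realization: because of the Hankel coupling the entries are not freely choosable (only the coordinates outside $A$ are), so the clean argument comes from the Tchebycheff moment theory rather than from generic matrix completion, and it is precisely the sparsity of $A$ that bounds the number of atoms --- and hence the rank of the localizing matrices and the degree of the $\sigma_i$ --- by a function of $k$. The remaining points are routine: if $\inf_{x\in\R_+}f=-\infty$ (for instance when $f$ has negative leading coefficient) then no $\lambda$ is admissible and both sides equal $-\infty$; non-attainment of the infimum at a finite point is absorbed by an atom ``at $\infty$'' in the principal representation; and in the bounded case attainment of $\sup$ and $\inf$ is automatic since $\sum_i x^i\Sigma_B$ is closed, as already noted.
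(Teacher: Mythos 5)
First, a point of orientation: the paper itself does not prove Theorem~\ref{thm_averkovscheiderer}; it quotes it from \cite[Proposition~4.3]{averkov2024convex}, so your blind proof has to supply the whole argument. The parts you actually carry out are correct and consistent with the paper's machinery: taking $C_i = x^i\Sigma_B$ inside the pointed closed cone $\cP_{\{0,\ldots,d\}}(\R_+)$ with $q = 1 \in x^0\Sigma_B$, Proposition~\ref{sparse:abstract:duality} combined with the dual description of $\Sigma_B$ from Propositions~\ref{sdp:self-dual} and~\ref{sos:vs:moments} does yield the second equality, and your weak-duality/bipolarity argument correctly reduces the first equality to the single inclusion $\cP_A(\R_+) \subseteq x^0\Sigma_B + \cdots + x^{d-2k}\Sigma_B$, equivalently to the moment-side inclusion you display.

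The gap is that this inclusion --- which is the entire content of the theorem --- is never proved. Your route via chordal completion does not work, as you half-concede: in the moment relaxation \emph{all} coordinates $v_0,\ldots,v_d$ are specified and the windows $L_v(x^iM_B)$ are overlapping, Hankel-coupled submatrices, so there is no partial matrix with freely completable unspecified entries; deciding whether a window-PSD vector admits a completion to a PSD (and flat) Hankel pair is essentially a restatement of what must be proven, not a tool for proving it. Your fallback to ``Tchebycheff-system moment theory'' invokes the wrong statement: that boundary points of $\cM_A(\R_+)$ have principal representations with $O(k)$ atoms is a Carath\'eodory-type fact about points \emph{already known} to lie in the moment body, and it says nothing about whether a window-PSD vector lies in the moment body, nor does it produce SOS weights of degree at most $2k$. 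The bridge from the Descartes/Tchebycheff structure of $A$ (at most $2k$ positive zeros of $f$, each of even multiplicity) to the sparse certificate is exactly the contribution of \cite{averkov2024convex} and needs a genuine argument; note, for instance, that factoring the square part out of $f$ leaves a positive but no longer sparse polynomial, and dense polynomials positive on $\R_+$ do \emph{not} in general admit decompositions with low-degree SOS weights, so the naive factorization strategy cannot close the loop. As written, your proposal sets up the (formal, already-in-the-paper) duality frame and then reduces the theorem to itself.
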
 

Theorem~\ref{thm_averkovscheiderer} implies that if a univariate polynomial $f$ has $2k+1$ terms and an arbitrarily large degree $d$, then the pattern relaxation with respect to  the family of patterns $\{i,..,i+2k\}$, where $0\le i \le d - 2k$, for the optimization of $f$ on the domain $\R_+$ gives the exact optimal value of \eqref{POP}.

% -------------------------------------------------------------------------------
% -------------------------------------------------------------------------------
\subsection{Lifted conic formulations} 

In Subsection~\ref{existing:sparse:conic}, we discussed conic relaxations of polynomial optimization problems that are based on conic lifted representations. In this subsection, we present some abstract framework of lifted formulations and provide a summary of how the different liftings from Subsection~\ref{existing:sparse:conic} are related to each other. 

% -------------------------------------------------------------------------------
\subsubsection{Abstract lifted formulations} 

\begin{defn} \label{def:lift} 
	Consider a closed convex cone $D \subseteq \R^n$.
	We say that a subset $S$ of the finite-dimensional vector space $\R^m$ over $\R$ admits a $D$-lifted representation if 
	\begin{equation} \label{lift} 
		S = \{ M x \colon x \in D, \ A x = b \}
	\end{equation} 
	for some linear maps $M \in \R^{m \times n}$, $A \in \R^{k \times m}$ and $b \in \R^k$.  Geometrically, this means that $S$ is a linear image of an affine slice of $D$. 
\end{defn} 

\begin{rem}
	Representing $S$ as an image $S = M D = \{ M x \colon x \in D\}$ of some cone $D$ is a special case of a $D$-lifted formulation with $k=0$.  Furthermore, representing $S$ as a cross-section $S = \{ x \in D \colon Ax =b\}$ of $D$ by the affine subspace $\{x \colon Ax = b\}$ is a special $D$-lifted formulation with $M$ being the identity matrix. 

	Another way of writing the constraints $x \in D, \ A x =b$ is by parametrizing the affine space $\{ x \colon A x = b\}$ as $x = v - U y $ for some matrix $U$ and a vector $v$. This allows to represent the constraint $x \in D, \ A x = b$ as $v - U y \in D$ and $M x$ is represented as $M x = M v - M U y$. 

When $S$ in Definition~\ref{def:lift} is a convex cone, one can replace $b$ by the zero vector in the $D$-lifted representation of $S$ so that $S$ is represented as a linear image of a slice of $D$ by a \emph{linear} subspace. 
%Similarly to the previous remark, we observe that a linear space can be represented through a homogeneous system of linear equations or via a parametrization and as the set of linear combinations with respect to a linearly independent system of vectors. 
\end{rem} 

In Subsection~\ref{existing:sparse:conic} we had a number of situations, in which we replaced the cones $\cC_{P_i}(X)$ in \eqref{C-P-RLX} by smaller cones $C_i$ that are images of some cones $D_i$ under a linear map. The following abstract proposition underpins the duality for the respective sparse conic relaxation of \eqref{C-P-RLX}. 

\begin{prop} \label{abstract:sparse:duality:images} 
	Let $p,q \in \R^n$, let $C_1,\ldots,C_N \subseteq \R^n$ be pointed closed convex cones and $C:= C_1 +  \cdots + C_N$, a pointed  closed convex cone that does not contain $q$. Assume that for each $i \in [N]$ the cone  $C_i$ admits a formulation $C_i := M_i D_i := \{ M_i y \colon y \in D_i\}$ as an image of a closed convex cone $D_i$ under a matrix $M_i$. Then the problem $\sup \{ \lambda \in \R \colon p-\lambda q \in C\}$ can be formulated as 
	\begin{align*} 
		& \sup \{ \lambda \in \R \colon p - \lambda q \in M_1 D_1 + \cdots + M_N D_N \} 
		\\ = & \inf \{ \sprod{p}{v}  \colon \sprod{q}{v} = 1, \  M_i^\top v \in D_i^\ast \ \forall \;  i \in [N] \}.
	\end{align*} 
\end{prop}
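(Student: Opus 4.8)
The plan is to reduce the claim to the abstract duality already established and to recognize the right-hand side as nothing but a rewriting of the dual cone of $C$. Since $C = C_1 + \cdots + C_N = M_1 D_1 + \cdots + M_N D_N$ is, by hypothesis, a pointed closed convex cone, and $q \in C \setminus \{0\}$ (so that the normalization $\sprod{q}{v} = 1$ is meaningful), Proposition~\ref{abstract:duality} applies verbatim to the single cone $C$ and yields
\[
\sup \{ \lambda \in \R \colon p - \lambda q \in C \} = \inf \{ \sprod{p}{v} \colon v \in C^\ast, \ \sprod{q}{v} = 1 \}.
\]
The left-hand side here is already the left-hand side of the assertion, because $C$ is defined to be $M_1 D_1 + \cdots + M_N D_N$. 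Hence the entire content of the proposition is the explicit computation of $C^\ast$.

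For that computation I would invoke two elementary dual-cone identities. First, the dual of a sum of cones is the intersection of the duals, $(\sum_i M_i D_i)^\ast = \bigcap_i (M_i D_i)^\ast$; this holds for arbitrary cones, since a vector $v$ pairs non-negatively with every $\sum_i M_i y_i$ ($y_i \in D_i$) exactly when it pairs non-negatively with each summand separately, which one sees by setting the other summands to $0$ (legitimate because every convex cone contains the origin). Second, for a single linear map one has $(M_i D_i)^\ast = \{ v \colon M_i^\top v \in D_i^\ast \}$, which follows from the adjoint relation $\sprod{v}{M_i y} = \sprod{M_i^\top v}{y}$: the latter is non-negative for all $y \in D_i$ precisely when $M_i^\top v \in D_i^\ast$. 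Combining the two gives $C^\ast = \{ v \colon M_i^\top v \in D_i^\ast \ \text{for all} \ i \in [N] \}$, and substituting this into the infimum above produces exactly the right-hand side of the claim. Equivalently, one could apply Proposition~\ref{sparse:abstract:duality} directly to the family $C_i = M_i D_i$ and then rewrite each $C_i^\ast$, but the route through Proposition~\ref{abstract:duality} is cleaner for the reason explained next.

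The calculation itself is routine, so the only real care lies in the hypotheses. The images $M_i D_i$ need not be closed individually, and this is precisely why I prefer to invoke Proposition~\ref{abstract:duality} for the single cone $C$ (assumed closed and pointed) rather than to lean on closedness of each $C_i$: both dual-cone identities above are valid for arbitrary, not necessarily closed, cones, so no closedness of the individual $C_i$ is ever used. The one point I would want to pin down is the condition on $q$ relative to $C$; the duality of Proposition~\ref{abstract:duality} genuinely requires $q \in C \setminus \{0\}$ so that the constraint set $\{ v \in C^\ast \colon \sprod{q}{v} = 1 \}$ is the correct normalization, and I would state the hypothesis in that form. With this settled, the proof is just the substitution $C^\ast = \bigcap_i (M_i D_i)^\ast$ into the already-known identity.
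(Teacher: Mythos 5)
Your proof is correct and is essentially the paper's argument: the paper's one-line proof applies Proposition~\ref{sparse:abstract:duality} to the cones $C_i = M_i D_i$ together with the identity $(M_i D_i)^\ast = \{ v \colon M_i^\top v \in D_i^\ast \}$, and since Proposition~\ref{sparse:abstract:duality} is itself proved by applying Proposition~\ref{abstract:duality} to the sum $C_1 + \cdots + C_N$ and using $(C_1 + \cdots + C_N)^\ast = C_1^\ast \cap \cdots \cap C_N^\ast$, your route through Proposition~\ref{abstract:duality} is the same argument with that auxiliary proposition unfolded. Your reading of the hypothesis on $q$, namely $q \in C \setminus \{0\}$ as Proposition~\ref{abstract:duality} requires, is also the intended one (the statement's ``does not contain $q$'' is evidently a slip).
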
 
\begin{proof} 
	The assertion follows from Proposition~\ref{sparse:abstract:duality} by taking into account that $(M_i D_i)^\ast = \{ v \in \R^n \colon M_i^\top v \in D_i^\ast \}$. 
\end{proof} 

% -------------------------------------------------------------------------------
\subsubsection{Examples of primal dual pairs with lifting} 

We look at some special cases of duality.

\begin{prop} \label{primal-dual:lp} 
	Let polynomials $g_1,\ldots,g_s, f \in \R[x]_B$ be given for some finite set $B \subseteq \N^n$ with $0 \in B$. Then
	\begin{align*}
		& \sup \{ \lambda \in \R \colon f - \lambda \in g_1  \R_+ \cdots + g_s \R_+ \} 
		\\ = &  \inf \{ L_v(f) \colon v \in \R^B, \ v_0 = 1, \ L_v(g_i) \ge 0 \ \forall \; i \in [s] \}
	\end{align*} 
	and this primal-dual pair of problems provides a lower bound on the problem of the minimization of $f(x)$ subject to $g_1 \ge 0,\ldots, g_s \ge 0$. 
\end{prop}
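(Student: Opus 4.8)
The plan is to read this off from the lifted sparse duality of Proposition~\ref{abstract:sparse:duality:images} by exhibiting each summand $g_i \R_+$ as the image of the elementary cone $\R_+$. Throughout I identify $\R[x]_B$ with $\R^B$ via $f \leftrightarrow (f_\alpha)_{\alpha \in B}$, so that the scalar product becomes $\sprod{f}{v} = L_v(f)$, exactly as in the remark following Proposition~\ref{moment:vs:positive}. I then set $p := f$ and, using $0 \in B$, take $q := x^0$, whose coefficient vector is the standard basis vector $e_0$. For each $i \in [s]$ I put $D_i := \R_+ \subseteq \R$ and let $M_i \colon \R \to \R^B$ be the linear map $t \mapsto t\, g_i$, so that $M_i D_i = g_i \R_+$ and $M_1 D_1 + \cdots + M_s D_s = g_1 \R_+ + \cdots + g_s \R_+$ is precisely the cone appearing in the supremum problem.

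Next I would compute the data entering the dual. The adjoint of $M_i$ is the functional $v \mapsto \sprod{g_i}{v} = L_v(g_i)$, and since $\R_+$ is self-dual, the dual constraint $M_i^\top v \in D_i^\ast$ reads $L_v(g_i) \ge 0$. Likewise $\sprod{q}{v} = L_v(x^0) = v_0$, so the normalization $\sprod{q}{v} = 1$ becomes $v_0 = 1$, while $\sprod{p}{v} = L_v(f)$ is the dual objective. Substituting these identifications into the conclusion of Proposition~\ref{abstract:sparse:duality:images} turns its supremum/infimum pair into the asserted equality, the supremum side matching $\sup\{\lambda \colon f - \lambda \in g_1\R_+ + \cdots + g_s\R_+\}$ and the infimum side matching $\inf\{L_v(f) \colon v_0 = 1,\ L_v(g_i) \ge 0\ \forall i \in [s]\}$. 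Equivalently, since each ray $g_i \R_+$ is already a closed convex cone, one may bypass the lifting and apply Proposition~\ref{sparse:abstract:duality} directly, using $(g_i \R_+)^\ast = \{v \colon L_v(g_i) \ge 0\}$ together with $(C_1 + \cdots + C_s)^\ast = C_1^\ast \cap \cdots \cap C_s^\ast$.

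It remains to justify the final sentence. Writing $K := \{x \in \R^n \colon g_1(x) \ge 0, \ldots, g_s(x) \ge 0\}$, each $g_i$ is non-negative on $K$, hence $g_i \R_+ \subseteq \cP_B(K)$, and since $\cP_B(K)$ is a convex cone also $g_1 \R_+ + \cdots + g_s \R_+ \subseteq \cP_B(K)$. Consequently any $\lambda$ feasible for the supremum satisfies $f - \lambda \in \cP_B(K)$, i.e.\ $f(x) \ge \lambda$ for all $x \in K$; taking the infimum over $x \in K$ shows that the common optimal value lower-bounds $\inf_{x \in K} f(x)$.

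The one step requiring genuine care is verifying the hypotheses of Proposition~\ref{abstract:sparse:duality:images} (equivalently, of Proposition~\ref{sparse:abstract:duality}): one must check that $C := g_1 \R_+ + \cdots + g_s \R_+$ is a closed convex cone correctly positioned relative to $q = x^0$. Closedness is automatic because $C$ is finitely generated, hence polyhedral. This polyhedrality is in fact the crux: once the cones are polyhedral, the whole identity is nothing but finite-dimensional linear-programming duality between the two linear programs in the variables $(\lambda, c_1, \ldots, c_s)$ and $v$, so strong duality holds with no gap, and the degenerate cases are absorbed by the usual conventions for the empty supremum or infimum. Thus the abstract machinery of Section~\ref{sect:conic:duality} specializes to exactly this LP pair, and nothing beyond the routine bookkeeping above is needed.
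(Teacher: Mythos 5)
Your proof is correct and is essentially the paper's own argument: the paper's entire proof consists of invoking Proposition~\ref{abstract:sparse:duality:images} with $D_i = \R_+$ and the lifting $t \mapsto t\,g_i$, observing that $M_i^\top v = \sprod{g_i}{v} = L_v(g_i)$ and $(\R_+)^\ast = \R_+$, which is exactly your first two paragraphs. Your third paragraph supplies the verification of the final sentence (via $g_i\R_+ \subseteq \cP_B(K)$), which the paper leaves implicit.

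One correction to your closing paragraph, though it concerns a defect shared by the paper's proof. It is not true that the degenerate cases are ``absorbed by the usual conventions'': LP strong duality requires at least one of the two programs to be feasible, and both can fail simultaneously. Take $n=1$, $B = \{0,1\}$, $g_1 = -1$, $g_2 = x$, $f = -x$. Then $f - \lambda$ never lies in $g_1\R_+ + g_2\R_+$ (its $x$-coefficient is $-1$, while every element of that cone has non-negative $x$-coefficient), so the supremum is $-\infty$; on the other side $L_v(g_1) = -v_0 = -1 < 0$ makes the infimum $+\infty$, and the asserted equality fails. This is a failure of the proposition as stated rather than of your route: the hypothesis of Proposition~\ref{sparse:abstract:duality} that $q$ lie in $C_1 + \cdots + C_N$ (here, that $1 = \sum_i c_i g_i$ for some $c_i \ge 0$; the phrase ``does not contain $q$'' in Proposition~\ref{abstract:sparse:duality:images} is evidently a typo for the opposite) is not automatic, and neither the paper's proof nor yours verifies it — indeed it fails in the example above. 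What your polyhedral LP-duality observation actually establishes, and this is the honest scope of the statement, is the equality whenever at least one of the two programs is feasible; the doubly infeasible case, which forces $K = \emptyset$, is a genuine exception for both your argument and the paper's.
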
 
\begin{proof} 
	We use Proposition~\ref{abstract:sparse:duality:images}, which explains  general sparse duality with lifting, for $A_i = g_i$ and $D_i = \R_+$, where we see $g_i(x)$ as a column vector in $\R^B$. The minimization problem involves the constraints  $A_i^\top v \in D_i^\ast$, where $A_i^\top v = \sprod{g_i}{v} = L_v(g_i)$ and $D_i^\ast = (\R_+)^\ast = \R_+$.  This gives the assertion. 
\end{proof} 

\begin{rem}
	The primal-dual pair in Proposition~\ref{primal-dual:lp} is a pair of linear problems. If one knows some inequalities $h_1,\ldots,h_N$ valid on $X \subseteq \R^n$, one can use these linear problems to relax $\inf_{x \in X} f(x)$. In particular, it is possible to add the dual linear problem to the relaxations arising from the expression trees, see Section~\ref{sec_expression}.
\end{rem} 

\begin{rem}
	Proposition~\ref{primal-dual:lp} in combination with Handelman's theorem~\ref{handelman:thm} gives a primal-dual pair of linear problems for the minimization $\inf_{x \in K} f(x)$ of a polynomial $f$ over a polytope $K \subseteq \R^n$. If $g_1 \ge 0,\ldots, g_m \ge 0$ is a system of linear inequalities that describes $K$, one can choose $h_i = g^{\alpha(i)}$ with $\alpha(i) \in \N^n$. Handelman's theorem guarantees that when $\{\alpha(i) \colon i \in [N]\} = \N_d^n$ and $d$ is large enough, the primal-dual pair approximates $\inf_{x \in K} f(x)$ arbitrarily well. 
\end{rem} 

\begin{rem}
	For the primal and dual SAGE relaxation, based on a family of SAGE patterns $P_1,\ldots,P_N$ with $B := P_1 \cup \cdots \cup P_N$, 
	the cones $D_i$ are GMC cones $\GMC_{\lambda(i)}$ with $\lambda(i) \in \R_{>0}^{k_i+2}$, and the matrix $A_i$ provides a coordinate embedding of $\R^{k_i+2} \widehat{=} \R^{P_i}$ into $\R^{B}$ that bijectively sends $\R^{P_i}$ to $\R^{P_i} \times \{0\}^{B \setminus P_i}$ by appending zero components. Accordingly, $A_i^\top$ describes the coordinate projection of $\R^B$ onto $\R^{P_i} \widehat{=} \R^{k_i+2}$. Analogous comments also apply to the SONC, SDSOS, and sparse SOS relaxations. The underlying cone for the SDSOS relaxation is $\GMC_{1/2,1/2}$. 
	
For sparse SOS relaxations, Proposition~\ref{abstract:sparse:duality:images} provides an abstract version of  Proposition~\ref{sparse:sos}, where $A_i$ corresponds to the linear map $Q \in \bS^{m_i} \mapsto (x^{B_i})^\top Q x^{B_i} \in \R[x]_A$, with $m_i = |B_i|$, and $A_i^\top$ corresponds to the adjoint of this map, which is the map   $v \in \R^A \rightarrow L_v( M_{B_i}) \in \bS^{m_i}$. 
\end{rem} 

% -------------------------------------------------------------------------------
\subsubsection{Relevant conic-optimization paradigms}

\begin{rem} 
	Conic programming with respect to the cone $\R_+^n$ is linear programming. A $\R_+^n$-lifted formulation is also called a \emph{linear extended formulation}. 
	It is known that a set $X$ admits a linear extended formulation if and only if $X$ is a polyhedron. This follows from the fact that polyhedrality is preserved under taking linear images. Thus, linear programming convexification techniques correspond to conic optimization for the cone $\R_+^n$. 
\end{rem} 

\begin{defn} 
	We call $(\bS_+^2)^m$ a \emph{second-order cone} and we call a set $S$ \emph{second-order cone representable} if it admits a $(\bS_+^2)^m$-lift. We define second-order cone programming as optimization of a linear objective function subject to finitely many LMIs $M_1(x) \succeq 0,\ldots,M_m(x) \succeq 0$ with matrices of size $2$. 
\end{defn} 

\begin{rem} 
	Polyhedra are second-order cone representable, since every linear inequality $\sprod{a}{x} + b \ge 0$ in the variables $x$ can be rewritten as a linear matrix inequality $\begin{pmatrix} \sprod{a}{x} + b & 0 \\ 0 & 0\end{pmatrix} \succeq 0$ with a matrix of size $2$. On the other hand, not every second-order cone representable set is a polyhedron: consider for example the unit disc, described as  $\begin{pmatrix} 1 + x_1 & x_2 \\ x_2 & 1 -x_2 \end{pmatrix} \succeq 0$. 
\end{rem} 

\begin{defn} 
	A $\bS_+^m$-lifted formulation is also called a \emph{semidefinite lifted formulation} or a \emph{semidefinite extended formulation}. We call a set that admits a semidefinite extended formulation \emph{semidefinitely representable}. 
\end{defn} 

\begin{rem} 
	Every second-order cone representable set is semidefinitely representable, because $(\bS_+^2)^m$ corresponds to a cross-section of $\bS_+^{2m}$. On the other hand, Hamza Fawzi showed that there exist semidefinitely representable sets that are not second-order cone representable \cite{fawzi2019representing}. This result was generalized by Averkov \cite{averkov2019optimal} and later generalized even further by Saunderson \cite{saunderson2020limitations}. 
\end{rem} 

There is a well-developed theory of computational methods for solving linear, second-order cone and semidefinite optimization problems, with the methods implemented by various solvers. So, for each relaxation of a polynomial optimization problem that relies on linear, second-order cone or semidefinite lifted formulations, it is possible to use the available computational machinery to carry out computations in practice. 

\begin{rem} 
For SAGE and SONC relaxations we use lifts with respect to the Cartesian products 
\[
\GMC_{\gamma(1)} \times \cdots \times \GMC_{\gamma(m)}
\]  
of the GMC cones. For conic programming with respect to GMC cones computational approaches are also available. Alternatively, one can pass from GMC cones to second-order cones, using a result of Ben-Tal and Nemirovski \cite{ben2001lectures}, which shows that $\GMC_{\lambda}$ is second-order cone reprentable. 
\end{rem}

% -------------------------------------------------------------------------------
% -------------------------------------------------------------------------------
\section{Computations} \label{sec_computations}

In this section we shall investigate numerically the impact of various forms of convexification and sparsification on computational runtime and on the quality of approximations. We %explain implementation details in Section~\ref{sec_implementation}, 
describe which algorithms were evaluated with which criteria in Section~\ref{sec_setup}, and describe the benchmark test set of polynomial optimization problems in Section~\ref{sec_benchmark}. The numerical results are then presented and discussed in Section~\ref{sec_results}.

% -------------------------------------------------------------------------------
% -------------------------------------------------------------------------------
\subsection{Convexification and sparsification approaches} \label{sec_setup}

We are interested in different ways of convexification and in a comparison to existing solvers for polynomial optimization problems based on either primal or dual approaches. In all of our numerical comparisons we used the following software. The abbreviations will be used in all following figures.

\begin{itemize} 
  \item[\underline{Method}] \underline{Description} 
	\item[B] We used the software \baron~1.8.9 \cite{sahinidis:baron:17.8.9,tawarmalani2005polyhedral} with default settings, called from \matlab. \baron is a mature solver, based on branch-and-bound and polyhedral relaxations. We enforced a 1000 second time limit, as discussed further below. 
	\item[R] Root node relaxation of \baron, indicating the computational costs and approximation quality of the polyhedral relaxation without branch-and-bound based refinements. 
	\item[Y] We used the software \yalmip 20200930 \cite{yalmip} as a \matlab toolbox. It allows to compute the moment as well as the SOS relaxation of \eqref{POP}. We used \yalmip's \texttt{solvesos} at the lowest possible level of the SOS hierarchy with \mosek 9.2.32 \cite{mosek}.
	\item[SOS] In study \ref{subsec:num-res-sparseB}, method Y could not provide a solution due to memory issues. We replaced it with a custom implementation of Lasserre's SDP relaxation at the lowest possible level of the hierarchy. It gave qualitatively similar results on other test problems (data not shown).
	\item[CS] We used the software \cstssos 1.00, based on the (dual) TSSOS approach \cite{MR4198579}, compare Section~\ref{sec_TSSOS}. \cstssos is a \julia package that allows to exploit correlative sparsity and term sparsity simultaneously. We called the first level of the hierarchy by running the command \texttt{cs\_tssos\_first} with settings \texttt{order} $=\lceil\tfrac{\deg(f)}{2}\rceil$ and \texttt{TS="MD"}.
	\item[SIG] The \python package \sageopt 0.5.3 \cite{chandrasekaran2016relative,katthan2021unified} allows to compute relaxations based on SONC polynomials as well as SAGE relaxations based on signomials. For SONC relaxation no finite lower bounds could be obtained for any of the instances, all shown results correspond to SAGE relaxations, compare Section~\ref{sec_SAGE}. We used the method \texttt{sig\_constrained\_relaxation} on the lowest possible hierarchy level. 
\end{itemize}

\medskip
The rationale behind the selection of these methods was the variety of primal and dual approaches: \baron for global NLP approaches relying on polyhedral relaxations, \yalmip as an implementation of the dense Lasserre's semidefinite relaxation, \cstssos as a particular way to sparsify the latter, and \sageopt as an alternative approach relying on the circuit patterns.
Note that more solvers to solve \eqref{POP} globally exist, such as \texttt{SCIP} \cite{scip}, \texttt{COUENNE} \cite{belotti2015couenne}, or \texttt{LINDOGlobal}. As they are based on polyhedral relaxations, too, we concentrated on a comparison with \baron.
We are not aware of any other source, where such a broad comparison across a wide range of approaches was carried out.

In different numerical studies in Section~\ref{sec_results} we compared the approaches listed above to implementations of pattern relaxations from Section~\ref{sec_specific} as specified below. The code for solving the pattern relaxation \eqref{P-RLXa} was implemented and run in \matlab~9.10.0.1669831 (R2021a) Update 5 \cite{matlab}, consists of roughly 3500 lines of code, and uses \mosek 9.2.32 \cite{mosek} to solve the relaxations. We used the following pattern relaxations in order of appearance.

\begin{itemize} 
  \item[\underline{Method}] \underline{Description} 
	\item[M] Multilinear pattern relaxation using $\cF_A^{m} := \{ \{0,\alpha_1\} \times \cdots \times \{0,\alpha_n\} \colon \alpha \in A\}$.  Here, and in what follows, as ap preprocessing step we only keep the inclusion-maximal patterns of the family and leave out all the other patterns. 
	\item[C] Chain relaxation for the pattern family $\cF_A^{c} := \{ C_{\alpha}\alpha \colon \alpha \in A \} $ given by 
	\[
	C_\alpha := \left \{ k \frac{\alpha}{\gcd(\alpha)} \colon k=0,\ldots, 2 \lceil \gcd(\alpha)/2 \rceil  \right \},
	\]
	where $\gcd(\alpha)$ denotes the greatest common divisor of the components of $\alpha$. 
	\item[H] Pattern relaxation for the pattern family $\cF_A^h$ constructed as follows. Choose the minimal $d \in \N$ such that $A \subseteq \{0,\ldots, 2d \}^n$ and add to $\cF_A^h$ the patterns 
$	\{ k e_i \colon k=0,\ldots, 2d\} $
	for each $i\in[n]$, and $ \{ (k,\ldots,k) \colon k=0,\ldots, 2d\}$, and $\{0,k\} \times \cdots \times \{0,k\}$ for every $k=0,\ldots, 2d$, and all patterns from $\cF_A^m$. 
%	TODO: Ben's current write-up for (H) has issues. There is no evenness assumption on $d$, and the second set in the union in the displayed expression probably has a wrong  subscript - it's is family of exponent sets, but it should actually be an exponent set. The motivation for choosing this particular $\Gamma$ is not clear. Why not choosing $\Gamma = \{0,1\}^n$? 
	\item[S] Pattern relaxation for the family $\cF_A^{s} := \{ S_{\alpha,i} \colon i \in [n], \alpha \in A\}$   
	\[
	S_{\alpha,i } := \{ (\alpha_1,\ldots, \alpha_{i-1},  k , \alpha_{i+1},\ldots,\alpha_n) \colon k =0,\ldots, 2 \lceil \alpha_i/2 \rceil \}
	\]
	of shifted axis-parallel chains containing elements of $A$.
%	  TODO: Ben's current write-up for (S) has issues: It looks like the occurrences of $d-1$ should be $d-2$ since $d$ is assumed to be even. 
	\item[MC] Pattern relaxation for the pattern family $\cF_A^{mc} := \cF_A^m \cup \cF_A^c$. 
	\item[T] Pattern relaxation for the pattern family $\cF_A^t$ consisting of truncated submonoid patterns constructed as follows. Fix a truncated submonoid pattern $P = 2 \, \N_{2 \lceil \deg(A) / 4 \rceil} ^n$. For each $\alpha \in \N^n$, consider the pattern 
	\(
	P_\alpha = \Gamma \N_{2 \lceil \deg(A) / 2 \rceil}^n
	\)
	given by the matrix $\Gamma = (\gamma(1),\ldots,\gamma(n))$ with $\gamma(i) = e_i$ if $\alpha_i >0$ and $\gamma(i) = 0$ otherwise. Define $\cF_A^t := \{P \} \cup \{P_\alpha \colon \alpha \in A \setminus P \}$. This pattern is designed to give more tractable Lasserre-type relaxations for low degrees $d$.   
\end{itemize} 

% -------------------------------------------------------------------------------
% -------------------------------------------------------------------------------
\subsection{Test set and evaluation criteria} \label{sec_benchmark}

Establishing a meaningful test set for evaluating approaches to \eqref{POP} is a nontrivial task. Regarding the choice of the objective $f$, there are low-degree and high-degree instances, there are dense instances, instances with a specific structural sparsity and instances with a random sparsity, instances with many variables and instance with a small number of variables, structured instances occurring in particular applications and randomly generated instances. There are many ways to combine the above properties so that a thorough evaluation would require a large number of different types of instances. Evaluation on such a diverse test set would allow us to understand on which types of instances a particular approach should be applied. 

We are not aware of any library of polynomial optimization instances that exhibits such a desired degree of diversity. Thus, we randomly generated a test set of instances of different types and complemented it with some structured examples. 
Our test sets evaluates $12$ choices of $A$, classified into four types: dense exponent sets, sparse sets, sparse sets with large dimension $n$ and small degree $d$, and specifically structured sets. The latter is composed of exponent sets that are designed to give specific insight for some of the tested relaxation approaches.

In all calculations, we fix the feasible set $X$ to the box $K= [0,1]^n$.
For each $A$, we create a sample of twenty polynomials by sampling coefficients for $f$ using a uniform distribution in $[-1,1]^A$. 
However, samples are only included in the test set, when they are sufficiently difficult. To quantify difficulty, we used the performance of method B. Only when either the minimization problem $\min_{x \in K} f(x)$ or the maximization problem $\min_{x \in K} - f(x)$ can not be solved completely by \baron within the time limit of 1000 seconds, do we keep $f$. This procedure explains why the reported runtimes of \baron are always at least 1000 seconds and only allow to indicate the potential of improvement of \baron for difficult cases. This preselection is motivated by the basic idea that we primarily need to improve the performance of methods on instances that are currently hard for established solution techniques.

The first evaluation criterium is the approximation quality of the relaxation. To consider different directions of optimization over the feasible set $K$, we consider minimization and maximization problem. We use the normalized value
\begin{equation} \label{eq_criterium}
\triv_{\cF}(f,K) = \frac{\max_{v \in \RLX_\cF(K)} L_v(f) - \min_{v \in \RLX_\cF(K)} L_v(f)}{\max_{v \in \Bx(l_A,u_A)} L_v(f) - \min_{v \in \Bx(l_A,u_A)} L_v(f)} \in [0,1] 
\end{equation}
as an indicator of the tightness of the calculated relaxations in comparison to the trivial bounds obtained from the bounds $(l_A,u_A)$ on the monomials occuring in $f$. Values close to $1$ indicate that the bounds obtained via relaxation $\RLX_\cF(K)$ do not improve much over the trivial bounds in the denominator, whereas values close to $0$ indicate that $\RLX_\cF(K)$ is a good relaxation.

%This is a value in $[0,1]$. When $\triv(f,K)$ is close to $1$, the trivial relaxations for the maximization and minimiation of $f$ on $K$ already give good approximation. If $\triv(f,K)$ is close to $0$ the trivial relaxation are not helpful, which indicates that the problem of computing $\range(f,K)$ is nontrivial. 

%Analogously, when we deal with the pattern relaxation $\RLX_\cF(K)$ of $\cM_A(K)$, we introduce the range of the linearization of $f$ on this relaxation as
%\[
%\range_\cF(f,K) := \max_{v \in \RLX_\cF(K)} L_v(f) - \min_{v \in \RLX_\cF(K)} L_v(f). 
%\]
%This value is an upper bound on $\range(f,K)$ and it is at least as good as the trivial bound. The triviality of $\range(f,K)$ is assessed by comparing to the trivial bound and is given as the quotient 
%\[
%\triv_\cF(f,K) = \frac{\range_\cF(f,K)}{\trb(f,K)}.  
%\]
%So, when we look at a particular instance $(f,K)$ for a range computation, we are interested in knowing how trivial this instance, which is assessed via $\triv(f,K)$, and if we want to know whether a relaxation using a pattern family $\cF$ gives a good relaxation of the range computation, we assess this by $\triv_\cF(f,K)$. 

The second reported criteria is computational time, provided as the wall-clock time in seconds, averaged over all 20 samples.
The solvers were run on a compute server with 4 Intel(R) Xeon(R) Gold 6138 CPUs with 20 cores of 2 threads and 1 TB RAM each under Ubuntu 20.04.3.
Each solver-instance pair was assigned to one such job, i.e., the solvers themselves did not use the parallel structure.
In order to distribute the solver-instance pairs to the 80 cores we used \cite{tange_2020_4118697}.

% -------------------------------------------------------------------------------
% -------------------------------------------------------------------------------
\subsection{Numerical results} \label{sec_results}

We investigate the performance of different methods with respect to approximation quality evaluated via \eqref{eq_criterium} and computational run time. We structure the analysis according to properties of the exponent sets $A$.

% -------------------------------------------------------------------------------
\subsubsection{Dense Exponent Sets}\label{subsec:dense} 
We first consider dense exponent sets $A=\Nnd$ for $n\in\{2,4\}$ and $d=10$. Results are shown in Figure~\ref{boxplot:dense}.

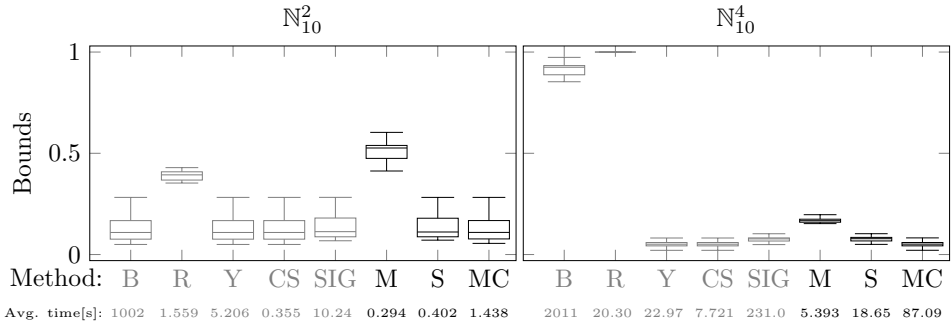
\begin{figure}[h!!!]
	\centering
	%  \tikzsetfigurename{figures/fig-boxplot-dense}
	%  \includegraphics{figures/fig-boxplot-dense0.pdf}
	\scalebox{0.96}{  \begin{tikzpicture}
\begin{groupplot}[group style={
                  group size=2 by 1,
	              xticklabels at = edge bottom,
	              yticklabels at = edge left,
	              ylabels at     = edge left,
	              xlabels at     = edge bottom,
	              horizontal sep = 0.1cm, 
	              vertical sep   = 0.3cm},
                  xlabel style={align=right},
	              ylabel= Bounds,
	              boxplot/hide outliers/.code={\def\pgfplotsplothandlerboxplot@outlier{}
	              },
                  ymin = -0.03,
                  ymax = 1.03,
                  xmin = 0.2, 
                  xmax = 8.53,
	              xticklabel style       = {align=center},
                  height  = \textwidth*(0.35),
                  width   = \textwidth*(0.575),
                  boxplot/draw direction = y,
                  title style={yshift=-1ex,},
                  every boxplot/.style={mark=none,every mark/.append style={mark size=1.5pt}}
                  ]
  \nextgroupplot[
  xlabel style = {at={(axis description cs:-0.085,0.005)},}, 
  xlabel = {Method: \\ \tiny{Avg. time[s]:}},
  title={$\N^{2}_{10}$}, 
  xtick={1,2,3,4,5,6,7,8,9}, 
  xticklabels={
  {\color{gray}B    \\ \color{gray}\tiny{1002}	},
  {\color{gray}R    \\ \color{gray}\tiny{1.559}	},
  {\color{gray}Y    \\ \color{gray}\tiny{5.206}	},
  {\color{gray}CS   \\ \color{gray}\tiny{0.355}	},
  {\color{gray}SIG  \\ \color{gray}\tiny{10.24} },
  {\color{black}M   \\ \tiny{0.294}     			},
  {\color{black}S   \\ \tiny{0.402}     			}, 
  {\color{black}MC  \\ \tiny{1.438}     			}, 
  {\color{black}H   \\ \tiny{1.093}     			}}]
 
  \addplot+ [boxplot, mark=none, solid, gray]  table[row sep=newline,y index=0] {figures/data/dual-monorels/dense/dense_n2d10_B1000_width.dat};
  \addplot+ [boxplot, mark=none, solid, gray]  table[row sep=newline,y index=0] {figures/data/dual-monorels/dense/dense_n2d10_BRoot_width.dat};
  \addplot+ [boxplot, mark=none, solid, gray]  table[row sep=newline,y index=0] {figures/data/dual-monorels/dense/dense_n2d10_YAL_width.dat};
  \addplot+ [boxplot, mark=none, solid, gray]  table[row sep=newline,y index=0] {figures/data/dual-monorels/dense/dense_n2d10_CSTS_width.dat};
  \addplot+ [boxplot, mark=none, solid, gray]  table[row sep=newline,y index=0] {figures/data/dual-monorels/dense/dense_n2d10_SOPT_sig_width.dat};
  \addplot+ [boxplot, mark=none, solid, black] table[row sep=newline,y index=0] {figures/data/dual-monorels/dense/dense_n2d10_M_width.dat};
  \addplot+ [boxplot, mark=none, solid, black] table[row sep=newline,y index=0] {figures/data/dual-monorels/dense/dense_n2d10_S_width.dat};
  \addplot+ [boxplot, mark=none, solid, black] table[row sep=newline,y index=0] {figures/data/dual-monorels/dense/dense_n2d10_MCS_width.dat};
  \addplot+ [boxplot, mark=none, solid, black] table[row sep=newline,y index=0] {figures/data/dual-monorels/dense/dense_n2d10_Str1_width.dat};
  
  \nextgroupplot[
  title={$\N^{4}_{10}$}, 
  xtick={1,2,3,4,5,6,7,8,9}, 
  xticklabels={
  {\color{gray}B    \\ \color{gray}\tiny{2011}	},
  {\color{gray}R    \\ \color{gray}\tiny{20.30}	},
  {\color{gray}Y    \\ \color{gray}\tiny{22.97}	},
  {\color{gray}CS   \\ \color{gray}\tiny{7.721}	},
  {\color{gray}SIG  \\ \color{gray}\tiny{231.0}	},
  {\color{black}M   \\ \tiny{5.393}     			},
  {\color{black}S   \\ \tiny{18.65}     			},
  {\color{black}MC  \\ \tiny{87.09}     			}, 
  {\color{black}H   \\ \tiny{4.817}     			}}]
 
  \addplot+ [boxplot, mark=none, solid, gray]  table[row sep=newline,y index=0] {figures/data/dual-monorels/dense/dense_n4d10_B1000_width.dat};
  \addplot+ [boxplot, mark=none, solid, gray]  table[row sep=newline,y index=0] {figures/data/dual-monorels/dense/dense_n4d10_BRoot_width.dat};
  \addplot+ [boxplot, mark=none, solid, gray]  table[row sep=newline,y index=0] {figures/data/dual-monorels/dense/dense_n4d10_YAL_width.dat};
  \addplot+ [boxplot, mark=none, solid, gray]  table[row sep=newline,y index=0] {figures/data/dual-monorels/dense/dense_n4d10_CSTS_width.dat};
  \addplot+ [boxplot, mark=none, solid, gray]  table[row sep=newline,y index=0] {figures/data/dual-monorels/dense/dense_n4d10_SOPT_sig_width.dat};
  \addplot+ [boxplot, mark=none, solid, black] table[row sep=newline,y index=0] {figures/data/dual-monorels/dense/dense_n4d10_M_width.dat};
  \addplot+ [boxplot, mark=none, solid, black] table[row sep=newline,y index=0] {figures/data/dual-monorels/dense/dense_n4d10_S_width.dat};
  \addplot+ [boxplot, mark=none, solid, black] table[row sep=newline,y index=0] {figures/data/dual-monorels/dense/dense_n4d10_MCS_width.dat};
  \addplot+ [boxplot, mark=none, solid, black] table[row sep=newline,y index=0] {figures/data/dual-monorels/dense/dense_n4d10_Str1_width.dat};
\end{groupplot}
\end{tikzpicture}}
	\caption{Results for dense exponent sets $A=\Nnd$. Shown is the median and standard variation for 20 sample coefficient vectors $f$ of the evaluation criterium \eqref{eq_criterium} for the methods specified in Section~\ref{sec_setup}. Note that a value close to $0$ indicates a good relaxation. The average computational time in seconds is provided below each method in the bottom row. Note that only instances are selected for which \baron needs the full time limit on either minimization or maximization problem.} \label{boxplot:dense}
\end{figure}

The root relaxation R of \baron can be solved in 1.5 and 20 seconds, respectively. The solution B using branch-and-bound is constrained on purpose by the imposed time limit of 1000 seconds for minimization and maximization problem. During this time a certain number of nodes can be processed, which leads to a significant reduction in the case $\N^2_{10}$. For the higher dimensional polynomials in $\N^4_{10}$ the solution time per node is increased, though. The reduced number of nodes that can be processed in the time limit do only result in a modest improvement of the bounds.

The methods Y, CS, SIG, S, and MC provide good bounds. In comparison, the runtime for SIG is not very competetive.
The multilinear polyhedral relaxation M can be solved fast, but results in weaker bounds, as expected. The bounds are better for $n=4$ compared to R, though, probably due to a larger pattern size and hence more connections between monomial variables compared to the relaxation \baron uses.  

% -------------------------------------------------------------------------------
\subsubsection{Sparse Exponent Sets}\label{subsec:num-res-sparse}
We use randomly generated sparse exponent sets $A=S(n,d)$ to test pattern families that do not assume any structure of $A$.
$S(n,d)$ is generated by randomly picking $\left\lceil\sqrt{\tbinom{n+d}{d}}\right\rceil$ exponents via \texttt{randperm} from $\N^{n}_{d}$. Results are shown in Figure~\ref{boxplot:sparse}.

\begin{figure}[h!!!]
	\centering
	\scalebox{0.96}{  %\pgfplotsset{every axis title/.append style={at={(0.1,0.8)}}}
\begin{tikzpicture}
\begin{groupplot}[group style={
                  group size=2 by 1,
	              xticklabels at = edge bottom,
	              yticklabels at = edge left,
	              ylabels at     = edge left,
	              xlabels at     = edge bottom,
	              horizontal sep = 0.1cm, 
	              vertical sep   = 0.3cm},
                  xlabel style={align=right},
	              ylabel= Bounds,
                  height  = \textwidth*(0.35),
                  width   = \textwidth*(0.575),
                  boxplot/draw direction = y,
                  ymin    = -0.03,
                  ymax    = 1.03,
	              xticklabel style       = {align=center},
                  title style={yshift=-1ex,},
                  every boxplot/.style={mark=none,every mark/.append style={mark size=1.5pt}}
                  ]
  \nextgroupplot[
  xlabel style = {at={(axis description cs:-0.085,0.005)},}, 
  xlabel = {Method: \\ \tiny{Avg. time[s]:}}, 
  title={$\rmS(4,10)$}, xtick={1,2,3,4,5,6,7}, 
  xticklabels={
  {\color{gray}B  \\ \color{gray}\tiny{1002}  },
  {\color{gray}R  \\ \color{gray}\tiny{1.546} },
  {\color{gray}Y  \\ \color{gray}\tiny{29.89} }, 
  {\color{gray}CS \\ \color{gray}\tiny{79.71} }, 
  {\color{gray}SIG\\ \color{gray}\tiny{6.486} }, 
  {\color{black}M \\ \tiny{0.506}             },
  {\color{black}H \\ \tiny{3.586}             }}]

  \addplot+ [boxplot, mark=none, solid, gray]   table[row sep=newline,y index=0] {figures/data/dual-monorels/sparse/sparse_sqrt_n4d10_B1000_width.dat};
  \addplot+ [boxplot, mark=none, solid, gray]   table[row sep=newline,y index=0] {figures/data/dual-monorels/sparse/sparse_sqrt_n4d10_BRoot_width.dat};
  \addplot+ [boxplot, mark=none, solid, gray]   table[row sep=newline,y index=0] {figures/data/dual-monorels/sparse/sparse_sqrt_n4d10_YAL_width.dat};
  \addplot+ [boxplot, mark=none, solid, gray]   table[row sep=newline,y index=0] {figures/data/dual-monorels/sparse/sparse_sqrt_n4d10_CSTS_width.dat};
  \addplot+ [boxplot, mark=none, solid, gray]   table[row sep=newline,y index=0] {figures/data/dual-monorels/sparse/sparse_sqrt_n4d10_SOPT_sig_width.dat};
  \addplot+ [boxplot, mark=none, solid, black]  table[row sep=newline,y index=0] {figures/data/dual-monorels/sparse/sparse_sqrt_n4d10_M_width.dat};
  \addplot+ [boxplot, mark=none, solid, black]  table[row sep=newline,y index=0] {figures/data/dual-monorels/sparse/sparse_sqrt_n4d10_Str1_width.dat};
  
  \nextgroupplot[
  title={$\rmS(6,10)$}, 
  xtick={1,2,3,4,5,6,7}, 
  xticklabels={
  {\color{gray}B  \\ \color{gray}\tiny{1534}  	},
  {\color{gray}R  \\ \color{gray}\tiny{3.620} 	},
  {\color{gray}Y  \\ \color{gray}\tiny{1811} 	}, 
  {\color{gray}CS \\ \color{gray}\tiny{4382}  	},
  {\color{gray}SIG \\ \color{gray}\tiny{15.67}  },
  {\color{black}M \\ \tiny{3.485}             	},
  {\color{black}H \\ \tiny{12.15}             	}}]
 
  \addplot+ [boxplot, mark=none, solid, gray]   table[row sep=newline,y index=0] {figures/data/dual-monorels/sparse/sparse_sqrt_n6d10_B1000_width.dat};
  \addplot+ [boxplot, mark=none, solid, gray]   table[row sep=newline,y index=0] {figures/data/dual-monorels/sparse/sparse_sqrt_n6d10_BRoot_width.dat};
  \addplot+ [boxplot, mark=none, solid, gray]   table[row sep=newline,y index=0] {figures/data/dual-monorels/sparse/sparse_sqrt_n6d10_YAL_width.dat};
  \addplot+ [boxplot, mark=none, solid, gray]   table[row sep=newline,y index=0] {figures/data/dual-monorels/sparse/sparse_sqrt_n6d10_CSTS_width.dat};
  \addplot+ [boxplot, mark=none, solid, gray]   table[row sep=newline,y index=0] {figures/data/dual-monorels/sparse/sparse_sqrt_n6d10_SOPT_sig_width.dat};
  \addplot+ [boxplot, mark=none, solid, black]  table[row sep=newline,y index=0] {figures/data/dual-monorels/sparse/sparse_sqrt_n6d10_M_width.dat};
  \addplot+ [boxplot, mark=none, solid, black]  table[row sep=newline,y index=0] {figures/data/dual-monorels/sparse/sparse_sqrt_n6d10_Str1_width.dat};
\end{groupplot}
\end{tikzpicture}}
	\caption{As in Figure~\ref{boxplot:dense}, but for sparse exponent sets $S(n,d)$.}\label{boxplot:sparse}
\end{figure}
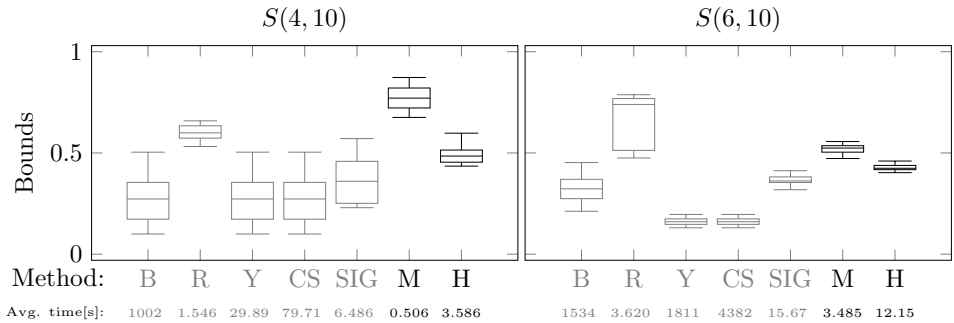

In comparison to \baron, the semidefinite relaxations Y and CS provide very good results also in $n=6$ dimensions, albeit at a high computational price. In comparison, SIG, M, and H provide weaker bounds, but up to three orders of magnitude faster. Method M based on multilinear patterns results in weak bounds, which can be strengthened considerably with method H at some computational expense by additionally enforcing indirect connections between moment variables via $n+1$ chains and $d$ multilinear patterns.

% -------------------------------------------------------------------------------
\subsubsection{Sparse Exponent Sets with $n > d$}\label{subsec:num-res-sparseB}

As in Section~\ref{subsec:num-res-sparse} we use sparse exponent sets, however now with a high number of variables $n=20,25,30,40$ and low degree $d=4$. 
Results are shown in Figure~\ref{boxplot:sparse-d4}.

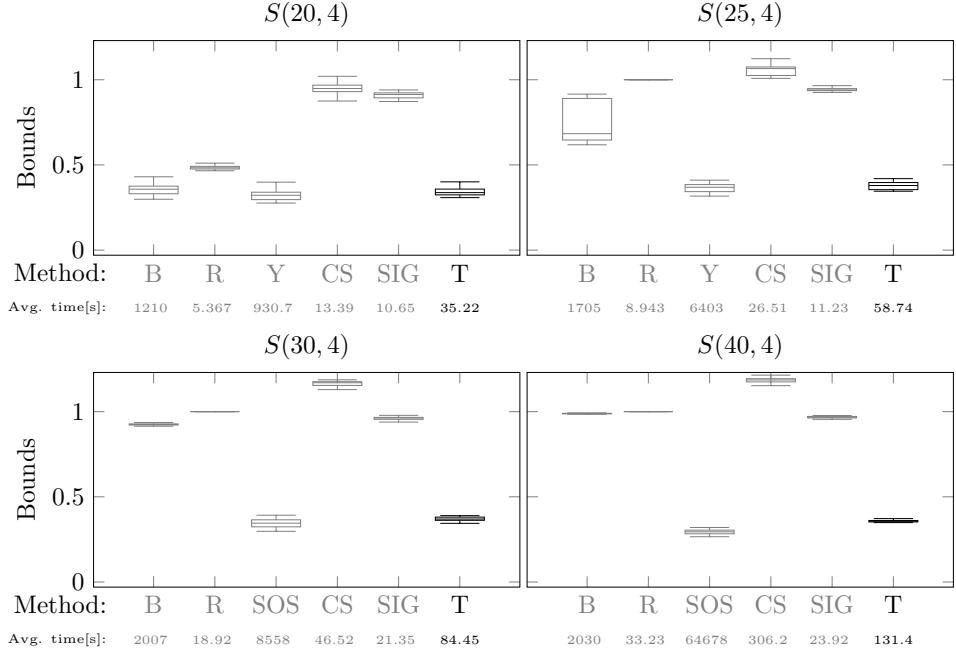
\begin{figure}[h!!!]
	\centering
	\scalebox{0.96}{  \begin{tikzpicture}
\begin{groupplot}[group style={
                  group size=2 by 1,
	              xticklabels at = edge bottom,
	              yticklabels at = edge left,
	              ylabels at     = edge left,
	              xlabels at     = edge bottom,
	              horizontal sep = 0.1cm, 
	              vertical sep   = 0.3cm},
                  xlabel style={align=right},
	              ylabel= Bounds,
                  height  = \textwidth*(0.35),
                  width   = \textwidth*(0.575),
                  boxplot/draw direction = y,
                  ymin    = -0.03,
                  ymax    = 1.23,
	              xticklabel style       = {align=center},
                  title style={yshift=-1ex,},
                  every boxplot/.style={mark=none,every mark/.append style={mark size=1.5pt}}
                  ]
  \nextgroupplot[
  xlabel style = {at={(axis description cs:-0.085,0.005)},}, 
  xlabel = {Method: \\ \tiny{Avg. time[s]:}}, 
  title={$\rmS(20,4)$}, xtick={1,2,3,4,5,6}, 
  xticklabels={
  {\color{gray}B    \\ \color{gray}\tiny{1210}	},
  {\color{gray}R    \\ \color{gray}\tiny{5.367}	},
  {\color{gray}Y    \\ \color{gray}\tiny{930.7}	},
  {\color{gray}CS   \\ \color{gray}\tiny{13.39}	},
  {\color{gray}SIG  \\ \color{gray}\tiny{10.65}	}, 
  {\color{black}T   \\             \tiny{35.22}	}}]
 
  \addplot+ [boxplot, mark=none, solid, gray]   table[row sep=newline,y index=0] {figures/data/dual-monorels/sparse_big/sparse_sqrt_n20d04_B1000_width.dat};
  \addplot+ [boxplot, mark=none, solid, gray]   table[row sep=newline,y index=0] {figures/data/dual-monorels/sparse_big/sparse_sqrt_n20d04_BRoot_width.dat};
  \addplot+ [boxplot, mark=none, solid, gray]   table[row sep=newline,y index=0] {figures/data/dual-monorels/sparse_big/sparse_sqrt_n20d04_YAL_width.dat};
  \addplot+ [boxplot, mark=none, solid, gray]   table[row sep=newline,y index=0] {figures/data/dual-monorels/sparse_big/sparse_sqrt_n20d04_CSTS_width.dat};
  \addplot+ [boxplot, mark=none, solid, gray]   table[row sep=newline,y index=0] {figures/data/dual-monorels/sparse_big/sparse_sqrt_n20d04_SOPT_sig_width.dat};
  \addplot+ [boxplot, mark=none, solid, black]  table[row sep=newline,y index=0] {figures/data/dual-monorels/sparse_big/sparse_sqrt_n20d04_TSD4_width.dat};
  
  \nextgroupplot[
  title={$\rmS(25,4)$}, 
  xtick={1,2,3,4,5,6}, 
  xticklabels={
  {\color{gray}B    \\ \color{gray}\tiny{1705}	},
  {\color{gray}R    \\ \color{gray}\tiny{8.943}	},
  {\color{gray}Y    \\ \color{gray}\tiny{6403}	},
  {\color{gray}CS   \\ \color{gray}\tiny{26.51}	},
  {\color{gray}SIG  \\ \color{gray}\tiny{11.23}	}, 
  {\color{black}T   \\             \tiny{58.74}	}}]
 
  \addplot+ [boxplot, mark=none, solid, gray]   table[row sep=newline,y index=0] {figures/data/dual-monorels/sparse_big/sparse_sqrt_n25d04_B1000_width.dat};
  \addplot+ [boxplot, mark=none, solid, gray]   table[row sep=newline,y index=0] {figures/data/dual-monorels/sparse_big/sparse_sqrt_n25d04_BRoot_width.dat};
  \addplot+ [boxplot, mark=none, solid, gray]   table[row sep=newline,y index=0] {figures/data/dual-monorels/sparse_big/sparse_sqrt_n25d04_YAL_width.dat};
  \addplot+ [boxplot, mark=none, solid, gray]   table[row sep=newline,y index=0] {figures/data/dual-monorels/sparse_big/sparse_sqrt_n25d04_CSTS_width.dat};
  \addplot+ [boxplot, mark=none, solid, gray]   table[row sep=newline,y index=0] {figures/data/dual-monorels/sparse_big/sparse_sqrt_n25d04_SOPT_sig_width.dat};
  \addplot+ [boxplot, mark=none, solid, black]  table[row sep=newline,y index=0] {figures/data/dual-monorels/sparse_big/sparse_sqrt_n25d04_TSD4_width.dat};

\end{groupplot}
\end{tikzpicture}}
	\scalebox{0.96}{  \begin{tikzpicture}
\begin{groupplot}[group style={
                  group size=2 by 1,
	              xticklabels at = edge bottom,
	              yticklabels at = edge left,
	              ylabels at     = edge left,
	              xlabels at     = edge bottom,
	              horizontal sep = 0.1cm, 
	              vertical sep   = 0.3cm},
                  xlabel style={align=right},
	              ylabel= Bounds,
                  height  = \textwidth*(0.35),
                  width   = \textwidth*(0.575),
                  boxplot/draw direction = y,
                  ymin    = -0.03,
                  ymax    = 1.23,
	              xticklabel style       = {align=center},
                  title style={yshift=-1ex,},
                  every boxplot/.style={mark=none,every mark/.append style={mark size=1.5pt}}
                  ]
  
  \nextgroupplot[
  xlabel style = {at={(axis description cs:-0.085,0.005)},}, 
  xlabel = {Method: \\ \tiny{Avg. time[s]:}}, 
  title={$\rmS(30,4)$}, xtick={1,2,3,4,5,6}, 
  xticklabels={
  {\color{gray}B    \\ \color{gray}\tiny{2007}	},
  {\color{gray}R    \\ \color{gray}\tiny{18.92}	},
  {\color{gray}SOS  \\ \color{gray}\tiny{8558}	},
  {\color{gray}CS   \\ \color{gray}\tiny{46.52}	}, 
  {\color{gray}SIG  \\ \color{gray}\tiny{21.35}	},
  {\color{black}T   \\             \tiny{84.45}	}}]
 
  \addplot+ [boxplot, mark=none, solid, gray]   table[row sep=newline,y index=0] {figures/data/dual-monorels/sparse_big/sparse_sqrt_n30d04_B1000_width.dat};
  \addplot+ [boxplot, mark=none, solid, gray]   table[row sep=newline,y index=0] {figures/data/dual-monorels/sparse_big/sparse_sqrt_n30d04_BRoot_width.dat};
  \addplot+ [boxplot, mark=none, solid, gray]   table[row sep=newline,y index=0] {figures/data/dual-monorels/sparse_big/sparse_sqrt_n30d04_SOS_width.dat};
  \addplot+ [boxplot, mark=none, solid, gray]   table[row sep=newline,y index=0] {figures/data/dual-monorels/sparse_big/sparse_sqrt_n30d04_CSTS_width.dat};
  \addplot+ [boxplot, mark=none, solid, gray]   table[row sep=newline,y index=0] {figures/data/dual-monorels/sparse_big/sparse_sqrt_n30d04_SOPT_sig_width.dat};
  \addplot+ [boxplot, mark=none, solid, black]  table[row sep=newline,y index=0] {figures/data/dual-monorels/sparse_big/sparse_sqrt_n30d04_TSD4_width.dat};
  
  \nextgroupplot[
  title={$\rmS(40,4)$}, 
  xtick={1,2,3,4,5,6}, 
  xticklabels={
  {\color{gray}B    \\ \color{gray}\tiny{2030}	},
  {\color{gray}R    \\ \color{gray}\tiny{33.23}	},
  {\color{gray}SOS  \\ \color{gray}\tiny{64678}	},
  {\color{gray}CS   \\ \color{gray}\tiny{306.2}	}, 
  {\color{gray}SIG  \\ \color{gray}\tiny{23.92}	}, 
  {\color{black}T   \\             \tiny{131.4}	}}]
 
  \addplot+ [boxplot, mark=none, solid, gray]   table[row sep=newline,y index=0] {figures/data/dual-monorels/sparse_big/sparse_sqrt_n40d04_B1000_width.dat};
  \addplot+ [boxplot, mark=none, solid, gray]   table[row sep=newline,y index=0] {figures/data/dual-monorels/sparse_big/sparse_sqrt_n40d04_BRoot_width.dat};
  \addplot+ [boxplot, mark=none, solid, gray]   table[row sep=newline,y index=0] {figures/data/dual-monorels/sparse_big/sparse_sqrt_n40d04_SOS_width.dat};
  \addplot+ [boxplot, mark=none, solid, gray]   table[row sep=newline,y index=0] {figures/data/dual-monorels/sparse_big/sparse_sqrt_n40d04_CSTS_width.dat};
  \addplot+ [boxplot, mark=none, solid, gray]   table[row sep=newline,y index=0] {figures/data/dual-monorels/sparse_big/sparse_sqrt_n40d04_SOPT_sig_width.dat};
  \addplot+ [boxplot, mark=none, solid, black]  table[row sep=newline,y index=0] {figures/data/dual-monorels/sparse_big/sparse_sqrt_n40d04_TSD4_width.dat};

\end{groupplot}
\end{tikzpicture}}
	\caption{As in Figure~\ref{boxplot:dense}, but for sparse exponent sets $S(n,d)$ with $n > d$.}\label{boxplot:sparse-d4}
\end{figure}

The difficulty of these instances becomes apparent from the weak bounds close to $1$ provide by method R and for $n=40$ also for B. Remember that a value of $1$ in \eqref{eq_criterium} corresponds to relaxations that have no advantage over the trivial lower bounds on the monomial variables.
Excellent bounds are provided by the Y / SOS methods. However, the computational time increases drastically and memory issues arise for Y, validating the theoretical considerations in Section~\ref{sec_paradigms}. It becomes clear that alternatives are necessary for problems in higher dimensions.
It is interesting that methods CS and SIG run comparitively fast, but do not result in good bounds.

One such candidate is method T, which is tailored for problems with low degree and many variables and profits computationally from the small size of the involved LMIs. The very good bounds provided by T can be calculated two orders of magnitude faster than with \yalmip. This allowed to also compute nontrivial bounds for instances with exponent sets $S(80,4)$ in approximately 400 seconds.
%For these exponent sets the computation of one of the two optima involved in the definition of the width usually takes between 6-7 minutes.
%The reason for the good performance in terms of computation time of (T) can be traced back to the relatively small size of the biggest involved $r\times r$ psd matrices in the relaxation of \eqref{P-RLXa}. 
%That is for $\mF^{t}_{S(n,2d)}= \set{\TS(\{\bfe^i\}_{i\in\supp(\alpha)},2d)}{\alpha\in S(n,2d)\bs \TS(\Gamma,2d) }\cup\{\TS(\Gamma,d)\}$ 
%\[
%r\le\max\Big\{\binom{d+\min\{n,2d\}}{\min\{n,2d\}},\binom{\big\lceil\tfrac{d}{2}\big\rceil+n}{n}\Big\}.
%\]
%For $2d=4$ and $n\ge 4$ this boils down to $r\le\max\Big\{\tbinom{2+4}{4},\tbinom{1+n}{n}\Big\}$.

% -------------------------------------------------------------------------------
\subsubsection{Specifically structured test sets} \label{sec_structuredresults}

To complement the results on dense and sparse exponent sets, we consider specific exponent sets with particular properties. We start with adversarial instances. 
Assume for example that we want to solve \eqref{POP} with $A = \{(k,\ldots,k ) \colon k=0,\ldots, 2d \}$, where $d \in \N$. This is actually a disguised one-dimensional problem, since we can make a substitution $y = x_1 \cdots x_n$. Thus, we need patterns that would link the monomial variables with the exponents in $A$.
Since $A$ is a chain, choosing $A$ as a pattern is optimal. This means, C is an optimal method for this $A$.  But if we use M, we would link the monomials $x_1^i \cdots x_n^i$ with the variables $x_1,\ldots,x_n$ by convex constraints, which is not helpful. So the above support is an adversarial choice for method M. By looking at such kinds of special $A$'s we are able to understand which are the preferred choices of $A$ for the methods we evaluate and on which choices of $A$ the methods have difficulties. We test four exponent sets based on chain pattern families,
\begin{align*} 
A_5 & = \{ (k, k) \colon k=0,\ldots, 10\}  \subset \N^2_{10} \\
A_6 & = \{ (k, k, k, k) \colon k=0,\ldots, 10\}  \subset \N^4_{10} \\
A_7 & = \{ k \alpha \colon k=0,\ldots, 10, \ \alpha \in \{e_1, e_2, e_1 + e_2\} \} \subset \N^2_{10} \\
A_8 & = \{ k \alpha \colon k=0,\ldots, 10, \ \alpha \in \{e_1, e_2, e_3, e_4, e_1 + e_2 + e_3 + e_4\} \} \subset \N^4_{10}
%A_5 & = \{ (k,\ldots, k) \colon k=0,\ldots, 2d\} \\
%A_7 & = \{ k \alpha \colon k=0,\ldots, 2d, \ \alpha \in \{e_1,\ldots,e_n, e_1 + \cdots + e_n\} \}. 
\end{align*}
Results are shown in Figure~\ref{boxplot:adversary}.

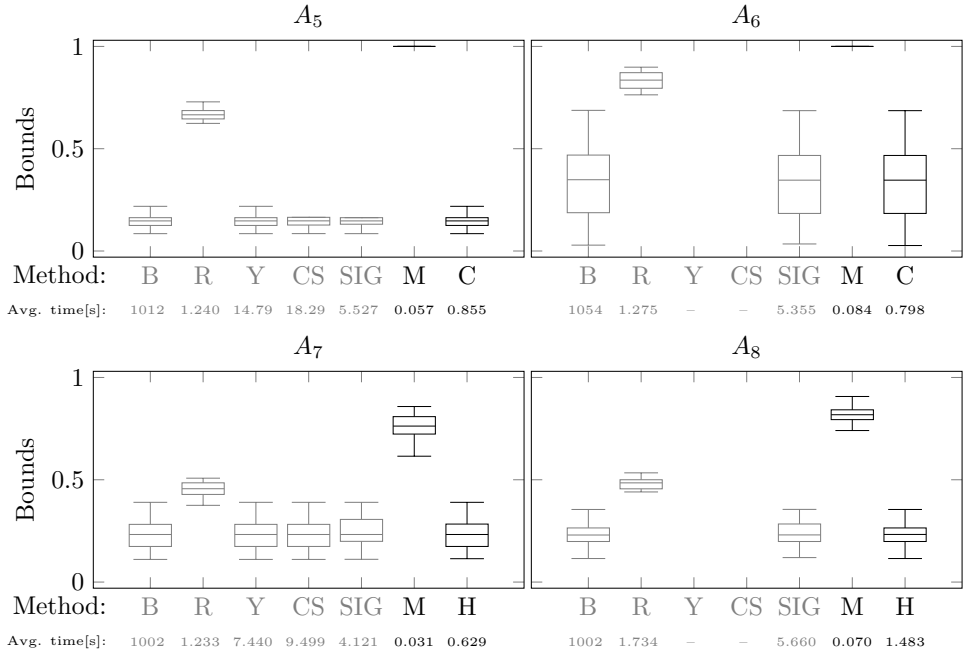
\begin{figure}[htbp]
	\centering
	%  \tikzsetfigurename{figures/fig-boxplot-adversary}
	%  \includegraphics{figures/fig-boxplot-adversary0.pdf}
	\scalebox{0.97}{\begin{tikzpicture}[]
\begin{groupplot}[
                  group style={
                  group size=2 by 1,
	              xticklabels at = edge bottom,
	              yticklabels at = edge left,
	              ylabels at     = edge left,
	              xlabels at     = edge bottom,
	              horizontal sep = 0.1cm, 
	              vertical sep   = 0.3cm
                  },
                  xlabel style={align=right},
	              ylabel= Bounds,
	              xticklabel style   = {align=center},
                  height  = \textwidth*(0.35),
                  width   = \textwidth*(0.575),
                  boxplot/draw direction = y,
                  ymin = -0.03,
                  ymax = 1.03,
                  title style={yshift=-1ex,},
                  every boxplot/.style={mark=none,every mark/.append style={mark size=1.5pt}}
                  ]
  \nextgroupplot[
  xlabel style = {
  at={(axis description cs:-0.085,0.005)},}, 
  xlabel = {Method: \\ \tiny{Avg. time[s]:}}, 
  title={$A_5$}, 
  xtick={1,...,7}, 
  xticklabels={
  {\color{gray}B  \\ \color{gray}\tiny{1012}  },
  {\color{gray}R  \\ \color{gray}\tiny{1.240} },
  {\color{gray}Y  \\ \color{gray}\tiny{14.79} },
  {\color{gray}CS \\ \color{gray}\tiny{18.29} },
  {\color{gray}SIG\\ \color{gray}\tiny{5.527} },
  {\color{black}M \\ \tiny{0.057}             },
  {\color{black}C \\ \tiny{0.855}             }}] 

  \addplot+ [boxplot, mark=none, solid, gray]   table[row sep=newline,y index=0] {figures/data/dual-monorels/d_chain/chain_n2d10_B1000_width.dat};
  \addplot+ [boxplot, mark=none, solid, gray]   table[row sep=newline,y index=0] {figures/data/dual-monorels/d_chain/chain_n2d10_BRoot_width.dat};
  \addplot+ [boxplot, mark=none, solid, gray]   table[row sep=newline,y index=0] {figures/data/dual-monorels/d_chain/chain_n2d10_YAL_width.dat};
  \addplot+ [boxplot, mark=none, solid, gray]   table[row sep=newline,y index=0] {figures/data/dual-monorels/d_chain/chain_n2d10_CSTS_width.dat};
  \addplot+ [boxplot, mark=none, solid, gray]   table[row sep=newline,y index=0] {figures/data/dual-monorels/d_chain/chain_n2d10_SOPT_sig_width.dat};
  \addplot+ [boxplot, mark=none, solid, black]  table[row sep=newline,y index=0] {figures/data/dual-monorels/d_chain/chain_n2d10_M_width.dat};
  \addplot+ [boxplot, mark=none, solid, black]  table[row sep=newline,y index=0] {figures/data/dual-monorels/d_chain/chain_n2d10_C_width.dat};
  
  \nextgroupplot[
  title={$A_6$}, 
  xtick={1,...,7}, 
  xticklabels={
  {\color{gray}B  \\ \color{gray}\tiny{1054}  },
  {\color{gray}R  \\ \color{gray}\tiny{1.275} },
  {\color{gray}Y  \\ \color{gray}\tiny{--}    },
  {\color{gray}CS \\ \color{gray}\tiny{--}    },
  {\color{gray}SIG\\ \color{gray}\tiny{5.355} },
  {\color{black}M \\ \tiny{0.084}             },
  {\color{black}C \\ \tiny{0.798}             }}] 
  \addplot+ [boxplot, mark=none, solid, gray]   table[row sep=newline,y index=0] {figures/data/dual-monorels/d_chain/chain_n4d10_B1000_width.dat};
  \addplot+ [boxplot, mark=none, solid, gray]   table[row sep=newline,y index=0] {figures/data/dual-monorels/d_chain/chain_n4d10_BRoot_width.dat};
  \addplot+ [boxplot, mark=none, solid, gray]   table[row sep=newline,y index=0] {figures/data/dual-monorels/d_chain/chain_n4d10_YAL_width.dat};
  \addplot+ [boxplot, mark=none, solid, gray]   table[row sep=newline,y index=0] {figures/data/dual-monorels/d_chain/chain_n4d10_CSTS_width.dat};
  \addplot+ [boxplot, mark=none, solid, gray]   table[row sep=newline,y index=0] {figures/data/dual-monorels/d_chain/chain_n4d10_SOPT_sig_width.dat};
  \addplot+ [boxplot, mark=none, solid, black]  table[row sep=newline,y index=0] {figures/data/dual-monorels/d_chain/chain_n4d10_M_width.dat};
  \addplot+ [boxplot, mark=none, solid, black]  table[row sep=newline,y index=0] {figures/data/dual-monorels/d_chain/chain_n4d10_C_width.dat};
 
\end{groupplot}
\end{tikzpicture}}
	\scalebox{0.97}{\begin{tikzpicture}[]
\begin{groupplot}[
                  group style={
                  group size=2 by 2,
	              xticklabels at = edge bottom,
	              yticklabels at = edge left,
	              ylabels at     = edge left,
	              xlabels at     = edge bottom,
	              horizontal sep = 0.1cm, 
	              vertical sep   = 0.3cm
                  },
                  xlabel style={align=right},
	              ylabel= Bounds,
	              xticklabel style   = {align=center},
                  height  = \textwidth*(0.35),
                  width   = \textwidth*(0.575),
                  boxplot/draw direction = y,
                  ymin = -0.03,
                  ymax = 1.03,
                  title style={yshift=-1ex,},
                  every boxplot/.style={mark=none,every mark/.append style={mark size=1.5pt}}
                  ]
  \nextgroupplot[
  xlabel style = {
  at={(axis description cs:-0.085,0.005)},}, 
  xlabel = {Method: \\ \tiny{Avg. time[s]:}   }, 
  title={$A_7$}, 
  xtick={1,...,7}, 
  xticklabels={
  {\color{gray}B  \\ \color{gray}\tiny{1002}   },
  {\color{gray}R  \\ \color{gray}\tiny{1.233}  },
  {\color{gray}Y  \\ \color{gray}\tiny{7.440}  },
  {\color{gray}CS \\ \color{gray}\tiny{9.499}  },
  {\color{gray}SIG\\ \color{gray}\tiny{4.121}  },
  {\color{black}M \\ \tiny{0.031}              },
  {\color{black}H \\ \tiny{0.629}              }}] 

  \addplot+ [boxplot, mark=none, solid, gray]   table[row sep=newline,y index=0] {figures/data/dual-monorels/dpa_chain/dpa_chain_n2d10_B1000_width.dat};
  \addplot+ [boxplot, mark=none, solid, gray]   table[row sep=newline,y index=0] {figures/data/dual-monorels/dpa_chain/dpa_chain_n2d10_BRoot_width.dat};
  \addplot+ [boxplot, mark=none, solid, gray]   table[row sep=newline,y index=0] {figures/data/dual-monorels/dpa_chain/dpa_chain_n2d10_YAL_width.dat};
  \addplot+ [boxplot, mark=none, solid, gray]   table[row sep=newline,y index=0] {figures/data/dual-monorels/dpa_chain/dpa_chain_n2d10_CSTS_width.dat};
  \addplot+ [boxplot, mark=none, solid, gray]   table[row sep=newline,y index=0] {figures/data/dual-monorels/dpa_chain/dpa_chain_n2d10_SOPT_sig_width.dat};
  \addplot+ [boxplot, mark=none, solid, black]  table[row sep=newline,y index=0] {figures/data/dual-monorels/dpa_chain/dpa_chain_n2d10_M_width.dat};
  \addplot+ [boxplot, mark=none, solid, black]  table[row sep=newline,y index=0] {figures/data/dual-monorels/dpa_chain/dpa_chain_n2d10_Str1_width.dat};
  
  \nextgroupplot[
  title={$A_8$}, 
  xtick={1,...,7}, 
  xticklabels={
  {\color{gray}B  \\ \color{gray}\tiny{1002}		},
  {\color{gray}R  \\ \color{gray}\tiny{1.734}	}, 
  {\color{gray}Y  \\ \color{gray}\tiny{--}  		},
  {\color{gray}CS \\ \color{gray}\tiny{--}  		},
  {\color{gray}SIG\\ \color{gray}\tiny{5.660}	},
  {\color{black}M \\ \tiny{0.070}           		},
  {\color{black}H \\ \tiny{1.483}           		}}] 
  
  \addplot+ [boxplot, mark=none, solid, gray]   table[row sep=newline,y index=0] {figures/data/dual-monorels/dpa_chain/dpa_chain_n4d10_B1000_width.dat};
  \addplot+ [boxplot, mark=none, solid, gray]   table[row sep=newline,y index=0] {figures/data/dual-monorels/dpa_chain/dpa_chain_n4d10_BRoot_width.dat};
  \addplot+ [boxplot, mark=none, solid, gray]   table[row sep=newline,y index=0] {figures/data/dual-monorels/dpa_chain/dpa_chain_n4d10_YAL_width.dat};
  \addplot+ [boxplot, mark=none, solid, gray]   table[row sep=newline,y index=0] {figures/data/dual-monorels/dpa_chain/dpa_chain_n4d10_CSTS_width.dat};
  \addplot+ [boxplot, mark=none, solid, gray]   table[row sep=newline,y index=0] {figures/data/dual-monorels/dpa_chain/dpa_chain_n4d10_SOPT_sig_width.dat};
  \addplot+ [boxplot, mark=none, solid, black]  table[row sep=newline,y index=0] {figures/data/dual-monorels/dpa_chain/dpa_chain_n4d10_M_width.dat};
  \addplot+ [boxplot, mark=none, solid, black]  table[row sep=newline,y index=0] {figures/data/dual-monorels/dpa_chain/dpa_chain_n4d10_Str1_width.dat};

\end{groupplot}
\end{tikzpicture}}
	\caption{As in Figure~\ref{boxplot:dense}, but for specific adversarial exponent sets $A_5, A_6, A_7, A_8$.}\label{boxplot:adversary}
\end{figure}

As a main take-away, the exploitation of chain patterns in C and H brings a huge advantage in comparison to the multilinear pattern method M. The resulting bounds are similar to those produced by B and SIG, but significantly faster. This shows that the structure of the exponent set may be decisive for the performance of particular methods. Detecting such structures a priori in an automatized, adaptive way, seems to be one of the future challenges in polynomial optimization.

Findally, we study the exponent set $\Aex$ as defined in Example~\ref{rmk:pattern-plot} and visualized in Figure~\ref{fig_patterns}. 
In addition to the solvers and methods M and H, we also apply three custom pattern families $F^1$, $F^2$, and $F^3$ as introduced in the fifth row of Figure~\ref{fig_patterns}. Results are shown in Figure~\ref{boxplot:custom}.

\begin{figure}[h!!!]
	\centering
	\begin{tikzpicture}
\begin{groupplot}[group style={
                  group size= 1 by 1,
	              xticklabels at = edge bottom,
	              yticklabels at = edge left,
	              ylabels at     = edge left,
	              xlabels at     = edge bottom,
	              horizontal sep = 0.3cm, 
	              vertical sep   = 0.3cm},
                  xlabel style={align=right},
	              ylabel= Bounds,
	              boxplot/hide outliers/.code={\def\pgfplotsplothandlerboxplot@outlier{}},
                  ymin = -0.03,
                  ymax = 1.03,
	              xticklabel style       = {align=center},
                  height  = \textwidth*(0.35),
                  width   = \textwidth*(0.72),
                  boxplot/draw direction = y,
                  title style={yshift=-1ex,},
                  every boxplot/.style={mark=none,every mark/.append style={mark size=1.5pt}}
                  ]
  \nextgroupplot[
  xlabel style = {at={(axis description cs:-0.085,-0.01)},}, 
  xlabel = {Method: \\ \tiny{Avg. time[s]:} },
  title={$\Aex$}, 
  xtick={1,2,3,4,5,6,7,8,9,10}, 
  xticklabels={
  {\color{gray}B$\phantom{^1}$  \\ \color{gray}\tiny{1001}    },
  {\color{gray}R$\phantom{^1}$  \\ \color{gray}\tiny{0.583}   },
  {\color{gray}Y$\phantom{^1}$  \\ \color{gray}\tiny{6.837}   },
  {\color{gray}CS$\phantom{^1}$ \\ \color{gray}\tiny{1.252}   },
  {\color{gray}SIG$\phantom{^1}$\\ \color{gray}\tiny{1.901}   },
  {\color{black}M$\phantom{^1}$ \\ \tiny{0.031}               },
  {\color{black}H$\phantom{^1}$ \\ \tiny{0.538}               },
  {\color{black}$\rmF^1$        \\ \tiny{0.911}               },
  {\color{black}$\rmF^2$        \\ \tiny{0.464}               },
  {\color{black}$\rmF^3$        \\ \tiny{0.130}               }}]

  \addplot+ [boxplot, mark=none, solid,  gray]  table[row sep=newline,y index=0] {figures/data/dual-monorels/custom/A00n2_B1000_width.dat};
  \addplot+ [boxplot, mark=none, solid,  gray]  table[row sep=newline,y index=0] {figures/data/dual-monorels/custom/A00n2_BRoot_width.dat};
  \addplot+ [boxplot, mark=none, solid,  gray]  table[row sep=newline,y index=0] {figures/data/dual-monorels/custom/A00n2_YAL_width.dat};
  \addplot+ [boxplot, mark=none, solid,  gray]  table[row sep=newline,y index=0] {figures/data/dual-monorels/custom/A00n2_CSTS_width.dat};
  \addplot+ [boxplot, mark=none, solid,  gray]  table[row sep=newline,y index=0] {figures/data/dual-monorels/custom/A00n2_SOPT_sig_width.dat};
  \addplot+ [boxplot, mark=none, solid,  black] table[row sep=newline,y index=0] {figures/data/dual-monorels/custom/A00n2_M_width.dat};
  \addplot+ [boxplot, mark=none, solid,  black] table[row sep=newline,y index=0] {figures/data/dual-monorels/custom/A00n2_Str1_width.dat};
  \addplot+ [boxplot, mark=none, solid,  black] table[row sep=newline,y index=0] {figures/data/dual-monorels/custom/A00n2_P1_width.dat};
  \addplot+ [boxplot, mark=none, solid,  black] table[row sep=newline,y index=0] {figures/data/dual-monorels/custom/A00n2_P2_width.dat};
  \addplot+ [boxplot, mark=none, solid,  black] table[row sep=newline,y index=0] {figures/data/dual-monorels/custom/A00n2_P3_width.dat};  
\end{groupplot}
\end{tikzpicture}
	\caption{As in Figure~\ref{boxplot:dense}, but for the exponent set $\Aex$ and additional custom pattern families $F^1$, $F^2$, and $F^3$ as introduced in Example~\ref{rmk:pattern-plot}.}\label{boxplot:custom}
\end{figure}
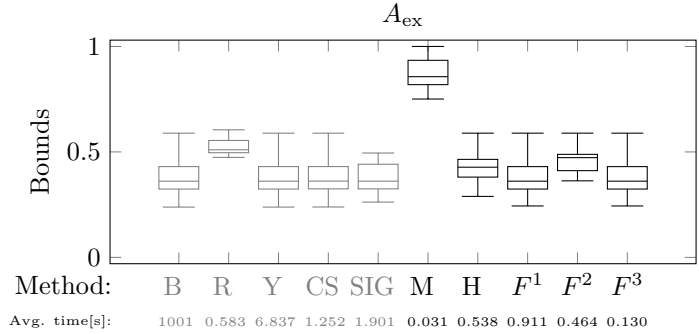

One observes that particular choices of pattern families, such as $F^3$, may bring additional advantages with respect to runtime and approximation quality when compared to method H.

% ---------------------------------------------------------
% ---------------------------------------------------------
\section{Conclusions and Outlook} \label{sec_summary}

We have presented different aspects of convexification and sparsity exploitation for primal and dual approaches to polynomial optimization in a self-contained review. The review contains novel proofs. For example, we proved the dual positivstellens\"atze from Putinar and Handelman using infinite-dimensional matrices together with a projection argument, which allowed to omit the introduction of moments. Our generalized framework is meant to facilitate the transfer of theoretical results to special cases.

Our numerical experiments indicate the huge potential of tailored relaxations based on pattern families. Structure exploitation may bring huge advantages in comparison to standard approaches both in terms of approximation quality and in computational runtime. Patterns have already been used for special cases. Ahmadi and coworkers  \cite{ahmadi2023sums} investigated a combination of the patterns $\N_2^n$ and axis-parallel chains for the unconstrained case $X = \R^n$. They were able to show equivalences of primal and dual formulation and exact representations for specific exponent sets $A$.

Ideally, one would like to develop an adaptive method that works best for any type of instances, i.e., with respect to exponents $\alpha \in A$ and coefficient vectors.
Detecting favorable structures a priori in an automatized, adaptive way, seems to be one of the future challenges in polynomial optimization. First ideas in this direction are based on graph algorithms \cite{Peters2021a} or integer programming \cite{Shaydurova2024}, but require further research. An integration into concepts from the global NLP community, in particular branch-and-bound frameworks, is also very promising.

% -------------------------------------------------------------------------------
% -------------------------------------------------------------------------------
%\backmatter

%\bmhead{Supplementary information}

%Not applicable.

%\bmhead{Acknowledgments}

%The authors gratefully acknowledge the support from the Deut\-sche For\-schungs\-ge\-mein\-schaft (DFG, German Research Foundation) - 314838170, GRK 2297 MathCoRe. The second author was partially supported by the Ministry of Economy, Science and Digitalisation of the federal state Saxony-Anhalt by means of the Landesgraduierten-Stipendium. 

%\section*{Declarations}

%Not applicable.

\bibliographystyle{siamplain}
\bibliography{literature.bib}

% **************************************************
\clearpage
\section*{Appendix: Notation}

We denote the convex hull and the convex conic hull of sets by $\conv$ and $\cone$, respectively. The transposition operation is denoted as $\top$. By default, vectors are interpreted as columns, but when the choice of the column vs. row convention does not matter, we write vectors as rows to avoid unnecessary transposition. 

% **************************************************
\begin{nomenclature}
  \nclentry{$[n]$}{\{1, \ldots, n\}}
  \nclentry{$|A|$}{Cardinality of set $A$}
  \nclentry{$A$}{Finite exponent set, $A \subset \N^n$}
  \nclentry{$B$}{Finite exponent set in $\N^n$, $B \supseteq A$ with $B = P_1 \cup \cdots \cup P_N $}
  \nclentry{$A+B$}{Minkowski sum, $\{ \alpha + \beta \colon \alpha \in A, \beta \in B \}$}
  \nclentry{$\BF(B)$}{Polyhedral relaxation based on \eqref{eq_BF}}
  \nclentry{$\Bx(l,u)$}{Box of lower and upper bounds $[l_1,u_1] \times \ldots \times [l_n, u_n]$}
  \nclentry{$\cC_A(X)$}{Moment cone $\overline {  \cone \{ x^A \colon x \in X\} }$}
  \nclentry{$\CRLX_\cF(X)$}{Conic moment relaxation $\{ v \in \R^B \colon v_{P_i} \in \cC_{P_i}(X) \ \text{for all} \in [N]\}$}
  \nclentry{$\deg(A)$}{Degree, $ \max \{ \deg(x^\alpha) \colon \alpha \in A\}$}
  \nclentry{$\cM_A(X)$}{Moment body $\cM_A(X) := \conv \{ (x^\alpha)_{\alpha \in A} \colon x \in X\}$}
  \nclentry{$\cF$}{Pattern familiy $\cF = \{P_1,\ldots,P_N\}$}
  \nclentry{$f(x)$}{Objective function, polynomial}
%  \nclentry{$f$}{Coefficient vector of $f(x)$}
  \nclentry{$g$}{Vector of polynomial inequalities $g(x) \ge 0$ defining $X$}
  \nclentry{$h$}{Vector of polynomial equalities $h(x) = 0$ defining $X$}
  \nclentry{$K$}{Special case of feasible set $X$, either box or polytope}
  \nclentry{$L_v(f)$}{Linearization map $L_v \Bigl(\sum_{\alpha \in A} f_\alpha x^\alpha \Bigr) := \sum_{\alpha \in A} f_\alpha v_\alpha$}
  \nclentry{$M_B(x)$}{$M_B(x) = x^B (x^B)^\top$}
  \nclentry{$\MR(\cB_0,\dots,\cB_{k})$}{$\{ v \in \R^P \colon L_v(g_i M_B) \succeq 0  \ \forall \; \ i \in [k] \ \text{and} \ B \in \cB_i\}$}
  \nclentry{$\N_d^n$}{Space of exponents of maximum degree $d$ for polynomials in $\R^n$}
  \nclentry{$P_i$}{Pattern, as a member of the pattern family $\cF$}
  \nclentry{$\R[x]$}{Ring of polynomials in variables $x \in \R^n$ with real coefficients}
  \nclentry{$\R[x]_A$}{As $\R[x]$, but with monomials whose exponent vectors are in $A$}
  \nclentry{$\R^A$}{Real vector space isomorphic to $\R^{|A|}$ with vectors indexed by $A$}
  \nclentry{$\bS^m$}{Space of symmetric matrices of size $m$ over reals}
  \nclentry{$\bS_+^m$}{Space of positive semidefinite matrices  in $\bS^m$}
  \nclentry{$s$}{Number of polynomial inequality constraints in $X$}
  \nclentry{$t$}{Number of polynomial equality constraints in $X$}
  \nclentry{$v$}{Monomial variable vector with entries $v_\alpha = x^\alpha \in \R$}
  \nclentry{$x$}{Variables of original problem \eqref{POP}, $x \in \R^n$}
  \nclentry{$x^\alpha$}{Product $x^\alpha = \prod_{i=1}^n x_i^{\alpha_i}$}
  \nclentry{$X$}{Feasible set, $x \in X \subseteq \R^n$}
%  \nclentry{}{}
%  \nclentry{}{}
  \nclentry{$\overline{Y}$}{Topological closure of set $Y$}
  \nclentry{$y_A$}{If $y \in \R^n$ and $B \subseteq \N^n$, then for $A \subseteq B$ the vector $y_A$ is the}
  \nclentry{     }{projection of $y$ on coordinates indexed by $A$: $y_A = (y_\alpha)_{\alpha \in A} \in \R^A$}
  \nclentry{$Y_A$}{Projection of a set $Y$ on the $A$ coordinates: $Y_A  = \{ y_A \colon y \in Y \}$}
%  , Monomial variable vector with entries $v_\alpha = x^\alpha$ for $\alpha \in A$}
%  \nclentry{$V_A$}{Set $\{ v_A : v \in V \}$ of monomial variables for sets $V$}
\end{nomenclature}

% -------------------------------------------------------------------------------
% -------------------------------------------------------------------------------

\end{document}